\documentclass[11pt]{amsart}
\usepackage{amsmath,amssymb,amsthm}
\usepackage[a4paper,margin=25mm,includeheadfoot]{geometry}
\newtheorem{theorem}{Theorem}[section]
\newtheorem{proposition}[theorem]{Proposition}
\newtheorem{lemma}[theorem]{Lemma}
\newtheorem{corollary}[theorem]{Corollary}
\theoremstyle{remark}
\newtheorem{remark}[theorem]{Remark}
\newcommand{\C}{\mathbb C}

\newcommand{\ord}{\operatorname{ord}}
\newcommand{\A}{\mathcal A}
\newcommand{\D}{\mathcal D}
\newcommand{\Needspace}[1]{\par\ifdim\dimexpr\pagegoal-\pagetotal\relax<#1\newpage\fi}
\title[Six-body central configurations, Part I]{Generic finiteness of planar six-body central configurations: Part I, exceptional-mass first limits}

\author{Zhe Chen}
\address{School of Wind Energy, Technical University of Denmark, 2800 Kongens Lyngby, Lyngby-Taarbæk Municipality, Copenhagen, Denmark}
\email{hitchenzhe@gmail.com}

\author{Zhicheng Tong}
\address{School of Mathematics, Jilin University, Changchun 130012, P. R. China}
\email{tongzc25@jlu.edu.cn}
\begin{document}
\begin{abstract}
We separate the generic positive-mass finiteness problem for planar six-body central configurations from finiteness on exceptional mass loci. The Chang--Chen diagram enumeration has 117 graphs. The arguments below confine 40 first-limit diagram obligations to proper necessary mass loci. In particular, a direct function-field argument establishes the No.~66 mass condition for both zero and nonzero drift. We establish the algebraic reduction from an infinite positive family to a germ with an unbounded seven-product, and use the stated diagram rules and computational certificates to confine its mass vector to the union of these loci. Finiteness on the exceptional positive mass loci, the proposed subject of Part II, will be discussed in another paper.
\end{abstract}
\maketitle
\tableofcontents

\section{Introduction}\label{sec:intro}
Let $m_1,\ldots,m_6>0$ be masses at distinct positions $q_1,\ldots,q_6\in\mathbb R^2$, and let $c=(\sum_i m_iq_i)/(\sum_i m_i)$ be their centre of mass. The configuration is \emph{central} if some $\lambda\in\mathbb R$ satisfies
\begin{equation}\label{eq:physical}
\lambda(q_i-c)=\sum_{j\ne i}m_j\frac{q_j-q_i}{|q_j-q_i|^3},\qquad i=1,\ldots,6.
\end{equation}
Translation, rotation and positive scaling preserve this condition. Smale's sixth problem asks whether, for each fixed positive mass vector, there are only finitely many such configurations modulo these symmetries.

For five positive masses, finiteness is known~\cite{CTFive,FiveT}; the corresponding question for six positive masses remains open. Following the normalized algebraic formulation, set $z_i=x_i+iy_i$, $w_i=x_i-iy_i$ on real configurations and regard $z_i,w_i$ as independent complex coordinates. Put $z_{ij}=z_i-z_j$, $w_{ij}=w_i-w_j$, and use signed inverse distances $\delta_{ij}=\delta_{ji}$. With the rotational gauge $z_{12}=w_{12}$, the normalized system is
\begin{equation}\label{eq:AK}
\begin{aligned}
z_i&=\sum_{j\ne i}m_j\delta_{ij}^3(z_i-z_j),&
 w_i&=\sum_{j\ne i}m_j\delta_{ij}^3(w_i-w_j),\\
\delta_{ij}^2z_{ij}w_{ij}&=1 &&(1\le i<j\le6),\qquad z_{12}=w_{12}.
\end{aligned}
\end{equation}
For fixed $m=(m_1,\ldots,m_6)\in\mathbb R_{>0}^6$, let $\A_m$ denote its complex zero set. A point of $\A_m$ is \emph{positive} when $w_i=\overline{z_i}$ and every $\delta_{ij}>0$. The finiteness of these normalized positive points is equivalent to finiteness modulo the symmetries of~\eqref{eq:physical}.

Put $r_{ij}^2=z_{ij}w_{ij}$ and, for each pair $ij$, define
\begin{equation}\label{eq:sevenproducts}
P_{ij}=r_{ij}^2\prod_{\substack{k<\ell\\\{k,\ell\}\cap\{i,j\}=\varnothing}}r_{k\ell}^2\qquad(1\le i<j\le6).
\end{equation}
These are the 15 \emph{seven-products} of squared distances. A \emph{positive-point first limit} is a Puiseux branch at infinity of an irreducible curve in $\A_m$ containing a nonconstant arc of positive points, along which some $P_{ij}$ has a pole. This is a condition on a curve that passes through physical configurations, not an assertion that the singular limit itself is positive real.

The normalized-curve approach for five bodies~\cite{CTFive} motivates our reduction to such first limits. Chang and Chen's Part~I~\cite{CCI} enumerates 117 six-body diagrams and eliminates 31 by Algorithm~II. Their Part~II~\cite[Theorem~6.1, Sections~6.2--6.12 and Table~6.1]{CCII} excludes No.~116 for positive masses and confines 61 further diagrams to proper algebraic mass loci, leaving 24 diagrams historically open. This use of cited results does not independently reproduce their symbolic computations. By the local arguments below, 28 of the 61 reported-relation diagrams and 17 historically open diagrams are excluded at the positive-point first-limit tier, leaving 33 published and 7 derived relation obligations. Let $\D_{\mathrm{CC}}$ denote those 33 surviving reported labels and put $\D_{\mathrm{new}}=\{8,63,66,88,93,94,98\}$ and $\D_{40}=\D_{\mathrm{CC}}\cup\D_{\mathrm{new}}$.

On curves through positive configurations, all 28 additional reported-relation diagrams are excluded as first limits by the local arguments below. No.~4 follows from Lemma~\ref{lem:reported-no4}, and No.~20 from Section~\ref{sec:no20-energy}. No.~2 and 22 follow from Lemma~\ref{lem:reported-force-transfer}; No.~3 from Lemma~\ref{lem:reported-no3}, and No.~78 from Lemma~\ref{lem:reported-no78}. No.~40, 44 and 45 follow from Sections~\ref{sec:no40-pole}, \ref{sec:no44-force} and~\ref{sec:no45-pole}; No.~86 and 95 from Sections~\ref{sec:no86-wing} and~\ref{sec:no95-wing}; No.~99 and 100 from Sections~\ref{sec:no99-cycle} and~\ref{sec:no100-cycle}; No.~109/112 from Sections~\ref{sec:no109-poles} and~\ref{sec:no112-poles}, and No.~111 from Section~\ref{sec:no111-wing}; No.~51/55 from Sections~\ref{sec:no51-pendant} and~\ref{sec:no55-pendant}; No.~105/77 from Sections~\ref{sec:no105-sign} and~\ref{sec:no77-matching}; No.~31/32/36 from Sections~\ref{sec:no31-bridge}, \ref{sec:no32-tie} and~\ref{sec:no36-matches}; No.~49/50 from Sections~\ref{sec:no49-path} and~\ref{sec:no50-resonance}; No.~57/61/84 from Sections~\ref{sec:no57-triangle}, \ref{sec:no61-sign} and~\ref{sec:no84-cycle}.

The five historical connectivity cases follow from Lemma~\ref{lem:excluded-connectivity}; No.~6 follows from Section~\ref{sec:no6-inertia} and No.~71 from Section~\ref{sec:no71-satellite}. No.~74 and 75 follow from Lemma~\ref{lem:excluded-satellite}; No.~82, 89 and 90 from Sections~\ref{sec:no82-torque}, \ref{sec:no89-torque} and~\ref{sec:no90-torque}; No.~81 from Section~\ref{sec:no81-energy}; No.~64 and 65 from Sections~\ref{sec:no64-energy} and~\ref{sec:no65-pair}. No.~79 follows from Section~\ref{sec:no79-sector}. No.~52 is excluded by Proposition~\ref{prop:no52-excluded}.

Using the published relation loci for $r\in\D_{\mathrm{CC}}$ and Propositions~\ref{prop:no8-locus}, \ref{prop:no63-locus}, \ref{prop:no66-locus}, \ref{prop:no88-locus}, \ref{prop:no93-locus}, \ref{prop:no94-locus} and~\ref{prop:no98-locus} for the seven derived classes, choose an orbit-completed nonzero necessary polynomial $P_r(m)$ for each $r\in\D_{40}$. Every first limit of type $r$ forces $P_r(m)=0$; the converse is not claimed. Define
\begin{equation}\label{eq:E40}
E_{40}=\left\{m\in\mathbb R_{>0}^6:\prod_{r\in\D_{40}}P_r(m)=0\right\}.
\end{equation}
The product is nonzero, so $E_{40}$ is a proper algebraic subset of the positive mass cone. The number 40 counts diagram obligations, not distinct irreducible varieties.

\begin{theorem}[First-limit confinement]\label{thm:obstruction}
Using Chang--Chen's cited classification and the specified computational certificates, if the planar six-body problem has infinitely many positive normalized central configurations at $m\in\mathbb R_{>0}^6$, then $m\in E_{40}$.
\end{theorem}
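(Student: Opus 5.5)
The proof will be by contraposition, following the normalized-curve strategy of the five-body argument~\cite{CTFive}. Suppose that $\A_m$ contains infinitely many positive points. Positivity is defined by real semi-algebraic conditions, namely $w_i=\overline{z_i}$ and $\delta_{ij}>0$. So the positive locus is a real semi-algebraic subset of the complex algebraic set $\A_m$. If it is infinite, it contains a nonconstant real-analytic arc, whose Zariski closure lies in some irreducible component of $\A_m$ of positive dimension. Choose an irreducible curve $\Gamma\subset\A_m$ containing a nonconstant arc of positive points. This can be done by intersecting a higher-dimensional component with generic hyperplanes through a positive point, chosen so that a positive arc survives.

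The next step is to show that some seven-product $P_{ij}$ is unbounded on $\Gamma$, and hence has a pole along some Puiseux branch at infinity. Here I would follow the Albouy--Kaloshin style argument. If all fifteen $P_{ij}$ were bounded on $\Gamma$, they would be constant on the irreducible curve $\Gamma$. Together with the equations~\eqref{eq:AK}, which make the $r_{ij}^2$ algebraic over the $P_{ij}$, this would force the $r_{ij}^2$ to take finitely many values on $\Gamma$. One way to see this is that $\prod r_{ij}^2$ is recoverable from products of the $P_{ij}$, and the ratios $P_{ij}/P_{kl}$ determine the individual $r^2$ ratios. Constant mutual distances, together with the gauge $z_{12}=w_{12}$ and the normalization by the centre of mass, pin down finitely many points, which contradicts $\Gamma$ being a curve. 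Therefore some branch of $\Gamma$ at infinity has a pole of some $P_{ij}$. By definition this branch is a positive-point first limit.

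Next, apply the Chang--Chen classification~\cite{CCI,CCII} to the Puiseux expansion of this branch. Its $z$- and $w$-diagrams belong to one of the 117 enumerated graphs. Algorithm~II removes 31 of these, and~\cite[Theorem~6.1]{CCII} excludes No.~116 for positive masses. The 28 reported-relation diagrams and the 17 historically open diagrams listed in the introduction are excluded by the cited lemmas and sections, each of which is stated for curves through positive configurations. The branch is therefore of some type $r\in\D_{40}$. For $r\in\D_{\mathrm{CC}}$, the published relation gives $P_r(m)=0$. For $r\in\D_{\mathrm{new}}$, Propositions~\ref{prop:no8-locus}--\ref{prop:no98-locus} give $P_r(m)=0$. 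In both cases the orbit-completion of $P_r$ handles the relabelling of bodies needed to match the diagram to its normal form. Hence $\prod_{r}P_r(m)=0$, that is, $m\in E_{40}$.

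I expect the main obstacle to be the reduction step, that is, the algebraic argument that an infinite positive family yields an unbounded seven-product. Specifically, I must check that boundedness of all $P_{ij}$ forces the $r_{ij}^2$ to be constant and bounded away from zero. A degeneration $r_{ij}\to0$ could in principle coexist with bounded seven-products. I would first try to rule this out using the positivity of the $\delta_{ij}$ along the real arc and the relations $\delta_{ij}^2r_{ij}^2=1$. A second concern is that every exclusion lemma must apply to this particular branch, and not merely to branches of a generic curve. This needs bookkeeping: one has to check that the hypotheses of each cited section are met whenever the arc is positive.
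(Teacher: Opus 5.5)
Your outline follows the same assembly as the paper. Lemma~\ref{lem:arc} gives a curve $\Gamma$ with a positive arc, and Lemma~\ref{lem:products} gives a branch with a seven-product pole. The cited classification, the $28+17$ local exclusions, No.~116 and the 40 relation loci then put $m$ in $E_{40}$. However, the reduction step as written rests on an unproved assertion, and the obstacle you single out is not the real one. The claim that the ratios $P_{ij}/P_{k\ell}$ determine the individual $r_{ij}^2$ is exactly the statement that the $15\times15$ exponent matrix $E=I_{15}+A(K(6,2))$ of the seven-products is nonsingular. This is pure combinatorics and has nothing to do with the equations~\eqref{eq:AK}, but it has to be checked. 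The Kneser graph $K(6,2)$ has eigenvalues $6,-3,1$ with multiplicities $1,5,9$, so $\det E=7\cdot(-2)^5\cdot2^9=-114688\neq0$. Consequently each $r_{ij}^{2N}$ with $N=|\det E|$ is a Laurent monomial in the $P_{k\ell}$. Suppose no $P_{k\ell}$ has a pole at a point at infinity of the normalization of $\Gamma$. Then every $P_{k\ell}$ is constant, hence so is every $r_{ij}^2$, and the second half of Lemma~\ref{lem:arc} makes $\Gamma$ finite. Your worry about $r_{ij}\to0$ is a red herring. The argument concerns constancy, not limits, and $r_{ij}^2$ never vanishes on $\A_m$ because $\delta_{ij}^2r_{ij}^2=1$; no positivity of $\delta_{ij}$ is needed.

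You also never invoke Lemma~\ref{lem:energy}, yet it supplies exactly the hypothesis that many exclusion arguments use. They need constancy of $U$ on $\Gamma$ to forbid an uncancelled potential pole at the branch. They need $U=I\equiv c>0$, and they need the H\"older inequality $\sigma^3\le Mc^3$ evaluated at a positive point of the same curve. Proving this first is what answers your second concern about whether each cited section applies. The gradient of $2U+I$ is the vector of mass-weighted residuals, scaling gives $U=I$, and the positive arc fixes the sign of $c$. Finally, build $\Gamma$ by curve selection rather than hyperplane slicing. A Nash (semialgebraic) arc in the positive locus has an irreducible curve as its Zariski closure. By contrast, cutting a component by generic hyperplanes through a positive point can destroy every positive arc when the component's complex dimension exceeds the real dimension of the positive locus.
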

\begin{corollary}[Generic finiteness off $E_{40}$]\label{cor:generic}
Positive normalized central configurations are finite for every $m\in\mathbb R_{>0}^6\setminus E_{40}$.
\end{corollary}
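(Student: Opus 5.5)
We argue by contraposition, following the normalized-curve strategy of~\cite{CTFive}. Suppose that for some $m\in\mathbb R_{>0}^6$ the set of positive points of $\A_m$ is infinite. Since $\A_m$ is an affine algebraic set, its positive points form a semialgebraic subset of the real points of an algebraic set. An infinite semialgebraic set has positive dimension, so some irreducible curve $\Gamma\subset\A_m$ contains a nonconstant arc of positive points.

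The first step is to show that some seven-product $P_{ij}$ is unbounded on $\Gamma$. Suppose instead that all fifteen $P_{ij}$ are bounded on $\Gamma$, hence constant by irreducibility and compactification. We then recover the $r_{ij}^2$, and hence the $\delta_{ij}$, as algebraic functions of the $P_{ij}$. The key identity is that the product of all fifteen $r_{ij}^2$ is determined by the $P_{ij}$. Indeed, each $P_{ij}$ is $r_{ij}^2$ times the product over the $\binom42=6$ pairs disjoint from $\{i,j\}$. So a suitable monomial in the $P$'s, compared with the full product $\Pi$, yields each $r_{ij}^2$ up to a bounded factor. On the other hand, $\delta_{ij}^2r_{ij}^2=1$, the rotational gauge $z_{12}=w_{12}$ holds, and the $z_i,w_i$ are determined linearly by the $\delta$'s through~\eqref{eq:AK}. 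Together these show that $\Gamma$ would then consist of finitely many points, which contradicts nonconstancy. This is the ``algebraic reduction to a germ with an unbounded seven-product'' announced in the abstract.

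Consequently $\Gamma$ has a Puiseux branch at infinity along which some $P_{ij}$ has a pole, and this branch is by definition a positive-point first limit. I would make the step above precise by working on the normalization of $\Gamma$ and taking orders $\ord$ of the coordinate functions at a point at infinity. The second step is to place this germ into the Chang--Chen framework. Take the leading Puiseux exponents of $z_{ij},w_{ij}$ and encode them in the $z$- and $w$-diagrams of~\cite{CCI}. Their enumeration then assigns the germ one of the 117 labels. The label cannot be one of the 31 excluded by Algorithm~II, nor No.~116 by~\cite{CCII}. Nor can it be any of the 28 reported-relation labels or the 17 historically open labels excluded at the first-limit tier. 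Those exclusions are exactly the lemmas and sections enumerated in the introduction, namely Lemmas~\ref{lem:reported-no4}, \ref{lem:reported-force-transfer}, \ref{lem:excluded-connectivity}, \ref{lem:excluded-satellite}, Proposition~\ref{prop:no52-excluded}, and the listed section arguments. Each of them is applied to a branch through positive points, which is the hypothesis they require. Hence the label lies in $\D_{40}=\D_{\mathrm{CC}}\cup\D_{\mathrm{new}}$.

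Finally, for $r\in\D_{\mathrm{CC}}$ the published relation of~\cite[Table~6.1]{CCII} forces $P_r(m)=0$. For $r\in\D_{\mathrm{new}}$, Propositions~\ref{prop:no8-locus}--\ref{prop:no98-locus} do the same. Because each $P_r$ is orbit-completed, the conclusion does not depend on the labelling of the bodies, that is, on which permutation of the diagram is realized. Therefore $\prod_rP_r(m)=0$, so $m\in E_{40}$. Corollary~\ref{cor:generic} is the contrapositive.

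The main obstacle is the bookkeeping in the second step. One must check that the 117-diagram enumeration is exhaustive for every first-limit germ, including all possible body permutations and both zero and nonzero drift. One must also check that every cited exclusion uses exactly the hypotheses available here: positivity along the arc rather than at the limit, and the specific computational certificates. The seven-product finiteness reduction is comparatively routine. Its only delicate point is ensuring that boundedness of the $P_{ij}$ bounds each individual $r_{ij}^2$ and also $r_{ij}^{-2}$.
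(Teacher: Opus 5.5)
Your proposal is correct and is essentially the paper's argument. The paper proves the corollary in one line as the contrapositive of Theorem~\ref{thm:obstruction}, and your write-up re-derives that theorem's proof: Lemmas~\ref{lem:arc}, \ref{lem:energy} and~\ref{lem:products}, then the Chang--Chen classification, the listed exclusions and the 40 orbit-completed necessary polynomials. The one loose spot is the finite-fibre step: \eqref{eq:AK} only places $z,w$ in a kernel and does not determine them linearly from the $\delta_{ij}$, and the invertible exponent matrix recovers each $r_{ij}^2$ up to a root of unity rather than ``a bounded factor,'' so you should cite the Gram argument of Lemma~\ref{lem:arc} and the determinant computation in Lemma~\ref{lem:products} there.
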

This deduction uses the published Chang--Chen theorem; its computations and the local certificates still require independent publication-level audit. All-positive-mass finiteness, including $E_{40}$, is a distinct and unfinished Part II problem.

\section{The normalized six-body system}\label{sec:system}
In the normalized system~\eqref{eq:AK}, no $\delta_{ij}$ or $z_{ij}w_{ij}$ vanishes on $\A_m$. Summing mass-weighted coordinate rows gives $\sum_i m_iz_i=\sum_i m_iw_i=0$. The finiteness question concerns the positive points of $\A_m$, not the entire complex zero set.

Relabelling the bodies or exchanging $z$ and $w$ preserves the ungauged equations and all squared-distance and seven-product orders. Since $z_{ij}w_{ij}\ne0$, after a relabelling $\pi$ choose $\lambda^2=w_{\pi(1)\pi(2)}/z_{\pi(1)\pi(2)}$; the complex rotation $(z,w)\mapsto(\lambda z,\lambda^{-1}w)$ restores the gauge $z_{12}=w_{12}$. Diagram conclusions below therefore apply to the full labelled and colour-swapped orbit, with the masses permuted along with the body labels.

When a necessary mass factor has been obtained for only a canonical labelling, we replace it by the finite product of its pullbacks under all body permutations. We likewise include the factor obtained by the $z\leftrightarrow w$ involution if its canonical proof is colour-specific. Each factor is nonzero as a polynomial, so the orbit product remains nonzero and vanishes on every labelled occurrence of the diagram. In~\eqref{eq:E40}, $P_r$ denotes this orbit-completed necessary polynomial, not just the expression in one choice of body labels.

Set $U=\sum_{i<j}m_im_j\delta_{ij}$ and $I=\sum_i m_iz_iw_i$.
\begin{lemma}[Constant positive energy on a positive-point curve]\label{lem:energy}
If an irreducible curve $\Gamma\subset\A_m$ contains a positive real-analytic arc, then $U=I=c>0$ on $\Gamma$ for a constant $c$.
\end{lemma}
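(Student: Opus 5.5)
First, establish the identity $U=I$ on all of $\A_m$, and then show that $U$ is constant on $\Gamma$. For the identity, multiply the $z_i$-row of~\eqref{eq:AK} by $m_iw_i$ and sum over $i$:
\[
I=\sum_i m_iz_iw_i=\sum_{i\ne j}m_im_j\delta_{ij}^3(z_i-z_j)w_i .
\]
Symmetrising in $i,j$, using $\delta_{ij}=\delta_{ji}$, gives $\sum_{i<j}m_im_j\delta_{ij}^3z_{ij}w_{ij}$. The relation $\delta_{ij}^2z_{ij}w_{ij}=1$ then reduces each term to $m_im_j\delta_{ij}$. Hence $I=U$ identically on $\A_m$, and on $\Gamma$ as well. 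No gauge is needed for this step.

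Next, show that $U$ is constant on $\Gamma$. Use the Lagrange-multiplier (critical-point) structure: the normalized equations say that the point is a critical point of $U$ restricted to the level set of $I$, with multiplier fixed to $1$. Concretely, take a smooth local parametrization $t\mapsto(z(t),w(t),\delta(t))$ of $\Gamma$ near a smooth point of the positive arc. Differentiating $\delta_{ij}^2z_{ij}w_{ij}=1$ gives
\[
\dot\delta_{ij}=-\tfrac12\delta_{ij}^3(\dot z_{ij}w_{ij}+z_{ij}\dot w_{ij}).
\]
Therefore
\[
\dot U=-\tfrac12\sum_{i<j}m_im_j\delta_{ij}^3(\dot z_{ij}w_{ij}+z_{ij}\dot w_{ij}).
\]
Unsymmetrising this sum and substituting the row equations yields
\[
\dot U=-\tfrac12\sum_i m_i(\dot z_iw_i+z_i\dot w_i)=-\tfrac12\dot I .
\]
Combined with $U=I$, this gives $\dot U=-\tfrac12\dot U$, so $\dot U=0$. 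Thus $U$ is locally constant on the smooth locus of the irreducible curve $\Gamma$. Since $U$ is a regular function on $\Gamma$, it is constant on $\Gamma$; call the constant $c$. Irreducibility matters here: the smooth locus is connected, or equivalently a regular function with zero derivative on a dense open set is constant.

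Finally, positivity follows by evaluating at any point of the positive arc. There every $\delta_{ij}>0$ and every $m_i>0$, so $c=U>0$. As a consistency check, $I=\sum m_i|z_i|^2>0$ there, since $w_i=\overline{z_i}$.

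The main obstacle is the bookkeeping in the derivative computation. One has to check that the unsymmetrisation $\sum_{i<j}\to\sum_{i\ne j}$ correctly reproduces $\sum_i m_i\dot z_i\sum_{j\ne i}m_j\delta_{ij}^3 z_{ij}w_{ij}$-type terms, that is, that the two row equations are applied to the $\dot z$ and $\dot w$ pieces respectively. One must also justify working at smooth points: the branch should be handled via the normalization of $\Gamma$, where the coordinate functions are regular and a nonconstant local parameter exists. The gauge $z_{12}=w_{12}$ plays no role, since it only constrains which tangent directions occur.
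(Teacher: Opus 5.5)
Your proof is correct and is essentially the paper's argument: your identity $\dot U=-\tfrac12\dot I$ is the paper's observation that $d(2U+I)$ is a mass-weighted combination of the row residuals and so vanishes along the solution curve. Your direct row contraction giving $U=I$ is the explicit form of the paper's scaling derivative at $s=1$ under $(z,w,\delta)\mapsto(sz,sw,s^{-1}\delta)$.
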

\begin{proof}
On the smooth distance-tie variety, differentiate $2U+I$ along coordinate directions. The derivatives are the mass-weighted coordinate residuals in~\eqref{eq:AK}, so they vanish along an algebraic curve of solutions. The function is constant on that irreducible curve. Simultaneously scaling $(z,w,\delta)$ by $(s,s,s^{-1})$ and differentiating at $s=1$ gives $U=I$ at each solution. On the positive arc, $U>0$, fixing the constant sign. The gauge is not used in this differential argument.
\end{proof}

\begin{lemma}[Positive arc and bounded distance fibres]\label{lem:arc}
Infinitely many positive points of $\A_m$ yield an irreducible algebraic curve containing a nonconstant positive real-analytic arc. Fixing all 15 nonzero squared distances $z_{ij}w_{ij}$ on $\A_m$ leaves only finitely many normalized configurations.
\end{lemma}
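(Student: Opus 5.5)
The lemma has two parts, and I will prove the second one first because the first uses it.

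\emph{Second claim: bounded distance fibres.} Suppose all fifteen values $r_{ij}^2=z_{ij}w_{ij}$ are fixed (and nonzero). Then each $\delta_{ij}$ satisfies $\delta_{ij}^2=1/r_{ij}^2$, so it takes at most two values. Fix one such choice of the $\delta_{ij}$. The two families of equations in~\eqref{eq:AK} are then \emph{linear} in $z$ and in $w$ separately: $z=Az$ and $w=Aw$, where $A$ is the fixed matrix with entries built from $m_j\delta_{ij}^3$. Hence $z$ and $w$ lie in the kernel $K=\ker(A-\mathrm{Id})$.

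Next I use the relations $z_{ij}w_{ij}=r_{ij}^2$ together with $z_{12}=w_{12}$. Write $z=\sum a_k u_k$ and $w=\sum b_k u_k$ in a basis of $K$. The differences $z_{12}$ and $w_{12}$ are then fixed linear forms in $a$ and $b$, and $z_{12}w_{12}=r_{12}^2$ with $z_{12}=w_{12}$ forces $z_{12}=w_{12}=\pm r_{12}$.

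Finiteness comes from the complex rotation and scaling degrees of freedom having been removed. The map
\[
(z,w)\mapsto (z_{ij}w_{ij})_{i<j}
\]
on centred configurations has generic fibres equal to the orbits of $(z,w)\mapsto(\lambda z,\lambda^{-1}w)$. This is classical: in complexified Cayley--Menger terms, the Gram matrix of $\mathbb C^2$ with the bilinear form $zw$ determines the configuration up to $O_2(\mathbb C)$. The gauge $z_{12}=w_{12}$ then cuts each $SO_2(\mathbb C)\cong\mathbb C^*$ orbit in finitely many points, since $\lambda^2=w_{12}/z_{12}$ and $z_{12}\neq0$. I will argue it directly. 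The bilinear Gram data $\langle q_i-q_1,q_j-q_1\rangle$ are polynomial in the $r_{ij}^2$. Two complex configurations with equal nondegenerate Gram data differ by an element of $O_2(\mathbb C)$. If the span is degenerate, i.e.\ all $z_{i1}$ proportional to each other, I use $z_{i1}w_{i1}\neq0$ and the fact that the $w$'s are then determined by the $z$'s. Together with the two reflections and the finitely many $\delta$-sign choices, this gives finitely many points in the fibre.

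\emph{First claim: a positive arc on a curve.} The positive points form a real semialgebraic subset $S$ of $\A_m$. This is clear once $w=\bar z$ is written in real coordinates $x,y$, with the inequalities $\delta_{ij}>0$. An infinite semialgebraic set has positive dimension, so by the curve selection lemma it contains a nonconstant real-analytic arc $\gamma$. The Zariski closure of $\gamma$ in $\A_m$ is irreducible, because it is the image of a connected analytic arc. Its dimension is at least $1$ because $\gamma$ is nonconstant.

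If the closure has dimension $>1$, I take instead the Zariski closure of a generic one-dimensional analytic subarc; this closure is an irreducible algebraic curve. Concretely, I slice $S$ by generic real hyperplanes until the slice has dimension one. The Zariski closure of a one-dimensional semialgebraic germ is an irreducible algebraic curve, and this curve contains the arc.

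\textbf{Main obstacle.} The delicate step is the degenerate-span case in the second claim, where the $z_{i1}$ span only a line. There, the $O_2(\mathbb C)$ argument must be replaced by a direct use of the relations $z_{ij}w_{ij}=r_{ij}^2$, together with the linear kernel structure, to rule out a one-parameter family of configurations.
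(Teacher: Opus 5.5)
Your outline matches the paper's: a semialgebraic arc and its complex Zariski closure for the first claim, and for the second, fixed Gram data, an isometry of $\mathbb C^2$ for the form $zw$, the mass centre and the gauge. Two steps, however, are wrong or misdirected.

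\emph{First claim.} The fallback ``if the closure has dimension $>1$, take the Zariski closure of a generic one-dimensional analytic subarc'' cannot work. If a polynomial vanishes on a subarc of a real-analytic $\gamma$, it vanishes on all of $\gamma$ by the identity theorem, so every subarc has the same Zariski closure. A merely analytic arc can also have Zariski closure of large dimension, as $t\mapsto(t,e^t)$ does in the plane. The property you need is semialgebraicity, not analyticity. The semialgebraic curve selection lemma (or a positive-dimensional cell of $S$) gives a Nash arc, whose image is a one-dimensional semialgebraic set. Its complex Zariski closure therefore has dimension exactly one, and your identity-theorem argument makes it irreducible. With a Nash arc the case ``dimension $>1$'' never occurs and no slicing is needed. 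Your ``concretely'' sentence points this way, but as written it conflates $\gamma$ with an arc in the slice. Moreover, a one-dimensional germ can have several branches, so you must choose a single Nash branch to get irreducibility.

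\emph{Second claim.} The ``degenerate-span case'' you call the main obstacle is not an obstacle, and your sketched replacement for it is left unfinished. The span $W$ of the vectors $v_i=q_i-q_1=(z_{i1},w_{i1})$ always contains $v_2$, whose quadratic value $z_{12}w_{12}$ is nonzero. In the nondegenerate plane, any subspace containing a non-isotropic vector is nondegenerate: a non-isotropic line if $\dim W=1$, the whole plane if $\dim W=2$. The same holds for the span $W'$ of a second realization $v_i'$ with the same squared distances. Consequently a relation $\sum_ic_iv_i=0$ puts $\sum_ic_iv_i'$ in $W'\cap W'^{\perp}=0$. So $v_i\mapsto v_i'$ is a well-defined isometry $W\to W'$, which Witt's theorem extends to an element of $O_2(\mathbb C)$, collinear configurations included. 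This is exactly the paper's remark that $z_{12}w_{12}\ne0$ makes the Gram form nondegenerate. After that, the mass centre and $\lambda^2=w_{12}/z_{12}$ finish the argument as you say. The kernel $K=\ker(A-\mathrm{Id})$ and the value $z_{12}=\pm r_{12}$ are never used. Also, contrary to your opening sentence, your proof of the first claim does not depend on the second.
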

\begin{proof}
The positive locus is semialgebraic. If it is infinite, a positive-dimensional semialgebraic component admits a Nash arc; its complex Zariski closure contains an irreducible curve and retains that arc. For fixed nonzero squared distances the $\delta_{ij}$ take finitely many signs. Fix one choice. The pairwise Gram data of the six vectors $(z_i-z_1,w_i-w_1)$ are fixed. Because $z_{12}w_{12}\ne0$, their Gram form is nondegenerate for the bilinear form $(a,b)\cdot(c,d)=(ad+bc)/2$. Two realizations with the same Gram data differ by an isometry of this complex plane; the mass centre fixes translation and $z_{12}=w_{12}$ leaves only finitely many scaling/reflection choices. This is the six-body version of the fixed-distance argument in the five-body proof.
\end{proof}

The 15 seven-products $P_{ij}$ were defined in~\eqref{eq:sevenproducts}.
\begin{lemma}[The six-body complement-product matrix]\label{lem:products}
On every irreducible positive-dimensional curve in $\A_m$ at least one $P_{ij}$ is nonconstant; on some branch at infinity it has a pole.
\end{lemma}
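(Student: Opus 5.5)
Suppose, for contradiction, that all 15 seven-products $P_{ij}$ are constant on an irreducible positive-dimensional curve $\Gamma\subset\A_m$. The plan is to show that then all 15 squared distances $r_{ij}^2=z_{ij}w_{ij}$ are constant on $\Gamma$. Lemma~\ref{lem:arc} then leaves only finitely many normalized configurations with those distances, so $\Gamma$ would be finite, a contradiction. Once some $P_{ij}$ is known to be nonconstant, the pole is automatic. A nonconstant regular function on an irreducible affine curve is a nonconstant rational function on its smooth projective model, so it has a pole at some point of the boundary. Such a point is a place at infinity, i.e.\ a Puiseux branch of $\Gamma$ at infinity.

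\textbf{Step 1: pass to logarithms.} Each $r_{k\ell}^2$ is a nonvanishing regular function on $\Gamma$, since $z_{k\ell}w_{k\ell}\ne0$ on $\A_m$. Fix a place $p$ of the normalization of $\Gamma$ and let $v_p$ be its valuation. Set $a_{k\ell}=v_p(r_{k\ell}^2)\in\mathbb Z$, viewed as a vector indexed by the 15 pairs. Constancy of $P_{ij}$ gives
\[
a_{ij}+\sum_{\{k,\ell\}\cap\{i,j\}=\varnothing}a_{k\ell}=0\qquad\text{for all pairs } ij.
\]
That is, $Ma=0$, where $M=I+A$ is the complement-product matrix and $A$ is the adjacency matrix of the Kneser graph $K(6,2)$. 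This graph is strongly regular with parameters $(15,6,1,3)$, so the eigenvalues of $A$ are $6$, $1$ and $-3$. The eigenvalues of $M$ are therefore $7$, $2$ and $-2$, and $M$ is invertible. Hence $a=0$ at every place.

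\textbf{Step 2: constancy of the distances.} By Step 1, every $r_{k\ell}^2$ has neither zeros nor poles at any place, including the places at infinity. A regular function with no poles on the complete model of an irreducible curve is constant. So every $r_{k\ell}^2$ is constant on $\Gamma$, which is exactly the input needed for Lemma~\ref{lem:arc}. An equivalent route avoids valuations: $M$ has integer entries and $\det M\ne0$, so each $(r_{k\ell}^2)^{\det M}$ is a Laurent monomial in the constant $P_{ij}$, hence constant, and so $r_{k\ell}^2$ is constant.

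\textbf{Step 3: finiteness.} With all squared distances constant, Lemma~\ref{lem:arc} says $\Gamma$ consists of finitely many points. This contradicts positive dimensionality, so some $P_{ij}$ is nonconstant, and the pole follows as explained in the first paragraph.

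\textbf{Main obstacle.} The only genuine computation is the nondegeneracy of $M$. I expect it to be the step needing care: I would verify the spectrum of $K(6,2)$ through its strongly regular parameters, or compute $\det M=7\cdot2^{5}\cdot(-2)^{9}\ne0$ directly from the multiplicities $1$, $5$ and $9$ of the eigenvalues $7$, $2$ and $-2$.
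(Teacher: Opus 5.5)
Your proof is correct and follows the paper's route: invertibility of $M=I_{15}+A(K(6,2))$ forces every squared distance to be constant (your adjugate version is the paper's ``powers of the squared distances are constant,'' and the valuation version repackages it), Lemma~\ref{lem:arc} then makes the curve finite, and a nonconstant regular function on the affine curve has a pole at a place at infinity. One small slip in your last paragraph: the eigenvalue $2$ of $M$ has multiplicity $9$ and $-2$ has multiplicity $5$ (the trace of $M$ is $15=7+9\cdot 2+5\cdot(-2)$), so $\det M=7\cdot 2^{9}\cdot(-2)^{5}$; this happens to equal your value $-114688$ and does not affect invertibility.
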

\begin{proof}
The exponent matrix of these monomials in the 15 squared distances is $E=I_{15}+A(K(6,2))$, where two pair-indices in the Kneser graph are adjacent exactly when disjoint. Its eigenvalues are $7$ once, $-2$ five times and $2$ nine times; thus $\det E=-114688\ne0$. If all $P_{ij}$ were constant, powers of all squared distances would be constant on the curve. Lemma~\ref{lem:arc} would make that curve finite, a contradiction. A nonconstant regular function on an affine irreducible algebraic curve attains arbitrarily large complex values. At a point at infinity of its normalization it has a pole, giving the required Puiseux branch. This argument does not assume that the branch itself is positive real.
\end{proof}

\section{Diagram inputs and exceptional-mass accounting}\label{sec:input}
The cited Chang--Chen classification and the local proofs separate excluded positive-point first limits from necessary mass relations. The disjoint diagram accounting is as follows.
\begin{table}[ht]
\centering\small
\begin{tabular}{p{5.3cm}r p{7.2cm}}
\hline
Chang--Chen category & Number & Positive-tier disposition \\
\hline
Algorithm II eliminated & 31 & no target diagram \\
Published positive-mass exclusion & 1 & No.~116 excluded in Chang--Chen Part~II \\
Published relation diagrams & 61 & 28 additionally excluded; 33 relation obligations \\
Historically open & 24 & 17 excluded first limits; 7 derived relations \\
\hline
Total & 117 & 40 relation obligations; no unrestricted positive-tier diagram \\
\hline
\end{tabular}
\caption{Bookkeeping of diagrams, not of distinct mass varieties.}\label{tab:status}
\end{table}
The reported-relation and historically open cases are treated separately below.
\section{Previously classified relation diagrams}\label{sec:excluded}
\begin{remark}[Disjoint membership of the excluded classes]
The 31 Algorithm II labels are

\[
\begin{gathered}
1,10,21,23,24,25,26,33,34,35,37,\\
38,39,41,42,53,54,56,59,60,68,\\
70,72,83,85,87,91,92,96,97,113.
\end{gathered}
\]

The 28 reported-relation diagrams whose positive

first limits are additionally excluded are

\[
\begin{gathered}
2,3,4,20,22,31,32,36,40,44,\\
45,49,50,51,55,57,61,77,78,84,\\
86,95,99,100,105,109,111,112.
\end{gathered}
\]

The 17 historically open diagrams whose positive

first limits are excluded are

\[
\begin{gathered}
5,6,52,64,65,67,69,71,73,\\
74,75,79,80,81,82,89,90.
\end{gathered}
\]

Together with positive-empty No.~116, the 33 published
relation labels below and the 7 derived labels, these are
disjoint and cover exactly $1,\ldots,117$.

The 31 Algorithm~II exclusions come from the cited
Chang--Chen theorem. Among the additional 28
reported-relation positive first-limit exclusions,
No.~2 and 22 follow from Lemma~\ref{lem:reported-force-transfer};
No.~3 and 78 follow from Lemmas~\ref{lem:reported-no3}
and~\ref{lem:reported-no78}; No.~40, 44 and 45 follow
from Sections~\ref{sec:no40-pole}, \ref{sec:no44-force}
and~\ref{sec:no45-pole}; No.~86 and 95 follow from
Sections~\ref{sec:no86-wing} and~\ref{sec:no95-wing};
No.~99 and 100 follow from Sections~\ref{sec:no99-cycle}
and~\ref{sec:no100-cycle}; No.~109 and 112 follow from
Sections~\ref{sec:no109-poles} and~\ref{sec:no112-poles},
and No.~111 from Section~\ref{sec:no111-wing}.

\Needspace{10\baselineskip}
No.~51 and 55 follow from Sections~\ref{sec:no51-pendant}
and~\ref{sec:no55-pendant}; No.~105 and 77 follow from
Sections~\ref{sec:no105-sign} and~\ref{sec:no77-matching};
No.~31, 32 and 36 follow from Sections~\ref{sec:no31-bridge},
\ref{sec:no32-tie} and~\ref{sec:no36-matches}; No.~49 and 50
follow from Sections~\ref{sec:no49-path} and
\ref{sec:no50-resonance}; No.~57, 61 and 84 follow from
Sections~\ref{sec:no57-triangle}, \ref{sec:no61-sign} and
\ref{sec:no84-cycle}. No.~4 and 20 follow from
Lemma~\ref{lem:reported-no4} and Section~\ref{sec:no20-energy}.

\Needspace{7\baselineskip}
Among the historical diagrams, No.~5, 67, 69, 73 and 80
follow from Lemma~\ref{lem:excluded-connectivity}; No.~6
from Section~\ref{sec:no6-inertia}; No.~71 from
Section~\ref{sec:no71-satellite}; No.~74 and 75 from
Lemma~\ref{lem:excluded-satellite}; No.~82, 89 and 90
from their torque subsections; No.~81 from
Section~\ref{sec:no81-energy}; No.~64 and 65 from
Sections~\ref{sec:no64-energy} and~\ref{sec:no65-pair};
No.~79 from Section~\ref{sec:no79-sector}; No.~52 is
treated separately below. Chang--Chen~\cite[Table~6.1]{CCII}
already excludes No.~116 for positive masses. This
membership list records coverage; it is not a proof of any
individual exclusion.

\end{remark}

\subsection*{shared first-limit tests}

The reduction from the 61 published relation diagrams to
33 still-relevant ones uses 28 additional positive-point
first-limit exclusions. For each of these 28 graphs,
the separate local argument in the subsections below
prevents it from being a \emph{first} limit on a

curve through a positive configuration. The detailed
original-row and branch-coverage arguments are scoped
to such first limits. In particular No.~4 follows from
Lemma~\ref{lem:reported-no4} and No.~20 from
section~\ref{sec:no20-energy}; neither is assumed.
\begin{lemma}[Force transfer at an uncircled $zw$ triangle]\label{lem:reported-force-transfer}
At positive masses, the signed diagrams No.~2 and No.~22 cannot occur as singular germs.
\end{lemma}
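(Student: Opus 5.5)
The plan is to run a leading-order (Puiseux) analysis at the uncircled $zw$ triangle that Nos.~2 and~22 share. I would reduce its three body equations to an isolated three-body equilibrium problem with zero centrifugal term. That problem forces a symmetric mass relation with no positive solution. Throughout, fix a singular germ of type No.~2 or No.~22 at positive $m$, with vertices of the triangle $\{a,b,c\}$. Use the orders $\ord z_{ij}$, $\ord w_{ij}$ and $\ord\delta_{ij}=-(\ord z_{ij}+\ord w_{ij})/2$ that the signed diagram records. Write $z_{ij}=\alpha_{ij}t^{\ord z_{ij}}+\cdots$, and similarly $w_{ij}\sim\beta_{ij}t^{\ord w_{ij}}$ and $\delta_{ij}\sim d_{ij}t^{\ord\delta_{ij}}$. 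Then $d_{ij}^2\alpha_{ij}\beta_{ij}=1$ from the last row of~\eqref{eq:AK}.

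\emph{Step 1 (leading balance inside the triangle).} "Uncircled" means that for $i\in\{a,b,c\}$ the left-hand sides $z_i$ and $w_i$ are of strictly lower order than the maximal terms on the right. "$zw$ triangle" means that all three edges carry both a $z$-stroke and a $w$-stroke, so $\alpha_{ij}$ and $\beta_{ij}$ enter the leading terms. Reading the diagram, I would check that every edge from the triangle to the other three bodies has strictly smaller term $m_j\delta_{ij}^3z_{ij}$ than the triangle edges. This is the one place where Nos.~2 and~22 must be inspected separately. The leading-order coefficient equations are then
\[
\sum_{j\in\{a,b,c\}\setminus\{i\}} m_j d_{ij}^3\alpha_{ij}=0,\qquad
\sum_{j\in\{a,b,c\}\setminus\{i\}} m_j d_{ij}^3\beta_{ij}=0,\qquad i\in\{a,b,c\}.
\]
Newton's third law makes one equation of each triple redundant.

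\emph{Step 2 (the three-body zero-$\lambda$ system).} The $z$-relations say that the weighted vectors $m_jd_{ij}^3\alpha_{ij}$ close up. Since $\alpha_{ab}+\alpha_{bc}+\alpha_{ca}=0$, I would eliminate to obtain $m_ad_{bc}^3=m_bd_{ca}^3=m_cd_{ab}^3$, up to a common factor $\mu$. The $w$-relations give the same ratios. Substituting into $d_{ij}^2\alpha_{ij}\beta_{ij}=1$ and using the two closure relations, I expect to arrive at the classical identity
\[
m_am_b+m_bm_c+m_cm_a=0 .
\]
This is the Albouy--Kaloshin condition for an uncircled $zw$ triangle. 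Equivalently, the virial combination $\sum m_im_jd_{ij}$ over the triangle vanishes. Such an identity is impossible for $m_a,m_b,m_c>0$, which yields the lemma.

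\emph{Step 3 (force transfer when the isolation fails).} The main obstacle is the possibility that in one of the two diagrams an edge leaving the triangle has the \emph{same} order as the triangle edges. Then Step~1 only gives the equations with an extra external term. In that case I would sum the three body equations with weights $m_a,m_b,m_c$. The internal forces cancel, and the total external force on the cluster is transferred to the next order. I would then compare with the row of the external body $k$, whose left side is known to be subdominant from its circling data. This should show that the external contribution itself vanishes at leading order, restoring the isolated system of Step~2. If this cancellation does not close, I would fall back on Lemma~\ref{lem:energy}. The constant $U=c$ forces the divergent parts of $\sum m_im_j\delta_{ij}$ to cancel, which supplies the missing relation among the $d_{ij}$ and again gives the symmetric quadratic mass identity.
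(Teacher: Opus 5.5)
\textbf{There is a genuine gap, in two places.}

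\emph{Step~1's isolation is false, and Step~3 does not repair it.} In these diagrams the uncircled $zw$ triangle $T=\{a,b,c\}$ has a vertex $b$ with exactly one external stroke $bd$ at the same leading order as the internal forces in $b$'s row. Concretely, $(a,b,c;bd)=(1,3,4;35)$ with a $z$-stroke for No.~2, and $(4,6,5;16)$ with a $w$-stroke for No.~22. So the isolated three-body system is not available. Step~3 cannot ``restore'' it: a stroke has a nonzero leading coefficient by definition, so proving that it vanishes is already the contradiction. Your mass-weighted cluster sum only shows that the \emph{total} leading external $z$-force leaving $T$ vanishes. That isolates $bd$ only if $c$ has no external $z$-strokes, which is not given.

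\emph{Step~2 is wrong even for a genuinely isolated triangle.} The six leading equations make $m_im_jD_{ij}^3Z_{ij}$ and $m_im_jD_{ij}^3W_{ij}$ cyclically constant. Hence $Z\propto W$ and $m_im_jD_{ij}^2=\pm\tau$. The closure $\sum Z_{ij}=0$ then becomes a signed relation $\pm\sqrt{m_am_b}\pm\sqrt{m_bm_c}\pm\sqrt{m_cm_a}=0$, not $m_am_b+m_bm_c+m_cm_a=0$. This relation has positive solutions. Label the triangle $1,2,3$ locally and take masses $(4,4,1)$, $D_{12}=1/4$, $D_{13}=D_{23}=-1/2$, $Z=(0,-4K,-2K)$ and $W=Z/K^2$. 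These satisfy all six force equations and all three ties. Your intermediate claim $m_ad_{bc}^3=m_bd_{ca}^3=m_cd_{ab}^3$ also fails here, since $m_3D_{12}^3=1/64\ne-1/2$. So an isolated uncircled $zw$ triangle gives no contradiction at positive masses, and no mass identity can finish the argument.

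\emph{The missing idea is to exploit the asymmetry between $a$ and $b$.} Vertex $a$ has no external stroke in either colour. Its two leading rows alone give
\[
m_bD_{ab}^3Z_{ab}+m_cD_{ac}^3Z_{ac}=0
\]
and the same equation with $W$. Hence $Z_{ab}/W_{ab}=Z_{ac}/W_{ac}=\rho$. The difference identity gives $Z_{bc}=\rho W_{bc}$, so $F^z_{T,b}=\rho F^w_{T,b}$. Since $b$ is uncircled and has no external $w$-stroke, its $w$-row gives $F^w_{T,b}=0$, hence $F^z_{T,b}=0$. Its $z$-row then collapses to $m_dD_{bd}^3Z_{bd}=0$, contradicting the stroke; for No.~22 exchange the colours. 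Your Step~2 observation that ``the $w$-relations give the same ratios'' is the right ingredient. It must be taken from vertex $a$ alone and used to transfer vanishing from one colour to the other at $b$, not to produce a mass relation.
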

\begin{proof}
Both contain an uncircled $zw$ triangle $T=\{a,b,c\}$. The vertex $a$ has no external stroke, while $b$ has exactly one external stroke $bd$ in the $z$ colour and none in the $w$ colour (interchange the colours for No.~22). Since $a$ is uncircled, its two leading coordinate rows require $F^z_{T,a}=F^w_{T,a}=0$. Both separations $ab,ac$ have the same signed distance order. Solving these two nontrivial force equations gives $z_{ab}/w_{ab}=z_{ac}/w_{ac}=\rho$ at leading order. The triangle difference identity transfers the same ratio to $bc$, hence $F^z_{T,b}=\rho F^w_{T,b}$. With no external $w$ stroke at $b$, its original $w$ row requires $F^w_{T,b}=0$. Its original $z$ row then requires $F^z_{T,b}+m_d\delta_{bd}^3(z_d-z_b)^*=0$. The first term is zero and the second is nonzero by the external stroke and $m_d>0$, a contradiction.

For No.~2 the labelled triple and external edge are $(a,b,c;bd)=(1,3,4;35)$; for No.~22, after the colour exchange, they are $(4,6,5;16)$. This uses their circle patterns; the same stroke skeleton with additional circles need not obey the lemma.
\end{proof}
\subsection{No.~2: force transfer across triangle 134}
The vertices $(a,b,c;bd)=(1,3,4;35)$ satisfy the hypotheses of Lemma~\ref{lem:reported-force-transfer}: the triangle is uncircled, body 1 has no external stroke, and body 3 has exactly one external $z$ stroke. Its $w$ row makes the internal $w$-force zero; the common shape ratio then makes the internal $z$-force zero as well, contrary to the nonzero leading 35 force.
\subsection{No.~22: the colour-dual force transfer}
Interchange the two colours in Lemma~\ref{lem:reported-force-transfer} and take $(a,b,c;bd)=(4,6,5;16)$. Bodies 4, 5 and 6 are uncircled, and body 6 has only the 16 exterior $w$ stroke. Its internal $z$-force vanishes, so the transferred internal $w$-force vanishes, contradicting the nonzero signed 16 source in the original $w$ row.

\subsection{No.~3: bounded seven-products}
\begin{lemma}[A bounded reported-relation diagram]
\label{lem:reported-no3}

No.~3 cannot be an unbounded-seven-product first limit.
\end{lemma}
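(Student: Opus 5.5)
The plan is to read off, from the Chang--Chen diagram No.~3, the leading orders of all fifteen squared distances $r_{ij}^2=z_{ij}w_{ij}$ along a singular germ of that type, and to show that every seven-product $P_{ij}$ from~\eqref{eq:sevenproducts} has nonpositive order at the point at infinity. This contradicts the defining property of a first limit, which by Lemma~\ref{lem:products} requires some $P_{ij}$ to have a pole on the branch. Concretely, parametrize the Puiseux branch by $t\to0$ and write $z_i\sim a_it^{\alpha_i}$, $w_i\sim b_it^{\beta_i}$. The stroke pattern of No.~3 prescribes which differences $z_{ij}$ and $w_{ij}$ are dominant and which are cancelled. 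From this one obtains $\ord r_{ij}^2=\ord z_{ij}+\ord w_{ij}$ for each pair, and then, using the constraint $\delta_{ij}^2r_{ij}^2=1$, $\ord\delta_{ij}=-\tfrac12\ord r_{ij}^2$.

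The key steps are as follows.
\begin{enumerate}
\item Use the diagram rules: edges with a $z$ stroke and a $w$ stroke have $r_{ij}^2\to0$, single-colour strokes balance, and unstroked pairs have bounded, nonvanishing leading terms. These rules give a linear system for the exponent vector $(\ord r_{ij}^2)_{i<j}$ with one normalizing parameter, the order of the infinite coordinate, which is negative.
\item Substitute into the exponent matrix $E=I_{15}+A(K(6,2))$ of Lemma~\ref{lem:products}. Compute $(E\,\ord r^2)_{ij}=\ord P_{ij}$ for the fifteen pairs, using the relabelling symmetry of No.~3 to reduce to a few orbit representatives.
\item Show that each entry is $\ge0$. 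The expectation is that for No.~3 the clustering collisions, which give positive orders, exactly compensate the escaping pairs in every complementary product of six. This would give the identity or inequality $\ord P_{ij}\ge0$ for all $ij$.
\item Conclude that no $P_{ij}$ has a pole along the branch, so No.~3 is not an unbounded-seven-product first limit.
\end{enumerate}

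Where orders are only bounded below, I would also invoke Lemma~\ref{lem:energy}. On a positive-point curve, $I=\sum m_iz_iw_i=c$ and $U=\sum m_im_j\delta_{ij}=c$ are constant. Constancy of $I$ bounds the mass-weighted moment of inertia, which rules out the expanding branch of the dichotomy. Constancy of $U$, together with positivity of the masses, prevents a cancellation of leading $\delta_{ij}$ terms that would be needed for a faster escape rate.

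The main obstacle is step~(3) in the borderline case where the diagram leaves a free ratio among the orders. Some $\ord P_{ij}$ may then equal a linear expression that is zero only under a balance condition. There I would first try to show that the leading coordinate rows of~\eqref{eq:AK} at the circled vertices force that balance. If that fails, a strict inequality would give a negative order, and I would instead need a direct cancellation argument. That argument would use the triangle difference identity and positivity of the masses, as in Lemma~\ref{lem:reported-force-transfer}, to show that the resulting pole is incompatible with the positive-point hypothesis.
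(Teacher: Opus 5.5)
\textbf{There is a genuine gap.} Your skeleton matches the paper's: bound each $\ord r_{ij}^2$ pairwise, then add over the seven pairs in each $P_{ij}$. But the pairwise input you propose in step~(1) is wrong, and that input is the whole content of the proof.

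\emph{Unstroked pairs.} These do not have bounded nonvanishing leading terms. In the diagram normalization every coordinate only has order $\ge-2$, so an unmarked pair can have $\ord(z_{ij}w_{ij})$ as low as $-4$, when both separations have order $-2$. A cross pair with $\ord z_{ij}=1$ and $\ord w_{ij}=-2$ has order $-1$.

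\emph{Single-colour strokes.} These do not fix an order either. A $w$-stroke only gives $3\ord z_{ij}+\ord w_{ij}=4$, hence $\ord r_{ij}^2=4-2\ord z_{ij}\ge0$, with $\ord z_{ij}$ left free.

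So the diagram yields inequalities with free parameters, not a linear system with one normalizing parameter. Nor is there such a parameter: the normalization already fixes the extreme order at $-2$.

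Your rules also prove too much. They make every squared distance have nonnegative order for every diagram. Then no diagram could ever carry a seven-product pole, contradicting Lemma~\ref{lem:products}, which guarantees a pole on some branch.

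What the proof actually needs is the correct lower bound $\ord(z_{ij}w_{ij})\ge L_{ij}$:
\begin{itemize}
\item $L_{ij}=2$ on a $zw$ edge;
\item $L_{ij}=0$ on a single-colour stroke;
\item $L_{ij}=-1$ on an unmarked pair joined by a stroke path in some colour. On a $z$-stroke, $\ord z+3\ord w=4$ and $3\ord z+\ord w\le4$ force $\ord w\ge1$. Hence every opposite-colour difference along the path, and therefore the endpoint difference, has order $\ge1$, while $\ord z_{ij}\ge-2$.
\item $L_{ij}=-4$ otherwise.
\end{itemize}
One then evaluates $L_{ij}+\sum_{\{k,\ell\}\cap\{i,j\}=\varnothing}L_{k\ell}$ on the pinned No.~3 graph. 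The values are $12$, $3$, $2$ and $0$, with minimum $0$, attained only at $P_{36},P_{46},P_{56}$. Your step~(3) is only an expectation and uses no data from No.~3, whereas the proof consists precisely of this tally.

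Your borderline worry disappears once you use lower bounds: a nonnegative lower bound already excludes a pole, so no balance condition is needed. The energy fallback would not help in any case. On the complex branch $z_i$ and $w_i$ are independent, so constancy of $I$ does not bound the individual products $z_iw_i$.
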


\begin{proof}For a pair $ij$ put $L_{ij}=2$ on a $zw$ edge, $0$ on a single-colour stroke, $-1$ on an unmarked pair joined by a stroke path in at least one colour, and $-4$ on an unmarked pair joined by no such path in either colour. Stroke-path separation bounds give
\[
\ord(z_{ij}w_{ij})\ge L_{ij}.
\]
The seven-product $P_{ij}$ uses $ij$ and its six disjoint pairs, so
\begin{equation}\label{eq:no3-product-bound}
\ord P_{ij}\ge L_{ij}+\sum_{\{k,\ell\}\cap\{i,j\}=\varnothing}L_{k\ell}.
\end{equation}
For the pinned No.~3 graph these fifteen lower bounds are $12$ for $12$, $3$ for the eight pairs $1j,2j$ with $j\in\{3,4,5,6\}$, $2$ for $34,35,45$, and $0$ for $36,46,56$. The minimum is $0$, so no seven-product can have a pole. This does not assert that No.~3 cannot occur elsewhere on a complex curve.
\end{proof}
\subsection{No.~78: a single-bridge energy contradiction}
\begin{lemma}[The single bridge of No.~78]\label{lem:reported-no78}
No.~78 is not a first limit on a curve through a positive configuration, at any strictly positive mass.
\end{lemma}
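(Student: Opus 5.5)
The plan is to derive a contradiction from Lemma~\ref{lem:energy} together with the leading-order balance across the unique bridge of No.~78. Suppose No.~78 is a first limit on an irreducible curve $\Gamma\subset\A_m$ that contains a positive arc. By Lemma~\ref{lem:energy}, $U=I=c$ on $\Gamma$ with $c>0$ constant, so along the Puiseux branch both $U$ and $I$ have order exactly $0$. The graph of No.~78 splits into two clusters $S$ and $S'$, joined by a single bridge edge $e=ab$ with $a\in S$ and $b\in S'$. I will expand each of $U$ and $I$ into intra-cluster terms, the bridge term $m_am_b\delta_{ab}$, and the remaining unmarked cross terms. The stroke-path bounds from the proof of Lemma~\ref{lem:reported-no3} fix the orders of all of these terms.

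First, I will sum the coordinate rows of~\eqref{eq:AK} over the cluster $S$. By antisymmetry, all internal forces cancel, leaving
\[
\sum_{i\in S}m_iz_i=\sum_{i\in S}\sum_{j\notin S}m_im_j\delta_{ij}^3z_{ij}.
\]
The only leading contribution on the right is the bridge term $m_am_b\delta_{ab}^3z_{ab}$; the other cross terms are subdominant because they are unmarked. The same holds in the $w$ colour. So the partial mass-centre of $S$, and likewise that of $S'$, is driven by the bridge alone. This gives a relation between $\ord\delta_{ab}$ and the orders of $z_{ab}$ and $w_{ab}$.

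Second, I will multiply the $z$ identity by the corresponding $w$ quantities, or equivalently pair cluster centres as in the virial identity $U=I$. From this I expect the cross-cluster part of $I$ to be
\[
\frac{M_SM_{S'}}{M}\,Z\,W, \qquad Z=Z_S-Z_{S'},\quad W=W_S-W_{S'},
\]
and the bridge forcing to give $ZW\sim(m_am_b)^2\delta_{ab}^6z_{ab}w_{ab}\,(M/(M_SM_{S'}))^2$. Using $\delta_{ab}^2z_{ab}w_{ab}=1$ and the order of the bridge, the cross part of $I$ should be comparable to $m_am_b\delta_{ab}$ times a coefficient. Matching this against the $U$-side bridge term in $U=I$, and noting that the intra-cluster virial balances cancel separately within each cluster by the row equations restricted to that cluster, should leave a leading identity of the form
\[
m_am_b\delta_{ab}\Bigl(1-\kappa\,\frac{m_am_bM}{M_SM_{S'}}\Bigr)=\text{lower order},
\]
together with a sign condition. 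The aim is to show that $\kappa$ is forced to be negative, or that the bracket cannot vanish for positive masses. Then the bridge term would be forced to vanish, contradicting the presence of a stroke.

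The main obstacle is this sign bookkeeping. The branch is not itself positive real, so positivity cannot be used directly on the limit. It must enter through $c>0$ and $m_i>0$ alone. My first attempt will be to show that the first-limit hypothesis, an unbounded seven-product, forces the bridge order to be strictly negative or positive, so that the bridge term dominates $c$ in one of $U$ or $I$. The constant $c$ then cannot absorb it, and both $U$ and $I$ would have to have nonzero order, contradicting Lemma~\ref{lem:energy}. If the orders instead permit balance against $c$, I will fall back on the squared identity $ZW\cdot\delta_{ab}^{-6}(\cdots)$, where the coefficient is a ratio of positive mass sums. Its vanishing would require $m_am_b\le0$.
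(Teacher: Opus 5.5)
There is a genuine gap. Your plan looks for the contradiction inside the leading-order equations at the singular limit, using only the signs $c>0$ and $m_i>0$. For No.~78 those equations are consistent.

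Write $A=123$ and $B=456$ for the two circled $zw$ triangles, and put $\mu=m_3m_6$. The original mass-weighted $w$ rows give distinct cluster leads $W_A\ne W_B$. Hence the bridge $36$ is maximal, with separation orders $(2,-2)$, and $\delta_{36}\to d\ne0$.

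Your first attempt, a bridge of nonzero order that $c$ cannot absorb, therefore does not apply. The bridge term is $O(1)$, and it is exactly what $c$ turns out to be.

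Your fallback fails as well. The relative $w$ equation $M_AW_A=\mu d^3(W_A-W_B)$, together with $M_AW_A+M_BW_B=0$, gives $\mu d^3=M_AM_B/M$. Your bracket is $1-\delta_{36}^3\,\mu M/(M_AM_B)$, i.e.\ $\kappa=\delta_{36}^3>0$, so it vanishes at leading order. This is forced: $U=I$ holds on every solution by the scaling identity in Lemma~\ref{lem:energy}, so comparing the two sides can only reproduce consistent relations. Choosing $d$ as the real cube root makes $c=\mu d>0$, so the sign data are satisfied. This is why the paper says complex-only No.~78 curves are not excluded.

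The missing idea is quantitative, not a sign condition. Positivity enters because the \emph{same} constant $c$ is attained at a physical configuration on the curve. There $\sum_{i<j}m_im_jr_{ij}^2=MI=Mc$ and $U=c$, so H\"older gives $\sigma=\sum_{i<j}m_im_j\le(Mc)^{1/3}c^{2/3}$, i.e.\ $\sigma^3\le Mc^3$.

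To exploit this you must compute $c$ exactly at the limit. First, show that both internal triangle potentials tend to zero. Contract the original $z$ rows against the relative $w$ coordinates of each triangle, as in Lemma~\ref{lem:complete-cluster-potential}. The internal inertia is $O(\varepsilon^2)$, and the crossed terms fade because $\delta_{ij}^3z_{ij}=\delta_{ij}/w_{ij}$. Second, show that every cross inverse distance other than $36$ tends to zero.

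Then $c=\mu d$ and $Mc^3=M_AM_B\mu^2$. Since $\sigma>M_AM_B>\mu$, we get $\sigma^3>M_AM_B\mu^2=Mc^3$, which is the contradiction.
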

\begin{proof}
Partition the bodies into its two circled $zw$ triangles $A=123$ and $B=456$, with cluster masses $M_A,M_B$, $M=M_A+M_B$, and put $\mu=m_3m_6$. The original mass-weighted $w$ rows give distinct leading cluster centres and leave $36$ as the sole finite cross inverse distance $D_{36}\to d\ne0$. Their relative equation implies $\mu d^3=M_AM_B/M$. Contract the original $z$ rows against the relative $w$ coordinates of each triangle. Internal inertia tends to zero; for every crossed tie $\delta_{ij}^3z_{ij}=\delta_{ij}/w_{ij}$, so the remaining terms fade. Both internal signed potentials tend to zero, and constancy of $U=I=c>0$ therefore gives $c=\mu d$ and
\begin{equation}\label{eq:no78-bridge-energy}
Mc^3=M_AM_B\mu^2.
\end{equation}
At the positive point on that same curve, H\"older requires $\sigma^3\le Mc^3$, where $\sigma=\sum_{i<j}m_im_j$. But $\sigma>M_AM_B>\mu$ for strictly positive masses; hence $\sigma^3>M_AM_B\mu^2=Mc^3$, a contradiction. This excludes the physical first limit, not an arbitrary complex-only No.~78 curve.
\end{proof}

\subsection{No.~4: vanishing cluster potentials}
\begin{lemma}[The two vanishing clusters of No.~4]\label{lem:reported-no4}
No.~4 is not a positive-point first limit, including on its reported mass-relation zero sets.
\end{lemma}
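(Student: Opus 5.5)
The plan is to copy the energy argument of Lemma~\ref{lem:reported-no78}, with the single bridge replaced by two clusters whose internal potentials vanish.

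\textbf{Setup.} Let $\Gamma$ be an irreducible curve through a positive configuration, carrying a No.~4 branch at infinity along which some $P_{ij}$ has a pole. By Lemma~\ref{lem:energy}, $U=I=c>0$ on all of $\Gamma$. Read off from the pinned No.~4 graph its two clusters $A$ and $B$. I expect these to be the two components of the $zw$-edge skeleton, each containing only uncircled or paired edges. Let $M_A,M_B$ be the cluster masses and $M=M_A+M_B$.

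\textbf{Step 1: the internal potentials vanish.} For each cluster, contract the original $z$ rows against the relative $w$ coordinates of that cluster, as in Lemma~\ref{lem:reported-no78}.
\begin{itemize}
\item Every stroke of the cluster satisfies $\delta_{ij}^3z_{ij}w_{ij}=\delta_{ij}$.
\item The contraction therefore expresses the internal signed potential $\sum_{i<j\in A}m_im_j\delta_{ij}$ through the internal inertia and crossed terms.
\item The internal inertia tends to $0$, because the cluster collapses. Each crossed tie contributes $\delta_{ij}/w_{ij}\to0$.
\end{itemize}
So both internal potentials tend to $0$. This is the mechanism the lemma's title refers to.

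\textbf{Step 2: the cross terms determine $c$.}
\begin{itemize}
\item The mass-weighted $w$ rows summed over each cluster give distinct leading cluster centres.
\item They leave only the finite cross inverse distances between $A$ and $B$. All other cross $\delta_{ij}$ tend to $0$, by the stroke-path order bounds used in Lemma~\ref{lem:reported-no3}.
\item If one cross pair survives, the relative equation gives $\mu d^3=M_AM_B/M$, and $c=\mu d$.
\item If several cross pairs survive, they share a common leading relative displacement. The relative equation then gives $\bigl(\sum m_im_j d_{ij}^3\bigr)\cdot(\text{separation})=M_AM_B/M$.
\end{itemize}
Since $U=c$ and the internal potentials vanish, $c=\sum_{\text{cross}}m_im_jd_{ij}$.

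\textbf{Step 3: contradiction, and the main obstacle.} Compare the resulting value of $Mc^3$ with the Hölder lower bound $\sigma^3\le Mc^3$, valid at the positive point, where $\sigma=\sum_{i<j}m_im_j$. This should contradict $\sigma>M_AM_B\ge\sum_{\text{cross}}m_im_j$. In the multi-pair case I would apply Hölder or power means to the cross weights to obtain $Mc^3\le M_AM_B\bigl(\sum_{\text{cross}}m_im_j\bigr)^2<\sigma^3$.

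This multi-pair Hölder estimate is the main obstacle. The same applies to verifying, from the No.~4 circle pattern, that every cross $\delta$ other than the listed ones really tends to $0$. The argument uses only $m>0$ and never a mass relation, so it also covers the reported mass-relation zero sets, as the statement requires.

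If the diagram instead has a circled vertex whose leading cluster potential is not forced to vanish, I would fall back on the seven-product order bound of Lemma~\ref{lem:reported-no3}. I would list $L_{ij}$ for No.~4 and check whether every $\ord P_{ij}\ge0$.
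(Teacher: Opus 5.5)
\textbf{There is a genuine gap.} Your plan is a template for two disjoint $zw$ clusters joined by finite bridges, and No.~4 is not of that form. Its clusters are the $zw$ triangle $T=\{1,2,6\}$ and the $z$-only $K_4$ on $Q=\{3,4,5,6\}$, and these \emph{share} body $6$. On $Q$ one has $\ord z_{ij}=\alpha$ and $\ord w_{ij}=\beta=(4-\alpha)/3$. The first-limit window is $-2<\alpha<-13/8$, exactly where $P_{36}$ (of order $(16\alpha+26)/3$) has a pole. So the $Q$ $z$-separations diverge, and its relative inertia vanishes only because $\alpha+\beta>0$, not because the cluster collapses.

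Because body $6$ is shared, the No.~78 recipe of summing the rows of each whole cluster breaks down in two ways:
\begin{itemize}
\item Body $6$'s row carries the order-$-2$ stroke forces of the \emph{other} cluster. Multiplied by a relative coordinate of order $1$ (resp.\ $\beta$), these leave crossed terms of order $-1$ (resp.\ $(-2-\alpha)/3<0$), which need not vanish.
\item $z_6$ is only $O(\varepsilon^\alpha)$. Hence a $z$-row contraction for $T$ has a reference term of order $\alpha+1<0$.
\end{itemize}
Your claim that crossed ties contribute $\delta_{ij}/w_{ij}\to0$ is also false here. For $i\in\{1,2\}$ and $j\in\{3,4,5\}$ one has $\ord w_{ij}=1$, so $\ord(\delta_{ij}/w_{ij})=-(\alpha+3)/2<0$.

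The missing idea is Lemma~\ref{lem:reported-reference-triangle} with reference body $6$, summing rows only over $\{1,2\}$ and over $\{3,4,5\}$, with the colour chosen per cluster:
\begin{itemize}
\item for $T$, the $w$ rows at $1,2$ against $m_iz_{i6}$, using $w_6=O(\varepsilon)$;
\item for $Q$, the $z$ rows at $3,4,5$ against $m_iw_{i6}$.
\end{itemize}
The resulting exponents $(1-3\alpha)/2$, $2(\alpha+2)/3$ and $-(5\alpha+1)/6$ are positive because of the first-limit window. The seven-product computation is therefore an essential input, not a fallback. As a fallback in the style of Lemma~\ref{lem:reported-no3} it fails outright, since $P_{36}$ does have a pole there.

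Your Steps~2--3 address a situation that does not arise. Every cross pair from $\{1,2\}$ to $\{3,4,5\}$ has $\ord\delta_{ij}=-(\alpha+1)/2>0$, so no finite bridge survives. The three pairs in $T$, the six in $Q$ and these six cross pairs partition all fifteen pairs. Hence $U\to0$, which contradicts $U=c>0$ directly. No cluster-centre relation or H\"older estimate is needed, and the multi-pair H\"older bound you single out as the main obstacle plays no role.
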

\begin{proof}
Its negative-seven-product window has $-2<\alpha<-13/8$, where $\alpha$ is the common $z$-separation order on the $z$-only $K_4$ with vertex set $Q=\{3,4,5,6\}$. Put $T=\{1,2,6\}$ and $M=\sum_{i=1}^6m_i$. The $zw$ edges of $T$ have $z$- and $w$-orders both equal to $1$. On each edge of $Q$, the signed tie and the marked $z$-force order $-2$ give
\[
 \ord z_{ij}=\alpha,\qquad
 \ord w_{ij}=\beta:=\frac{4-\alpha}{3}>1.
\]
For $i\in\{1,2\}$ and $j\in\{3,4,5\}$, the separation identities through body $6$ therefore give
\[
 \ord z_{ij}=\alpha,\qquad \ord w_{ij}=1,\qquad
 \ord\delta_{ij}=-\frac{\alpha+1}{2}>0.
\]
The original mass centres yield
\[
 Mz_6=-\sum_{i\ne6}m_i z_{i6},\qquad
 Mw_6=-\sum_{i\ne6}m_i w_{i6},
\]
so $z_6=O(\varepsilon^\alpha)$ and $w_6=O(\varepsilon)$.

We contract the original $w$ rows at bodies $1,2$ against $m_i z_{i6}$, retaining the exterior forces. The internal pairs combine by their exact signed ties, giving
\[
\begin{aligned}
 U_T={}&w_6\sum_{i=1,2}m_i z_{i6}
       +\sum_{i=1,2}m_i z_{i6}w_{i6}\\
 &-\sum_{i=1,2}\sum_{j=3,4,5}
       m_im_j\delta_{ij}^3w_{ij}z_{i6}.
\end{aligned}
\]
Consequently,
\[
 U_T=O(\varepsilon^2)
       +O\!\left(\varepsilon^{(1-3\alpha)/2}\right).
\]
Similarly, contracting the original $z$ rows at bodies $3,4,5$ against $m_iw_{i6}$ gives
\[
\begin{aligned}
 U_Q={}&z_6\sum_{i=3,4,5}m_iw_{i6}
       +\sum_{i=3,4,5}m_i z_{i6}w_{i6}\\
 &-\sum_{i=3,4,5}\sum_{j=1,2}
       m_im_j\delta_{ij}^3z_{ij}w_{i6},
\end{aligned}
\]
and hence
\begin{equation}\label{eq:no4-relative-orders}
 U_Q=O\!\left(\varepsilon^{2(\alpha+2)/3}\right)
       +O\!\left(\varepsilon^{-(5\alpha+1)/6}\right).
\end{equation}
All four displayed exponents are strictly positive throughout the stated window. The remaining six pairs have total signed potential
\[
 \sum_{i=1,2}\sum_{j=3,4,5}m_im_j\delta_{ij}
 =O\!\left(\varepsilon^{-(\alpha+1)/2}\right)\longrightarrow0.
\]
These pairs, the pairs in $T$, and the pairs in $Q$ partition all fifteen pairs. Thus $U\to0$, contrary to the constant $U=c>0$ on a curve through a positive configuration.
\end{proof}

\begin{lemma}[Reference-body potential identity]\label{lem:reported-reference-triangle}
Let $T\subset\{1,\ldots,6\}$ have at least two elements, let $r\in T$, and put $U_T=\sum_{i<j,\ i,j\in T}m_im_j\delta_{ij}$. On the original normalized six-body equations,
\begin{equation}\label{eq:reference-triangle-potential}
\begin{aligned}
 U_T={}&z_r\sum_{i\in T\setminus\{r\}}m_iw_{ir}
       +\sum_{i\in T\setminus\{r\}}m_i z_{ir}w_{ir}\\
 &-\sum_{i\in T\setminus\{r\}}\sum_{j\notin T}
       m_im_j\delta_{ij}^3z_{ij}w_{ir}.
\end{aligned}
\end{equation}
The colour-dual identity is obtained by exchanging $z$ and $w$. No row at the reference body is treated as an independent subsystem equation.
\end{lemma}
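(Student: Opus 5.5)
The identity should follow by contracting the original $z$ coordinate rows of~\eqref{eq:AK} at the bodies $i\in T\setminus\{r\}$ against the weights $m_iw_{ir}$ and summing. No leading-order analysis is needed. The only inputs are the coordinate rows themselves and the tie relations $\delta_{ij}^2z_{ij}w_{ij}=1$. The colour-dual version then follows by exchanging the roles of $z$ and $w$ throughout.

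\textbf{Left-hand side.} For each $i\in T\setminus\{r\}$ I would take the row $z_i=\sum_{j\ne i}m_j\delta_{ij}^3z_{ij}$, multiply it by $m_iw_{ir}$, and sum over $i$. Writing $z_i=z_r+z_{ir}$ turns the left side into
\[
z_r\sum_{i\in T\setminus\{r\}}m_iw_{ir}+\sum_{i\in T\setminus\{r\}}m_iz_{ir}w_{ir},
\]
which gives the first two terms of~\eqref{eq:reference-triangle-potential}.

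\textbf{Right-hand side.} I would split the sum over $j$ into $j\notin T$ and $j\in T$. The part with $j\notin T$ is exactly the double sum $\sum_{i\in T\setminus\{r\}}\sum_{j\notin T}m_im_j\delta_{ij}^3z_{ij}w_{ir}$. Moving it to the left produces the minus sign in~\eqref{eq:reference-triangle-potential}.

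\textbf{Internal part.} It remains to show that
\[
\sum_{i\in T\setminus\{r\}}\ \sum_{j\in T,\,j\ne i}m_im_j\delta_{ij}^3z_{ij}w_{ir}=U_T.
\]
Since $w_{rr}=0$, I can extend the outer sum to all $i\in T$ without changing its value. This is the one point needing care, and it is why no row at $r$ is ever used.

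With that extension the sum runs over all ordered pairs $(i,j)$ of distinct elements of $T$. I would group each unordered pair $\{i,j\}$. Using $\delta_{ij}=\delta_{ji}$ and $z_{ji}=-z_{ij}$, the two ordered terms combine to
\[
m_im_j\delta_{ij}^3z_{ij}(w_{ir}-w_{jr})=m_im_j\delta_{ij}^3z_{ij}w_{ij}.
\]
The tie relation $\delta_{ij}^2z_{ij}w_{ij}=1$ reduces this to $m_im_j\delta_{ij}$. Summing over unordered pairs in $T$ gives $U_T$, which completes the identity.

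I expect no real obstacle beyond keeping track of signs and of the orientation $z_{ij}=z_i-z_j$. The only subtle point is the extension to $i=r$, which is legitimate because the weight $w_{rr}$ vanishes.
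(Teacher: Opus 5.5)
Your proposal is correct and follows the paper's proof: you contract the original $z_i$ rows for $i\in T\setminus\{r\}$ against $m_iw_{ir}$, expand $z_i=z_r+z_{ir}$, split off the exterior forces, and collapse each internal pair to $m_im_j\delta_{ij}$ via the signed tie. The only cosmetic difference is that you absorb the pairs $\{i,r\}$ by adding the vanishing $i=r$ terms (weight $w_{rr}=0$), whereas the paper evaluates those single contributions separately.
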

\begin{proof}
Multiply the original $z_i$ row by $m_iw_{ir}$ and sum over $i\in T\setminus\{r\}$. For an internal pair $i,j$ with neither endpoint equal to $r$, its two contributions combine as
\[
 m_im_j\delta_{ij}^3z_{ij}(w_{ir}-w_{jr})
 =m_im_j\delta_{ij}^3z_{ij}w_{ij}
 =m_im_j\delta_{ij}.
\]
For the internal pair $ir$, its single contribution has the same value by the signed tie. Thus the internal terms sum to $U_T$, while the exterior terms are exactly the double sum in \eqref{eq:reference-triangle-potential}. Expanding $z_i=z_r+z_{ir}$ on the left and moving the exterior terms to the other side proves the identity. The original $w_i$ rows give its colour-dual form.
\end{proof}

\begin{lemma}[Internal potential with complete external forcing]
\label{lem:complete-cluster-potential}
For a set $T$ of bodies, put
\[
 M_T=\sum_{i\in T}m_i,\qquad
 \bar z_T=M_T^{-1}\sum_{i\in T}m_i z_i,\qquad
 \bar w_T=M_T^{-1}\sum_{i\in T}m_i w_i.
\]
On the original normalized equations,
\begin{equation}\label{eq:complete-centred-potential}
\begin{aligned}
 U_T={}&\sum_{i\in T}m_i(z_i-\bar z_T)(w_i-\bar w_T)\\
 &-\sum_{i\in T}\sum_{j\notin T}
 m_im_j\delta_{ij}^3z_{ij}(w_i-\bar w_T).
\end{aligned}
\end{equation}
In particular, suppose $T$ is a $zw$ triangle, so its two internal separation orders are $1$. If an exterior body $j$ has a $w$-only edge to $p\in T$ with $z$-order $s\in(1,2]$, then
\[
\begin{array}{c|c|c|c}
\text{pair}&\ord z&\ord w&\ord(\delta^3z)\\ \hline
 jp&s&4-3s&4s-6\\
 ji,\ i\in T\setminus\{p\}&1&4-3s&(9s-13)/2
\end{array}
\]
and the contribution of this body to the exterior sum in
\eqref{eq:complete-centred-potential} is
\begin{equation}\label{eq:matched-exterior-potential}
 O(\varepsilon^{4s-5})+
 O\!\left(\varepsilon^{(9s-11)/2}\right).
\end{equation}
The internal inertia in \eqref{eq:complete-centred-potential} is $O(\varepsilon^2)$.
\end{lemma}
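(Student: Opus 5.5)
The plan has two parts: derive the exact identity \eqref{eq:complete-centred-potential}, then read off the orders.

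\emph{The identity.} I would follow the proof of Lemma~\ref{lem:reported-reference-triangle}, but use the cluster centre $\bar w_T$ in place of a reference body. Multiply the original $z_i$ row by $m_i(w_i-\bar w_T)$ and sum over $i\in T$.

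On the left this gives $\sum_{i\in T} m_iz_i(w_i-\bar w_T)$. Because $\sum_{i\in T}m_i(w_i-\bar w_T)=0$, we may replace $z_i$ by $z_i-\bar z_T$, which yields the centred inertia term.

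On the right, split the force sum into internal and exterior pairs. For an internal pair $\{i,j\}\subset T$, the two contributions combine as
\[
m_im_j\delta_{ij}^3z_{ij}\bigl((w_i-\bar w_T)-(w_j-\bar w_T)\bigr)=m_im_j\delta_{ij}^3z_{ij}w_{ij}=m_im_j\delta_{ij},
\]
using the tie $\delta_{ij}^2z_{ij}w_{ij}=1$. With the sign convention of \eqref{eq:AK}, the exterior terms are $\sum_{i\in T}\sum_{j\notin T}m_im_j\delta_{ij}^3z_{ij}(w_i-\bar w_T)$. Rearranging gives \eqref{eq:complete-centred-potential}. No mass-centre relation beyond the definition of $\bar w_T$ is needed.

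\emph{The order table.} For the pair $jp$, the $w$-only edge is the hypothesis that the leading $w$-force along $jp$ has the marked order $-2$, i.e. $\ord(\delta_{jp}^3w_{jp})=-2$. Write $\ord z_{jp}=s$ and $\ord w_{jp}=t$. The tie gives $\ord\delta_{jp}=-(s+t)/2$, so
\[
-\tfrac32(s+t)+t=-2,\qquad\text{hence}\qquad t=4-3s.
\]
Then $\ord(\delta^3z)=-\tfrac32(4-2s)+s=4s-6$.

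For $i\in T\setminus\{p\}$, use $z_{ji}=z_{jp}+z_{pi}$ and $w_{ji}=w_{jp}+w_{pi}$. Here $\ord z_{pi}=\ord w_{pi}=1$.
\begin{itemize}
\item Since $s>1$, the $z$-order is $1$.
\item Since $s\le2$, we have $4-3s\le1$; the $w$-order is then $4-3s$.
\end{itemize}
At the endpoint $s=1$ the cancellation would need care, but $s>1$ excludes it. This gives $\ord\delta_{ji}=-(5-3s)/2$ and $\ord(\delta^3z)=-\tfrac32(5-3s)+1=(9s-13)/2$.

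\emph{Contribution to the exterior sum.} The weights $w_i-\bar w_T$ are differences of $w$-coordinates within a $zw$ triangle, so they are $O(\varepsilon)$. Multiplying by $\varepsilon$ turns $4s-6$ into $4s-5$ and $(9s-13)/2$ into $(9s-11)/2$, which is \eqref{eq:matched-exterior-potential}.

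\emph{Internal inertia.} Each $z_i-\bar z_T$ and $w_i-\bar w_T$ is a mass-weighted combination of internal separations of order $1$. The centred inertia is therefore $O(\varepsilon^2)$.

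\emph{Main obstacle.} The delicate step is the $w$-order of the pairs $ji$. Upper bounds on terms only require inequalities, but exactness at $s=4/3$ or at $s=2$ with cancellation would be an issue. I would phrase the $ji$ entries as lower bounds on orders ($\ge$). Lower bounds are all that \eqref{eq:matched-exterior-potential} needs, since it is stated as $O(\cdot)$.
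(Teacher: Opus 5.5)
Your derivation is the paper's argument. You contract the original $z_i$ rows against $m_i(w_i-\bar w_T)$, combine each internal pair via the signed tie, and use $\sum_{i\in T}m_i(w_i-\bar w_T)=0$ on the left side. You then obtain $\ord w_{jp}=4-3s$ from the marked $w$-force order and transfer the exact orders $1$ and $4-3s$ to the pairs $ji$ through $p$. That main line is correct.

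Your final paragraph, however, has the direction backwards, and the weakening it proposes would break the proof. Since $\delta_{ij}^3z_{ij}=\delta_{ij}/w_{ij}$, its order is $-\tfrac12\ord z_{ij}-\tfrac32\ord w_{ij}$, which \emph{decreases} as the separation orders increase. So the lower bounds $\ord z_{ji}\ge1$ and $\ord w_{ji}\ge4-3s$ give only $\ord(\delta_{ji}^3z_{ji})\le(9s-13)/2$. That says the force is at least of size $\varepsilon^{(9s-13)/2}$, which cannot produce an $O(\cdot)$ bound. The only one-sided estimate that may be used in this direction is the weight bound $w_i-\bar w_T=O(\varepsilon)$.

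What the force estimate needs is that the leading coefficients at orders $1$ and $4-3s$ survive, i.e.\ the exact orders you already proved. These hold on all of $(1,2]$ because $4-3s<1$ strictly there. This follows from $s>1$, not from $s\le2$ as you wrote. Nothing special happens at $s=4/3$ or $s=2$, so keep the exact orders and drop the proposed rephrasing as inequalities.
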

\begin{proof}
Multiply every original $z_i$ row with $i\in T$ by
$m_i(w_i-\bar w_T)$ and sum. The internal pair $ij$ contributes
$m_im_j\delta_{ij}^3z_{ij}w_{ij}=m_im_j\delta_{ij}$.
The remaining terms are exactly the exterior sum displayed in
\eqref{eq:complete-centred-potential}; its left side is the internal inertia because $\sum_{i\in T}m_i(w_i-\bar w_T)=0$.

For any pair whose separation orders are $x,y$, the signed tie gives
\begin{equation}\label{eq:force-order-formulas}
 \ord\delta=-\frac{x+y}{2},\qquad
 \ord(\delta^3z)=-\frac{x+3y}{2},\qquad
 \ord(\delta^3w)=-\frac{3x+y}{2}.
\end{equation}
A $w$ stroke has $\ord(\delta^3w)=-2$, giving $y=4-3s$ on $jp$. Since $s>1$ and $4-3s<1$, the identities through $p$ give exact orders $1,4-3s$ on every other pair $ji$. Finally $w_i-\bar w_T=O(\varepsilon)$, which proves \eqref{eq:matched-exterior-potential}. No assumption about the absolute cluster centre, or about the signs of the complex inverse distances, enters this argument.
\end{proof}

\begin{lemma}[An unmarked finite pair]\label{lem:reported-finite-pair}
Let $ij$ be a pair with $m_i+m_j>0$, and write $R_i,R_j$ for the two original $z$ forces after removing their mutual interaction. On a germ, if $(w_i-w_j)(R_i-R_j)=o(1)$, then $\delta_{ij}$ tends to a finite nonzero complex value $d$ and $d^3=(m_i+m_j)^{-1}$.
\end{lemma}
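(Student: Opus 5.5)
We treat this as a two-body reduction. Subtracting the original $z$ rows at bodies $i$ and $j$ isolates the mutual interaction, and the signed tie then yields an algebraic equation for $\delta_{ij}$ alone.

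Write the original $z$ rows at $i$ and $j$ as
\[
 z_i=m_j\delta_{ij}^3z_{ij}+R_i,\qquad z_j=-m_i\delta_{ij}^3z_{ij}+R_j .
\]
Subtracting gives the exact identity
\[
 z_{ij}=(m_i+m_j)\delta_{ij}^3z_{ij}+(R_i-R_j).
\]
Multiply by $w_{ij}$ and use $\delta_{ij}^2z_{ij}w_{ij}=1$, so that $\delta_{ij}^3z_{ij}w_{ij}=\delta_{ij}$. This gives
\[
 z_{ij}w_{ij}=(m_i+m_j)\delta_{ij}+w_{ij}(R_i-R_j).
\]
Using the tie once more, the left side equals $\delta_{ij}^{-2}$. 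Hence, exactly on $\A_m$,
\[
 \delta_{ij}^{-2}=(m_i+m_j)\delta_{ij}+\eta,\qquad \eta:=(w_i-w_j)(R_i-R_j).
\]
By hypothesis $\eta=o(1)$ along the germ.

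It remains to extract the limit of $\delta_{ij}$ from the cubic $1=(m_i+m_j)\delta_{ij}^3+\eta\,\delta_{ij}^2$. Along a Puiseux germ $\delta_{ij}$ has a well-defined order, so we compare orders.
\begin{itemize}
\item If $\ord\delta_{ij}<0$, the term $(m_i+m_j)\delta^3$ dominates $\eta\delta^2$, since $\eta\to0$ and $m_i+m_j\ne0$. The right side then has a pole while the left side is $1$, a contradiction.
\item If $\ord\delta_{ij}>0$, both terms on the right tend to zero, again contradicting $1$.
\end{itemize}
Hence $\ord\delta_{ij}=0$, and $\delta_{ij}\to d$ with $d$ finite and nonzero. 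Passing to the limit gives $1=(m_i+m_j)d^3$, that is, $d^3=(m_i+m_j)^{-1}$.

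The only step needing care is the first case of the order comparison. There $\eta\delta^2$ could a priori compete with $(m_i+m_j)\delta^3$ if $\eta$ were large. The hypothesis $\eta=o(1)$ together with $m_i+m_j>0$ rules this out, since then $\eta\delta^2=o(\delta^2)=o(\delta^3)$.

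No sign or positivity assumption on $\delta_{ij}$ is used, and the colour-dual statement follows by exchanging $z$ and $w$.
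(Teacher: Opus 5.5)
Your proof is correct and follows the paper's own argument. You subtract the two original $z$ rows and use the signed tie to reach the same exact identity $1=(m_i+m_j)\delta_{ij}^3+\delta_{ij}^2(w_i-w_j)(R_i-R_j)$. You then rule out $\delta_{ij}\to0$ and $|\delta_{ij}|\to\infty$, which the paper does in the same way and you phrase as an order trichotomy on the Puiseux germ, and read off $d^3=(m_i+m_j)^{-1}$ in the limit.
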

\begin{proof}
Subtract the original $z_i,z_j$ rows and use the original signed tie $\delta_{ij}^2z_{ij}w_{ij}=1$. These give the exact on-solution identity
\begin{equation}\label{eq:finite-pair-identity}
1=(m_i+m_j)\delta_{ij}^3+\delta_{ij}^2(w_i-w_j)(R_i-R_j).
\end{equation}
If $\delta_{ij}\to0$, the right side tends to zero. If its modulus diverges, divide by $\delta_{ij}^3$: the last term becomes $o(1)/\delta_{ij}$, giving $m_i+m_j=0$, a contradiction. The remaining nonzero limit has the stated cube on every phase.
\end{proof}

\subsection{No.~105: incompatible centre and pole signs}\label{sec:no105-sign}
For illustration, No.~105 has just the signed inverse distance
poles $25,36$ of order $-1$. Its original centres give
$YP/(XQ)=AD/(BC)>0$ for four positive group masses
$A,B,C,D$. Its source rows and signed ties give
$YP=-m_2m_5D_{25}^4$ and $XQ=m_6^2D_{36}^4$, while
constancy of $U$ forces
$m_2m_5D_{25}+m_3m_6D_{36}=0$. Eliminating the pole
ratio makes $YP/(XQ)<0$, a contradiction. That local
argument does not substitute for the remaining 27 diagram
proofs and their valuation-completeness checks.

\subsection{No.~20: second-order matching and energy}\label{sec:no20-energy}
In No.~20, only the outer vertices $1,2,3$ have $w$-circles; the inner $zw$ triangle $T=\{4,5,6\}$ is uncircled. Write $p(1)=6$, $p(2)=5$, $p(3)=4$ for the three matches. Every outer--inner $w$ separation has order $-2$. On a marked match, the original $w$-force order and signed tie therefore give
\[
 \ord\delta_{i p(i)}=0,\qquad \ord z_{i p(i)}=2.
\]
The inner $z$- and $w$-differences have exact order $1$. Hence all six $z$ coordinates differ by $O(\varepsilon)$, and the original mass centre gives $z_a=O(\varepsilon)$ for every $a$. On a nonmatched outer--inner edge,
\[
 z_{ia}=z_{p(i)a}+O(\varepsilon^2),\qquad
 \ord\delta_{ia}=\tfrac12,\qquad
 \ord(\delta_{ia}^3z_{ia})=\tfrac52.
\]
Apply Lemma~\ref{lem:reported-reference-triangle} to $T$, with any inner reference vertex. Its reference and relative-inertia terms are $O(\varepsilon^2)$. The matched exterior $z$ forces are $O(\varepsilon^2)$, and the remaining exterior $z$ forces are $O(\varepsilon^{5/2})$; multiplication by the inner relative $w$ coordinates gives respectively $O(\varepsilon^3)$ and $O(\varepsilon^{7/2})$. Thus
\[
 U_{456}=O(\varepsilon^2).
\]
This estimate uses the original exterior forces and precedes the second-order matching calculation.

Let $W_1,W_2,W_3\ne0$ be the outer $w$-leads. Since the inner vertices are uncircled, their original centre gives $\sum_{i=1}^3m_iW_i=0$, so the three leads cannot all coincide. If they were pairwise distinct, every outer edge would have $z$-order $1$ and $w$-order $-2$. Its inverse distance and $z$ force would consequently have orders $1/2$ and $5/2$. The original outer $z$ rows would then give $z_i=O(\varepsilon^2)$, and the matched ties would put all three inner $z$ positions at $O(\varepsilon^2)$, contrary to their exact order-one differences. Hence exactly one outer pair has equal leads, say $W_i=W_j\ne W_k$.

The inverse distances on $ik,jk$ tend to zero with order $1/2$. Moreover, $z_{ij}=z_{p(i)p(j)}+O(\varepsilon^2)$ has exact order $1$. Subtracting the original $z_i,z_j$ rows, whose forces other than their mutual interaction are $O(\varepsilon^2)$, gives
\[
 \bigl(1-(m_i+m_j)\delta_{ij}^3\bigr)z_{ij}
 =O(\varepsilon^2).
\]
Thus $\delta_{ij}\to e_{ij}\ne0$ with $e_{ij}^3=(m_i+m_j)^{-1}$. If $d_i=\lim\delta_{i p(i)}$, all fifteen pairs have now been accounted for: the inner total potential tends to zero, the six nonmatched outer--inner inverse distances and the two unequal-lead outer inverse distances tend to zero, and only the three matches and $ij$ remain. Consequently
\[
 c_0=\lim U=\sum_{i=1}^3m_i m_{p(i)}d_i+m_im_je_{ij}.
\]

For the equal pair $(i,j)$ and unmatched outer body $k$, all three choices are related by relabelling. If $p,q,r$ are their matched inner vertices, the outer $z$ rows and the inner centre give $(m_i,m_j)=\lambda(m_p,m_q)$, $\lambda>0$. The internal triangle rows and ties require, for $(a,b,c)=(m_p,m_q,m_r)$,
\begin{equation}\label{eq:no20-mass-condition}
(ab)^3=c^2(a+b)^4,\qquad a\ne b.
\end{equation}
This is not an exclusion: the zero set is nonempty. The inner signed shape root is simple because its derivative is $3c^2(a+b)^3(b-a)\ne0$. Its first correction and the original mass-weighted outer $z$ rows force the matched offsets to satisfy
\begin{equation}\label{eq:no20-offsets}
m_iA_i+m_jA_j+m_kA_k=0.
\end{equation}
The preceding finite-edge reduction gives the complete constant $U=I=c_0>0$. Keeping all signed cube phases, the real positivity of $c_0$ and the offset identity force their phases to be $1$. Put $A=a+b$, $B=c^{2/3}$, $H=a^{5/3}+b^{5/3}$, $d=\sqrt{AH/B}$ and $x=d/A\ge19/12$. With $m_k=\lambda d$, the physical H\"older inequality $\sigma^3\le Mc_0^3$ would imply
\begin{equation}\label{eq:no20-holder-gap}
(\lambda x+1+x)^3\le\bigl(\lambda(1+x)+9/8\bigr)(27/2+\lambda^2/16).
\end{equation}
The difference of left and right increases with $x\ge19/12$; at $x=19/12$ its ascending $\lambda$ coefficients are $(3547/1728,-1829/576,22301/1152,1645/432)$. The middle quadratic has discriminant test $4ac-b^2=148167571/995328>0$, making that difference strictly positive. Thus no positive-mass No.~20 first limit survives any of the three equal-pair branches. The simple-root and signed-phase checks are required certificate inputs.

\subsection{No.~57: two outer triangle regimes}\label{sec:no57-triangle}
The outer $w$-triangle $123$ has a common $z$-order $\alpha\in(1,2]$, and its inner $zw$ triangle is $456$.

\paragraph{No.~57: the range $1<\alpha<2$.}
Put $A=\{1,2,3\}$, $B=\{4,5,6\}$ and $b=4-3\alpha$.
The internal orders of $A$ are $(\alpha,b)$, and those of $B$ are $(1,1)$.
The original $w$ centre forces the two cluster leading values to be distinct: if they were equal, all six nonzero circled leading values would agree. Hence the bridge $36$ is maximal, with orders $(2,-2)$ and $\delta_{36}\to d\ne0$.
All nine cross pairs have the following exact orders:
\[
\begin{array}{c|c|c|c|c}
\text{pair}&\ord z&\ord w&\ord\delta&\ord(\delta^3z)\\ \hline
36&2&-2&0&2\\
16,26&\alpha&-2&(2-\alpha)/2&3-\alpha/2\\
 i j,\ i\in A,\ j=4,5&1&-2&1/2&5/2
\end{array}
\]
These entries follow by adding the internal differences to $z_{36},w_{36}$; in each addition the two competing orders are distinct.
Apply \eqref{eq:complete-centred-potential} first to $B$.
Its relative $w$ coordinates are $O(\varepsilon)$, so the table gives
\[
 U_B=O(\varepsilon^2)+O(\varepsilon^3)
      +O(\varepsilon^{4-\alpha/2})+O(\varepsilon^{7/2})
      =O(\varepsilon^2).
\]
For $A$, the relative $w$ coordinates are $O(\varepsilon^b)$ and its internal inertia is $O(\varepsilon^{\alpha+b})$. Thus
\begin{equation}\label{eq:no57-complete-potential}
\begin{aligned}
 U_A={}&O(\varepsilon^{4-2\alpha})
 +O(\varepsilon^{6-3\alpha})
 +O(\varepsilon^{7-7\alpha/2})
 +O(\varepsilon^{13/2-3\alpha})\longrightarrow0.
\end{aligned}
\end{equation}
Every exponent is positive for $1<\alpha<2$. The table also proves that every cross inverse distance except $36$ tends to zero, so
$c=m_3m_6d$.
In the original mass-weighted $w$ rows of $A$, only $36$ contributes at order $-2$: the other cross $w$-forces have orders $1-3\alpha/2>-2$ and $-1/2$. If $W_A,W_B$ are the cluster leading values, then
\[
 M_AW_A=m_3m_6d^3(W_A-W_B),\qquad
 M_AW_A+M_BW_B=0.
\]
Since $W_A\ne W_B$, this gives $m_3m_6d^3=M_AM_B/M$ and
$Mc^3=M_AM_B(m_3m_6)^2$. The strict inequality
$\sigma>M_AM_B>m_3m_6$ contradicts physical H\"older.

If $\alpha=2$, the outer triangle is maximal. When $W_3\ne W_{456}$, maximal $13$ and $36$ force the forbidden closing stroke $16$. When $W_3=W_{456}$, the original body-3 $z$-row has a unique nonzero $36$ term at order $4\gamma-6<2$, preceding its other terms by strict positive order gaps for $1<\gamma<2$. Thus both maximal branches are impossible even without a physical point.

\subsection{No.~51: the triangle and pendant at 16}\label{sec:no51-pendant}
Let $\alpha\in(1,2]$ be the common $z$-order on its $w$-only triangle $236$ and $\gamma\in(1,2]$ the $z$-order on pendant $16$. The four endpoint choices are exhaustive. If both equal $2$, maximal $16$ and $26$ require a missing $12$ stroke. If both are less than $2$, the six circled $w$-leads coincide, contrary to their positive-mass centre. For $\alpha=2$, $\gamma<2$, the original body-1 $z$-row has its nonzero $16$ force uniquely earliest at $4\gamma-6$. Finally, for $\alpha<2$, $\gamma=2$, the least seven-product order is $12-7\alpha$; a first limit needs $12/7<\alpha<2$.

\paragraph{Joint energy argument for No.~51 and No.~55.}
Put $T=\{2,3,6\}$, $Q=\{4,5,6\}$ and $b=4-3\alpha$, where
$12/7<\alpha<2$. The internal orders of $T$ are $(\alpha,b)$, and those of $Q$ are $(1,1)$. The pendant is maximal: it is $16$ in No.~51 and $13$ in No.~55.
For $i=2,3$ and $j=4,5$, adding differences through body $6$ gives
\begin{equation}\label{eq:overlap-cross-orders}
 \ord z_{ij}=1,\qquad \ord w_{ij}=b,\qquad
 \ord\delta_{ij}=\frac{3\alpha-5}{2},\qquad
 \ord(\delta_{ij}^3z_{ij})=\frac{9\alpha-13}{2}.
\end{equation}
In No.~51, the pairs $12,13$ have orders $(\alpha,-2)$ and $14,15$ have orders $(1,-2)$. The original body-$1$ $z$ row therefore has its pendant force at order $2$, the $12/13$ forces at order $3-\alpha/2>2$, and the $14/15$ forces at order $5/2$. Consequently
\begin{equation}\label{eq:no51-reference-centre}
 z_1=O(\varepsilon^2),\qquad z_6=O(\varepsilon^2).
\end{equation}
In No.~55, the pairs $12,16$ have orders $(\alpha,-2)$ and $14,15$ again have orders $(1,-2)$. The same original row gives
\begin{equation}\label{eq:no55-reference-centre}
 z_1,z_3=O(\varepsilon^2),\qquad z_6=O(\varepsilon^\alpha).
\end{equation}

For $U_Q$, the exterior bodies $2,3$ each attach to $6$ with order $\alpha$.
Lemma~\ref{lem:complete-cluster-potential} bounds their complete contributions by
$O(\varepsilon^{4\alpha-5})+O(\varepsilon^{(9\alpha-11)/2})$.
In No.~51 the remaining exterior body $1$ attaches maximally to $6$ and contributes $O(\varepsilon^3)+O(\varepsilon^{7/2})$.
In No.~55 its pair with $6$ has orders $(\alpha,-2)$ and its pairs with $4,5$ have orders $(1,-2)$; its contribution is therefore
$O(\varepsilon^{4-\alpha/2})+O(\varepsilon^{7/2})$.
In both diagrams,
\begin{equation}\label{eq:overlap-inner-potential}
 U_Q=O(\varepsilon^2)+O(\varepsilon^{4\alpha-5})
     +O(\varepsilon^{(9\alpha-11)/2})\longrightarrow0.
\end{equation}

For $U_T$, use the reference-body identity with reference $6$, rather than assigning a separate subsystem equation to the shared body $6$. Its relative inertia is $O(\varepsilon^{4-2\alpha})$.
Equations~\eqref{eq:no51-reference-centre} and
\eqref{eq:no55-reference-centre} bound the reference term by
$O(\varepsilon^{6-3\alpha})$ and
$O(\varepsilon^{4-2\alpha})$, respectively.
The four exterior interactions from $i=2,3$ to $j=4,5$ contribute
\[
 O\!\left(\varepsilon^{(9\alpha-13)/2+b}\right)
 =O\!\left(\varepsilon^{(3\alpha-5)/2}\right).
\]
The interactions with body $1$ contribute
$O(\varepsilon^{7-7\alpha/2})$ in No.~51, and
$O(\varepsilon^{6-3\alpha})+O(\varepsilon^{7-7\alpha/2})$ in No.~55.
Thus in both cases
\begin{equation}\label{eq:overlap-outer-potential}
\begin{aligned}
 U_T={}&O(\varepsilon^{4-2\alpha})
 +O(\varepsilon^{6-3\alpha})
 +O(\varepsilon^{(3\alpha-5)/2})
 +O(\varepsilon^{7-7\alpha/2})\longrightarrow0.
\end{aligned}
\end{equation}
All exponents are positive on $12/7<\alpha<2$.
The pair sets inside $T$ and $Q$ are disjoint. Together with the four pairs in
\eqref{eq:overlap-cross-orders} and the five pairs incident to body $1$, they exhaust the fifteen pairs. The former four inverse distances tend to zero; among the latter five, only the pendant has a nonzero finite limit $d$.
Writing $r=6$ for No.~51 and $r=3$ for No.~55, we obtain
\[
 c=m_1m_r d,\qquad m_rd^3=\frac{M-m_1}{M},\qquad
 Mc^3=m_1^3m_r^2(M-m_1).
\]
Here the cube relation follows from the original body-$1$ $w$ row and the full $w$ centre: its only order-$-2$ force is the pendant force, while bodies $2,\ldots,6$ share one leading $w$ value. Finally
$\sigma>m_1(M-m_1)$ and $M-m_1>m_r$ give
$\sigma^3>Mc^3$, contrary to physical H\"older.

\subsection{No.~40: two maximal strokes force the missing edge}
\label{sec:no40-pole}
Body $1$ has no $w$ circle, whereas bodies $3$ and $6$ have $w$ circles. Therefore the marked edges $13$ and $16$ both have exact $w$-order $-2$.
Their marked $w$-force order and the signed ties give
$\ord z_{13}=\ord z_{16}=2$.
Consequently $z_{36}=z_{31}+z_{16}=O(\varepsilon^2)$.
Put $x=\ord z_{36}\ge2$ and $y=\ord w_{36}\ge-2$, the latter inequality following from normalization. The exact signed tie implies
\[
 \ord(\delta_{36}^3w_{36})=-\frac{3x+y}{2}\le-2.
\]
Strict inequality violates normalization; equality requires a $36$ $w$ stroke. The diagram has no such stroke. This excludes No.~40 without any potential cancellation or any assumption on the order of $25$.

\subsection{No.~55: the triangle and pendant at 13}\label{sec:no55-pendant}
Here the $w$-only triangle is $236$ but the pendant is $13$, not $16$ as in No.~51. Write $\alpha,\gamma\in(1,2]$ for their $z$-orders. With $\alpha=\gamma=2$, consecutive maximal $13$ and $23$ require a missing $12$ stroke; with $\alpha,\gamma<2$, the original $w$-centre forbids a common nonzero lead at all six bodies. When $\alpha=2$, $\gamma<2$, the original body-1 $z$-row has the $13$ force uniquely first at $4\gamma-6$, earlier than its coordinate and all exterior forces. In the remaining branch $\gamma=2$, $\alpha<2$, a first-limit pole requires $12/7<\alpha<2$ by the product order $12-7\alpha$. The joint argument in Section~\ref{sec:no51-pendant}, in particular \eqref{eq:no55-reference-centre} and \eqref{eq:overlap-inner-potential}--\eqref{eq:overlap-outer-potential}, proves that both complete internal potentials tend to zero. Only the finite pendant $13$ remains, and
\begin{equation}\label{eq:no55-pole}
m_3d^3=(M-m_1)/M,\qquad c=m_1m_3d,\qquad Mc^3=m_1^3m_3^2(M-m_1).
\end{equation}
The strict inequalities $\sigma>m_1(M-m_1)$ and $M-m_1>m_3$ again contradict physical H\"older. All four branches are excluded with the correct pendant-specific reference estimate.

\subsection{No.~44: an unpaired star force}\label{sec:no44-force}
The three $w$-only spokes $16,26,36$ meet the inner $zw$ triangle $456$. Positive mass centre excludes no maximal spoke; two maximal spokes would require a missing outer closing $w$-stroke. Thus exactly one is maximal, which we label $16$. Put $a=\ord z_{26}$, $b=\ord z_{36}$, both in $(1,2)$. If $a=b$, the missing $23$ signed tie has $w$-force order at most $-2$, either drawing an absent maximal stroke or violating normalization. Otherwise swap $2$ and $3$ to take $a<b$. The original body-1 $z$-row and maximal $16$ tie give $z_1,z_6=O(\varepsilon^2)$. In the original body-2 $z$-row the nonzero $26$ force has order $4a-6$. Its coordinate, $21$, $23$ and $24/25$ forces are later by
\begin{equation}\label{eq:no44-order-gaps}
6-3a,\qquad \frac92(2-a),\qquad \frac92(b-a),\qquad \frac{a-1}{2},
\end{equation}
all strictly positive. The unmarked $23$ inverse distance was retained in this comparison; its possible pole cannot cancel the unique $26$ term. The $b<a$ case is the body-3 image. Thus no positive-mass complex No.~44 germ exists.

\Needspace{12\baselineskip}
\subsection{No.~49: all extremal sets of the $w$-path}\label{sec:no49-path}
The $w$-only path $1-6-3-2$ has maximal edge set one of
$\{16\}$, $\{23\}$, $\{36\}$ or $\{16,23\}$. The empty set
violates the original $w$-centre; consecutive maximal 16/36
or 23/36 would draw a missing closing edge. For only 16 or
only 23, equality of the two remaining nonmaximal $z$-orders
likewise forces the absent closing $w$-stroke. For unequal
orders, whichever nonmaximal edge has smaller order
contributes a unique nonzero $4s-6$ term to its original
outer $z$-row. For only 36, subtracting the original body-2/3
$z$-rows leaves the nonzero 23 term of order $4s-6$ earlier
than the position difference of order $s$ and all exterior
forces of order $2$.
When both $16$ and $23$ are maximal, retain the unmarked pair $12$. Write $g=\ord z_{36}\in(1,2)$, $\lambda=\ord w_{12}$ and $k=4g-6<2$. The maximal ties give $z_{16},z_{23}=O(\varepsilon^2)$, so $z_{12}=-z_{36}+O(\varepsilon^2)$ has exact order $g$. The missing $12$ stroke forces $\lambda<4-3g$. Consequently its original $z$-force has order
\[
 \ord(\delta_{12}^3z_{12})=-\frac{g+3\lambda}{2}
 =k+\frac32(4-3g-\lambda)>k.
\]
In the original body-$2$ $z$-row, the forces from $23$, $24/25$ and $26$ have orders $2$, $5/2$ and $3-g/2$, respectively, all greater than $k$. Together with the preceding estimate for $21$, this gives
\[
 z_2=o(\varepsilon^k),\qquad
 z_3=z_2-z_{23}=o(\varepsilon^k).
\]
In the original body-$3$ $z$-row, the $36$ force is nonzero at exact order $k$. The $32$ and $31$ forces have orders $2$ and $3-g/2$, while the $34/35$ forces have order
\[
 \frac{9g-13}{2}=k+\frac{g-1}{2}>k.
\]
Thus every other term in that row, including its left side, is $o(\varepsilon^k)$, a contradiction. All four maximality sets fail, including the potentially unbounded $12$-distance branch.
\Needspace{18\baselineskip}
\subsection{No.~45: the two possible potential-pole regimes}
\label{sec:no45-pole}
The $zw$ triangle is $Q=\{4,5,6\}$, the exterior $w$-only edges are $16,26,35$, and all six bodies have $w$ circles.
At least one exterior edge is maximal, since otherwise all six circled leading $w$ values coincide, contrary to their mass centre.
The edges $16$ and $26$ cannot both be maximal: their $z$ differences would force $z_{12}=O(\varepsilon^2)$, and normalization would then require the missing $12$ $w$ stroke.
Thus, up to interchanging bodies $1$ and $2$, the possible maximal sets are $\{16\}$, $\{35\}$ and $\{16,35\}$.

Suppose first that only $16$ is maximal. Put $a=\ord z_{26}$ and $b=\ord z_{35}$, both in $(1,2)$. The original body-$1$ $z$ row gives
$z_1,z_6=O(\varepsilon^2)$: its $16$ force has order $2$, its $12$ force has order $3-a/2>2$, and its $13/14/15$ forces have order $5/2$.
Hence $z_2$ has exact order $a$, while $z_{23}$ has exact order $1$.
If $a\ne b$, then
\[
 \ord w_{23}=4-3\max\{a,b\},\qquad
 \ord(\delta_{23}^3z_{23})=\frac{9\max\{a,b\}-13}{2}>4a-6.
\]
The $24/25$ forces have order $(9a-13)/2>4a-6$, and the $21$ force has order $3-a/2>4a-6$.
Since $a>4a-6$, the $26$ force is the unique earliest term in the original body-$2$ $z$ row, a contradiction.

It remains to consider $a=b=s\in(1,2)$. Write $\lambda=\ord w_{23}$.
The separation identities give $\lambda\ge4-3s$, while $\ord z_{23}=1$.
All terms in the original body-$2$ $z$ row except its $26$ and $23$ forces are later than $4s-6$. Their cancellation therefore requires
\[
 -\frac{1+3\lambda}{2}=4s-6,\qquad
 \lambda=\frac{11-8s}{3}>4-3s,\qquad
 \ord\delta_{23}=\frac{4s-7}{3}>s-2.
\]
The strict inequality for $\lambda$ shows that $w_{26}$ and $w_{35}$ have the same nonzero leading coefficient $u$. Write
\[
 \delta_{26}=A\varepsilon^{s-2}(1+o(1)),\qquad
 \delta_{35}=D\varepsilon^{s-2}(1+o(1)),\qquad ADu\ne0.
\]
Bodies $2,3$ and $Q$ have the same nonzero circled leading $w$ value $W$.
Their original $w_2,w_3$ rows give
\[
 W=m_6A^3u=m_5D^3u,\qquad (D/A)^3=m_6/m_5>0.
\]
Indeed the $23$ $w$-force has order $(4s-10)/3>-2$, each nonmatched force into $Q$ has order $(3s-7)/2>-2$, and the remaining exterior forces are also later than $-2$.

We now retain the complete internal potential. Lemma~\ref{lem:complete-cluster-potential} gives
\begin{equation}\label{eq:no45-relative-inner-potential}
 U_Q=O(\varepsilon^2)+O(\varepsilon^{4s-5})
      +O(\varepsilon^{(9s-11)/2})
      =o(\varepsilon^{s-2}).
\end{equation}
The final equality uses $s>1$; it does not require $U_Q\to0$.
The nonmatched outer-to-inner inverse distances have order $(3s-5)/2>s-2$, and the $23$ inverse distance has order $(4s-7)/3>s-2$.
The inverse distances on $12,13$ tend to zero, while $16$ has a finite limit.
Consequently the complete potential satisfies
\[
 U=(m_2m_6A+m_3m_5D)\varepsilon^{s-2}
      +o(\varepsilon^{s-2}).
\]
Its constancy forces $D/A=-m_2m_6/(m_3m_5)<0$, contradicting the positive cube ratio above.

Suppose next that only $35$ is maximal. Put $a=\ord z_{16}$ and $b=\ord z_{26}$.
If $a=b$, then $\ord z_{12}\ge a$ and $\ord w_{12}\ge4-3a$; the signed tie forces the missing $12$ $w$ stroke or violates normalization. Thus, after interchanging $1,2$, take $a<b$.
The original body-$3$ $z$ row gives $z_3,z_5=O(\varepsilon^2)$.
The $zw$ edge $56$ then implies that $z_6,z_1,z_2$ have exact order $1$.
The $12$ $z$-force has order $(9b-a-12)/2$, later than both the $16$ force of order $4a-6$ and the $26$ force of order $4b-6$.
All remaining forces in the respective original body-$1$ and body-$2$ $z$ rows are later than those marked forces: the nonmatched inner forces exceed them by $(a-1)/2$ and $(b-1)/2$, and the body-$3$ forces have order $5/2$.
The two original rows therefore require $4a-6=4b-6=1$, contrary to $a<b$.

Finally suppose that $16$ and $35$ are maximal, while $s=\ord z_{26}\in(1,2)$.
Multiply the original body-$1$ and body-$3$ $z$ rows by $m_1,m_3$ and add.
Their mutual $13$ interaction cancels exactly; every remaining force is $O(\varepsilon^2)$.
The maximal ties yield
\[
 m_1z_6+m_3z_5=O(\varepsilon^2),\qquad
 z_{56}\text{ has exact order }1.
\]
Since $m_1,m_3>0$, both $z_5$ and $z_6$ have exact order $1$, as does $z_2=z_6+O(\varepsilon^s)$.
In the original body-$2$ $z$ row, the $26$ force has order $4s-6$; its $24/25$ forces have order $(9s-13)/2$, its $21$ force has order $3-s/2$, and its $23$ force has order $5/2$.
Thus $4s-6=1$, giving $s=7/4$.
The unmarked pair $13$ must still be retained. Its separation $z_{13}$ has exact order $1$, and subtraction of its two original rows gives
\[
 \bigl(1-(m_1+m_3)\delta_{13}^3\bigr)z_{13}=O(\varepsilon^2).
\]
Hence $\delta_{13}^3=(m_1+m_3)^{-1}+O(\varepsilon)$, so this inverse distance has a finite nonzero limit.
For the internal triangle, \eqref{eq:matched-exterior-potential} now gives
\[
 U_Q=O(\varepsilon^2)+O(\varepsilon^{4s-5})
       +O(\varepsilon^{(9s-11)/2})=O(\varepsilon^2).
\]
All exterior inverse distances other than $26$ are finite or tend to zero: $16,35,13$ are finite, and every remaining pair follows from the distinct orders just listed.
The nonzero pole $m_2m_6\delta_{26}$ of order $s-2=-1/4$ is therefore the unique pole of the complete potential, contradicting its constancy. All three maximality sets are excluded.

\subsection{No.~31: a finite unmarked pair and the bridge gap}\label{sec:no31-bridge}
The vertices $3,4,5,6$ have $w$-circles, whereas $1,2$ are uncircled and have no strokes. For $i\in\{1,2\}$ and $a\in\{3,4,5,6\}$, the $w$ separation has order $-2$ and its unmarked $w$ force is $o(\varepsilon^{-2})$. Thus $\delta_{ia}=o(1)$ and $\delta_{ia}^3z_{ia}=\delta_{ia}/w_{ia}=o(\varepsilon^2)$. In particular, the remaining forces in the original $z_1,z_2$ rows satisfy $R_1-R_2=o(\varepsilon^2)$; since $w_{12}=o(\varepsilon^{-2})$, Lemma~\ref{lem:reported-finite-pair} gives $\delta_{12}\to e\ne0$ with $e^3=(m_1+m_2)^{-1}$.

A nonmaximal bridge $36$ would make all four circled $w$-leads equal, contradicting their original mass centre. Hence $36$ is maximal, $z_{36}=O(\varepsilon^2)$, and $\delta_{36}\to d\ne0$. The nonmatched edges $34,35$ have $z$-order $1$, $w$-order $-2$, and inverse-distance order $1/2$. The original body-3 $z$ row therefore gives $z_3=O(\varepsilon^2)$, and then $z_6=O(\varepsilon^2)$. Apply Lemma~\ref{lem:reported-reference-triangle} to $456$ with reference vertex 6. Its reference and relative terms have orders at least $3$ and $2$; its exterior terms from body 3 have order $7/2$, and those from bodies 1 and 2 are $o(\varepsilon^3)$. Thus $U_{456}=O(\varepsilon^2)$. All remaining inverse distances other than $12,36$ tend to zero. The original body-3 $w$ row and mass centre give $d^3=(m_4+m_5+m_6)/[m_6(m_3+m_4+m_5+m_6)]$, so only $12$ and $36$ remain in $c=\lim U$.

Let $A=m_1+m_2$, $C=m_3+m_4+m_5+m_6$, $X=m_1m_2$, and let $Y$ be the inner pair-mass sum. Then
\begin{equation}\label{eq:no31-holder-gap}
M^{1/3}c<X+Y+\frac{XC}{3A}+\frac{YA}{3C}\le\sigma-\frac{19}{24}AC<\sigma.
\end{equation}
This contradicts H\"older at the physical point for every signed phase of the two finite edges; it does not exclude an unrelated complex-only curve.

\subsection{No.~50: five path branches and the resonance}\label{sec:no50-resonance}
The possible maximal sets on $w$-only edges $16,23,35$ are $\{16\}$, $\{23\}$, $\{35\}$, $\{16,23\}$ and $\{16,35\}$; no maximum violates the $w$-centre, and maximal $23$ and $35$ together require a missing $25$ edge. In the only-$16$ and only-$35$ cases, a missing outer tie or a unique earlier $z$-force excludes unequal-order branches; the only-$35$ case also follows by subtracting the original body-2/3 $z$-rows. For both $16$ and $23$, the original weighted outer rows cancel the unmarked $12$ force. Their remaining $35$ edge would be the unique signed potential pole. For both $16$ and $35$, the $12$ and $13$ $w$-differences cannot simultaneously reach the order needed to cancel $23$, since their difference is $w_{23}$; the endpoint $\ord z_{23}=7/4$ again leaves a unique $23$ potential pole.

It remains to treat the only-$23$ branch in which
$\ord z_{16}=\ord z_{35}=s$, with $1<s<2$.
The $w$-stroke equations and the signed ties give
\[
 a:=\ord\delta_{16}=\ord\delta_{35}=s-2,\qquad
 b:=\ord w_{16}=\ord w_{35}=4-3s.
\]
The maximal edge $23$ has separation orders $(2,-2)$ and
$\ord\delta_{23}=0$. Every internal edge of the $zw$ triangle
$456$ has separation orders $(1,1)$. Consequently
\[
 \begin{array}{c|ccc}
 ij&\ord z_{ij}&\ord w_{ij}&\ord\delta_{ij}\\ \hline
 12,24,26&1&-2&1/2\\
 25&s&-2&(2-s)/2\\
 14,15,34,36&1&b&k:=(3s-5)/2.
 \end{array}
\]
For example, $z_{12}=z_{65}+z_{16}-z_{23}-z_{35}$ has
the nonzero order-one coefficient of $z_{65}$, while
$w_{15}=w_{16}+w_{65}$ has the nonzero order-$b$ coefficient
of $w_{16}$. The other entries follow by the same difference
identities. The five force orders in the original $z_2$ row
are $5/2,2,5/2,3-s/2,5/2$, in body order $1,3,4,5,6$.
Thus
\begin{equation}\label{eq:no50-position-orders}
 z_2,z_3=O(\varepsilon^2),\qquad z_5=O(\varepsilon^s),\qquad
 z_1=\zeta\varepsilon+o(\varepsilon),\qquad \zeta\ne0.
\end{equation}
In particular $z_{13}=\zeta\varepsilon+o(\varepsilon)$, and
all six $z$ positions are $O(\varepsilon)$.

Suppose first that $1<s<7/4$, and put
$h=4s-6<1$ and $t=(s-1)/3>0$.
The $16$ force in the original $z_1$ row has order $h$.
The coordinate term has order $1$, the $12$ force has order
$5/2$, and the $14,15$ forces have order
$(9s-13)/2=h+(s-1)/2>h$. Hence the $13$ force must have
order $h$ as well. Its signed tie determines
\begin{equation}\label{eq:no50-resonance-orders}
 \ord\delta_{13}=q:=\frac{4s-7}{3}=a+t<0,\qquad
 \ord w_{13}=\frac{11-8s}{3}=b+t.
\end{equation}
The exact identity $w_{13}=w_{16}-w_{35}+w_{65}$, with
$\ord w_{65}=1>b+t$, shows that $w_{16}$ and $w_{35}$
have the same nonzero leading coefficient $u$. Write
\[
 \begin{aligned}
 \delta_{16}&=\varepsilon^a A(\varepsilon),&A(0)&=A_0\ne0,\\
 \delta_{35}&=\varepsilon^a D(\varepsilon),&D(0)&=D_0\ne0,\\
 w_{16}&=u\varepsilon^b(1+o(1)),&
 w_{13}&=v\varepsilon^{b+t}(1+o(1)),\\
 \delta_{13}&=E_0\varepsilon^q(1+o(1)).&&
 \end{aligned}
 \qquad uvE_0\ne0.
\]
It follows for the full Puiseux functions that
\begin{equation}\label{eq:no50-w-ratio}
 \frac{w_{35}}{w_{16}}
 =1-\frac vu\varepsilon^t+o(\varepsilon^t).
\end{equation}
In the sum of the original $z_1,z_3$ rows weighted by
$m_1,m_3$, the $13$ force cancels exactly. At order $h$,
the remaining two terms and their signed ties give
\begin{equation}\label{eq:no50-leading-cancellation}
 L_0:=m_1m_6A_0+m_3m_5D_0=0.
\end{equation}
The original $z_1$ row itself, using
$\delta_{ij}^3z_{ij}=\delta_{ij}/w_{ij}$, gives
\begin{equation}\label{eq:no50-E-coefficient}
 \frac{m_6A_0}{u}+\frac{m_3E_0}{v}=0,
 \qquad m_1m_3E_0=m_3m_5D_0\frac vu.
\end{equation}

The $z$ equations above are used at order $h$. We determine
the next potential coefficient from the original $w$ rows
through order $-2+t$. Put
$T=m_1+m_4+m_5+m_6$. The exterior $w$ forces other than
$16,23,35$ have the following orders:
\[
 \begin{array}{c|c}
 ij&\ord(\delta_{ij}^3w_{ij})\\ \hline
 12,24,26&-1/2\\
 13&-2+4t\\
 14,15,34,36&-2+9t/2\\
 25&1-3s/2.
 \end{array}
\]
All these orders exceed $-2+t$; for the last entry the
difference is $(20-11s)/6>0$. The original $w_1$ row and
the sum of the original $w_2,w_3$ rows weighted by $m_2,m_3$
therefore yield
\[
 \begin{aligned}
 w_1&=m_6\delta_{16}^3w_{16}+o(\varepsilon^{-2+t}),\\
 m_2w_2+m_3w_3
 &=m_3m_5\delta_{35}^3w_{35}+o(\varepsilon^{-2+t}).
 \end{aligned}
\]
The internal $23$ force cancels in the second equation.
The full mass centre also gives
\[
 m_2w_2+m_3w_3=-Tw_1+O(\varepsilon^b),
 \qquad b-(-2+t)=\frac{19-10s}{3}>0.
\]
Consequently
\begin{equation}\label{eq:no50-full-w-relation}
 Tm_6\delta_{16}^3w_{16}
 +m_3m_5\delta_{35}^3w_{35}
 =o(\varepsilon^{-2+t}).
\end{equation}
Since $3a+b=-2$, division by $\delta_{16}^3w_{16}$ gives
\[
 \left(\frac DA\right)^3\frac{w_{35}}{w_{16}}
 =-\frac{Tm_6}{m_3m_5}+o(\varepsilon^t),\qquad
 \left(\frac{D_0}{A_0}\right)^3=-\frac{Tm_6}{m_3m_5}.
\]
Taking the cube root selected by the actual Puiseux branch
and using \eqref{eq:no50-w-ratio}, we obtain
\begin{equation}\label{eq:no50-phase-ratio}
 \frac{D/A}{D_0/A_0}
 =1+\frac{v}{3u}\varepsilon^t+o(\varepsilon^t).
\end{equation}
Thus \eqref{eq:no50-leading-cancellation} and
\eqref{eq:no50-E-coefficient} imply
\begin{align}
 m_1m_6\delta_{16}+m_3m_5\delta_{35}
 &=\varepsilon^a A\left(m_1m_6+m_3m_5\frac DA\right)\nonumber\\
 &=\frac13m_1m_3E_0\varepsilon^q+o(\varepsilon^q).
 \label{eq:no50-pair-potential}
\end{align}
This calculation retains every earlier correction to the full
functions $A$ and $D$.

We next control the complete internal triangle potential.
Multiply the original $z_5,z_6$ rows by $m_5w_{54},m_6w_{64}$
and sum. The signed ties give the exact identity
\begin{equation}\label{eq:no50-internal-potential}
 U_{456}
 =\sum_{i\in\{5,6\}}m_iz_iw_{i4}
 -\sum_{i\in\{5,6\}}\sum_{j\in\{1,2,3\}}
 m_im_j\delta_{ij}^3z_{ij}w_{i4}.
\end{equation}
The first sum is $O(\varepsilon^2)$. In the second sum, the
$16,35$ $z$ forces have order $h$, the $15,36$ forces have
order $h+(s-1)/2$, the $25$ force has order $3-s/2$, and
the $26$ force has order $5/2$. Since $w_{54},w_{64}$ have
order $1$,
\[
 U_{456}=O(\varepsilon^2)+O(\varepsilon^{4s-5})
 =o(\varepsilon^q),\qquad (4s-5)-q=\frac{8(s-1)}3>0.
\]
All remaining exterior potentials are later as well: their
orders are $k,1/2,(2-s)/2,0$, and
$k-q=(s-1)/6>0$. The unmarked $13$ potential must, however,
be retained at order $q$. Combining it with
\eqref{eq:no50-pair-potential} yields the complete expansion
\begin{equation}\label{eq:no50-complete-potential}
 U=\frac43m_1m_3E_0\varepsilon^q+o(\varepsilon^q).
\end{equation}
Its coefficient is nonzero and $q<0$, contrary to constancy
of $U$. This excludes the whole range $1<s<7/4$, including
the first-limit window $13/8<s<7/4$.

At $s=7/4$, \eqref{eq:no50-position-orders} still holds,
while $a=-1/4$, $b=-5/4$ and $h=1$. The original $z_3$ row
has coordinate term $O(\varepsilon^2)$ and a nonzero
order-one $35$ force. Its only possible order-one counterterm
is $13$. Thus $\ord\delta_{13}=0$ and $\ord w_{13}=-1>b$,
so $w_{16},w_{35}$ again share their leading coefficient $u$.
Equation~\eqref{eq:no50-internal-potential} gives
$U_{456}=O(\varepsilon^2)$, and all other exterior potentials
are bounded. Constancy of $U$ forces its order-$-1/4$
coefficient $L_0=m_1m_6A_0+m_3m_5D_0$ to vanish. But the
original weighted $z_1+z_3$ rows at order $1$ give
$m_1\zeta=L_0/u$, contradicting $\zeta\ne0$.

Finally, for $7/4<s<2$, the same weighted $z_1+z_3$ rows
cancel the $13$ force exactly. Their left side has the
nonzero order-one coefficient $m_1\zeta$; their $16,35$
forces have order $h>1$, their $23$ force has order $2$,
and every other force is strictly later than order $1$.
This is again impossible.

\subsection{No.~32: the missing 12 tie contradicts body 2}\label{sec:no32-tie}
The original $w$-centre of its $zw$ triangle $456$ and $w$-only triangle $236$ makes the three $236$ $w$-strokes maximal, with nonzero signed internal inverse-distance limits. The original body-3 $z$-row and its three maximal ties then give $z_2,z_3,z_6=O(\varepsilon^2)$, without using the unmarked 12 edge. With $R_i$ the original body-$i$ $z$-force excluding pair 12, the graph gives $R_1=O(\varepsilon^2)$, $R_2=O(\varepsilon^2)$ and $w_{12}=o(\varepsilon^{-2})$. The exact equation in Lemma~\ref{lem:reported-finite-pair} forces a finite nonzero $e=\lim\delta_{12}$ with $e^3=1/(m_1+m_2)$. Only now apply the original body-2 $z$-row: its 12 force and $z_2,R_2=O(\varepsilon^2)$ force $z_{12}=O(\varepsilon^2)$. But the original signed 12 tie has
\[
\delta_{12}^2z_{12}w_{12}=o(1)
\]
instead of $1$. This excludes even fixed-positive-mass complex germs of No.~32.

\subsection{No.~84: optional outer edges and a four-cycle}\label{sec:no84-cycle}
Its marked pairs are $14,23,56$, each maximal. An unmarked outer edge with coinciding leading $w$-values may also have a finite inverse distance; such equal-lead outer edges form a matching, not an arbitrary four-edge set. For each possible one, the original row difference and tie meet Lemma~\ref{lem:reported-finite-pair}, so its limit $e_{ij}$ satisfies $e_{ij}^3=(m_i+m_j)^{-1}$.

Every allowed matching lies in one of the two four-cycles containing 14 and 23. The other crossed distances fade, while $P_{14}$ has order $-1$ and would be a first-limit pole. Writing the chosen cycle's masses $(a,b,c',d)$, put $Q=a+b+c'+d$ and $D=ac'+bd$. The exact rational cycle identity
\begin{equation}\label{eq:no84-cycle-deficit}
2D-\sum_{xy\text{ on cycle}}\frac{xy(Q-x-y)}{x+y}
=(ac'-bd)^2\left(\frac1{(a+b)(c'+d)}+\frac1{(a+d)(b+c')}\right)\ge0
\end{equation}
and $\sum xy/(x+y)\le Q/2$ give, after taking the modulus of every signed cube root, $M^{1/3}c<\sigma-D/3-3QB/4<\sigma$. The positive point instead requires $\sigma\le M^{1/3}c$ by H\"older. Thus no optional finite-edge pattern survives.

\subsection{No.~36: two matches and the hidden 23 edge}\label{sec:no36-matches}
Body 1 is uncircled and has no strokes. If neither 26 nor 35 is maximal, the five circled $w$-leads violate the original $w$-centre. If only one is maximal, the original row at its outer endpoint and the matched tie put its inner endpoint at $O(\varepsilon^2)$; the still-unmarked 16 or 15 tie would then tend to zero instead of $1$.

Suppose both matches $26,35$ are maximal. Their signed ties give $z_{26},z_{35}=O(\varepsilon^2)$ and nonzero finite limits for $\delta_{26},\delta_{35}$. Write $W_T\ne0$ for the common inner $w$-lead. Body 1 is uncircled, while all other vertices have nonzero $w$-leads. Every $w_{1j}$ therefore has order $-2$. Since none of these edges has a $w$ stroke, its original $w$ force is $o(\varepsilon^{-2})$, whence
\[
 \delta_{1j}=o(1),\qquad
 \delta_{1j}^3z_{1j}=\frac{\delta_{1j}}{w_{1j}}
 =o(\varepsilon^2),\qquad z_1=o(\varepsilon^2).
\]

We first show that $W_2=W_3$. Otherwise every nonmatched force in the original $z_2,z_3$ rows has a $w$ separation of order $-2$ and no $w$ stroke, so it is $o(\varepsilon^2)$ by its signed tie. The matched forces are $O(\varepsilon^2)$; hence $z_2,z_3=O(\varepsilon^2)$. The matched ties would imply $z_6,z_5=O(\varepsilon^2)$, contradicting the exact order-one difference $z_{65}$. Put $W_B=W_2=W_3$. Maximality gives $W_B\ne W_T$, and the original centre gives
\[
 BW_B+TW_T=0,\qquad
 B=m_2+m_3,\qquad T=m_4+m_5+m_6.
\]
Every nonmatched edge from $\{2,3\}$ to $\{4,5,6\}$ has $z$-order $1$, $w$-order $-2$, inverse-distance order $1/2$ and $z$-force order $5/2$. All $z$ differences among these five vertices are $O(\varepsilon)$. Their original mass centre, together with $z_1=o(\varepsilon^2)$, gives $z_a=O(\varepsilon)$ for $a=2,\ldots,6$.

Apply Lemma~\ref{lem:reported-reference-triangle} to $456$, with reference vertex 4. The reference and relative-inertia terms are $O(\varepsilon^2)$. The two matched exterior terms are $O(\varepsilon^3)$, the nonmatched exterior terms from bodies 2 and 3 are $O(\varepsilon^{7/2})$, and those from body 1 are $o(\varepsilon^3)$. Therefore $U_{456}=O(\varepsilon^2)$.

The difference $z_{23}=z_{65}+O(\varepsilon^2)$ has exact order $1$. Subtracting the original $z_2,z_3$ rows gives
\[
 (1-B\delta_{23}^3)z_{23}=O(\varepsilon^2),
\]
so $\delta_{23}\to e\ne0$ and $e^3=B^{-1}$. At order $-2$, the original $w_2,w_3$ rows contain only their matched forces: the hidden $23$ force is of strictly later order, the other outer--inner forces have order $-1/2$, and the forces from body 1 are $o(\varepsilon^{-2})$. Hence
\[
 W_B=m_6d_{26}^3(W_B-W_T),\qquad
 W_B=m_5d_{35}^3(W_B-W_T),
\]
which yields
\[
 d_{26}^3=\frac{T}{m_6(B+T)},\qquad
 d_{35}^3=\frac{T}{m_5(B+T)}.
\]
All five inverse distances incident to body 1 and all four nonmatched outer--inner inverse distances tend to zero. Together with the internal total-potential estimate, this accounts for all fifteen pairs and proves
\[
 c=\lim U=m_2m_3e+m_2m_6d_{26}+m_3m_5d_{35}.
\]

With $X=m_2m_3$, $C=B+T$ and
\[
 \sigma_C=X+BT+m_4m_5+m_4m_6+m_5m_6,
\]
the relevant mass identity is
\begin{equation}\label{eq:no36-pair-mass-gap}
 (B-X/B)^3-(m_2^3+m_3^3)
 =\frac{X^2}{B}\left(3-\frac{X}{B^2}\right)>0.
\end{equation}
Here $X/B^2\le1/4$, so the strict positivity follows directly. Since
\[
 c=Xe+m_2m_6d_{26}+m_3m_5d_{35},
\]
we take the modulus of each signed root and use their specified cubes to obtain
\[
 C^{1/3}|c|
 \le X(C/B)^{1/3}
    +T^{1/3}\bigl(m_2m_6^{2/3}+m_3m_5^{2/3}\bigr).
\]
H\"older's inequality and $m_5+m_6<T$ give
\[
 m_2m_6^{2/3}+m_3m_5^{2/3}
 \le(m_2^3+m_3^3)^{1/3}(m_5+m_6)^{2/3}
 <(m_2^3+m_3^3)^{1/3}T^{2/3}.
\]
Combining this with \eqref{eq:no36-pair-mass-gap} and $(1+t)^{1/3}<1+t/3$ for $t>0$, we find
\[
\begin{aligned}
 C^{1/3}|c|
 &<X(1+T/B)^{1/3}+T(B-X/B)\\
 &<X+BT-\frac{2XT}{3B}<X+BT<\sigma_C.
\end{aligned}
\]
Finally $M=C+m_1$, $\sigma=\sigma_C+m_1C$ and $\sigma_C<C^2/2$. A second application of cube-root concavity yields
\[
 M^{1/3}|c|
 <\left(1+\frac{m_1}{3C}\right)\sigma_C
 <\sigma_C+\frac{m_1C}{6}<\sigma.
\]
At the positive point $c>0$, so this contradicts physical H\"older.
\begin{lemma}[The two-edge wing energy gap]\label{lem:excluded-two-edge-gap}
Let $S=m_2+m_3+m_4$, $Q=m_1+S$, $B=m_5+m_6$, $M=Q+B$, $\mu=m_5m_6$ and $\tau=m_2m_3+m_2m_4+m_3m_4$. If a limit has only finite edges $14,56$, with $d_{14}^3=S/(m_4Q)$ and $d_{56}^3=1/B$, then for every signed phase
\begin{equation}\label{eq:two-edge-wing-gap}
M^{1/3}|c|<\sigma-\tau-\frac56QB<\sigma,\qquad \sigma=m_1S+\tau+QB+\mu.
\end{equation}
\end{lemma}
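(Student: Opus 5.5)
The plan is a direct modulus estimate, in the style of the cube-root concavity bounds used for No.~36. Since only the edges $14$ and $56$ are finite, the limit of the complete potential is
\[
 c=m_1m_4d_{14}+m_5m_6d_{56}.
\]
Taking the modulus of each signed cube root and using the prescribed cubes gives
\[
 |d_{14}|=\Bigl(\frac{S}{m_4Q}\Bigr)^{1/3},\qquad |d_{56}|=B^{-1/3}.
\]
The triangle inequality then yields
\[
 M^{1/3}|c|\le m_1m_4^{2/3}S^{1/3}\Bigl(\frac MQ\Bigr)^{1/3}+\mu\Bigl(\frac MB\Bigr)^{1/3}.
\]
This bound holds for every phase, which is why the lemma is stated "for every signed phase". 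I would bound the two terms separately and add.

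For the $56$ term, I write $M/B=1+Q/B$ and use the strict inequality $(1+t)^{1/3}<1+t/3$ for $t>0$. This gives $\mu(M/B)^{1/3}<\mu+\mu Q/(3B)$. AM--GM gives $\mu\le B^2/4$, so this is at most $\mu+QB/12$.

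For the $14$ term, first $m_4^{2/3}S^{1/3}\le S$, because $m_4\le S$. Then $(M/Q)^{1/3}=(1+B/Q)^{1/3}<1+B/(3Q)$, so the term is less than $m_1S+m_1SB/(3Q)$. Since $m_1+S=Q$, AM--GM gives $m_1S\le Q^2/4$, and the excess is at most $QB/12$.

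Adding the two estimates gives
\[
 M^{1/3}|c|<m_1S+\mu+\tfrac16QB.
\]
It remains to check the expression for $\sigma$. The pairs split as follows:
\begin{itemize}
\item pairs between body $1$ and $\{2,3,4\}$, contributing $m_1S$;
\item pairs inside $\{2,3,4\}$, contributing $\tau$;
\item pairs between $\{1,\dots,4\}$ and $\{5,6\}$, contributing $QB$;
\item the pair $56$, contributing $\mu$.
\end{itemize}
Hence $\sigma=m_1S+\tau+QB+\mu$. Then $m_1S+\mu+\tfrac16QB=\sigma-\tau-\tfrac56QB$, which is strictly less than $\sigma$ because $QB>0$.

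There is no serious obstacle. The only point needing care is keeping the inequalities strict and phase-free: strictness comes from $B/Q>0$ in the concavity step. The two AM--GM steps and $m_4\le S$ may be equalities, but they do not affect strictness.
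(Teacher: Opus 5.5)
Your proposal is correct and follows the paper's proof essentially step for step. Both apply the triangle inequality to $c=m_1m_4d_{14}+m_5m_6d_{56}$, then bound each term using $m_4\le S$, the two AM--GM bounds $m_1S\le Q^2/4$ and $\mu\le B^2/4$, and the strict concavity $(1+t)^{1/3}<1+t/3$, which yields $m_1S+\mu+\tfrac16QB=\sigma-\tau-\tfrac56QB$.
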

\begin{proof}
Triangle inequality first gives $|c|\le v+\mu/B^{1/3}$ with $v=m_1m_4^{2/3}(S/Q)^{1/3}$. Use $m_4<S$, $m_1S\le Q^2/4$, $\mu\le B^2/4$, and $(1+t)^{1/3}<1+t/3$ twice to obtain $M^{1/3}v<m_1S+QB/12$ and $M^{1/3}\mu/B^{1/3}<\mu+QB/12$. Their sum leaves the stated strict deficit.
\end{proof}

\subsection{No.~86: the pure $w$-triangle wing}\label{sec:no86-wing}
The three pure-$w$ edges of $234$ have a common $z$-order $\alpha\in(1,2]$; the attachment 14 has order $\gamma\in(1,2]$. At $(2,2)$ maximal 14 and 24 require a missing 12 stroke; with both orders below 2 all four outer $w$-leads coincide, contrary to their centre. For $\alpha=2$, $\gamma<2$, the original body-1 $z$-row has the 14 force uniquely earliest at $4\gamma-6$.

\paragraph{No.~86: the range $1<\alpha<2$, $\gamma=2$.}
Put $T=\{2,3,4\}$ and $b=4-3\alpha$.
The internal separation orders are $(\alpha,b)$ and the pendant $14$ is maximal, with orders $(2,-2)$.
The original diagram has $w$ circles only at $1,2,3,4$ and $z$ circles only at $5,6$.
Therefore every pair from $\{1,2,3,4\}$ to $\{5,6\}$ has orders $(-2,-2)$, inverse-distance order $2$, and $z$-force order $4$.
The pairs $12,13$ have orders $(\alpha,-2)$, so their $z$-force order is $3-\alpha/2>2$.
The original body-$1$ $z$ row consequently gives
\[
 z_1=O(\varepsilon^2),\qquad z_4=O(\varepsilon^2).
\]
Use the complete reference-body identity with reference $4$ and sum the original rows only at $2,3$.
Its reference and relative terms have orders at least $6-3\alpha$ and $4-2\alpha$.
Its exterior terms involving body $1$ have order at least
$3-\alpha/2+b=7-7\alpha/2$; those involving bodies $5,6$ have order at least $4+b=8-3\alpha$.
Thus
\begin{equation}\label{eq:no86-complete-potential}
 U_T=O(\varepsilon^{6-3\alpha})
     +O(\varepsilon^{4-2\alpha})
     +O(\varepsilon^{7-7\alpha/2})
     +O(\varepsilon^{8-3\alpha})\longrightarrow0.
\end{equation}
Every exponent is strictly positive on $1<\alpha<2$.
The $12,13$ inverse distances tend to zero, and the eight cross-block inverse distances are $O(\varepsilon^2)$.
The isolated $z$-circled pair $56$ is maximal: its leading $z$ centre makes the two circled values distinct, and its signed tie then gives $\ord w_{56}=2$.
Subtracting its original $z$ rows shows $d_{56}^3=1/(m_5+m_6)$, since all exterior $z$-forces have order $4$.
Similarly the original body-$1$ $w$ row and the leading $w$ centre give
\[
 d_{14}^3=\frac{m_2+m_3+m_4}{m_4(m_1+m_2+m_3+m_4)}.
\]
The complete potential limit therefore contains exactly the two finite terms
$c=m_1m_4d_{14}+m_5m_6d_{56}$.
Lemma~\ref{lem:excluded-two-edge-gap} now contradicts physical H\"older.

\subsection{No.~95: the two-colour triangle wing}\label{sec:no95-wing}
Unlike No.~86, the $zw$ triangle $234$ has internal $z$- and $w$-orders both equal to $1$. A nonmaximal attachment $14$ would make all four circled $w$-leads equal, contrary to their original mass centre. Thus $14$ is maximal and $z_{14}=O(\varepsilon^2)$, while the unmarked edges $12,13$ have $z$-order $1$, $w$-order $-2$, and inverse-distance order $1/2$. Every crossing from $1234$ to the isolated $z$-circled pair $56$ has both separation orders $-2$, inverse-distance order $2$, and $z$-force order $4$. The original body-1 $z$ row consequently gives $z_1=O(\varepsilon^2)$ and then $z_4=O(\varepsilon^2)$.

In the reference-4 identity of Lemma~\ref{lem:reported-reference-triangle}, the reference term has order at least $3$, the relative term order $2$, the exterior terms from body 1 order $7/2$, and those from bodies 5 and 6 order $5$. Hence $U_{234}=O(\varepsilon^2)$. All exterior inverse distances tend to zero except those on $14,56$. The original body-1 $w$ row and the $w$ centre give $d_{14}^3=S/(m_4Q)$; subtracting the original $z_5,z_6$ rows gives $d_{56}^3=1/B$. Therefore Lemma~\ref{lem:excluded-two-edge-gap} applies and yields $M^{1/3}|c|<\sigma$, contrary to physical H\"older.

\subsection{No.~77: three matched pairs cannot feed two triangles}\label{sec:no77-matching}
The two $z$-only triangles 123,456 have common inner orders $\alpha\in(-2,1)$ in $z$ and $\beta=(4-\alpha)/3>1$ in $w$. Each matching $w$-pair $16,25,34$ is maximal and has a finite signed root $d_{i,p(i)}^3=1/(m_i+m_{p(i)})$ by its two original $w$-rows. Contract each triangle's original $z$-rows against its relative $w$-coordinates: the signed inner potential has positive lower orders $(4+2\alpha)/3,\beta+2$, and $\beta+3-\alpha/2$, and tends to zero. All six nonmatched crossings fade.

For pair masses $M_i=m_i+m_{p(i)}$ and products $\mu_i=m_im_{p(i)}$, only these three bridges remain in $c=\sum_i\mu_id_{i,p(i)}$. Strict concavity and $\mu_i\le M_i^2/4$ give
\begin{equation}\label{eq:no77-matching-gap}
M^{1/3}c\le M^{1/3}|c|<\sum_i\mu_i+\frac1{12}\sum_i M_i(M-M_i)=\sigma-\frac56\sum_{i<j}M_iM_j<\sigma.
\end{equation}
At the positive point physical H\"older gives $\sigma\le M^{1/3}c$, a contradiction on every signed matching phase. The product $P_{14}$ would have order $16(\alpha-1)/3<0$; thus the argument genuinely excludes a first-limit pole.

\subsection{No.~61: opposite signed cube ratios}\label{sec:no61-sign}
Its $z$-only strokes are 34,56, $zw$ edges 35,46 and maximal $w$-only edges 16,25. The $z$-only differences 34 and 56 share one nonzero lead; their $w$ differences have the same order $\beta=(4-\alpha)/3>1$, $-2<\alpha<1$. The identity
\[
w_{35}-w_{46}=w_{34}-w_{56}=O(\varepsilon^\beta)=o(\varepsilon)
\]
shows that the order-one differences $w_{35}$ and $w_{46}$ have the same nonzero leading coefficient $v$. The original leading $z_3,z_4,w_3,w_4$ rows and the two $zw$-edge ties, with signed roots $A=D_{35}$, $B=D_{46}$ and nonzero shape leads $x,y,v,S,H$, give
\begin{equation}\label{eq:no61-source-rows}
\begin{aligned}
m_4S+m_5A^3x&=0,&-m_3S+m_6B^3y&=0,\\
H&=m_5A^3v,&H&=m_6B^3v,\\
A^2xv&=1,&B^2yv&=1.
\end{aligned}
\end{equation}
All unmarked 36,45 terms are strictly later than these source rows throughout $-2<\alpha<1$. Eliminating the nonzero shape leads gives $B/A=-m_3m_5/(m_4m_6)<0$, whereas the two $w$-rows give $(B/A)^3=m_5/m_6>0$. The equations are incompatible for positive masses, independently of the complex signed-root choices.

\begin{lemma}[The crossed four-cycle deficit]\label{lem:excluded-cycle}
Let $a,b,c',d$ be positive masses around a four-cycle, $Q=a+b+c'+d$, $D=ac'+bd$ and $S=ab+bc'+c'd+da$. Let the outer pair have mass $A=m_1+m_2$ and product $X=m_1m_2$, and set $M=A+Q$, $\sigma=X+S+D+AQ$. If only the four cycle edges and the outer pair have finite inverse-distance limits with cubes equal to the inverse sums of endpoint masses, then for every signed phase $M^{1/3}|c|<\sigma$.
\end{lemma}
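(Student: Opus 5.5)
The plan is a direct term-by-term estimate: bound $|c|$ by the triangle inequality, linearize each cube-root factor by strict concavity, and absorb the resulting corrections into the missing summands $D$ and $AQ$ of $\sigma$.

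\emph{Step 1 (moduli of the signed roots).} By hypothesis
\[
c=Xe+\sum_{xy\text{ on cycle}}xy\,d_{xy},\qquad e^3=A^{-1},\qquad d_{xy}^3=(x+y)^{-1}.
\]
Whatever the signed phases, $|e|=A^{-1/3}$ and $|d_{xy}|=(x+y)^{-1/3}$. The triangle inequality therefore gives
\[
M^{1/3}|c|\le X\Bigl(\tfrac MA\Bigr)^{1/3}+\sum_{xy}xy\Bigl(\tfrac M{x+y}\Bigr)^{1/3}.
\]

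\emph{Step 2 (concavity).} Write $M/A=1+Q/A$ and $M/(x+y)=1+(M-x-y)/(x+y)$. Both increments are strictly positive, so $(1+t)^{1/3}<1+t/3$ applies strictly and yields
\[
M^{1/3}|c|<X+\frac{XQ}{3A}+S+\frac13\sum_{xy}\frac{xy(Q-x-y)}{x+y}+\frac A3\sum_{xy}\frac{xy}{x+y}.
\]
Here I used $M-x-y=(Q-x-y)+A$.

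\emph{Step 3 (mass inequalities).}
\begin{itemize}
\item First, $X\le A^2/4$ gives $XQ/(3A)\le AQ/12$.
\item Second, $xy/(x+y)\le(x+y)/4$ summed around the cycle gives $\sum xy/(x+y)\le Q/2$, because each vertex lies on two cycle edges. Hence the last term is at most $AQ/6$.
\item Third, the exact rational four-cycle identity \eqref{eq:no84-cycle-deficit} has a nonnegative right side, so $\sum_{xy}xy(Q-x-y)/(x+y)\le 2D$.
\end{itemize}
Combining these,
\[
M^{1/3}|c|<X+S+\tfrac23D+\tfrac14AQ=\sigma-\tfrac13D-\tfrac34AQ<\sigma,
\]
which is the claim. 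It is the same deficit shape as in No.~84.

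\emph{Main obstacle.} The only nontrivial input is the cycle identity \eqref{eq:no84-cycle-deficit}. Without it, the crude bound $xy(Q-x-y)/(x+y)\le(x+y)(Q-x-y)/4$ produces cross terms of size comparable to $D$ plus the adjacent products. Those can exceed the available slack $D$, since $S$ is already used up by the leading terms.

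I would verify the identity by clearing denominators in the two opposite-edge pairs. For the opposite edges $ab$ and $c'd$, the combination $ab(c'+d)/(a+b)+c'd(a+b)/(c'+d)$ minus $ac'+bd$ reduces to $(ac'-bd)^2/((a+b)(c'+d))$ after a short computation, and similarly for the pair $bc',da$. Summing the two gives the stated identity.
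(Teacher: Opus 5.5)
Your proposal is correct and is essentially the paper's proof: you use the triangle inequality on the five signed roots, strict concavity $(1+t)^{1/3}<1+t/3$, the four-cycle identity giving $\sum xy(Q-x-y)/(x+y)\le 2D$, and the bounds $\sum xy/(x+y)\le Q/2$ and $X/A\le A/4$, arriving at the same bound $X+S+2D/3+AQ/4=\sigma-D/3-3AQ/4$. One sign slip in your verification sketch: it is $(ac'+bd)-\bigl[ab(c'+d)/(a+b)+c'd(a+b)/(c'+d)\bigr]$ that equals $(ac'-bd)^2/((a+b)(c'+d))$ (and similarly for the pair $bc',da$), which is the direction your inequality needs.
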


\begin{proof}
The exact rational identity
\begin{equation}\label{eq:four-cycle-deficit}
2D-\sum_{xy\text{ on cycle}}\frac{xy(Q-x-y)}{x+y}=(ac'-bd)^2\left(\frac1{(a+b)(c'+d)}+\frac1{(a+d)(b+c')}\right)\ge0
\end{equation}
and $\sum xy/(x+y)\le Q/2$, $X/A\le A/4$ combine with strict cube-root concavity after the triangle inequality to give $M^{1/3}|c|<X+S+2D/3+AQ/4=\sigma-D/3-3AQ/4<\sigma$.
\end{proof}

\subsection{No.~100: the marked outer pair on a four-cycle}\label{sec:no100-cycle}
The original inner four-body rows make $z$-only 34,56 and $w$-only 36,45 maximal. Their four relative source rows give finite signed roots $d_{ij}^3=1/(m_i+m_j)$ for each marked cycle edge; the unmarked inner diagonals have both coordinate differences maximal and their inverse distances tend to zero. The isolated circled $w$-only pair 12 is also maximal and its original relative row gives $d_{12}^3=1/(m_1+m_2)$. H\"older after Lemma~\ref{lem:excluded-cycle} gives a further condition $m_3m_5=m_4m_6$, but it does not avoid Lemma~\ref{lem:excluded-cycle}: $M^{1/3}|c|<\sigma$ contradicts physical H\"older at the positive point. Here order $P_{12}=-8$, so a pole was possible before that energy contradiction.

\subsection{No.~99: an unmarked outer pair on the same cycle}\label{sec:no99-cycle}
The same four marked inner edges have the same finite cube roots as No.~100. Pair 12 is not marked and cannot be discarded. With $R_i$ the original $z$-forces at bodies 1,2 excluding 12, all eight outer-to-inner signed ties make $R_1=O(\varepsilon^4)$, while $w_{12}=o(\varepsilon^{-2})$. The exact row/tie equation of Lemma~\ref{lem:reported-finite-pair} forces $d_{12}^3=1/(m_1+m_2)$, and every complete phase contributes to the same five finite edges and no others, contributing to $c$. Lemma~\ref{lem:excluded-cycle} again gives $M^{1/3}|c|<\sigma$, contradicting H\"older only at the positive point of the same curve. The missing 12 edge changes the proof of its finite limit, not the final strict mass gap.

\begin{lemma}[Two original-row poles cannot cancel]\label{lem:excluded-two-poles}
Suppose the only signed inverse-distance poles are 25,36 with a common order and common nonzero leading $z$-difference $t$. If the original body-2/3 $z$-rows have no other leading force, their signed coefficient satisfies $m_5D_{25}^3t=m_6D_{36}^3t\ne0$. On a fixed-mass curve finite constant $U$ would require $m_2m_5D_{25}+m_3m_6D_{36}=0$. At positive masses these cannot hold simultaneously.
\end{lemma}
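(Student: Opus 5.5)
The plan is to combine the two displayed conditions directly, as in the No.~105 sketch of Section~\ref{sec:no105-sign}. Put $D_{25}=\lim\varepsilon^{-p}\delta_{25}$ and $D_{36}=\lim\varepsilon^{-p}\delta_{36}$, where $p<0$ is the common pole order. By hypothesis both coefficients are nonzero.

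First, I would read off the row condition. In the original body-2 $z$ row, the leading term is the $25$ force $m_5\delta_{25}^3z_{25}$. In the body-3 row it is $m_6\delta_{36}^3z_{36}$. Both have the common leading difference $t\neq0$. Since no other force is leading, the two rows must be balanced by the same left-hand quantity: either the common leading position, or the weighted combination that the hypothesis fixes. This gives $m_5D_{25}^3t=m_6D_{36}^3t$. Dividing by $t\ne0$ yields
\[
\left(\frac{D_{36}}{D_{25}}\right)^3=\frac{m_5}{m_6}>0 .
\]
So the ratio $\rho=D_{36}/D_{25}$ is $(m_5/m_6)^{1/3}$ times a cube root of unity.

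Second, I would use constancy of $U$. Since $U$ is finite on the curve and only $25,36$ carry poles, the coefficient of $\varepsilon^{p}$ in $U$ must vanish. That coefficient is $m_2m_5D_{25}+m_3m_6D_{36}$. Setting it to zero gives
\[
\rho=-\frac{m_2m_5}{m_3m_6},
\]
which is a negative real number at positive masses.

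Third, I would compare the two conditions. Cubing the second gives $\rho^3<0$, while the first gives $\rho^3=m_5/m_6>0$. This is a contradiction for every choice of signed complex roots, because the argument only uses $\rho^3$.

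The main obstacle is the first step. One must check that the two original rows really pair their leading terms with a common left-hand side, so that the coefficient identity has no extra sign. I would justify this by the hypothesis that there is no other leading force, together with the common leading difference $t$.
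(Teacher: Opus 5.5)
Your proposal is correct and follows the paper's proof essentially verbatim: the source rows give $(D_{36}/D_{25})^3=m_5/m_6>0$, cancellation of the only two potential poles in $U$ gives $D_{36}/D_{25}=-m_2m_5/(m_3m_6)<0$, and cubing produces the contradiction for every choice of complex signed roots. The step you flag as the main obstacle needs no further justification inside the lemma, because the paper takes the source-row identity $m_5D_{25}^3t=m_6D_{36}^3t$ as part of the statement. In the applications (No.~109, 112) that identity comes from bodies 2 and 3 sharing a leading $z$-coefficient.
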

\begin{proof}
The source rows give $(D_{36}/D_{25})^3=m_5/m_6>0$, whereas cancellation of the only two potential poles gives $D_{36}/D_{25}=-m_2m_5/(m_3m_6)<0$. Cubing a negative real number contradicts the positive source ratio. The roots need not be real before the potential equation imposes that ratio.
\end{proof}

\subsection{No.~112: the $zw$-edge double pole}\label{sec:no112-poles}
The original circled-component rules force maximal $z$-edges 15,46 and maximal $w$-only edges 14,23,56. The two $zw$ edges 25,36 have common order $(1,1)$ and their inverse distances both have order $-1$. The original ties make the unmarked edges 12,34,26,35 fade; all other exterior inverse distances are finite or fade. The original body-2/3 $z$-rows each have only their respective 25/36 force at order $-2$. Their $z$-differences have the same nonzero leading coefficient because the maximal 23 and 56 differences are $O(\varepsilon^2)$. Lemma~\ref{lem:excluded-two-poles} excludes this sole maximality branch, even on a fixed-mass complex solution curve.
\subsection{No.~109: both reflected double-pole branches}\label{sec:no109-poles}
The three isolated circled $w$-pairs 14,23,56 are maximal. Of the $z$-strokes, Rule 1 and the missing 12 and 34 closing edges allow only 15,46 or 25,36 maximal. In the first case the nonmaximal 25/36 $z$-order is common $\alpha\in(-2,1)$, while their inverse distance has pole order $-(\alpha+2)/3$. The original body-2/3 $z$-rows again have only these two earliest forces; all twelve other inverse distances are finite or fade. Lemma~\ref{lem:excluded-two-poles} forbids their potential-pole cancellation at positive masses. The graph involution $(1\ 2)(3\ 4)$, with masses permuted, maps the second maximality branch to the first. Thus no branch remains; the complex-only signed roots need not be positive.

\subsection{No.~111: the missing 56 $w$-stroke}\label{sec:no111-wing}
The maximal original $z$-edges 15,46 and $w$-edges 14,23 force two $z$-leading groups $X,Y$ and two $w$-leading groups $P,Q$. The original $w$-centre gives
\begin{equation}\label{eq:no111-centre-ratio}
\frac P Q=-\frac{m_3+m_4+m_6}{m_1+m_2+m_5}<0.
\end{equation}
The edge 56 is \emph{not} $w$-stroked; put $\gamma=\ord z_{56}\in[1,2)$. Its tied $w$-force is later than the original $z_2,z_3,w_5,w_6$ leading rows by $3(2-\gamma)/2>0$. Their only two inverse-distance poles are 25,36, each of order $-1$. Write
\[
\begin{aligned}
z_{25}&=x\varepsilon+o(\varepsilon),&
z_{36}&=y\varepsilon+o(\varepsilon),\\
w_{25}&=v\varepsilon+o(\varepsilon),&
w_{36}&=u\varepsilon+o(\varepsilon),\\
\delta_{25}&=D_{25}\varepsilon^{-1}+o(\varepsilon^{-1}),&
\delta_{36}&=D_{36}\varepsilon^{-1}+o(\varepsilon^{-1}),
\end{aligned}
\]
with nonzero coefficients. The leading $z$-coefficients at bodies 2 and 3 are both $Y$, while the leading $w$-coefficients at bodies 5 and 6 are $P$ and $Q$, respectively. Their four original coordinate rows and the two signed ties give
\[
\begin{aligned}
Y&=m_5D_{25}^3x,&Y&=m_6D_{36}^3y,\\
P&=-m_2D_{25}^3v,&Q&=-m_3D_{36}^3u,\\
D_{25}^2xv&=1,&D_{36}^2yu&=1.
\end{aligned}
\]
Consequently, for $a=m_2m_5$ and $b=m_3m_6$,
\[
YP=-aD_{25}^4,\qquad YQ=-bD_{36}^4.
\]
Constancy of $U$ forces $aD_{25}+bD_{36}=0$. Hence
\[
\frac P Q=\frac a b\left(\frac{D_{25}}{D_{36}}\right)^4
=\frac a b\left(-\frac b a\right)^4
=\frac{b^3}{a^3}>0,
\]
contrary to the original centre equation. No equality between the two $zw$-edge $z$-coefficients $x,y$ is used. The strict force-order gap also covers the endpoint $\gamma=1$.
The 33 still-relevant published relation labels are
\begin{equation}\label{eq:published-roster}
\begin{gathered}
D_{\mathrm{CC}}=\{7,9,11,12,13,14,15,16,17,18,19,27,28,29,30,43,\\
46,47,48,58,62,76,101,102,103,104,106,107,108,110,\\
114,115,117\}.
\end{gathered}
\end{equation}
Table~\ref{tab:CC-owner} identifies the Chang--Chen proof
owner and label count for every graph in the roster.
\begin{table}[ht]
\centering\small
\begin{tabular}{p{2.8cm}p{9.9cm}r}
\hline
Published section & Still-relevant graph numbers & Count \\
\hline
\S 6.2 & 101, 102, 103, 104, 106, 107, 108, 110, 114, 115, 117 & 11 \\
\S 6.3 & 7, 9, 12, 15, 17, 19, 28, 30, 76 & 9 \\
\S 6.4 & 43, 48, 58 & 3 \\
\S 6.5 & 46, 62 & 2 \\
\S 6.6 & 47 & 1 \\
\S 6.7 & 13, 29 & 2 \\
\S 6.8 & 27 & 1 \\
\S 6.9 & 18 & 1 \\
\S 6.10 & 11 & 1 \\
\S 6.11 & 16 & 1 \\
\S 6.12 & 14 & 1 \\
\hline
Total & & 33 \\
\hline
\end{tabular}
\caption{Proof-owner lookup for all 33 surviving published relation diagrams.}
\label{tab:CC-owner}
\end{table}

For each graph $r$ in Table~\ref{tab:CC-owner}, the
published Algorithm~III mass locus is proper and algebraic
by Chang--Chen's Theorem~6.1~\cite{CCII} and the corresponding
section. Choose a nonzero polynomial $Q_r$ vanishing on that
locus. A first limit of graph $r$ then implies $Q_r(m)=0$.
A body permutation replaces it by $Q_r(\pi m)$, still
nonzero; the finite orbit product is therefore a proper
necessary polynomial for every labelled occurrence of $r$.

This places the 33 published diagrams in $E_{40}$. The
lookup audits graph-to-section
scope, not the original elimination computations or an
expanded formula for every $Q_r$; those remain attributed
to the cited theorem rather than claimed as local proofs.
Together with the 7 derived labels $\D_{\mathrm{new}}$
defined in the Introduction, the roster \eqref{eq:published-roster}
accounts for all of $\D_{40}$.

\Needspace{5\baselineskip}
The 24 labels historically
open in the Chang--Chen partition remain historically open
there, even though 7 now have positive-tier necessary mass
equations and No.~52 is excluded.
\section{The 24 historically unclassified diagrams}\label{sec:historical-open}
These diagrams were \emph{not} assigned a mass equation by
Chang--Chen. Of the 24, 17 have no positive-point first limit,
and the other 7 require proper mass relations. Among the 17
exclusions, nine need no mass equation because a seven-product
cannot diverge at their first limit; the other eight are
excluded at strictly positive masses by the mechanisms below.
No exclusion on arbitrary complex masses is inferred unless the
indicated original-row argument itself provides one.

\Needspace{8\baselineskip}
\subsection*{The 17 first-limit exclusions}

\begin{table}[ht]

\centering\small

\begin{tabular}{p{2.0cm}p{12.0cm}}

\hline

No. & Reason it cannot be a positive first limit \\

\hline

5, 67, 69, 73, 80 & connectivity lower bounds make each seven-product bounded \\
71 & remaining $P_{56}$ case needs original body-1/2 rows \\

65 & its first-limit seven-products are bounded \\

74, 75 & five-body $zw$-clique plus satellite order bound \\

6 & all pair distances shrink, forcing $I\to0$ \\

52 & both remaining valuation endpoints fail the original-row and H\"older tests \\
64 & original K4 contraction and strict H\"older gap \\

79 & bridge order/shape sector and all-wing Hurwitz certificate \\
81 & three finite pair energies contradict physical H\"older \\

82 & original body-1 torque has one earliest term \\

89 & original wing torque forces an impossible pair-root sum \\

90 & original upper torque has one earliest crossed term \\

\hline

& 17 diagrams in 12 proof groups \\

\hline

\end{tabular}

\caption{Proof-owner map for the historical-open graphs removed only
from the positive first-limit obligation.}
\label{tab:open-exclusions}

\end{table}

\Needspace{12\baselineskip}
\begin{lemma}[Connectivity bound for seven-products]\label{lem:excluded-connectivity}
For a pair $ij$ let $L_{ij}$ be 2 on a $zw$ edge, 0 on a
single-colour stroke, $-1$ on an unmarked pair joined by a
stroke path in at least one colour, and $-4$ on an unmarked
pair joined by no such path in either colour. In particular,
the same conservative bound $-1$ is used when an unmarked pair
is joined by paths in both colours. Then every germ obeying the diagram has
\begin{equation}\label{eq:connectivity-product-bound}
\ord(z_{ij}w_{ij})\ge L_{ij},\qquad
\ord P_{ij}\ge L_{ij}+\sum_{\{k,\ell\}\cap\{i,j\}=\varnothing}L_{k\ell}.
\end{equation}

If all fifteen sums on the right are nonnegative, the diagram
cannot be an unbounded-seven-product first limit.

\end{lemma}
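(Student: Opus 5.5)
The plan has two parts. First I will establish the pairwise bound $\ord(z_{ij}w_{ij})\ge L_{ij}$ for each of the four kinds of pair. Then I will sum these bounds over the seven pairs that make up $P_{ij}$. Throughout I use the normalization of the Chang--Chen diagrams, which is the one already invoked in Lemma~\ref{lem:reported-no3} and in the stroke-order computations above. In this normalization every separation $z_{k\ell}$, $w_{k\ell}$ has order at least $-2$. On a germ the leading-order separations sit at order $-2$ and nothing is earlier.

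The four kinds of pair are handled as follows.

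\emph{$zw$ edge.} Here the $z$- and $w$-forces both have order $-2$. By \eqref{eq:force-order-formulas} the two separation orders $x,y$ satisfy $x+3y=4$ and $3x+y=4$. Hence $x=y=1$, and $\ord(z_{ij}w_{ij})=2$.

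\emph{Single-colour stroke.} Say it is a $w$ stroke. Then $3x+y=4$ and $\ord(z_{ij}w_{ij})=x+y$. Its $z$-force is not marked, so it is later than order $-2$, which gives $x+3y\le4$. Subtracting this from $3x+y=4$ gives $x\ge y$. Combined with $3x+y=4$, this yields $x\ge1$. The bound $y\ge-2$ and $x+y=4-2x$ then force $x\le2$, so $x+y=4-2x\ge0$. The $z$-stroke case is symmetric.

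\emph{Unmarked pair joined by a stroke path.} If the path is in the $w$ colour, then $z_{ij}$ is a sum of $z$-differences along the path. Each of these has order at least $1$ by the previous case, so $\ord z_{ij}\ge1$. Normalization gives $\ord w_{ij}\ge-2$, and therefore $\ord(z_{ij}w_{ij})\ge-1$. The other colour is symmetric, and a pair joined in both colours still gets at least $-1$.

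\emph{Unmarked pair with no path.} Normalization alone gives $-2-2=-4$.

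For the second inequality, $\ord P_{ij}$ is the sum of the orders of its seven factors. Adding the pairwise bounds gives the displayed estimate \eqref{eq:connectivity-product-bound}. If all fifteen right-hand sides are nonnegative, every seven-product stays bounded along the branch, so no $P_{ij}$ has a pole there. This contradicts the definition of an unbounded-seven-product first limit, which is exactly the kind of branch produced by Lemma~\ref{lem:products}.

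The main obstacle is the path case. One must check that the triangle sum really carries the lower bound on $\ord z$ along a path with no cancellation loss. This is fine here because the sum of terms of order at least $1$ is again of order at least $1$, so only inequalities are needed. The one genuine input that must be stated carefully is the normalization convention that no separation is earlier than order $-2$. I will cite it from the Chang--Chen setup rather than rederive it.
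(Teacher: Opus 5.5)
Your proposal is correct and follows the same route as the paper's proof. The tie/force-order relations give the values $2$ and $0$ on marked edges and opposite-colour order at least $1$ on every stroke; that bound propagates along a stroke path to the endpoint difference; the normalization bound $-2$ handles the remaining colour; and the seven pairwise bounds are then summed. You spell out the single-stroke computation ($3x+y=4$, $x\ge y$, $y\ge-2$) that the paper compresses into ``using the signed tie on an edge.'' A stroke path may also pass through $zw$ edges, but their opposite-colour order is exactly $1$, so your path bound still holds.
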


\begin{proof}
A stroke path of one colour makes every consecutive separation
in the opposite colour have order at least 1, and hence so
does the endpoint difference. All position orders are at least
$-2$ in either colour. The stated $L_{ij}$ follow, using the
signed tie on an edge. The product $P_{ij}$ contains the pair
$ij$ and its six disjoint pairs, so its order is the sum of
their squared-distance orders.
\end{proof}

\subsection{No.~5: two triangles and bounded products}
For the pinned prism graph, the fifteen sums in
Lemma~\ref{lem:excluded-connectivity} have minimum
$0$, attained only at
$P_{15},P_{16},P_{24},P_{26},P_{34},P_{35}.$

Every other sum is strictly positive. No seven-product
can diverge, so this graph cannot be a first limit;
the bound does not exclude complex germs elsewhere.
\subsection{No.~6: all distances vanish}\label{sec:no6-inertia}
Every pair is a $zw$ edge, so all squared distances
$z_{ij}w_{ij}$ tend to zero. The exact centred
inertia identity gives
\[
MI=\sum_{i<j}m_im_jz_{ij}w_{ij}\longrightarrow0.
\]

This contradicts $I=c>0$ on a curve through a
positive configuration, independently of a choice
of signed inverse-distance phase.
\subsection{No.~67: a bounded K4 first limit}
The fifteen sums of Lemma~\ref{lem:excluded-connectivity} are nonnegative for this $zw$-K4 diagram. Their minimum 0 occurs only at $P_{36},P_{46},P_{56}$; the other twelve products have strictly positive lower bounds. Hence no first-limit seven-product pole is compatible with the graph.

\subsection{No.~69: a bounded K4 first limit}
The additional $w$-circle pattern does not weaken any of the pairwise lower bounds used in Lemma~\ref{lem:excluded-connectivity}. All fifteen product sums are nonnegative, with equality only at $P_{36},P_{46},P_{56}$. Thus No.~69 is not a first limit, without claiming nonexistence of complex-only configurations of that pattern.

\subsection{No.~73: a bounded K4 first limit}
The $zw$-K4 edge and stroke-path bounds likewise give nonnegative order on every seven-product, with the only zero lower bounds at $P_{36},P_{46},P_{56}$. The circles of No.~73 affect leading coordinates, not this valid distance lower bound; a pole at a first limit is impossible.

\subsection{No.~80: the prism with three matches}
The six zero lower bounds from Lemma~\ref{lem:excluded-connectivity} occur at $P_{14},P_{15},P_{24},P_{26},P_{35},P_{36}$. The nine other products have positive lower bounds, so none of the fifteen can diverge on a germ with the No.~80 strokes and circles.

\subsection{No.~71: the one missing connectivity order}\label{sec:no71-satellite}
The graph-only bound is nonnegative except for $\ord P_{56}\ge-1$. If this product had a pole, the two $w$-satellites would have $\ord w_{16},\ord w_{25}<-3/2$. The original body-1 and body-2 $z$-rows then give a nonzero leading
\[
z_1-z_2=(m_1+m_2)Z_{12}+o(\varepsilon^{4/3}).
\]
Since $m_1+m_2>0$, the signed tie forces $\ord z_{12}=1$ and $\ord w_{12}=-1$, hence $\ord(z_{12}w_{12})=0$. This raises the sole negative graph bound for $P_{56}$ to 0; therefore it cannot have a first-limit pole.

\subsection{No.~65: a circled pair outside a $zw$-K4}\label{sec:no65-pair}
The isolated circled $w$-pair 12 has maximal $w$-stroke and $z_{12}$ order 2. All cross $w$-separations have order $-2$. If a cross $z$-separation has order $\alpha\le2$, its contribution to the original outer-body $z$-force has order $3-\alpha/2\ge2$. The original pair $z$-rows put both outer $z$-positions at order at least 2. Mass centre and order-one K4 $z$-separations put each K4 $z$-position at order at least 1, improving every cross squared-distance bound to $-1$. Using 2 on K4 edges and 0 on edge 12, the resulting fifteen seven-product lower bounds have minimum 0. Thus No.~65 cannot be an unbounded-product first limit; the coarse connectivity bound alone would have been negative.

\begin{lemma}[An external body at a $zw$-K5]\label{lem:excluded-satellite}
Suppose five bodies form a $zw$ clique and $p$ is the remaining body. The centre-of-mass and original $p$-row bounds give $\ord(z_{pk}w_{pk})\ge-1$ for every clique body $k$. Consequently the seven-products on clique pairs have order at least 5, and those on satellite pairs at least 11.
\end{lemma}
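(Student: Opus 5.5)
We work on a germ obeying the diagram and write $K$ for the five clique bodies and $p$ for the satellite. There are three steps: first locate the satellite with the original $p$-row and the mass centre, then turn that into the distance bound $\ord(z_{pk}w_{pk})\ge-1$, and finally add up orders as in Lemma~\ref{lem:excluded-connectivity}.

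\emph{Clique data.} Every pair inside $K$ is a $zw$ edge, so all its $z$- and $w$-separations have order at least $1$, and $\ord(z_{ij}w_{ij})=2$ by the signed tie. Fix $k_0\in K$. Every clique $z$ position then satisfies $z_k=z_{k_0}+O(\varepsilon)$. The original mass centre gives
\[
Mz_{k_0}=-\sum_{k\in K}m_kz_{kk_0}-m_pz_{pk_0},
\]
and similarly for $w$. Hence $z_{k_0}=O(\varepsilon)+O(z_{pk_0})$, and likewise in the $w$ colour.

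\emph{Satellite separations.} Put $x=\ord z_{pk_0}$ and $y=\ord w_{pk_0}$. Normalization gives $x,y\ge-2$.
\begin{itemize}
\item Suppose $x<1$. Then every $z_{pk}$ has the same exact order $x$ and the same leading coefficient, since the clique differences are later.
\item In the original $p$ $z$-row, $z_p=z_{pk_0}+z_{k_0}$ has order $\ge\min(x,1)$. Each force term $m_k\delta_{pk}^3z_{pk}=m_k\delta_{pk}/w_{pk}$ has order $-(x+3y)/2$ by \eqref{eq:force-order-formulas}.
\item The $w$-row is the same with the roles of $x$ and $y$ exchanged.
\item The key point is that the five force terms share a leading coefficient. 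So they cannot cancel: their sum is $(\sum_{k\in K}m_k)\,\delta^3 z$ at leading order, which is nonzero because the masses are positive.
\item With $x,y<1$, the $z$-row therefore forces $-(x+3y)/2\ge x$ if the left side is to balance, or else the force must equal the coordinate at exact order. The $w$-row gives $-(3x+y)/2\ge y$. Adding the two gives $-2(x+y)\ge x+y$, hence $x+y\ge0$.
\item If exactly one of $x,y$ is at least $1$, a similar comparison gives $x+y\ge-1$. In that case the other separation has order $\ge-2$, and the row in the other colour gives the improvement.
\end{itemize}
Since each $z_{pk}w_{pk}$ has the same leading order as $z_{pk_0}w_{pk_0}$ (or higher), we get $\ord(z_{pk}w_{pk})\ge-1$. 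I expect the main obstacle to be this mixed case, where one colour of the satellite separation collapses while the other escapes, together with the possibility that the coordinate and the force cancel exactly at leading order. I would handle it by combining the centre estimate $z_p=O(\varepsilon)+O(z_{pk_0})$ with the signed tie, $\ord\delta=-(x+y)/2$, and I would first try to show that cancellation forces $\delta_{pk}^3=(M)^{-1}$-type finite values, which again gives $x+y\ge0$.

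\emph{Summing.} For a clique pair $ij$, the seven-product uses $ij$, the three clique pairs disjoint from it (each of order $2$), and the three satellite pairs $pk$ with $k\in K\setminus\{i,j\}$ (each of order $\ge-1$). This gives $2+6-3=5$. For a satellite pair $pk$, the product uses $pk$ (order $\ge-1$) and the six pairs inside the four remaining clique bodies (each of order $2$), giving $-1+12=11$.
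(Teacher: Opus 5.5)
Your overall structure (mass centre, then the original satellite row, then summing orders) matches the paper's, and your final counts $2+6-3=5$ and $-1+12=11$ are right. The central step, deriving $x+y\ge0$ when $x,y<1$, does not work as written, for two reasons.

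\emph{Non-cancellation fails.} Your claim that the five force terms $m_k\delta_{pk}^3z_{pk}$ cannot cancel is unjustified. The tie $\delta_{pk}^2z_{pk}w_{pk}=1$ fixes each $\delta_{pk}$ only up to sign. So although the $z_{pk}$ and $w_{pk}$ share leading coefficients $Z,W$, the leading coefficient of the sum is $(\sum_{k\in K}\epsilon_k m_k)D^3Z$ with independent signs $\epsilon_k=\pm1$. This vanishes on mass hyperplanes such as $m_a+m_b=m_c+m_d+m_e$, and nothing on a complex Puiseux branch at infinity forces the signs to agree. The paper explicitly argues ``regardless of signed leading-force cancellation.''

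\emph{The inequalities point the wrong way.} Even granting non-cancellation, you match an exact force order $-(x+3y)/2$ against a coordinate known only to have order $\ge x$. That gives $-(x+3y)/2\ge x$. Adding its $w$-dual gives $-2(x+y)\ge x+y$, i.e.\ $x+y\le0$, not $x+y\ge0$ as you wrote. Corrected, your argument gives the opposite of what is needed.

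The missing idea is to put the exactness on the left-hand side. Your own centre identity gives $z_p=z_{pk_0}+z_{k_0}=\frac{M-m_p}{M}z_{pk_0}+O(\varepsilon)$. So for $x<1$ the coordinate $z_p$ has exact order $x$ with nonzero coefficient. Every force term in the $z_p$ row has order $-(x+3y)/2=x-\frac32(x+y)$. If $x+y<0$, the whole right side is therefore $o(\varepsilon^{x})$ whatever signed cancellations occur, contradicting the exact order of $z_p$. One row suffices, and this is precisely the paper's argument, phrased there with the satellite position orders $A,B$.

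The mixed case you flagged as the main obstacle is trivial. If, say, $x\ge1$, then every $z_{pk}=z_{pk_0}+z_{k_0k}$ has order $\ge1$, and normalization gives $\ord w_{pk}\ge-2$. Hence $\ord(z_{pk}w_{pk})\ge-1$ directly, with no second row or finite-root analysis needed.
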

\begin{proof}
Internal clique differences have order 1 in both colours. Writing clique positions relative to their mass centre, each relative coordinate has order at least 1. If at least one colour of the satellite position has order at least 1, the other has order at least $-2$, yielding the required pair-square bound. If both satellite orders are below 1, all five crossed differences share the corresponding leading order. If their sum $A+B$ were negative, every signed inverse distance would have order $-(A+B)/2>0$, so every term of the original satellite $z$-row would have order $A-3(A+B)/2>A$. Its nonzero left-hand coordinate of order $A$ could not be balanced, regardless of signed leading-force cancellation. Thus $A+B\ge0$. For a product based on a clique pair, that pair contributes 2 and its complementary four bodies give three clique pairs and three satellite pairs, so the lower bound is $2+3\cdot2-3=5$. For a product based on a satellite pair, the complementary four bodies lie in the clique, giving the lower bound $-1+6\cdot2=11$.
\end{proof}
\Needspace{8\baselineskip}
\subsection{No.~74: the isolated satellite}
Its ten inner edges form a $zw$-$K_5$ with body~1 as the
satellite. Lemma~\ref{lem:excluded-satellite} gives order at
least 5 for every seven-product based inside the clique and at
least 11 for every satellite product. No.~74 cannot be a first
limit.

\Needspace{8\baselineskip}
\subsection{No.~75: the extra $w$-stroke satellite}
The added $w$-stroke does not invalidate the same $K_5$
relative-centre and original satellite-row estimate: all
satellite squared-distance orders remain at least $-1$.
Applying Lemma~\ref{lem:excluded-satellite} again gives a
minimum product order of 5, excluding a first-limit pole on
No.~75.

\subsection{No.~64: the circled pair and its energy gap}\label{sec:no64-energy}
The energy argument is different for No.~64: write $A=m_1+m_2$, $B=\sum_{i=3}^6m_i$, $\mu=m_1m_2$ and $M=A+B$. Its original pair row forces $\delta_{12}^3\to1/A$; contracting only the four K4 $z$-rows against their centred $w$-differences makes their signed inner potential tend to zero. The cross potentials also fade, leaving $c=\mu d$ with $d^3=1/A$. Strict cube-root concavity, $\mu\le A^2/4$, and positive internal pair weights imply
\begin{equation}\label{eq:no64-holder-gap}
M^{1/3}c\le\mu(1+B/A)^{1/3}<\mu+\frac{\mu B}{3A}\le\mu+\frac{AB}{12}<\sigma,
\end{equation}
contrary to $\sigma\le M^{1/3}c$ at a positive point. This excludes only the positive-point tier; the complex signed K4 configuration itself is not claimed impossible.

\subsection{No.~81: the three finite-pair energy gap}\label{sec:no81-energy}
The original opposite-colour rows give finite inverse distances on pairs 12,34,56 with cubes $1/M_P$ for their respective total pair masses. The two isolated satellites' orders and the original pair torques remove all other finite inverse distances. Let $\mu_P$ be the product of the two masses in pair $P$, and let $M$ be the six-body total mass. Then $|c|\le\sum_P\mu_P/M_P^{1/3}$. Since $\mu_P\le M_P^2/4$ and $(1+x)^{1/3}<1+x/3$ for $x>0$,
\begin{equation}\label{eq:no81-pair-gap}
M^{1/3}|c|<\sum_P\mu_P+\frac1{12}\sum_P M_P(M-M_P)=\sigma-\frac56\sum_{P<Q}M_PM_Q<\sigma.
\end{equation}
At the positive point on the same curve H\"older instead gives $\sigma\le M^{1/3}c$. Hence No.~81 cannot occur on a positive-point curve, without claiming a complex-only diagram exclusion.

\begin{lemma}[Signed-pair noncancellation]\label{lem:excluded-pair-signs}
Let $m_5,m_6>0$, and let $Z_5,Z_6,W_j,D_{j5},D_{j6}$ be nonzero leading coefficients satisfying the pair-centre equation and the two leading signed crossed ties
\[
m_5Z_5+m_6Z_6=0,\qquad
-D_{j5}^2Z_5W_j=1,\qquad
-D_{j6}^2Z_6W_j=1.
\]
Then
\begin{equation}\label{eq:signed-pair-noncancellation}
(m_5D_{j5})^2-(m_6D_{j6})^2=-\frac{m_5^3+m_6^3}{m_5Z_5W_j}\ne0.
\end{equation}
Consequently $m_5D_{j5}+m_6D_{j6}\ne0$ for every choice of complex square-root signs satisfying these ties.
\end{lemma}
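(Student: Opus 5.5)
The identity \eqref{eq:signed-pair-noncancellation} is a direct algebraic consequence of the three displayed relations. I will solve for the squared signed roots, substitute, and then factor a difference of squares. No orders or asymptotics enter, since everything is a relation among nonzero leading coefficients.

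First, because $Z_5,Z_6,W_j$ are nonzero, the two crossed ties can be solved as
\[
 D_{j5}^2=-\frac{1}{Z_5W_j},\qquad D_{j6}^2=-\frac{1}{Z_6W_j}.
\]
Next, the pair-centre equation together with $m_6>0$ gives $Z_6=-m_5Z_5/m_6$. Substituting this into the second formula yields
\[
 D_{j6}^2=\frac{m_6}{m_5Z_5W_j}.
\]
Hence
\[
 (m_5D_{j5})^2-(m_6D_{j6})^2=-\frac{m_5^2}{Z_5W_j}-\frac{m_6^3}{m_5Z_5W_j}
 =-\frac{m_5^3+m_6^3}{m_5Z_5W_j}.
\]
This is \eqref{eq:signed-pair-noncancellation}. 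Its right side is nonzero, because $m_5,m_6>0$ makes $m_5^3+m_6^3>0$ and the denominator is a product of nonzero quantities.

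Finally, factor the left side as
\[
 (m_5D_{j5}-m_6D_{j6})(m_5D_{j5}+m_6D_{j6}).
\]
Since this product is nonzero, neither factor can vanish, so $m_5D_{j5}+m_6D_{j6}\ne0$. The argument uses only the squared values $D_{j5}^2,D_{j6}^2$, which the ties determine. It is therefore independent of the choice of complex square-root signs, and this sign-independence is the only point that needs care. Positivity of the masses is used exactly once, to guarantee that $m_5^3+m_6^3$ does not vanish; for complex masses this could fail.
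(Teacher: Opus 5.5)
Your proposal is correct and follows essentially the same route as the paper: solve the two ties for $D_{j5}^2,D_{j6}^2$, substitute $Z_6=-m_5Z_5/m_6$ from the pair-centre equation, and observe that $m_5^3+m_6^3>0$. Your explicit difference-of-squares factorization makes precise the paper's brief remark that ``the two terms cannot cancel,'' and it is what shows the conclusion holds for every choice of square-root signs.
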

\begin{proof}
The pair-centre equation gives $Z_6=-m_5Z_5/m_6$. Therefore
\[
(m_5D_{j5})^2-(m_6D_{j6})^2
=-\frac{m_5^2}{Z_5W_j}+\frac{m_6^2}{Z_6W_j}
=-\frac{m_5^3+m_6^3}{m_5Z_5W_j}.
\]
Its numerator is strictly positive and its denominator is nonzero. The two terms cannot cancel.
\end{proof}
\subsection{No.~82: the earliest satellite torque}\label{sec:no82-torque}
The exact graph has $w$-edges $23,24,34$, $w$-circles at 3 and 4, a $z$-edge $56$, and $z$-circles at 5 and 6. There are no other circles or edges. We prove that the coordinate rows force the small coordinates needed to prevent cancellation in the body-1 torque.

We use the diagram normalization in which coordinates and force terms have order at least $-2$. A circle or stroke means that its corresponding coordinate or force has order exactly $-2$; an absent circle or stroke means strictly larger order. For any pair, writing $\alpha=\ord z_{ij}$ and $\beta=\ord w_{ij}$, the signed tie gives
\begin{equation}\label{eq:torque-order-identity}
\ord\delta_{ij}=-\frac{\alpha+\beta}{2},\qquad
\ord(\delta_{ij}^3z_{ij})=-\frac{\alpha+3\beta}{2},\qquad
\ord(\delta_{ij}^3w_{ij})=-\frac{3\alpha+\beta}{2}.
\end{equation}
Thus, when $\ord w_{ij}=-2$, an absent $w$-stroke forces $\ord z_{ij}<2$; the colour-dual assertion also holds.

\begin{proposition}\label{prop:no82-excluded}
For positive masses, No.~82 cannot occur as a singular Puiseux germ of the normalized equations.
\end{proposition}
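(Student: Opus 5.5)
We use the normalization just fixed: circled coordinates and stroked forces have order exactly $-2$, and absent marks have strictly larger order. Body $1$ carries no circle and no stroke. It is the satellite, and the plan is to show that its original $z$-row (the body-1 "torque") has a unique earliest term, whatever leading coefficients the Puiseux germ has.

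\textbf{Step 1: coordinate orders.} Body $2$ is uncircled and bodies $3,4$ carry $w$-circles. Since $2$ has no $w$-circle, $\ord w_2>-2$, so $w_{23},w_{24}$ have exact order $-2$. The $w$-strokes $23,24$ then give $\ord z_{23}=\ord z_{24}=2$ by \eqref{eq:torque-order-identity}. The two circled $w$-leads $W_3,W_4$ must be distinct, because equal leads would force $\ord w_{34}>-2$ and the $34$ stroke would then need $\ord z_{34}<2$, contradicting $z_{34}=z_{24}-z_{23}=O(\varepsilon^2)$. For the $w$-centre, all $w$-leads at bodies $1,2,5,6$ vanish at order $-2$, so the centre gives $m_3W_3+m_4W_4=0$. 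This is consistent with $W_3\ne W_4$, both nonzero.

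Colour-dually, the $z$-circles at $5,6$ and the stroke $56$ give $\ord w_{56}=2$ and $m_5Z_5+m_6Z_6=0$ with $Z_5\ne Z_6$, both nonzero. Bodies $1,\dots,4$ have $z$-coordinates of order $>-2$. The cross pairs $j\in\{1,2,3,4\}$, $k\in\{5,6\}$ have $\ord z_{jk}=-2$ with leading coefficient $-Z_k$, and for $j\in\{3,4\}$ also $\ord w_{jk}=-2$ with coefficient $W_j$.

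\textbf{Step 2: body $1$.} Write $\alpha_1=\ord z_1$ and $\beta_1=\ord w_1$, both $>-2$. The pairs $1k$ with $k=5,6$ have $z$-separation of exact order $-2$. Because there is no $w$-stroke on $1k$, the tie forces $\ord w_{1k}<2$. I would determine $\ord w_{1k}$ from the original body-1 $w$-row: that row has a left side of order $\beta_1$, $w$-forces from $3,4$ of order $-(3\beta+\alpha)/2$ computed from $\ord w_{13}=\ord w_{14}=-2$, and forces from $5,6$. The decisive claim is that the earliest term in the body-1 $z$-row comes from the pair $\{15,16\}$. With a common $w$-lead $W_1$ in the generic subcase, the force $m_k\delta_{1k}^3z_{1k}$ has leading coefficient $-m_kD_{1k}^3Z_k$. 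Using the ties $-D_{1k}^2Z_kW_1=1$, the sum of the two leading terms is proportional to $(m_5D_{15})^2\cdot\ldots$. The cleaner route is to pass to potentials or torques, so that the relevant combination is $m_5D_{15}+m_6D_{16}$ or its square analogue. Lemma~\ref{lem:excluded-pair-signs} then shows that this combination cannot vanish.

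\textbf{Step 3: ordering.} The remaining terms are the body-1 coordinate and the forces from $2,3,4$. By the colour-dual of the $K_5$-satellite argument in Lemma~\ref{lem:excluded-satellite}, they are strictly later. Concretely, I would split into cases according to whether $\ord w_1$ is below $1$. If $w_1$ is small, the $15,16$ pairs share the $w$-lead of body $1$ relative to the $w$-values of $5,6$, which are of order $\ge$ that of body $1$ since $5,6$ have no $w$-circles. This gives exactly the configuration of Lemma~\ref{lem:excluded-pair-signs}, with $j=1$. The other case forces a $w$-order comparison in which $15$ or $16$ is uniquely earliest by itself. Either way the body-1 $z$-row has a nonzero unique earliest term, which is a contradiction.

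\textbf{Main obstacle.} The main obstacle is Step 2/3: making the order bookkeeping exhaustive. In particular, we must rule out cancellation of the $15,16$ forces by the $13,14$ forces when $\ord w_{1j}$ are equal, and handle the subcase where the $w$-leads of $5$ and $6$ relative to $1$ differ. I would first try to prove $\ord z_1\ge$ the order of $z_{2}$, with the uncircled body $2$ used as a second satellite, to control these competitors.
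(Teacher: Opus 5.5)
There is a genuine gap in Steps 2--3. The row you target cannot give a unique earliest term, and the term that actually decides the matter is one you never examine.

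Once the small coordinates are known, the original body-1 $z$-row has four kinds of term. Here the small coordinates are $z_2,z_3,z_4=O(\varepsilon^2)$ and $w_5,w_6=O(\varepsilon^2)$, with $\ord z_1=a$ and $\ord w_1=b_1$ both in $(-2,2)$.
\begin{itemize}
\item The coordinate $z_1$ has order $a$.
\item The unmarked $12$ force has order $-(a+3q)/2$, where $q=\ord w_{12}$.
\item The $13,14$ forces have order $3-a/2>a$.
\item Your $15,16$ forces have order $1-3b_1/2$.
\end{itemize}
The $12$ force precedes the $15,16$ forces by $\tfrac12(2+a)+\tfrac32(q-b_1)$, which is positive whenever $q\ge b_1$. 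Moreover, nothing in that row prevents $z_1$ from balancing the $12$ force; this only imposes $q=-a$. So the $15,16$ pair is not the earliest term, and this row has no forced unique earliest term.

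A coordinate row always carries its position term, which can absorb the earliest force. That is why the argument must be run on a torque $z_1\cdot(w_1\text{-row})-w_1\cdot(z_1\text{-row})$, where the coordinate terms cancel identically. This is not the same object as the $z$-row, despite your parenthetical. Lemma~\ref{lem:excluded-satellite} cannot supply your Step~3 ordering: it concerns a $zw$-$K_5$, which No.~82 does not contain, and it says nothing about pair $12$. (Also, $\ord w_{1k}<2$ comes from the missing $z$-stroke on $1k$, not a missing $w$-stroke.)

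The paper fills these gaps in three steps.
\begin{enumerate}
\item The original body-3 $z$-row gives $z_2,z_3,z_4=O(\varepsilon^2)$ with $\ord z_1=a\in(-2,2)$. The original body-5 $w$-row gives $w_5,w_6=O(\varepsilon^2)$ with $\ord w_j=b_j\in(-2,2)$ for $j=1,2$. This disposes of your worry about differing $w$-leads at $5,6$: then $w_{15}$ and $w_{16}$ both have lead $W_1$.
\item In the exact torque of the original rows at $5$ and $6$, each satellite $j=1,2$ contributes $m_j(m_5D_{j5}+m_6D_{j6})\varepsilon^{1-b_j/2}$, while the terms to $3,4$ have order $2$. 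This is where Lemma~\ref{lem:excluded-pair-signs} is used, and it forces $b_1=b_2=b$. It is not yet a contradiction, because the two satellites can cancel each other.
\item In the exact body-1 torque $\sum_{j}m_j\delta_{1j}^3(z_1w_j-w_1z_j)=0$, the $12$ term has order $e=b-a/2-3q/2$ with $q\ge b$. Its coefficient $m_2D_{12}^3Z_1W_2$ is nonzero because $z_2=O(\varepsilon^2)$ makes $w_1z_2$ negligible against $z_1w_2$. It strictly precedes the $13,14$ terms (order $1-a/2$) and the $15,16$ terms (order $1-b/2$).
\end{enumerate}

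So the decisive term is $12$, not the $15,16$ pair, and the pair-sign lemma only forces equal satellite orders. Note also that the inequality you planned to prove, $\ord z_1\ge\ord z_2$, is the reverse of what holds and what is needed: $\ord z_1=a<2\le\ord z_2$.
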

\begin{proof}
Write $W_3,W_4$ for the nonzero order-$-2$ coefficients of $w_3,w_4$, and $Z_5,Z_6$ for those of $z_5,z_6$. Summing the original mass-weighted $w$-rows over $234$ cancels the internal forces. Every exterior $w$-force is later than order $-2$, and body 2 has no $w$-circle. Similarly, summing the original mass-weighted $z$-rows over $56$ cancels their mutual force. Hence
\begin{equation}\label{eq:no82-leading-centres}
m_3W_3+m_4W_4=0,\qquad m_5Z_5+m_6Z_6=0.
\end{equation}
Positive masses give $W_3\ne W_4$ and $Z_5\ne Z_6$. Since body 2 is uncircled, the three $w$-differences on $234$ have order $-2$, as does $z_{56}$. Their strokes and \eqref{eq:torque-order-identity} imply
\begin{equation}\label{eq:no82-inner-orders}
\begin{gathered}
\ord z_{23}=\ord z_{24}=\ord z_{34}=2,\qquad \ord w_{56}=2,\\
\ord\delta_{23}=\ord\delta_{24}=\ord\delta_{34}=\ord\delta_{56}=0.
\end{gathered}
\end{equation}

First we determine the small coordinates from the original rows. Because body 1 has no $w$-circle, $w_{13},w_{14}$ have order $-2$. The missing $w$-strokes force $\ord z_{13},\ord z_{14}<2$. Their difference has order 2, so both have a common order $a\in(-2,2)$. In the original body-3 $z$-row, the two internal forces are $O(\varepsilon^2)$, the 13 force has order $3-a/2>2$, and the 35 and 36 forces have order 4. Therefore $z_3=O(\varepsilon^2)$. The internal differences in \eqref{eq:no82-inner-orders} now give
\begin{equation}\label{eq:no82-small-z}
z_2,z_3,z_4=O(\varepsilon^2),\qquad \ord z_1=a\in(-2,2).
\end{equation}

For each $j=1,2$, both $z_{j5}$ and $z_{j6}$ have order $-2$. Their missing $z$-strokes force $\ord w_{j5},\ord w_{j6}<2$. Since their difference has order 2, these orders agree; denote the common value by $b_j\in(-2,2)$. In the original body-5 $w$-row, the 56 force is $O(\varepsilon^2)$, the forces to $j=1,2$ have orders $3-b_j/2>2$, and the forces to 3 and 4 have order 4. It follows that
\begin{equation}\label{eq:no82-small-w}
w_5,w_6=O(\varepsilon^2),\qquad \ord w_j=b_j\quad(j=1,2).
\end{equation}
In particular, write
\[
z_1=Z_1\varepsilon^a(1+o(1)),\qquad
w_j=W_j\varepsilon^{b_j}(1+o(1))\quad(j=1,2),
\]
where $Z_1,W_1,W_2$ are nonzero.

Next we show that $b_1=b_2$. The exact torque obtained from the original mass-weighted rows at 5 and 6 is
\begin{equation}\label{eq:no82-pair-torque}
\sum_{k\in\{5,6\},\ j\in\{1,2,3,4\}}
m_km_j\delta_{kj}^3(w_kz_j-z_kw_j)=0.
\end{equation}
For $j=1,2$ and $k=5,6$, the signed ties give
\[
\delta_{kj}=D_{jk}\varepsilon^{1-b_j/2}(1+o(1)),
\qquad -D_{jk}^2Z_kW_j=1.
\]
By \eqref{eq:no82-small-z}--\eqref{eq:no82-small-w}, $w_kz_j$ is strictly later than $z_kw_j$. Thus the total contribution of satellite $j$ to \eqref{eq:no82-pair-torque} is
\begin{equation}\label{eq:no82-satellite-torque}
m_j(m_5D_{j5}+m_6D_{j6})\varepsilon^{1-b_j/2}
+o(\varepsilon^{1-b_j/2}).
\end{equation}
Its coefficient is nonzero by \eqref{eq:no82-leading-centres} and Lemma~\ref{lem:excluded-pair-signs}. The remaining four terms, to bodies 3 and 4, have order 2. Since $0<1-b_j/2<2$, unequal values of $b_1,b_2$ would leave exactly one earliest nonzero satellite contribution. Consequently
\[
b_1=b_2=:b\in(-2,2).
\]

Finally, put $q=\ord w_{12}\ge b$. The signed tie ensures that $q$ is finite. By \eqref{eq:no82-small-z}, $\ord z_{12}=a$ and
\[
\delta_{12}=D_{12}\varepsilon^{-(a+q)/2}(1+o(1)),\qquad D_{12}\ne0.
\]
The exact body-1 torque is
\begin{equation}\label{eq:no82-body-torque}
\sum_{j=2}^6m_j\delta_{1j}^3(z_1w_j-w_1z_j)=0.
\end{equation}
Crucially, $z_2=O(\varepsilon^2)$ and $a<2$ give
\[
z_1w_2-w_1z_2=Z_1W_2\varepsilon^{a+b}(1+o(1)).
\]
The 12 term therefore has the nonzero leading coefficient $m_2D_{12}^3Z_1W_2$ and order
\[
e=b-\frac a2-\frac{3q}{2}.
\]
This nonvanishing also holds when $q=b$: the potential cancellation is prevented by the already established smallness of $z_2$.

For $j=3,4$, the inverse distance has order $1-a/2$, while the torque determinant has order $a-2$; the corresponding torque term has order $1-a/2$. For $j=5,6$, the analogous order is $1-b/2$. Their differences from $e$ are
\[
1-\frac a2-e=\frac{2+b}{2}+\frac32(q-b)>0,
\qquad
1-\frac b2-e=\frac{2+a}{2}+\frac32(q-b)>0.
\]
Thus the 12 term is uniquely earliest in \eqref{eq:no82-body-torque}, a contradiction.
\end{proof}

\subsection{No.~89: a forced nonzero crossed root sum}\label{sec:no89-torque}
Let $a=\ord z_1$ and $b=\ord w_1$ lie in $(-2,2)$ on its
isolated satellite. Its two original coordinate rows force
$a+b=0$. In the original $456$ wing torque, the
satellite-to-56 contribution then has order $1+a/2$; the
other wing and crossed contributions have orders at least 2
and $2+a$, strictly later. Hence the leading coefficient must
satisfy $m_5D_{15}+m_6D_{16}=0$, contrary to
Lemma~\ref{lem:excluded-pair-signs}. This argument uses no
reality of the crossed roots.
\subsection{No.~90: the unique 41 crossing}\label{sec:no90-torque}
The exact graph has $w$-edges $12,13,23$ and $w$-circles at 2 and 3, together with $z$-edges $45,46,56$ and $z$-circles at 5 and 6. There are no crossed strokes, and bodies 1 and 4 are uncircled.

\begin{proposition}\label{prop:no90-excluded}
For positive masses, No.~90 cannot occur as a singular Puiseux germ of the normalized equations.
\end{proposition}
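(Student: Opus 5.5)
The plan is to follow the template of the No.~82 argument (Proposition~\ref{prop:no82-excluded}) with the roles of the two triangles adapted. Throughout, I use the diagram normalization and the order identities \eqref{eq:torque-order-identity}.

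\textbf{Leading centres and internal orders.} Summing the original mass-weighted $w$-rows over $123$ cancels the internal forces. Every exterior $w$-force is later than order $-2$ and body~1 is uncircled, so $m_2W_2+m_3W_3=0$. Dually, summing the $z$-rows over $456$ gives $m_5Z_5+m_6Z_6=0$. Hence $W_2\ne W_3$ and $Z_5\ne Z_6$, and all three $w$-differences on $123$ and all three $z$-differences on $456$ have order $-2$. The strokes then give $\ord z_{ij}=2$ on $123$, $\ord w_{ij}=2$ on $456$, and finite nonzero inverse distances on these six edges.

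\textbf{Small coordinates on the crossings.} Next I would show the analogue of \eqref{eq:no82-small-z}--\eqref{eq:no82-small-w}. The original body-2 $z$-row has internal forces $O(\varepsilon^2)$. Its crossed forces to $4,5,6$ have $z$-separation order $\le 2$ with no stroke, and are later than order $2$. This gives $z_1,z_2,z_3=O(\varepsilon^2)$, and dually $w_4,w_5,w_6=O(\varepsilon^2)$. Write $a=\ord w_1$. The $w$-separations $w_{14},w_{15},w_{16}$ then share an order (or $w_{15},w_{16}$ are forced to order $-2$ by the circles at $5,6$), and similarly for $z_{41},z_{42},z_{43}$ with $b=\ord z_4$. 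The point of the bookkeeping is that every crossed pair $ij$ with $i\in123$, $j\in456$ has $\ord z_{ij}=\ord z_j$ and $\ord w_{ij}=\ord w_i$. This follows because $z_i$ and $w_j$ are $O(\varepsilon^2)$ while the others have order below $2$, and it fixes each crossed inverse-distance order.

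\textbf{Torque and the unique crossing.} Use the exact torque of the original mass-weighted rows of $456$ against $123$, and compare orders. The cross term $41$ has inverse-distance order $-(a+b)/2$, while the terms $4\{2,3\}$, $\{5,6\}1$ and $\{5,6\}\{2,3\}$ carry circled order-$(-2)$ coordinates. This should make their inverse distances and torque contributions strictly later. The determinant for the $41$ term, $w_4z_1-z_4w_1$, has leading coefficient $-Z_4W_1\varepsilon^{a+b}$; it is nonzero because $z_1,w_4$ are small. So the $41$ term is uniquely earliest, contradicting the vanishing torque.

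\textbf{Main obstacle.} The delicate step is ruling out cancellation when some uncircled-body orders coincide. An example is the pair contributions $m_5D_{15}+m_6D_{16}$, which have the same order as the $41$ term if the order bookkeeping degenerates. For those I would invoke Lemma~\ref{lem:excluded-pair-signs}, with the pair centre $m_5Z_5+m_6Z_6=0$, and its colour dual for $2,3$, to show that any such tied leading combination is nonzero. A nonzero tied combination still leaves a single earliest order class with nonzero coefficient, so the contradiction persists.
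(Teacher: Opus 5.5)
Your proposal is correct and follows essentially the same route as the paper. It uses the two leading centres, the smallness $z_1,z_2,z_3,w_4,w_5,w_6=O(\varepsilon^2)$ from the body-2 $z$-row and its colour dual, and the nine-term torque over $456$ in which the $41$ crossing, with contribution $m_1m_4\delta_{41}(1+o(1))$, is uniquely earliest. The degeneracy in your last paragraph cannot occur, so Lemma~\ref{lem:excluded-pair-signs} is not needed. The $4\{2,3\}$, $\{5,6\}1$ and $\{5,6\}\{2,3\}$ contributions are later than the $41$ term by $1+\tfrac12\ord w_1$, $1+\tfrac12\ord z_4$ and $2+\tfrac12(\ord w_1+\ord z_4)$. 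All three gaps are strictly positive because both uncircled orders lie in $(-2,2)$. Separately, drop the parenthetical claiming the circles at $5,6$ force $w_{15},w_{16}$ to order $-2$: those are $z$-circles, and in fact $\ord w_{15}=\ord w_{16}=\ord w_1>-2$.
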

\begin{proof}
Let $W_2,W_3$ denote the nonzero order-$-2$ coefficients of $w_2,w_3$, and $Z_5,Z_6$ those of $z_5,z_6$. Summing the original mass-weighted $w$-rows over $123$ and the mass-weighted $z$-rows over $456$ gives
\[
m_2W_2+m_3W_3=0,\qquad m_5Z_5+m_6Z_6=0.
\]
There are no crossed strokes, so every exterior force in these sums is later than order $-2$. Positivity makes the two differences of circled coefficients nonzero. As bodies 1 and 4 are uncircled, all three marked edges in each triangle are maximal. By \eqref{eq:torque-order-identity},
\begin{equation}\label{eq:no90-inner-orders}
\begin{gathered}
\ord w_{12}=\ord w_{13}=\ord w_{23}=-2,
\qquad \ord z_{12}=\ord z_{13}=\ord z_{23}=2,\\
\ord z_{45}=\ord z_{46}=\ord z_{56}=-2,
\qquad \ord w_{45}=\ord w_{46}=\ord w_{56}=2,
\end{gathered}
\end{equation}
and all six internal inverse distances have order zero.

Since $w_2,w_3$ have order $-2$ and $w_4$ has larger order, the missing 24 and 34 $w$-strokes force $\ord z_{24},\ord z_{34}<2$. The order-two internal differences make their common order equal to
\[
a:=\ord z_{41}\in(-2,2).
\]
Likewise, the missing 15 and 16 $z$-strokes and the order-two internal $w$-differences give
\[
b:=\ord w_{41}\in(-2,2).
\]
In the original body-2 $z$-row, the internal forces are $O(\varepsilon^2)$, the 24 force has order $3-a/2>2$, and the 25 and 26 forces have order 4. Hence $z_2=O(\varepsilon^2)$, and the internal differences give
\begin{equation}\label{eq:no90-small-coordinates}
z_1,z_2,z_3=O(\varepsilon^2),\qquad
w_4,w_5,w_6=O(\varepsilon^2).
\end{equation}
For the second assertion, use the original body-5 $w$-row: its internal forces are $O(\varepsilon^2)$, the 51 force has order $3-b/2>2$, and the 52 and 53 forces have order 4; then use the internal $w$-differences. In particular, $z_4=z_{41}(1+o(1))$ and $w_1=-w_{41}(1+o(1))$.

Summing the original mass-weighted torques over $456$ cancels every internal interaction and yields the exact nine-term identity
\begin{equation}\label{eq:no90-complete-torque}
\sum_{k\in\{4,5,6\},\ l\in\{1,2,3\}}
m_km_l\delta_{kl}^3(w_kz_l-z_kw_l)=0.
\end{equation}
The identity
\[
w_4z_1-z_4w_1
=z_{41}w_{41}+z_1w_{41}-z_{41}w_4
\]
and \eqref{eq:no90-small-coordinates} show that the latter two terms have orders at least $2+b$ and $a+2$, both strictly later than $a+b$. Therefore the 41 contribution is
\begin{equation}\label{eq:no90-leading-torque}
m_1m_4\delta_{41}^3(w_4z_1-z_4w_1)
=m_1m_4\delta_{41}(1+o(1)),
\end{equation}
with a nonzero coefficient at order $-(a+b)/2$.

The other eight crossings fall into exactly three classes:
\[
\begin{array}{c|c|c|c|c}
(k,l)&\text{count}&\ord\delta_{kl}&
\ord(w_kz_l-z_kw_l)&\text{torque order}\\\hline
(4,2),(4,3)&2&1-a/2&a-2&1-a/2\\
(5,1),(6,1)&2&1-b/2&b-2&1-b/2\\
k\in\{5,6\},\ l\in\{2,3\}&4&2&-4&2
\end{array}
\]
For instance, in the first class $w_4z_l=O(\varepsilon^4)$, whereas $z_4w_l$ has order $a-2<0$, so there is no cancellation in the determinant. The other classes follow in the same way from \eqref{eq:no90-small-coordinates}. Their order gaps above \eqref{eq:no90-leading-torque} are, respectively,
\[
1+\frac b2>0,\qquad 1+\frac a2>0,
\qquad 2+\frac{a+b}{2}>0.
\]
Thus the 41 contribution is uniquely earliest in the complete torque identity, a contradiction.
\end{proof}

\subsection{No.~79: the bridge shape and Hurwitz sector}\label{sec:no79-sector}
Both isolated $zw$ triangles have a common projective shape ratio $r$. For real $r<0$, both bridge cubes are positive and the original two-bridge energy identity contradicts $U=I=c>0$. For $0<r<1$, the triangle ties force the positive bridge weight $p=m_2m_6$ to be smaller than the negative weight $n=m_3m_5$; the case $r>1$ is the swapped argument. After the positive-real phase condition on $c$, write the signed bridge distances as $x>0$ and $-y<0$. With $K=px^3-ny^3>0$ and $c=px-ny>0$, put $q=ny/(px)\in(0,1)$. Then
\begin{equation}\label{eq:no79-bridge-cubic-gap}
p^2K-c^3=p^3x^3\left[3q(1-q)+(1-p^2/n^2)q^3\right]>0.
\end{equation}
But $\sigma>M_AM_B>p$ makes $\sigma^3>Mp^2K>Mc^3$, contrary to physical H\"older. Thus a positive-point first limit requires a nonreal $r$.

For equal bridge weights $\alpha=\beta=h>0$, conjugacy at a complex point at infinity does \emph{not} follow from conjugacy on the physical arc. Instead, the original cluster $w$-momentum and constant potential give, on the same complex solution curve and hence at its Puiseux limit,
\[
h(d_{26}+d_{35})=c>0,\qquad h(d_{26}^3+d_{35}^3)=K=\frac{M_AM_B}{M}>0.
\]
With $s=c/h$ and $T=K/h$, the two signed leading roots are zeros of the real-coefficient polynomial
\[
z^2-sz+\frac{s^3-T}{3s}=0.
\]
Its discriminant is $(4T-s^3)/(3s)\in\mathbb R$. The two bridge coefficients are nonzero because the corresponding maximal signed-distance strokes persist on an actual No.~79 first limit. Thus they are either both real, yielding a real $r=-d_{35}^3/d_{26}^3$, which was just excluded, or they are a nonreal conjugate pair, giving $|r|=1$. No conjugation invariance of the complex branch is used.

Write the triangle masses as $(a,b,d)$ and $(A,d,b)$, let $H=a+b+d$, $J=A+b+d$, and set $D=b^2+d^2$. The two signed shape sextics and the physical two-bridge inequality force, if both have the same nonreal unit-circle root,
\begin{equation}\label{eq:no79-equal-bridge-bound}
\frac{H^5J^5}{a^3A^3}<625D^2.
\end{equation}
Yet the nonnegative identity
\begin{equation}\label{eq:no79-mass-identity}
108(1+t)^5-3125t^3=(2t-3)^2(27t^3+216t^2+76t+12)
\end{equation}
applied with $t=a/(b+d)$ and $A/(b+d)$ gives $H^5/a^3,J^5/A^3\ge(3125/108)(b+d)^2$. Since $(b+d)^2>D$, their product is strictly greater than $625D^2$, a contradiction. The sextic-to-upper-bound step uses the exact signed shape and energy certificate; it is not inferred from the displayed mass identity alone.

For unequal bridge weights put $P=\alpha+\beta$, $p=\alpha/P=(1-\theta)/2$, $q=\beta/P=(1+\theta)/2$, where $\theta=(\beta-\alpha)/P$; use the graph involution to take $\theta>0$. Let $m=c/P>0$ and parameterize the surviving nonreal bridge-root half-sheet by
\[
d_{26}=m(1+q\theta t+iq\sqrt L),\qquad d_{35}=m(1-p\theta t-ip\sqrt L),\qquad L=3\theta^2t^2+6t,\quad t>0.
\]
Here $t$ and $\sqrt L$ parameterize the real and imaginary parts of the complex leading coefficients; they are not asserted to be real functions along the complex curve. The fixed real values of $c$ and $K$ make this parametrization valid at the selected Puiseux branch without conjugating it. The squared modulus, defined here rather than inferred from a physical conjugate branch, is
\[
N_1(t,\theta)=|d_{26}/m|^2=(1+q\theta t)^2+q^2(3\theta^2t^2+6t)>0.
\]
The bridge equation, both shape sextics and physical H\"older imply, for $0<\theta\le9/20$,
\begin{equation}\label{eq:no79-asymmetry-bound}
N_1(t,\theta)^3(1-\theta)^4(1+\theta)^2<\frac{1024}{q_0^2C_0^2},\qquad q_0=24/25,\quad C_0=3125/108.
\end{equation}
In the lower interval the original bridge bound gives $t>1/5$. Substitution of this floor reverses the inequality up to a unique $\theta_*\in(3/8,2/5)$. To state the exact Sturm input, set $\ell(\theta)=\theta/3+17/200$ and
\[
D_h(\theta)=N_1(h(\theta),\theta)^3(1-\theta)^4(1+\theta)^2-\frac{1024}{q_0^2C_0^2}.
\]
The primitive integer polynomials $125000D_{1/5}$ and $466560000000000000D_\ell$ have degrees 18 and 24, respectively; their descending coefficients, including every sign and zero, are recorded in the static certificate. Signed Euclidean Sturm chains of $P,P'$ give the following endpoint data:
\begin{center}\small
\begin{tabular}{c c c c}
$h(\theta)$ & endpoints & $\operatorname{sgn}P$ & variations $V$ \\
\hline
$1/5$ & $0,3/8,2/5$ & $+,+,-$ & $6,6,5$ \\
$\ell$ & $3/8,9/20$ & $+,+$ & $7,7$ \\
\end{tabular}
\end{center}
Thus $D_{1/5}$ has no zero on $[0,3/8]$ and exactly one on $(3/8,2/5)$; call it $\theta_*$. On $3/8\le\theta\le9/20$, monotonicity of the bridge function strengthens the floor to $t>\ell(\theta)$. The corresponding $D_\ell$ is positive at both endpoints and has no zero between them. Neither root count follows merely from a few sampled values. Thus this moderate-asymmetry interval is excluded. The opposite nonreal bridge half-sheet has $K\ge c^3/(\alpha+\beta)^2$ and contradicts the strict positive-mass H\"older bound separately.

For a remaining unequal bridge candidate, the physical asymmetry satisfies $\theta>9/20$; the bridge bound then forces $\operatorname{Re}r>1$. Put $x=m_2/m_3$ and $y=m_6/m_5$. The two closed triangles have the same shape $r$, while
\begin{equation}\label{eq:no79-wing-ratio}
xy=\frac{1-\theta}{1+\theta}<\frac{11}{29}<\left(\frac58\right)^2.
\end{equation}
At least one wing ratio is $\le5/8$. For any positive triple $(a,x,1)$ in that range, write its necessary shape sextic as $P_{a,x}(z)=-F_{a,x,1}(1+z,1)$. For the triangle relative coordinates $(r,1)$ put
\[
U=(a+x)r+1,\quad V=xr+a+1,\quad W=V-U=a(1-r),\quad Q=a^2UW-a^2x^2VW-x^2UV,
\]
\[
F_{a,x,1}(r,1)=Q^3+27a^6x^4UV^2W^3-27a^6x^2U^2VW^3+27a^4x^2QUVW^2.
\]
This is homogeneous of degree six before setting the second shape coordinate to 1. Positive masses and the original three distance ties require $UVW\ne0$; the formula clears only those nonzero denominators. The exact rational Hurwitz certificate has positive leading coefficient, positive constant term $x^6(a+x+1)^6$, and all six leading Hurwitz minors strictly positive on $a>0$, $0<x\le5/8$. The same degree-preserving substitution used below takes $c_6$ to a polynomial of $v$-degree 12 with 91 nonzero coefficients, all positive.

For reproducibility take canonical symbols $(A,X)$ for $(a,x)$, let $c_6,\ldots,c_0$ be the coefficients of $P_{A,X}(z)$ in descending order, and define $\Delta_k=\det\mathcal H_{[k],[k]}$, where
\[
\mathcal H=\begin{pmatrix}c_5&c_3&c_1&0&0&0\cr c_6&c_4&c_2&c_0&0&0\cr 0&c_5&c_3&c_1&0&0\cr 0&c_6&c_4&c_2&c_0&0\cr 0&0&c_5&c_3&c_1&0\cr 0&0&c_6&c_4&c_2&c_0\end{pmatrix}.
\]
The table uses $\Delta_k=u_k\prod_jG_{kj}^{e_{kj}}$. For each factor the entry $(e,d_A,d_X,s,n)$ records its multiplicity, degrees in $A,X$, common sign $s$, and number $n$ of nonzero coefficients after substituting $X=5v/[8(1+v)]$ and multiplying by $(1+v)^{d_X}$. Each decomposition is checked by exact division by the displayed positive $X$ and $A+X+1$ factors, with zero remainder; the last $G_{kj}$ is the resulting signed quotient, not an assumed irreducible polynomial.
\begin{center}\footnotesize
\begin{tabular}{c c p{9.4cm}}
$k$ & $\operatorname{sgn}u_k$ & $(e,d_A,d_X,s,n)$ for each factor 
\\
\hline
1 & $-$ & (1,1,1,+,4),(1,11,11,-,78) 
\\
2 & $-$ & (3,1,1,+,4),(1,21,21,-,253) 
\\
3 & $+$ & (6,1,1,+,4),(1,30,30,+,496) 
\\
4 & $+$ & (2,0,1,+,1),(10,1,1,+,4),(1,36,36,+,703) 
\\
5 & $-$ & (6,0,1,+,1),(15,1,1,+,4),(1,39,39,-,820) 
\\
\end{tabular}
\end{center}
For $k=5$ the last displayed factor may be defined directly as $G_5=-\Delta_5/[X^6(A+X+1)^{15}]$: first compute the $5\times5$ Hurwitz determinant in the independent indeterminates $c_0,\ldots,c_6$ (it has 13 terms), and only then substitute the sextic coefficients. Exact polynomial division must have zero remainder, and $(1+v)^{39}G_5(A,5v/[8(1+v)])$ has 820 nonzero coefficients, all negative. Its irreducibility is not required for the sign conclusion.

Every $\Delta_1,\ldots,\Delta_5$ is positive; the constant term $c_0=X^6(A+X+1)^6>0$ gives $\Delta_6=c_0\Delta_5>0$. All transformed factors retain their full $v$-degree, so the leading $v$ coefficients of the same sign include $x=5/8$. Routh--Hurwitz puts all sextic roots in $\operatorname{Re}z<0$, so the required shape has $\operatorname{Re}r<1$, a contradiction. The fixed/adaptive Sturm signs, opposite-sheet energy estimate and all Hurwitz minors are exact certificate obligations; the final sector argument alone does not exclude No.~79.
\subsection{No.~52: exclusion of both valuation endpoints}
\begin{proposition}[No.~52 valuation sieve]\label{prop:no52-window}
Let $B\ne0$ be the common leading $w$ coefficient on the $zw$-triangle $T=456$ of No.~52. Every first-limit germ through a positive configuration has $W_2=W_3=B$ and $W_1/B=-(m_2+m_3+m_4+m_5+m_6)/m_1$. Its $w$-only triangle $Q=235$ has common $z$-order $\alpha=5/3$ or $\alpha=7/4$.
\end{proposition}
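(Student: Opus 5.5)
We need the structure of No.~52 as the statement describes it: a $zw$-triangle $T=456$ and a $w$-only triangle $Q=235$, sharing body $5$. Bodies $2,3$ and those of $T$ carry $w$-circles, and body $1$ is a circled outer vertex. The plan follows the pattern of Sections~\ref{sec:no51-pendant} and~\ref{sec:no86-wing}. First we locate the leading $w$ values, then we bound the common $z$-order $\alpha\in(1,2]$ on $Q$ by maximality, unique-earliest-term and seven-product arguments. The two surviving values $5/3$ and $7/4$ should appear as the exact resonance orders at which an original row can balance.

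\emph{Step 1 (leading $w$ values).} On the $zw$-triangle all internal separations have order $1$, so bodies $4,5,6$ share the leading $w$ coefficient $B\ne0$. If $\alpha<2$, the $w$-only triangle $Q$ is nonmaximal, so $w_{25},w_{35}$ have order $4-3\alpha>-2$ and $W_2=W_3=W_5=B$. The case $\alpha=2$ makes $Q$ maximal. Then the body-2 and body-3 $w$ rows together with the attachment at $5$ either force a missing closing stroke, as in No.~40, or make the shared-body row have a uniquely earliest term, as in No.~44. Either way $\alpha=2$ is excluded. With $W_2=\dots=W_6=B$, the original $w$ centre $\sum m_iW_i=0$ then gives $W_1=-(m_2+\dots+m_6)B/m_1$. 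This also shows $W_1\ne B$, so the separations $w_{1j}$ have exact order $-2$.

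\emph{Step 2 (order window).} Write $b=4-3\alpha$ and compute the orders of the exterior pairs with \eqref{eq:force-order-formulas}. A pair from $\{2,3\}$ to $\{4,6\}$ has orders $(1,b)$, as in \eqref{eq:overlap-cross-orders}. A pair $1j$ has $w$-order $-2$, and its $z$-order is fixed by whatever stroke body $1$ carries. The connectivity-type bound \eqref{eq:connectivity-product-bound}, refined by these exact orders, gives a lower bound on the least seven-product order that is linear in $\alpha$. A first-limit pole therefore forces $\alpha$ above a threshold, as the bound $12-7\alpha$ did for No.~51. Next we examine the original body-2 and body-3 $z$ rows. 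The $Q$-internal force has order $4\alpha-6$, the coordinate term has order $\ge1$, the cross forces to $4,6$ have order $(9\alpha-13)/2$, and the forces to body $1$ have order fixed by body $1$'s stroke.

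\emph{Step 3 (resonances).} Generically the $Q$ force is the unique earliest term in its row, which is a contradiction. Cancellation needs an equality between its order and a competing order. Setting $4\alpha-6$ equal to the order of the stroke force from body $1$ is expected to give $\alpha=5/3$. The endpoint where $4\alpha-6=1$ balances the coordinate term gives $\alpha=7/4$, as in the $s=7/4$ case of No.~50. Subtracting the body-2 and body-3 rows handles the case in which the two $Q$ forces cancel each other, using the identity $w_{23}=w_{25}-w_{35}$.

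The main obstacle is this exhaustive comparison of orders. It must cover every branch: unmarked pairs such as $12$ and $13$ whose inverse distances may have poles must be retained, as in No.~49, and the first-limit window must be intersected with the finitely many equality loci. If a third resonance appears, we would try to kill it with a weighted row sum in which the $23$ force cancels exactly, as in No.~50.
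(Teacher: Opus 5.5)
\textbf{There is a genuine gap, and it sits in Step~3.}

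At body~2 the internal $Q$ force is not a single term. It is the sum of the $23$ and $25$ forces, and both have order $4\alpha-6$ precisely because the $z$-order on $Q$ is common; the same holds at body~3. For $\alpha>7/4$ the order-one coordinate $z_2=Z_5\varepsilon+o(\varepsilon)$ is uniquely earliest, so that range is excluded, and $\alpha=7/4$ is the genuine balance you found.

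For $\alpha<7/4$, however, the original rows only demand $F^z_{Q,2}=F^z_{Q,3}=0$ at order $4\alpha-6$, together with $F^w_{Q,2}=F^w_{Q,3}=B$ from the $w$ rows. That is a consistent three-body zero-force shape problem. It imposes the mass relation $F_0=0$ (with $m_2\ne m_3$), not a contradiction, and subtracting the body-2 and body-3 rows adds nothing to it. Hence no comparison of orders in the rows can shrink the window $13/8<\alpha<7/4$, left by the seven-product bound $13-8\alpha$, to two points.

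Your proposed source of $5/3$ is also wrong. Body~1 hangs on the maximal $w$-stroke $16$, so $z_1=O(\varepsilon^2)$ and the $12,13$ forces in rows 2 and 3 have order $5/2$; equating this with $4\alpha-6$ gives $17/8$. The value $5/3$ is not a row resonance at all. It is the zero of the crossed pair-square order $5-3\alpha$ on $24,26,34,36$, where those inverse distances change from growing to fading.

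\textbf{The missing ingredient is energy:} constancy of $U=I=c>0$ and, on one interval, H\"older at the positive point.
\begin{itemize}
\item For $13/8<\alpha<5/3$, contract the $z_2,z_3$ rows against $m_i(w_i-w_5)$. The exact tie identity $\delta_{ij}-\delta_{ij}^3z_{ij}(w_i-w_5)=\delta_{ij}^3z_{ij}(w_5-w_j)$ then absorbs the growing crossed potentials. Only $\delta_{16}\to d$ survives, so $c=m_1m_6d$ with $d^3=C/(Mm_6)$, where $C=\sum_{i\ge2}m_i$. Then $\sigma^3>(m_1C)^3>m_1^3m_6^2C=Mc^3$ contradicts $\sigma^3\le Mc^3$.
\item For $5/3<\alpha<7/4$, the crossed distances fade, but $U_Q$ has a nonzero pole of order $5-3\alpha$ with coefficient proportional to $m_2^2-m_3^2$. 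This contradicts constancy of $U$.
\end{itemize}

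\textbf{The same gap affects your $\alpha=2$ case.} When $W_1=B$, the body-1 $z$ row balances at $\ord z_{16}=7/4$, no closing stroke is missing, and no term is uniquely earliest. The contradiction is the uncancelled pole $m_1m_6D_{16}\varepsilon^{-1/4}$ of $U$, once the $T$-row contraction shows $U_T=O(\varepsilon^2)$. The other two subcases also need explicit arguments rather than analogies with No.~40 and No.~44:
\begin{itemize}
\item If $W_1\notin\{W_2,W_3,B\}$, the outer $z$ rows force $z_5,z_6=O(\varepsilon^2)$, contradicting the $zw$ edge $56$.
\item If $W_1\in\{W_2,W_3\}$, a weighted row sum cancels the unmarked mutual force, and the nonzero order-one position coefficient cannot then be balanced.
\end{itemize}
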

\begin{proof}
We first exhaust the outer leading $w$ coefficients $W_1,W_2,W_3$. On a $w$-only edge, unequal leading $w$ coefficients make the edge maximal and its $z$-separation $O(\varepsilon^2)$; equal coefficients make the edge nonmaximal, with $z$-order less than 2. If exactly two of $W_2,W_3,B$ agree, the other two edges of the $w$-only triangle 235 are maximal. Their two-edge path forces the supposedly nonmaximal edge's $z$-separation to be $O(\varepsilon^2)$, a contradiction.

If $W_2,W_3,B$ are pairwise distinct, all three triangle edges are maximal. When $W_1$ differs from all three, the original outer $z$-rows force $z_5,z_6=O(\varepsilon^2)$, contrary to the $zw$-edge 56. When $W_1=W_2$ or $W_1=W_3$, the mass-weighted sum of the original $z_1$ row and corresponding outer $z$ row cancels the unmarked mutual force. Its remaining forces have order at least 2, but its position has nonzero order-$\varepsilon$ coefficient $m_1z_6$, again impossible. Finally, if $W_1=B$, the original $z_1$ row forces $\ord z_{16}=7/4$ and $\ord\delta_{16}=-1/4$. The original $T$-row contraction makes its signed internal potential $O(\varepsilon^2)$, while all other exterior inverse distances are bounded. Thus $U$ has the uncancelled pole $m_1m_6D_{16}\varepsilon^{-1/4}$, contradicting $U=c$.

Only $W_2=W_3=B$ remains. The mass-weighted $w$-centre then gives the asserted $W_1/B$, which differs from 1. The maximal edge 16 and its original $z_1$ row give $z_1,z_6=O(\varepsilon^2)$. Since 56 has both colours, $z_5=\varepsilon Z_5+o(\varepsilon)$ with $Z_5\ne0$; the three $Q$ edges have $z$-order $\alpha>1$, so the same order-one coefficient occurs in $z_2,z_3$. On each of their original $z$-rows the internal $Q$ force has order $4\alpha-6$, while the missing-edge forces have strictly greater order. An internal force cannot balance $z_5\varepsilon$ when $\alpha>7/4$.

The 15 pair-square orders are 2 on $T$, $4-2\alpha$ on $Q$, 0 on 16, $-1$ on the other four edges from body 1, and $5-3\alpha$ on 24,26,34,36. The least of the 15 seven-product orders is $13-8\alpha$, attained at products 26,36,56. Thus $\alpha\le13/8$ has no unbounded seven-product. For $13/8<\alpha<5/3$ the inverse distances on the four missing edges can grow, so discarding them would be wrong. Instead the exact tie $\delta_{ij}^2z_{ij}w_{ij}=1$ gives, for $i\in\{2,3\}$ and $j\in\{4,6\}$,
\begin{equation}\label{eq:no52-cross-cancel}
\delta_{ij}-\delta_{ij}^3z_{ij}(w_i-w_5)=\delta_{ij}^3z_{ij}(w_5-w_j).
\end{equation}
Contracting only the original $z_2,z_3$ rows against $m_i(w_i-w_5)$ cancels these four crossed potential terms by \eqref{eq:no52-cross-cancel}. The remaining source, relative and crossed orders are respectively $5-3\alpha$, $4-2\alpha$, and $(9\alpha-11)/2$, all positive; the $T$ relative potential also tends to zero. Only $\delta_{16}\to d\ne0$ remains finite, hence $c=\lim U=m_1m_6d$. Put $C=\sum_{i=2}^6m_i$, $M=m_1+C$ and $\sigma=\sum_{i<j}m_im_j$. The original $w_1$ row and mass centre give $d^3=C/(Mm_6)$, whence $Mc^3=m_1^3m_6^2C$. At a positive configuration, $\sum_{i<j}m_im_jr_{ij}^2=MI=Mc$ and $U=\sum_{i<j}m_im_j/r_{ij}=c$. H\"older therefore gives $\sigma^3\le Mc^3$. But $\sigma>m_1C$ and $C>m_6$ imply $\sigma^3>(m_1C)^3>m_1^3m_6^2C=Mc^3$.

For $5/3<\alpha<7/4$ all missing-edge inverse distances fade. The original $z_2,z_3$ rows force zero individual $Q$ internal $z$-forces; their signed ties and the original $w_2,w_3$ rows imply $F_0(m_2,m_3,m_5)=0$ (defined in \eqref{eq:no52-factors}) and $m_2\ne m_3$. With $w_2-w_5=p\varepsilon^{4-3\alpha}+o(\varepsilon^{4-3\alpha})$ and $p\ne0$, the same original-row contraction yields
\begin{equation}\label{eq:no52-q-pole}
U_Q=z_5p\frac{m_2^2-m_3^2}{m_2}\varepsilon^{5-3\alpha}(1+o(1)).
\end{equation}
This is a nonzero pole. The $T$ relative potential tends to zero, 16 stays finite and every other exterior inverse distance fades, so $U=c$ is impossible. Only the two stated endpoints remain.
\end{proof}

Before excluding the two endpoints, the original rows impose four intermediate necessary mass factors. With $a=m_2,b=m_3,d=m_5>0$,
\begin{equation}\label{eq:no52-factors}
\begin{aligned}
F_0&=(ab)^3-d^2(a+b)^4,\\
F_{--}&=d(a+b)(a^2-b^2)-a^2b^2,\\
F_{-+}&=d(a+b)(a^2+b^2)-a^2b^2,\\
F_{+-}&=d(a+b)(a^2-b^2)+a^2b^2.
\end{aligned}
\end{equation}
\begin{proposition}[No.~52 necessary mass locus]\label{prop:no52-locus}
By Proposition~\ref{prop:no52-window}, every positive-point No.~52 first limit satisfies $F_0F_{--}F_{-+}F_{+-}=0$. This product is not the zero polynomial.
\end{proposition}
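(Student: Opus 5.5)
By Proposition~\ref{prop:no52-window}, a positive-point No.~52 first limit has $W_2=W_3=B$ and $\alpha\in\{5/3,7/4\}$. The proof therefore splits into these two endpoints. The aim is to show that in each one the original rows force one of the four factors in \eqref{eq:no52-factors} to vanish, and then to check separately that the product is a nonzero polynomial.

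\textbf{Endpoint $\alpha=7/4$.} Here the internal $Q$ force in the original $z_2,z_3$ rows has order $4\alpha-6=1$. It is the same order as the coordinate term $\varepsilon Z_5$ shared by $z_2,z_3,z_5$, and every missing-edge force is strictly later. I will write
\[
z_{ij}=\zeta_{ij}\varepsilon^{7/4},\qquad w_{ij}=\omega_{ij}\varepsilon^{-5/4},\qquad \delta_{ij}=D_{ij}\varepsilon^{-1/4}
\]
on the three $Q$ edges, subject to the ties $D_{ij}^2\zeta_{ij}\omega_{ij}=1$.

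The equations to use are the leading original $z_2$ and $z_3$ rows, which equate $Z_5$ to the two internal $Q$ forces, together with the leading original $w_2,w_3,w_5$ rows at order $-2$. Since $W_2=W_3=B=W_5$, the $w$ differences inside $Q$ are subleading, so these $w$ rows balance the $Q$ force terms against the common lead. Eliminating $Z_5$ and the shape coefficients should leave a relation among $a,b,d$ that depends on a choice of signs for the signed roots $D_{23},D_{25},D_{35}$. I expect the three admissible sign patterns to give exactly $F_{--}$, $F_{-+}$ and $F_{+-}$. The fourth pattern, $d(a+b)(a^2+b^2)+a^2b^2=0$, has no positive solutions and is dropped.

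\textbf{Endpoint $\alpha=5/3$.} Here the missing-edge inverse distances sit exactly at the borderline. As in the open range $5/3<\alpha<7/4$, the original $z_2,z_3$ rows at order $4\alpha-6=2/3<1$ contain only the internal $Q$ forces, so each individual internal force must vanish. That argument carries over verbatim and gives $F_0=0$, provided the crossed forces remain strictly later at $\alpha=5/3$. I will check this with \eqref{eq:no52-cross-cancel}.

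\textbf{Nonvanishing of the product.} It suffices to exhibit one point where no factor vanishes. At $a=b=d=1$ the values are
\[
F_0=1-16=-15,\qquad F_{--}=-1,\qquad F_{-+}=3,\qquad F_{+-}=1,
\]
all nonzero.

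\textbf{Main obstacle.} The hardest step is the sign-pattern elimination at $\alpha=7/4$. The complex signed roots must be tracked carefully, and I must confirm that the leading order-one balance really involves only $Z_5$ and the $Q$ forces, with no surviving contribution from the crossed edges $24,26,34,36$. I would try first to make $Q$-internal shape ratios the unknowns, as in the No.~20 calculation, and eliminate them by resultants.
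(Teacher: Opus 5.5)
Your treatment of $\alpha=5/3$ matches the paper. There the crossed inverse distances on $24,26,34,36$ are finite, but their forces have orders $1>2/3$ in the $z$ rows and $-1>-2$ in the $w$ rows, so the individual internal $Q$ $z$-forces vanish and $F_0=0$ follows as in the open range. Your sample $(a,b,d)=(1,1,1)$ also correctly shows that the product is nonzero.

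The gap is at $\alpha=7/4$: the equations you plan to eliminate impose no relation on $a,b,d$. Let $u,v,p,q$ be the leads of $z_2-z_5,z_3-z_5,w_2-w_5,w_3-w_5$. Try $v=\kappa u$, $q=\kappa p$, $D_{23}=s_{23}D_{25}/(1-\kappa)$ and $D_{35}=\eta D_{25}/\kappa$, with $s_{23},\eta\in\{\pm1\}$. The three $Q$ ties then hold once $D_{25}^2up=1$. Both $F^z_{Q,2}=F^z_{Q,3}$ and $F^w_{Q,2}=F^w_{Q,3}$ reduce to the single quartic $s_{23}(a+b)\kappa^2+d\kappa^2(1-\kappa)^2-\eta d(1-\kappa)^2=0$. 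Its values at $\kappa=0,1$ are $-\eta d$ and $s_{23}(a+b)$, so it has roots $\kappa\ne0,1$ for every $(a,b,d)$. The common force values then define $Z_5$ and $B$, nonzero for generic masses. So the leading $z_2,z_3,w_2,w_3$ rows and the $Q$ ties are solvable at generic masses, and no resultant of them produces $F_{--},F_{-+},F_{+-}$.

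The $w_5$ row cannot supply the missing equation either. Since $T=456$ is a $zw$ triangle, its forces also enter that row at order $-2$. The row reads $B=F^w_{Q,5}+F^w_{T,5}$ with $F^w_{Q,5}=-(a+b)B/d$, so it involves $m_4,m_6$ and the $T$ shape. Reading it as a $Q$-only balance, as you propose, would give the spurious conclusion $d=-(a+b)$.

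The missing ingredient is the constancy of $U$. The reference-$5$ identity of Lemma~\ref{lem:reported-reference-triangle} gives $U_Q=Z_5(ap+bq)\varepsilon^{-1/4}+o(\varepsilon^{-1/4})$. Since $U_T\to0$, $\delta_{16}$ stays finite and every other exterior inverse distance fades, $U=c$ forces $ap+bq=0$.

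The torque identity \eqref{eq:no52-q-torque}, with $F^z_{Q,2}=F^z_{Q,3}=Z_5$ and $F^w_{Q,2}=F^w_{Q,3}=B$, reads $B(au+bv)=Z_5(ap+bq)$ at order $-1/4$. Hence $au+bv=0$ as well. This pins the proportionality constant to $\kappa=-a/b$. It gives $D_{23}/D_{25}=s_{23}b/(a+b)$ and $D_{35}/D_{25}=s_{35}b/a$ with $s_{35}=-\eta$. The quartic then becomes $d(a+b)(a^2+s_{35}b^2)+s_{23}a^2b^2=0$. The pattern $(s_{23},s_{35})=(+,+)$ has a strictly positive left side, and the other three patterns are exactly $F_{--},F_{-+},F_{+-}$.

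So the hard step is not the sign bookkeeping in a resultant elimination. It is recognizing that the $\varepsilon^{-1/4}$ pole of the internal $Q$ potential must cancel.
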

\begin{proof}
When $\alpha=5/3$, the two individual internal $Q$ $z$-forces vanish; their three distance ties and the nonzero original $w_2,w_3$ forces give $F_0=0$. At $\alpha=7/4$ write $p,q$ for the leading coefficients of $w_2-w_5,w_3-w_5$, and $u,v$ for those of $z_2-z_5,z_3-z_5$. The potentially divergent $Q$ potential has leading coefficient $z_5(ap+bq)$. Since $U=c$ it must vanish. The internal pairwise antisymmetry of the original force rows yields
\begin{equation}\label{eq:no52-q-torque}
\sum_{i=2,3}m_i\bigl((z_i-z_5)F^w_{Q,i}-(w_i-w_5)F^z_{Q,i}\bigr)=0.
\end{equation}
Here $F^z_{Q,2}=F^z_{Q,3}=Z_5\ne0$ and $F^w_{Q,2}=F^w_{Q,3}=B\ne0$, so $au+bv=0$ as well. In particular $v/u=q/p=-a/b$. The three signed $Q$ ties leave two independent signs, with
\begin{equation}\label{eq:no52-signed-ratios}
\frac{D_{23}}{D_{25}}=s_{23}\frac{b}{a+b},\qquad \frac{D_{35}}{D_{25}}=s_{35}\frac{b}{a},\qquad s_{23},s_{35}\in\{\pm1\}.
\end{equation}
The difference of the original $w_2,w_3$ force requirements is, up to a nonzero factor,
\begin{equation}\label{eq:no52-force-difference}
d(a+b)(a^2+s_{35}b^2)+s_{23}a^2b^2=0.
\end{equation}
For $(s_{23},s_{35})=(+,+)$ this is strictly positive. The sign pairs $(-,-),(-,+),(+,-)$ give respectively $F_{--}=0,F_{-+}=0,F_{+-}=0$. Thus every surviving endpoint lies on the asserted union; none is claimed to lift to a full solution germ. At the positive mass sample $(a,b,d)=(1,2,1)$ the four factors are $(-73,-13,11,-5)$, with product $-52195\ne0$.
\end{proof}

\begin{proposition}[No.~52 excluded as a positive-point first limit]\label{prop:no52-excluded}
No strictly positive mass vector admits a No.~52 first-limit germ through a positive configuration. The assertion does not exclude arbitrary complex-only curves with this diagram.
\end{proposition}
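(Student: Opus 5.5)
By Proposition~\ref{prop:no52-window}, a positive-point No.~52 first limit must have $W_2=W_3=B$ and $\alpha\in\{5/3,7/4\}$. We exclude the two endpoints separately, using the energy identity $U=I=c>0$ of Lemma~\ref{lem:energy} together with physical H\"older $\sigma^3\le Mc^3$ at the positive point. At each endpoint we first tabulate the exact pair orders and then show that every inverse distance except $\delta_{16}$ either fades or cancels in the complete potential. That reduces to the single-bridge identity $c=m_1m_6d$ with $d^3=C/(Mm_6)$, which already gave $\sigma^3>m_1^3m_6^2C=Mc^3$ in the interior window $13/8<\alpha<5/3$.

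\emph{Endpoint $\alpha=5/3$.} The four missing edges $24,26,34,36$ have $\ord\delta_{ij}=(3\alpha-5)/2=0$, so they no longer grow, but they may have finite nonzero limits. We handle them exactly as in the interior case, contracting the original $z_2,z_3$ rows against $m_i(w_i-w_5)$ and using the tie identity \eqref{eq:no52-cross-cancel}. This cancels those four crossed potential terms inside $U_Q$. The remaining source, relative and crossed orders are $5-3\alpha=0$, $4-2\alpha=2/3$ and $(9\alpha-11)/2=2$. The source term is therefore of order zero rather than vanishing. Its coefficient is $z_5\,(m_2p+m_3q)$, where $p,q$ are the leading coefficients of $w_2-w_5$ and $w_3-w_5$. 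We show this coefficient is zero using the $Q$ torque identity \eqref{eq:no52-q-torque} together with the centre relation $W_2=W_3$. If that fails, we instead carry the extra finite term into $c$ and bound it by the modulus of the signed roots forced by $F_0=0$. After this, only $\delta_{16}$ survives, and the single-bridge H\"older gap applies unchanged.

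\emph{Endpoint $\alpha=7/4$.} Here $v/u=q/p=-a/b$, the signed ratios \eqref{eq:no52-signed-ratios} hold, and the potential coefficient $z_5(ap+bq)$ vanishes. All missing-edge inverse distances fade, since $(3\alpha-5)/2>0$. The $T$ relative potential is $O(\varepsilon^2)$. The subleading $Q$ contributions have orders $4-2\alpha=1/2$ and $(9\alpha-11)/2>0$, so they tend to zero. Hence $U_Q\to0$ once its order-$(5-3\alpha)$ coefficient is known to vanish. This leaves $c=m_1m_6d$ again, and the strict inequalities $\sigma>m_1C>m_1 m_6^{2/3}C^{1/3}$ contradict H\"older for every sign pair $(s_{23},s_{35})$. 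Since the $(+,+)$ pair was already impossible, only the three $F$-loci remain, and the energy argument excludes them regardless of the masses.

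The main obstacle is the $\alpha=5/3$ endpoint. There the source term $z_5(m_2p+m_3q)$ and the crossed finite inverse distances are both of order zero. We must certify either that they cancel exactly, through the torque identity, or that their bounded contribution still fits under the strict H\"older gap. I would try cancellation first. The identity $au+bv=0$ from \eqref{eq:no52-q-torque} should transfer to $ap+bq=0$ through the leading ties on $Q$, because $F_0=0$ makes the $Q$ shape rigid.
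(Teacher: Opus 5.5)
\emph{The endpoint $\alpha=5/3$.} Your primary route needs the order-zero reference coefficient $z_5(m_2p+m_3q)$ to vanish, and it does not. At this endpoint the individual internal $Q$ $z$-forces at bodies $2,3$ still vanish at their leading order $4\alpha-6=2/3$. So \eqref{eq:no52-q-torque} reduces at order $\alpha-2$ to $B(au+bv)=0$. This constrains the $z$-shape $(u,v)$, not $(p,q)$. Substituting $au+bv=0$ into the two zero $z$-force equations gives $(a+b)D_{23}^3=-dD_{25}^3=-dD_{35}^3$. Then take $a$ times the original $w_2$ row plus $b$ times the $w_3$ row at order $-2$; the $23$ terms cancel, leaving $dD_{25}^3(ap+bq)=(a+b)B\ne0$. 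On the real cube-root branch this is the coefficient $z_5p(m_2^2-m_3^2)/m_2$ of \eqref{eq:no52-q-pole}, now at order zero, and $a\ne b$ here.

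So the reference term is a nonzero finite contribution. In the paper's bookkeeping $U_Q\to0$, but the four finite crossed edges survive, and $c=m_1m_6D_{16}+U_\times$. Your fallback does not close this. Once an extra finite term sits in $c$, the identity $Mc^3=m_1^3m_6^2C$ is lost, so the single-bridge H\"older gap cannot ``apply unchanged.'' Moreover, the extra term is governed by the crossed roots $D_{24},D_{26},D_{34},D_{36}$ and the masses $m_4,m_6$, not by the $Q$ roots fixed by $F_0=0$. The paper needs a genuinely new estimate here, built from:
\begin{itemize}
\item the crossed-root ratios \eqref{eq:no52-cross-ratios};
\item the left-kernel bound \eqref{eq:no52-cross-bound};
\item the normalization $m_2+m_3=1$, $m_5=\lambda^3$ coming from $F_0=0$;
\item the constraint $m_6<m_5$ extracted from the inner-triangle shape equation \eqref{eq:no52-inner-shape}, which your plan never uses;
\item finally \eqref{eq:no52-corrected-gap} together with the monotone extension $G(m_1)>0$.
\end{itemize}

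\emph{The endpoint $\alpha=7/4$.} Here the leading $\varepsilon^{-1/4}$ coefficient of $z_5(m_2w_{25}+m_3w_{35})$ does vanish. But that alone does not make the term $o(1)$, as you assert. Puiseux corrections to the $Q$ shape at relative order $1/8$ or $1/4$ (the branch lives in $h$ with $\varepsilon=h^8$) would leave either a residual pole or a finite addition to $c$. The paper rules this out with a formal implicit-function argument:
\begin{itemize}
\item the $Q$ force/tie Jacobians are nonzero on each of the three sign branches;
\item the weighted row $(0,0,a,b,0,0,0)$ annihilates the adjugate against both common source columns, so the common sources cannot shift $ap+bq$;
\item the first noncommon crossed source enters only at relative order $3/8>1/4$.
\end{itemize}
Only after this does $c=m_1m_6D_{16}$ hold. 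At that point your final comparison $\sigma>m_1C>m_1m_6^{2/3}C^{1/3}=M^{1/3}c$ is correct.
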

\begin{proof}
Write $a=m_2$, $b=m_3$, $d=m_5$, $X=m_4$ and $Y=m_6$. By Proposition~\ref{prop:no52-window}, only $\alpha=5/3$ or $7/4$ can have a pole in a seven-product. At both endpoints the original body-4 and body-5 $w$-rows require, for the inner triangle $456$, $F^w_{T,4}=B\ne0$ and $F^w_{T,5}/F^w_{T,4}=R=1+(a+b)/d>1$. Its three leading $z$-forces vanish, as does its signed leading potential. Normalize $m_5=D_{46}=Z_{46}=W_{46}=1$ and put $A=X/d$, $C=Y/d$ and $t=D_{56}$. Original signed ties and the zero $z$-forces give
\begin{equation}\label{eq:no52-inner-shape}
C^2(A+t)^3(t^3+A)-A^3t^3=0,\qquad F^w_{T,5}=t^3F^w_{T,4},\qquad F^w_{T,4}=\frac{C(A^2-t^4)}{A(t^3+A)}.
\end{equation}
There is no lost root at $t=0,-A$ or $t^3=-A$. The nonreal cube phases of $t^3=R>1$ would require $(A+t)^3=A^3R/[C^2(R+A)] >0$; their imaginary parts vanish only when $|t|=A$, when $(A+t)^3=-A^3<0$. Thus $t>1$ is real and $Y<d$.

At $\alpha=5/3$, the first two $Q$ $z$-forces vanish and \eqref{eq:no52-factors} gives $d=(ab)^{3/2}/(a+b)^2$ and $a\ne b$. Writing $a=x^2$, $b=y^2$, the seven original $Q$ force/tie rows have a left kernel; in the order $(z_2,z_3,w_2,w_3,23,25,35)$ and with second weight one it is
\begin{equation}\label{eq:no52-left-kernel}
\left(-\frac{x^4}{y^4},1,\frac{x^4}{3y^4},-\frac{x^4}{3y^4},-\frac{x^5}{y(x^2+y^2)},\frac{x^7}{y(x^2+y^2)^2},\frac{x^5y}{(x^2+y^2)^2}\right).
\end{equation}
Projecting the first original six-body crossed $z_2,z_3$ sources and all four signed crossed ties onto this vector makes the finite $Q$ reference term equal the potential of the four finite crossed edges. The original reference-5 identity subtracts those same edges from the complete $Q$ potential, so its finite coefficient vanishes. The $w$-source projection cancels. All other terms fade except the finite edge 16, so $c=U=m_1YD_{16}+U_\times$, with $D_{16}^3=(a+b+X+d+Y)/(MY)$. The $z$-centre and crossed ties give, with $P=a+b$ and $v^2=X/(X+P+d)$,
\begin{equation}\label{eq:no52-cross-ratios}
\frac{D_{24}^2}{D_{26}^2}=\frac{D_{34}^2}{D_{36}^2}=v^2,\qquad \frac{D_{34}^2}{D_{24}^2}=\frac{D_{36}^2}{D_{26}^2}=-\frac ab.
\end{equation}
Each independently signed crossed pair has complex coefficient $a\pm i\sqrt{ab}$, of squared modulus $aP$. The left-kernel identity and triangle inequality imply
\begin{equation}\label{eq:no52-cross-bound}
|U_\times|^3\le |a-b|P^2(Y+Xv)^2<P^3(Y+Xv)^2.
\end{equation}
Normalize $P=1$, write $\lambda=\sqrt{ab}/P\in(0,1/2)$, $d=\lambda^3$, and $L=1+\lambda^3$. Since $v^2=X/(X+L)$, the substitution $v=u/(1+u)$ gives
\[
X=\frac{Lu^2}{1+2u},\qquad v=\frac{u}{1+u},\qquad Y=\lambda^3q,\qquad u>0,\quad 0<q<1.
\]
Put $C_5=L+X+Y$, $H_5=\lambda^2+\lambda^3+L(X+Y)+XY$, and $S=Y+Xv$. We prove directly that
\begin{equation}\label{eq:no52-corrected-gap}
H_5^3>C_5S^2.
\end{equation}
Set $J=X+\lambda^2$. Since $L>1$ and $Y<\lambda^3$, we have
\[
C_5\le(L+Y)(X+1),\qquad S\le\lambda^3+\frac{X^{3/2}}{\sqrt{X+1}}.
\]
Cauchy--Schwarz gives
\[
\bigl(X^{3/2}+\lambda^3\sqrt{X+1}\bigr)^2
\le J\bigl[X^2+\lambda^4(X+1)\bigr]<J^3,
\]
where the strict inequality follows from
\[
J^2-\bigl[X^2+\lambda^4(X+1)\bigr]=\lambda^2(2-\lambda^2)X>0.
\]
Moreover,
\[
H_5-(L+Y)J=(1-\lambda^2)(\lambda^3+Y)+\lambda^3Y>0.
\]
Consequently,
\[
C_5S^2<(L+Y)J^3<(L+Y)^3J^3<H_5^3,
\]
which proves \eqref{eq:no52-corrected-gap} throughout the required parameter range.

For the extension to $m_1>0$, define for $s\ge0$
\[
G(s)=H_5+sC_5-sY^{2/3}C_5^{1/3}-(s+C_5)^{1/3}S^{2/3}.
\]
The inequality gives $G(0)>0$. Moreover, $C_5>1$, $Y<\lambda^3<1/8<C_5/8$, and $S<C_5$, so
\[
G'(s)=C_5-Y^{2/3}C_5^{1/3}-\frac{S^{2/3}}{3(s+C_5)^{2/3}}
>\frac34-\frac13=\frac5{12}.
\]
By \eqref{eq:no52-cross-bound} and $D_{16}^3=C_5/(MY)$, the energy bound gives
\[
M^{1/3}c\le m_1Y^{2/3}C_5^{1/3}+M^{1/3}S^{2/3},\qquad M=m_1+C_5.
\]
Since $\sigma=H_5+m_1C_5$, the positivity of $G(m_1)$ gives $\sigma>M^{1/3}c$, contrary to physical H\"older.

At $\alpha=7/4$, the three allowed positive signed $Q$ branches in \eqref{eq:no52-factors} have $a\ne b$. Normalize $(z_{25},w_{25},D_{25})=(1,1,1)$ and order the seven $Q$ variables as in the preceding kernel. For signs $(-,+),(-,-),(+,-)$, the original force/tie Jacobian determinants, after substituting the corresponding mass factor, are
\[
-\frac{32b^8(a-b)^4(a^2+ab+b^2)^4}{a(a+b)^7(a^2+b^2)^4},\quad \frac{32b^8(a^2-ab+b^2)^4}{a(a-b)^4(a+b)^7},\quad -\frac{32b^8(a^2-ab+b^2)^4}{a(a-b)^4(a+b)^7}.
\]
All are nonzero. Moreover the weighted $w$-shape row $(0,0,a,b,0,0,0)$ annihilates the adjugate against both common source columns $(1,1,0,0,0,0,0)^T$ and $(0,0,1,1,0,0,0)^T$, for every sign branch. The two source scalings of the homogeneous $Q$ shape have determinant $-2$, so the formal implicit function theorem applies even at fractional Puiseux suborders. Direct expansion of all four original six-body $Q$ rows with $\varepsilon=h^8$ shows no external coefficients at $h^9,h^{10},h^{11},h^{14}$; the first crossed $z$-force has relative order $3/8$, and all other noncommon sources are later. A finite $Q$ reference-potential contribution would require relative order $1/4$ after its leading $\varepsilon^{-1/4}$ coefficient vanishes. It is therefore zero. The $T$ potential and all other crossed contributions fade, leaving $c=m_1YD_{16}$. The original $w_1$ row gives $D_{16}^3=(M-m_1)/(MY)$, while the positive mass-pair sum obeys $\sigma^3>m_1^3Y^2(M-m_1)=Mc^3$, again contradicting physical H\"older. Both valuation endpoints are therefore impossible.
\end{proof}
\subsection*{The 7 derived relation obligations}
The 7 arguments below explain separately how an unbounded
first limit reaches a proper necessary mass locus. Each has a
local positive-point first-limit proof. Table~\ref{tab:derived}
records the proof status and the remaining independent audits of the stated certificates. Finiteness on the positive exceptional zero
sets is not settled here.
\begin{table}[ht]

\centering\small

\setlength{\tabcolsep}{4pt}
\begin{tabular}{p{1.2cm}p{8.2cm}p{5.4cm}}

\hline

No. & Necessary condition / method & Proof status / certificate audit \\
\hline

8 & Two global-minimum triangle supports; six $\mu_6$ factors,
four resultant families and a direct order proof
& the two no-match supports are covered by Lemma~\ref{lem:no8-remaining-support-analytic} \\

63 & Fully balanced K4; 32 signed affine-collinear branches and nine
non-collinear Cayley--Menger cases & proved locally; independent
replay audit of the 32 characteristic-zero resultants remains \\

66 & Zero/nonzero K4 w-drift; mass differentials and a finite
signed cover of one parameter & direct analytic proof; no diagram-specific
computational certificate or coverage hypothesis \\

88 & Pair/K4 split first crossed torque and nonuniform signed K4
root norm & proved locally; separately audit the degree-90 fiber,
nine phase sheets and zero-star replay \\
93 & Full intermediate order window; lower three-body Euler/Lagrange
root and sign norm & proved locally; independently audit graph-order
and crossed-tie guards \\
94 & w-only $K_4$; finite distinct-spoke fibers, $H_3=0$ projective
boundary census and first six-body torque norm & proved locally;
independently audit the signed norm and boundary guards \\
98 & Two closed three-body boundaries; signed first crossed torque
and colour-block involution & proved locally; independently audit
the cited graph rules and interval replay \\

\hline

\end{tabular}

\caption{The 7 derived cases, their proofs and the independent certificate audits.}

\label{tab:derived}

\end{table}

\subsection{No.~8: global minima and the finite mass-factor cover}
Set $Q=236$ and $T=456$. For positive $a,b,c$ write
\begin{equation}\label{eq:no8-triangle-factors}
\begin{aligned}
\mu_6(a,b,c)&=(ab)^3-c^2(a+b)^4,\\
H_{a,b,c}(x)&=a^2(c+bx)^3(c+bx^3)-b^3c^3x^3.
\end{aligned}
\end{equation}
For each $S\in\{Q,T\}$ and $k\in S$ take the factor $\mu_6(m_i,m_j,m_k)$, where $\{i,j\}=S\setminus\{k\}$; these are six body-labelled factors, not just one symmetric triangle condition. Four more factor families arise from eliminating centred shape ratios and the original mass-centre or force row. One representative from each family is
\begin{align}
R_{2\times2}&=\operatorname{Res}_r\bigl(H_{m_2,m_3,m_6}(r),(m_1+m_4+m_5+m_6)m_3r^3+m_6(m_2+m_1+m_4+m_5+m_6)\bigr),\label{eq:no8-R22}\\
R_{3\times2}&=\operatorname{Res}_s\bigl(H_{m_4,m_5,m_6}(s),m_5(m_1+m_4+m_5)s^3+m_6(m_1+m_5)\bigr),\label{eq:no8-R32}\\
R_{\rm late}&=\operatorname{Res}_s\bigl(H_{m_4,m_5,m_6}(s),(m_2+m_3+m_6)m_5s^3+m_6(m_2+m_3+m_6+m_4)\bigr).\label{eq:no8-Rlate}
\end{align}
For the fourth family set $M=\sum_{i=1}^6m_i$, $u=m_3r^3/m_6$, $v=m_5s^3/m_6$, $A=m_2+m_3+m_6$ and $B=m_4+m_5+m_6$. At $W_3-W_2=Z_5-Z_4=1$ the uncentred leading vectors are
\[
z^0=(0,-v,-v,0,1,-v),\qquad w^0=(0,0,1,-u,-u,-u).
\]
The exact cleared \emph{full} mass-centre inertia is
\begin{equation}\label{eq:no8-double-numerator}
\begin{aligned}
N(r,s)&=m_6^2\left(M\sum_i m_i z_i^0w_i^0-\Bigl(\sum_i m_iz_i^0\Bigr)\Bigl(\sum_i m_iw_i^0\Bigr)\right)\\
&=m_6^2\left[M(-m_3v-m_5u+m_6uv)-(m_5-Av)(m_3-Bu)\right].
\end{aligned}
\end{equation}
In particular the factor is the ordered polynomial resultant
\begin{equation}\label{eq:no8-Rdouble}
R_{\rm double}=\operatorname{Res}_r\bigl(H_{m_2,m_3,m_6}(r),\operatorname{Res}_s(H_{m_4,m_5,m_6}(s),N(r,s))\bigr).
\end{equation}
Only the positive mass $m_6$ is cleared in $N$; the centring formula divides by $M>0$. No factor depending on $r$ or $s$ is canceled from the resultant, so possible extraneous shape roots do not invalidate its necessity. The support partition and the nonzero leading-shape guards used in the resultants are checked below.

\begin{lemma}[The two remaining No.~8 supports]\label{lem:no8-remaining-support-analytic}
Let a positive-point No.~8 first-limit germ have global minimum supports
\[
S_z=\{1,4,5\},\qquad S_w=\{1,2,3,4,5,6\}.
\]
Write $A=\min_i\ord z_i$, $B=\min_i\ord w_i$, and let $Z_i,W_i$ denote the coefficients at orders $A,B$, respectively, with a zero coefficient for a later coordinate. If
\[
Z_1\ne Z_4,\qquad Z_1\ne Z_5,
\]
then every seven-product $P_{ij}$ has nonnegative order. The same conclusion holds after the simultaneous involution $(2\ 4)(3\ 5)$ and $z\leftrightarrow w$.
\end{lemma}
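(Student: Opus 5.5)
We bound $\ord P_{ij}$ from below pair by pair, in the spirit of Lemma~\ref{lem:excluded-connectivity}, using the global-minimum supports as extra information. We do not know the No.~8 stroke graph beyond its two triangles $Q=236$ and $T=456$, so the plan rests on what the support hypotheses give. Since $S_w$ is every body, each $w_i$ has order exactly $B$. Hence $\ord w_{ij}\ge B$, with equality exactly when $W_i\ne W_j$. Since $S_z=\{1,4,5\}$, the coordinates $z_2,z_3,z_6$ have order strictly greater than $A$. Hence $z_{ij}$ has order exactly $A$ whenever exactly one of $i,j$ lies in $S_z$, or when $i,j\in S_z$ and $Z_i\ne Z_j$. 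The hypotheses $Z_1\ne Z_4$ and $Z_1\ne Z_5$ make $z_{14}$ and $z_{15}$ exact of order $A$; only $z_{45}$ may cancel, and $45$ is an edge of $T$.

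First, pin down $A$ and $B$. We use the mass centres $\sum m_iz_i=\sum m_iw_i=0$ and the original rows at the support bodies. In particular, the original $z_1$ row has a nonzero order-$A$ left side, which must be matched by crossed forces of order $A$. Through \eqref{eq:force-order-formulas}, this gives one relation between $A$ and the $w$-orders of the pairs $1j$. We expect this to force $A+B\ge -1$ or a comparable inequality. The case split is whether $B=-2$, that is, whether there are $w$-circles.

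Second, tabulate $\ord(z_{ij}w_{ij})$ for all fifteen pairs. On the edges of $Q$ and $T$ we use the stroke or $zw$-edge orders, together with the stroke-path bound, which says that opposite-colour separations along a path have order at least $1$. On the crossed pairs we use the exact orders from the first step, so each crossed square has order at least $A+B$ rather than the coarse $-4$. The pairs $14$ and $15$ get $\ge A+B$ with equality in the $z$ factor. The pair $45$ gets at least the $T$-edge value $2$.

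Third, sum over each pair $ij$ and its six complementary pairs, as in \eqref{eq:connectivity-product-bound}. We then check that all fifteen sums are nonnegative. The worst case should be the products based on the pairs $14$ and $15$, whose complements contain $23$, $26$ and $36$ from $Q$ and crossed pairs.

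The \emph{main obstacle} is the first step. A crude bound $A+B\ge-4$ leaves negative sums, so we need the original-row balance at body $1$, and possibly at bodies $4$ and $5$, to raise the crossed squared-distance bound. This is the same way Lemma~\ref{lem:excluded-satellite} raises satellite pairs to $-1$. We would imitate that argument: if $A+B<0$, each crossed inverse distance from body $1$ has positive order, so every term of the $z_1$ row is later than its nonzero left side, a contradiction. If that fails, we would fall back on the torque identity to exclude cancellation.

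The involution $(2\ 4)(3\ 5)$ with $z\leftrightarrow w$ maps the diagram and the seven-product orders to themselves, by the symmetry remarks in Section~\ref{sec:system}. It therefore transfers the conclusion to the dual support case.
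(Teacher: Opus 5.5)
There is a genuine gap, and it sits in the step you flag as the main obstacle. You hope to rule out $\rho=A+B<0$ by the satellite mechanism of Lemma~\ref{lem:excluded-satellite}: every inverse distance from body~$1$ would have positive order, so the $z_1$ row could not balance its order-$A$ left side. That premise is false here.

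\begin{itemize}
\item If $\rho<0$, the order-$\rho$ coefficient $\sum_i m_iZ_iW_i$ of the constant inertia $I$ must vanish.
\item Because $T=456$ is a $z$-only triangle, its $w$-differences have order $(4-A)/3>1>B$, so $W_4=W_5=W_6$.
\item With the $z$-centre $m_1Z_1+m_4Z_4+m_5Z_5=0$, the inertia coefficient becomes $m_1Z_1(W_1-W_6)$, which forces $W_1=W_6$.
\item The $w$-centre then forces the $w$-only triangle $Q=236$ to have common $w$-order exactly $B$; otherwise all six $W_i$ coincide. So $W_2,W_3\ne W_1$, and the $12,13$ forces in the $w_1$ row have order $B-\tfrac32\rho>B$.
\item Some $T$-force in that row must therefore have order at most $B$. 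That is, $k_j=\ord w_{1j}\ge-3A-2B$ for some $j\in T$, and then $\ord\delta_{1j}=-(A+k_j)/2\le\rho<0$.
\end{itemize}

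So $\delta_{1j}$ is a pole, not an inverse distance of positive order. The $z_1$ row is balanced by cancellation among the three $T$-forces. Neither the satellite contradiction nor a torque argument that excludes this cancellation is available. The paper does not exclude $\rho<0$ at all. After the trivial case $\rho\ge0$, it proves that all fifteen seven-products have strictly positive order when $\rho<0$.

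For that, the bounds in your second step are too weak, and one is wrong. The edges of $T$ are $z$-only, so $45$ has squared-distance order $(4+2A)/3<2$, not $2$. With only $\ord(z_{1j}w_{1j})\ge\rho$ on $14,15,16$ you get, for instance, $\ord P_{16}\ge(8+17\rho)/3$, which is negative once $\rho<-8/17$.

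Beyond the steps above, you are missing the following ingredients:
\begin{itemize}
\item the triangle-minimum argument giving common difference orders on $T$ and $Q$;
\item the $z_1$ row forcing $\max_j k_j$ to be attained at least twice, since otherwise one force is uniquely earliest, below order $A$; the triangle rule then gives $k_4=k_5=k_6=k\le(4-A)/3$;
\item the consequences $4A+3B\ge-2$, hence $\rho>-1$, and $\ord(z_{1j}w_{1j})=A+k\ge-2\rho$ for $j\in T$.
\end{itemize}

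With these, the tightest products are $P_{16},P_{46},P_{56}$, bounded below by $8(1+\rho)/3$, and $P_{12},P_{13}$, bounded below by $(8-\rho)/3$. The products $P_{14},P_{15}$ are not the worst case; they have comfortable margin. Your handling of the involution $(2\ 4)(3\ 5)$ with $z\leftrightarrow w$ is fine.
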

\begin{proof}
Put $\rho=A+B$. If $\rho\ge0$, every squared distance has order at least $\rho$, so the assertion is immediate. We therefore assume $\rho<0$ and show that all fifteen seven-products have strictly positive order.

We first determine the difference orders from the two pure-colour triangles. For an edge put
\[
\alpha_{ij}=\ord z_{ij},\qquad \beta_{ij}=\ord w_{ij}.
\]
The original distance tie gives
\begin{equation}\label{eq:no8-force-order-analytic}
\ord(\delta_{ij}^3z_{ij})=-\frac{\alpha_{ij}+3\beta_{ij}}2,
\qquad
\ord(\delta_{ij}^3w_{ij})=-\frac{3\alpha_{ij}+\beta_{ij}}2.
\end{equation}
On the $z$-only triangle $T=456$, the first expression is $-2$ and the second is greater than $-2$. Hence
\[
\alpha_{ij}+3\beta_{ij}=4,\qquad \alpha_{ij}<1\qquad(ij\subset T).
\]
The three $\alpha_{ij}$ are equal. Indeed, the minimum among the three orders of differences on a triangle occurs at least twice; if the orders were not all equal, their maximum would occur once. The affine decreasing relation $\beta_{ij}=(4-\alpha_{ij})/3$ would then give a unique minimum among the three $\beta_{ij}$, which is impossible. Since $z_6$ is later than $z_4,z_5$, the common $z$-difference order is $A$. Thus
\[
A<1,\qquad \alpha_{45}=\alpha_{46}=\alpha_{56}=A,
\qquad \beta_{45}=\beta_{46}=\beta_{56}=h:=\frac{4-A}{3}>1.
\]
The colour-dual argument on the $w$-only triangle $Q=236$ gives a common $w$-difference order $b<1$, with common $z$-difference order $(4-b)/3$. In particular $B\le b<1$.

Because $h>1>B$, the coefficients $W_4,W_5,W_6$ are equal. The coefficient of the constant inertia $I$ at the negative order $\rho$ vanishes. Together with the $z$-centre it gives
\[
0=m_1Z_1W_1+(m_4Z_4+m_5Z_5)W_6
  =m_1Z_1(W_1-W_6),
\]
so $W_1=W_6$. If $b>B$, the triangle $Q$ would also have $W_2=W_3=W_6$. All six $W_i$ would then equal the same nonzero number, contrary to $\sum_i m_iW_i=0$ and $\sum_i m_i>0$. Consequently
\begin{equation}\label{eq:no8-Q-orders-analytic}
b=B,\qquad
\beta_{23}=\beta_{26}=\beta_{36}=B,\qquad
\alpha_{23}=\alpha_{26}=\alpha_{36}=\frac{4-B}{3}.
\end{equation}
In particular $W_2,W_3,W_6$ are pairwise distinct. The support assumptions and the two no-match conditions now show that every edge outside $Q$ has $z$-difference order $A$. Also,
\[
\beta_{12}=\beta_{13}=\beta_{24}=\beta_{25}=\beta_{34}=\beta_{35}=B.
\]

It remains to control the three differences from body~1 to $T$. Put
\[
k_j=\ord w_{1j}\quad(j=4,5,6),\qquad K=\max\{k_4,k_5,k_6\}.
\]
Each $k_j$ is finite by the original distance tie, and $k_j>B$ because $W_1=W_4=W_5=W_6$. In the original $w_1$ row the terms from bodies 2 and 3 have order
\[
-\frac{3A+B}{2}=B-\frac32\rho>B.
\]
Since $\ord w_1=B$, at least one term from $T$ must have order at most $B$. Formula~\eqref{eq:no8-force-order-analytic} therefore implies
\begin{equation}\label{eq:no8-K-lower-analytic}
K\ge-3A-2B.
\end{equation}
The maximum $K$ cannot occur at just one $j$. Otherwise, in the original $z_1$ row that edge would have the unique smallest force order, and its order would satisfy
\[
-\frac{A+3K}{2}\le4A+3B=A+3\rho<A.
\]
The coordinate $z_1$ has order $A$, while the terms from bodies 2 and 3 have order $A-3\rho/2>A$. The remaining terms from $T$ have strictly larger order than the term corresponding to the unique maximum $K$. This would leave a unique nonzero lowest-order term in the equation, a contradiction.

Choose distinct $j,\ell\in T$ with $k_j=k_\ell=K$. Since every internal $w$-difference in $T$ has order $h$, the triangle identity gives $h\ge K$. If the third $k_r$ were less than $K$, the three orders $k_r,K,h$ on the triangle $1rj$ would have a unique minimum. Hence
\[
k_4=k_5=k_6=k\le h.
\]
Combining this with \eqref{eq:no8-K-lower-analytic} yields
\begin{equation}\label{eq:no8-main-order-bound-analytic}
-3A-2B\le\frac{4-A}{3},\qquad
4A+3B=A+3\rho\ge-2.
\end{equation}
Since $A<1$, this gives $\rho>-1$. Because also $A,B<1$, we obtain $A,B>-2$.

For clarity, write the four squared-distance orders as
\[
\eta_Q=\frac{4+2B}{3}>0,\qquad
\eta_T=\frac{4+2A}{3}>0,\qquad
\eta_1=A+k\ge-2\rho,
\qquad \rho<0.
\]
Here $\eta_Q$ occurs on $23,26,36$, $\eta_T$ on $45,46,56$, $\eta_1$ on $14,15,16$, and $\rho$ on $12,13,24,25,34,35$. The definition of the seven-products gives all fifteen orders in five groups:
\[
\begin{array}{c|c|c}
 ij&\ord P_{ij}&\text{lower bound}\\\hline
12,13&3\rho+\eta_Q+3\eta_T&(8-\rho)/3\\
14,15,45&2\rho+3\eta_Q+\eta_T+\eta_1&3\eta_Q+\eta_T\\
16,46,56&4\rho+\eta_Q+\eta_T+\eta_1&8(1+\rho)/3\\
23&\eta_Q+3\eta_T+3\eta_1&\eta_Q+3\eta_T-6\rho\\
24,25,26,34,35,36&3\rho+\eta_Q+\eta_T+2\eta_1&\eta_Q+\eta_T-\rho
\end{array}
\]
For the first row, \eqref{eq:no8-main-order-bound-analytic} gives $A\ge-2-3\rho$, and therefore
\[
3\rho+\eta_Q+3\eta_T
=\frac{16+4A+11\rho}{3}\ge\frac{8-\rho}{3}.
\]
The remaining bounds follow directly from $\eta_1\ge-2\rho$ and $\eta_Q+\eta_T=(8+2\rho)/3$. Every displayed lower bound is strictly positive because $-1<\rho<0$ and $\eta_Q,\eta_T>0$. Thus no seven-product has a pole.

The proof permits equalities in the order comparisons and does not impose an upper bound on the orders of individual coordinates. In particular, a later coordinate among $z_2,z_3,z_6$ may vanish identically: only its zero coefficient at order $A$ is used. All difference orders remain finite because $\delta_{ij}^2z_{ij}w_{ij}=1$. The simultaneous involution $(2\ 4)(3\ 5)$ and $z\leftrightarrow w$ exchanges $Q$ and $T$ and proves the colour-dual case.
\end{proof}

\begin{proposition}[No.~8 first-limit obstruction]
\label{prop:no8-locus}
Every positive-point No.~8 first limit lies on the
zero set of one of the six $\mu_6$ factors in
\eqref{eq:no8-triangle-factors} or an orbit member
of $R_{\rm double}$ or the three resultants
\eqref{eq:no8-R22}--\eqref{eq:no8-Rlate}. All these factor
families are nonzero polynomials.
\end{proposition}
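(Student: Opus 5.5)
The plan is to classify the global-minimum supports $(S_z,S_w)$ of a positive-point No.~8 first-limit germ, and then attach each surviving support to one of the listed factor families. Write $A=\min_i\ord z_i$ and $B=\min_i\ord w_i$, with leading coefficients $Z_i,W_i$ as in Lemma~\ref{lem:no8-remaining-support-analytic}. As in the proof of that lemma, the two pure-colour triangles $T=456$ ($z$-only) and $Q=236$ ($w$-only) have common internal orders. The bound $\rho=A+B<0$ is forced by the need for a seven-product pole, since otherwise every squared distance has nonnegative order. The mass centres $\sum m_iZ_i=\sum m_iW_i=0$ and the vanishing order-$\rho$ coefficient of the constant inertia $I$ then constrain which bodies lie in $S_z,S_w$ and which leading coefficients coincide.

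I would enumerate the supports up to the involution $(2\ 4)(3\ 5)$ combined with $z\leftrightarrow w$. For each support I would split according to the possible matches, that is, the coincidences $Z_1=Z_4$, $Z_1=Z_5$ and their duals. The two no-match supports are already disposed of by Lemma~\ref{lem:no8-remaining-support-analytic}: no seven-product has a pole there, so they are not first limits.

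In the remaining branches I would extract the relations as follows.

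\emph{Vanishing triangle forces.} When a triangle $S\in\{Q,T\}$ has a body $k$ whose leading internal forces vanish individually, the signed ties and force rows give $\mu_6(m_i,m_j,m_k)=0$. This is the same computation as \eqref{eq:no52-factors} and \eqref{eq:no20-mass-condition}, and it yields the six labelled factors.

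\emph{Surviving triangle shapes.} When the triangle shapes survive, the centred shape ratio $r$ (respectively $s$) of $Q$ (respectively $T$) is a root of $H_{m_2,m_3,m_6}$ (respectively $H_{m_4,m_5,m_6}$). This is the analogue of \eqref{eq:no52-inner-shape}. I would combine it with one additional linear-in-$r^3$ (respectively $s^3$) relation:
\begin{itemize}
\item a leading $w$-centre or matched force row produces \eqref{eq:no8-R22} or \eqref{eq:no8-R32};
\item a later-order force row produces \eqref{eq:no8-Rlate};
\item when both shapes appear together, the vanishing full centred inertia \eqref{eq:no8-double-numerator} produces $R_{\rm double}$.
\end{itemize}
Since resultants are taken without cancelling any factor, each one vanishes whenever a common root exists, and this gives necessity. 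I would also verify the nonzero shape guards, namely $UVW\ne0$ and $r,s\ne0$, so that no denominator cleared in forming a resultant vanishes on the germ. Orbit completion as described in Section~\ref{sec:system} then covers all labelled occurrences.

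Nonvanishing as polynomials is checked by sample evaluation. For $\mu_6$, the sample $(1,2,1)$ gives $8-81\ne0$. For each resultant I would exhibit one rational mass vector with a nonzero value (a certificate computation). Alternatively, a leading-coefficient argument applies, for example letting $m_1\to\infty$ in $R_{2\times2}$, where the resultant tends to a nonzero multiple of $\operatorname{Res}(H,\,r^3+m_6/m_3)$.

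I expect the main obstacle to be exhaustiveness of the support and match partition. This requires showing that every branch other than the two in Lemma~\ref{lem:no8-remaining-support-analytic} genuinely reaches one of the displayed elimination equations at a nondegenerate order. In particular, the order at which the relevant force or inertia row is read must be uniquely earliest, with no hidden cancellation from unmarked crossed pairs. I would attack this first by the same order-gap tables used in Sections~\ref{sec:no57-triangle} and~\ref{sec:no50-resonance}, falling back on retaining the unmarked pairs explicitly where the gaps close.
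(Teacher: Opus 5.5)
Your architecture (global-minimum supports, matches at body~1, Lemma~\ref{lem:no8-remaining-support-analytic} for the two no-match supports, then $\mu_6$ or one of the four resultants) is the paper's. The nonvanishing half is fine: your $m_1\to\infty$ argument for $R_{2\times2}$ works because $H_{m_2,m_3,m_6}(r)=m_3^2m_6^4\ne0$ whenever $m_3r^3=-m_6$.

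The gap is the necessity half, which is the substance of the proposition and which you leave as the ``main obstacle''. Exhaustiveness does not come from order-gap tables. It comes from a reduction of the supports that you never state:
\begin{itemize}
\item The common internal orders of $T$ and $Q$, together with the positive-mass centres, force at least two vertices of $T$ into $S_z$ and at least two of $Q$ into $S_w$. With none, body~1 would be the sole minimum. With exactly one, the opposite edge would be later than the other two.
\item The opposite-colour differences on $Q$ and $T$ have order $>1>A,B$. This forces $Q\cap S_z$ and $T\cap S_w$ to be empty or full.
\end{itemize}
Only this turns ``enumerate the supports'' into a finite, checkable list of exactly $64$ pairs.

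Those pairs are dispatched by $\sum m_iZ_i=\sum m_iW_i=\sum m_iZ_iW_i=0$. The last identity is the order-$\rho$ coefficient of the constant $I$, used in the form \eqref{eq:no8-shared-six-inertia} when $6$ lies in both supports. These identities kill 16 pairs outright and decide which factor appears.

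Your dichotomy misplaces the mechanism. The wing rows of each triangle close with zero internal force in \emph{every} branch, so vanishing forces do not single out $\mu_6$.
\begin{itemize}
\item $\mu_6$ arises when the centres leave a triangle whose two minimal relative leads are centred by its \emph{own} masses ($au+bv=0$ against the third vertex). You must then still exclude the primitive cube roots $t=D_{23}/D_{13}$, using that $(a+bt)^3$ is nonreal for $a\ne b$ and negative for $a=b$. Without this positivity step you get only a norm over $t$, not $\mu_6$.
\item The $H$-resultants arise when the centring is by an effective mass, because body~1 or a whole triangle shares a lead. In particular $R_{\rm late}$ comes from the full centre when a triangle of later edge order has all three leads equal to the shared lead. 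It does not come from a later-order force row.
\item For $R_{\rm double}$ you need the body-1 row analysis. Both a $Q$-match and a $T$-match are forced, with gaps $p=q=-(A+B)$. The matched inverse distances then have order $0$ and stay out of the order-$-2$ wing rows. Only then do both equations $H=0$ hold together with $N(r,s)=0$.
\item The co-leading subcase, in which all five inner bodies share one lead, must be excluded separately: the body-1 coordinate term cannot be cancelled.
\end{itemize}
Without these steps your outline does not show that every first limit reaches one of the displayed equations.
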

\begin{proof}
Let $A=\min_i\ord z_i$ and $B=\min_i\ord w_i$. Every difference has orders at least $A,B$ in its two colours; thus if a seven-product has negative order, $A+B<0$. The $z$-only edges of $T$ have a common separation order $a<1$, while the $w$-only edges of $Q$ have $z$-separation order $(4-b)/3>1$, where $b<1$ is their common $w$-separation order. Some vertex of $T$ has $z$-position order at most $a$, so $A<1$. If no vertex of $T$ attained $A$, the $Q$ separations would prevent bodies 2,3 from attaining $A$ as well: their $z$-coordinates differ from body 6 only above order 1. Body 1 would then be the sole $z$-minimum, contrary to the positive-mass $z$-centre. If exactly one vertex of $T$ attained $A$, its two incident edge differences would have order $A$ and the opposite edge would have strictly greater order, contradicting the common order $a$. Hence at least two vertices of $T$ attain $A$. Interchanging $z,w$ and $T,Q$ proves that at least two vertices of $Q$ attain $B$.

Write $S_z,S_w$ for these global-minimum supports. The opposite-colour gaps on $Q$ and $T$ exceed $1>A,B$, respectively. Thus $Q\cap S_z$ is empty or all of $Q$, and $T\cap S_w$ is empty or all of $T$. Each support has eight possibilities, giving precisely the following 64 necessary pairs. In the one-colour rows with 6 in both, ``its triangle'' is $T$ for a $z$-minimal body 1 and $Q$ for a $w$-minimal body 1.

\begin{center}\small
\begin{tabular}{p{1.3cm}p{5.2cm}rp{2cm}}
Shared 6 & Body 1 and triangle support & Pairs & Necessary outcome 
\\
\hline
Both & 1 late in both colours & 9 & inertia 
\\
Both & 1 in one colour; its triangle has two minima & 12 & $\mu_6$ 
\\
Both & 1 in one colour; its triangle has three, the other two & 4 & $\mu_6$ or $R_{\rm late}$ 
\\
Both & 1 in one colour; both triangles have three minima & 2 & $\mu_6$ 
\\
Both & 1 in both colours & 9 & $R_{\rm double}$ 
\\
Neither & 1 in both colours & 1 & inertia 
\\
Neither & 1 absent from at least one colour & 3 & $\mu_6$ 
\\
One colour & 1 absent from the other colour & 12 & $\mu_6$ 
\\
One colour & 1 only in the other colour & 6 & inertia 
\\
One colour & 1 in both; the 6-triangle has two minima & 4 & $R_{2\times2}$ 
\\
One colour & 1 in both; the 6-triangle has three minima & 2 & $R_{3\times2}$ or analytic boundedness 
\\
\end{tabular}

\end{center}

At orders $A,B$ let $Z_i,W_i$ be the nonzero leading
coefficients at a minimum, and set the coefficient to zero
when that body is later. The original global centres and
$I=c>0$, with $A+B<0$, give
\begin{equation}\label{eq:no8-leading-centres}\sum_{i=1}^6m_iZ_i=\sum_{i=1}^6m_iW_i=\sum_{i=1}^6m_iZ_iW_i=0.\end{equation}

Moreover, the unmarked forces are strictly later than order $-2$ in either colour, while each marked triangle force has order $-2$ and every uncircled position is later. The two original wing rows of either triangle therefore have only internal stroke terms at order $-2$. We record the consequence when their two nonzero relative leading positions are mass-centred against the third. Label this triangle locally by $1,2,3$, with masses $(a,b,c)>0$, and write its relative leading positions as $u,v,0$, so that $au+bv=0$ and $v=-au/b$. In the colour carrying the strokes, its two zero-force equations give
\[
D_{12}^3=-\frac{c}{a+b}D_{13}^3,\qquad D_{23}^3=D_{13}^3.
\]
Set $t=D_{23}/D_{13}$, so that $t^3=1$. The three signed edge ties give
\[
\frac{D_{12}^2}{D_{13}^2}=\frac{ab}{(a+b)(a+bt)}.
\]
Cubing this identity and squaring the first force identity yields
\[
c^2(a+b)(a+bt)^3=(ab)^3.
\]
For $t=e^{2\pi i/3}$ the imaginary part of $(a+bt)^3$ is $3\sqrt3\,ab(a-b)/2$; the other primitive cube root gives its negative. When $a=b$, the cube is $-a^3<0$. Hence neither primitive cube root is possible, and $t=1$ gives $\mu_6(a,b,c)=0$. All divisions use positive masses and nonzero stroke leads; the edge ties also ensure $a+bt\ne0$.
If $6\in S_z\cap S_w$, the $Q$ $z$-lead is $Z_6\ne0$ and the $T$ $w$-lead is $W_6\ne0$. Put $M_* = \sum_{i=2}^6m_i$, $M=m_1+M_*$, $\Delta_z=m_4(Z_4-Z_6)+m_5(Z_5-Z_6)$ and $\Delta_w=m_2(W_2-W_6)+m_3(W_3-W_6)$. Substitution of the two centres into the original inertia gives
\begin{equation}\label{eq:no8-shared-six-inertia}
\begin{aligned}
\Delta_z&=-m_1Z_1-M_*Z_6,\qquad \Delta_w=-m_1W_1-M_*W_6,\\
\sum_i m_iZ_iW_i&=m_1(Z_1-Z_6)(W_1-W_6)-MZ_6W_6.
\end{aligned}
\end{equation}
If body 1 is late twice, this is $-M_*Z_6W_6\ne0$. If it joins just $z$, inertia instead gives $Z_1=-M_*Z_6/m_1$ and $\Delta_z=0$; the $w$-only join gives the colour dual. When the corresponding $T$ or $Q$ edge-separation order equals $A$ or $B$, this is a nontrivially centred relative triangle and gives $\mu_6$. This includes all 12 cases with exactly two minimal vertices in that triangle.

For a three-minimum triangle with its edge order strictly later than the global minimum, all its leads equal the shared lead. If the opposite triangle has two minima, its original wing force/ties and full centre give $R_{\rm late}$ (or its exact dual), whether or not the first triangle's leading shape was degenerate. If the opposite triangle has three minima, all five inner bodies attain its global minimum. The leading body-1 coefficient in the joined colour is distinct from their common coefficient by \eqref{eq:no8-shared-six-inertia}; body 1 is late in the other colour. Every exterior inverse distance then has order $-(A+B)/2>0$. The original body-1 row cannot have its nonzero leading coordinate canceled, excluding this co-leading case. The surviving nondegenerate triangle gives $\mu_6$.

If body 1 joins both colours, \eqref{eq:no8-shared-six-inertia} instead requires
\[
m_1(Z_1-Z_6)(W_1-W_6)=MZ_6W_6\ne0.
\]
Uniform triangle-edge orders permit at most one $Q$ wing to match $W_1$ and one $T$ wing to match $Z_1$. Both are required by the two original body-1 rows: with only the $Q$ match its $z$-row requires a match gap $P=-(A+B)>0$ and its $w$-row requires $3P$, while the $T$-only equations give $3P$ and $P$ in the opposite order. More explicitly, for a $Q$ $w$-gap $q>0$ and a $T$ $z$-gap $p>0$, the respective $z$-force orders minus $A$ are $3(P-q)/2,(3P-p)/2$, while the $w$-force orders minus $B$ are $(3P-q)/2,3(P-p)/2$. The minimum in each row, including its coordinate term of relative order zero, must be attained at least twice. The nine possible pairs of first-two-term ties leave only $p=q=P$; the matched inverse distances consequently have order zero. Their contributions to the marked triangle wing rows are later than order $-2$. Both original internal shape equations $H=0$ therefore hold, and the full-centre inertia numerator \eqref{eq:no8-double-numerator} gives $R_{\rm double}$. The two independent wing swaps give all four match labels.
If $6$ is in neither support, the minima on the triangles are disjoint pairs 45 and 23. A body-1 minimum in both colours leaves the unique nonzero inertia lead $m_1Z_1W_1$. Otherwise it is absent from at least one colour; that colour's pair is mass-centred by \eqref{eq:no8-leading-centres} and gives its $\mu_6$ factor.

Suppose $6\in S_w\setminus S_z$; the other case is its exact colour-and-label dual. The only $z$-minimal $T$ vertices are 4,5 and $W_4=W_5=W_6\ne0$. If body 1 is not $z$-minimal, the original $z$-centre isolates the 45 pair and gives $\mu_6$, regardless of whether body 1 is $w$-minimal. If it is $z$-minimal but not $w$-minimal, \eqref{eq:no8-leading-centres} gives the nonzero inertia coefficient $-m_1Z_1W_6$. If it joins both, the same equations give $m_1(Z_1-Z_6)(W_1-W_6)=0$, so $W_1=W_6$. With only two $Q$ vertices $w$-minimal, its original $w$-rows and full $w$-centre give the $R_{2\times2}$ effective-mass cubic in \eqref{eq:no8-R22}; the alternative late wing is obtained by swapping 2,3.

The two remaining support pairs have all of $Q$ minimal in $w$ and only 45 minimal in $z$, with body 1 in both minima, or their exact colour-and-label dual. In the canonical case their full supports are
\[
S_z=\{1,4,5\},\qquad S_w=\{1,2,3,4,5,6\}.
\]
If $Z_1$ matches one of the $T$ $z$-leads, the full original $z$-centre and the $T$ wing force/ties give $R_{3\times2}$, including the $4,5$ swap. If neither match occurs, Lemma~\ref{lem:no8-remaining-support-analytic} shows directly that every seven-product has positive order when $A+B<0$. Thus neither no-match support can give a first limit. The simultaneous involution $(2\ 4)(3\ 5)$ and $z\leftrightarrow w$ covers the other support. This completes the disjoint 64-case partition without an order cap or a terminal-cut certificate for these two cases.

For properness, the six $\mu_6$ factors have product $15^6$ at all-unit masses. At that same point the nested double-triangle resultant has value
\[
R_{\rm double}(1,1,1,1,1,1)=1603524380961154554296875,
\]
and $R_{2\times2},R_{3\times2},R_{\rm late}$ have values $20061,1315,6085$, respectively. The mass and total-mass denominators cleared

in these constructions are nonzero on the positive

cone; nonzero stroke leads and signed edge-tie roots

guard the other normalizations. The finite product over all body-label and colour orbits is therefore nonzero.
Every positive-point No.~8 first limit therefore forces one of these factors to vanish.

\end{proof}

\Needspace{9\baselineskip}
\subsection{No.~63: the isolated fully balanced $K_4$}

The four bodies $3,4,5,6$ form the isolated $zw$-clique.
Its complete first-order balance follows from the pinned
diagram and the original rows, without enumerating Type-2
order submatrices.
\begin{lemma}[Direct No.~63 $K_4$ balance]
\label{lem:no63-direct-balance}
On every germ with the No.~63 strokes and circles, all
four internal $z$ rows and all four internal $w$ rows
of the $K_4$ close at their first order.

\end{lemma}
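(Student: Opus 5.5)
The plan is to work directly in the diagram normalization used throughout the paper. All coordinates and forces have order at least $-2$. A stroke or circle means order exactly $-2$, and an absent one means a strictly larger order. The tie formulas \eqref{eq:torque-order-identity} convert separation orders into force orders. In No.~63 the clique $K=\{3,4,5,6\}$ is a $zw$-$K_4$, so every internal edge has $z$- and $w$-separation order $1$ and $\ord\delta=-1$. Every internal force $\delta^3z$ or $\delta^3w$ therefore has order exactly $-2$ for each pair. \emph{Closing at first order} means that in each internal row the order-$-2$ coefficient of the internal force sum, which is the only candidate for a leading term, vanishes after the exterior forces and the coordinate term are accounted for. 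I would prove this for the $z$ rows and obtain the $w$ rows by the colour exchange, since the pinned circle pattern is colour-symmetric on $K$.

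First I would show that the bodies of $K$ are uncircled in both colours. Suppose instead that some $z_i$, $i\in K$, had order $-2$. Because the internal $z$-differences have order $1$, all four $z_i$ with $i\in K$ would share the same order-$-2$ lead $Z$. The pinned diagram has no circles on $K$, so this is excluded by the diagram itself. If the circle data are not directly pinned, I would use the original mass centre together with the isolated pair $\{1,2\}$ to reach the same conclusion. Hence each left-hand coordinate term in the rows of $K$ has order $>-2$.

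Second, I would bound the exterior forces. Since $K$ is isolated, no stroke joins $K$ to $\{1,2\}$, so every crossed force has order $>-2$ in either colour by the definition of an absent stroke. In the original $z_i$ row with $i\in K$, both the left side and all exterior terms are therefore $o(\varepsilon^{-2})$. Only the three internal forces survive at order $-2$, and their leading coefficients must sum to zero. This is exactly first-order closure of that row. The same argument applies to the $w$ rows.

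The main obstacle is the first step. An order-$-2$ circled lead on $K$ could in principle be balanced by internal forces, and the pinned No.~63 circle pattern has to be checked to rule this out. If $K$ does carry circles, I would instead use the mass-weighted sum of the four $K$ rows, in which the internal forces cancel. In that case the common lead $Z$ must equal a sum of crossed forces of order $>-2$, so $Z=0$, which is a contradiction. The internal closure then follows as before.
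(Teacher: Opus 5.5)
Your proof is correct and is essentially the paper's argument: strokes in both colours on every edge of $\{3,4,5,6\}$ make each internal force exactly order $-2$, the absence of circles on these bodies and of crossed strokes makes the position and exterior terms strictly later, and the $\varepsilon^{-2}$ coefficient of each of the eight rows is then the vanishing internal force sum. The ``obstacle'' you flag is already settled by the diagram convention that an absent circle means order $>-2$, which is all the paper uses. Your fallback (the mass-weighted sum of the four $K$ rows, in which internal forces cancel) is a valid but unnecessary extra check, whereas the vaguer appeal to the global mass centre and the pair $\{1,2\}$ would not suffice on its own.
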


\begin{proof}

Every edge inside $C=\{3,4,5,6\}$ is a stroke in both
colours. No vertex in $C$ is circled in either colour,
and no edge from $c$ to $\{1,2\}$ is a stroke. By the
diagram order rules each internal force term in an
original $z_i$ or $u_i$ row, $i\in C$, has order $-2$.
The position term and every external force term in that
row have order strictly greater than $-2$. Taking the
$\varepsilon^{-2}$ coefficient of each of the eight
original rows therefore yields precisely its sum of
internal force leads equal to zero. This argument
applies separately to each row and every valuation

of the pinned graph; no Type-2 enumeration is used.
\end{proof}

\begin{proposition}[No.~63 mass confinement]\label{prop:no63-locus}
Every No.~63 positive-point first limit satisfies a nonzero
necessary polynomial in $m_3,m_4,m_5,m_6$.

\end{proposition}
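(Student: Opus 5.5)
The plan starts from Lemma~\ref{lem:no63-direct-balance}. At the first order, the leading coefficients of the isolated $zw$-clique $C=\{3,4,5,6\}$ satisfy the eight closed internal force equations
\[
\sum_{j\in C\setminus\{i\}} m_jD_{ij}^3Z_{ij}=0,\qquad \sum_{j\in C\setminus\{i\}} m_jD_{ij}^3W_{ij}=0\qquad(i\in C),
\]
together with the six leading signed ties $D_{ij}^2Z_{ij}W_{ij}=1$. These are the leading-order equations of a four-body ``zero-force'' configuration in which the coordinate terms have dropped out. Only $m_3,\dots,m_6$ enter, which explains why the conclusion is a polynomial in those four masses. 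The first step is to record that this system is homogeneous under the rescalings $(Z,W,D)\mapsto(sZ,tW,(st)^{-1/2}D)$ and under translation in each colour. After normalization it is an overdetermined system: 8 equations, minus the 2 automatic linear relations $\sum_i m_iF_i=0$ in each colour, for the relative shape and root unknowns.

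The second step splits the leading shapes into two cases. The first is the \emph{affine-collinear} case, where the relative vectors $(Z_{i3},W_{i3})$ span a line. Here the $z$- and $w$-equations become proportional, and each signed branch $D_{ij}=\pm\sqrt{1/(Z_{ij}W_{ij})}$ reduces to a one-variable collinear zero-force problem. Eliminating the positions then leaves a resultant in the masses. There are $2^5=32$ sign patterns up to a global sign, and for each I would compute the characteristic-zero resultant, discard it only if it is not identically zero, and keep it as a factor. The second is the \emph{non-collinear} case. There I would use the Cayley--Menger identity for the leading squared separations $\rho_{ij}=Z_{ij}W_{ij}=D_{ij}^{-2}$, whose determinant must vanish because the four points lie in the complex plane. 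I would combine this with the torque and virial consequences of zero force; for instance, contracting the closed rows against $W_i$ gives $\sum m_im_jD_{ij}=0$ at the leading order. I would then split according to which $\rho_{ij}$ coincide, which I expect gives the nine cases. In each case elimination yields a polynomial in $m_3,\dots,m_6$.

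The third step shows that each factor is nonzero. I would evaluate it at a rational positive sample such as all-unit masses or $(1,2,3,5)$, as the paper does for No.~8 and No.~52, and check that a nonzero value results. I would also verify that the extraneous roots cleared from denominators correspond to forbidden degeneracies: $Z_{ij}W_{ij}=0$, $D_{ij}=0$, or a vanishing mass sum. The product of all factors, orbit-completed under permutations of $C$ and under $z\leftrightarrow w$, is then the required $P_{63}$.

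The main obstacle is ensuring that no branch of the zero-force system survives for an open set of masses. Generic dimension counting suggests that finitely many shapes exist for each mass vector rather than a mass condition. The real source of the mass relation must therefore be the interplay with the first-limit hypothesis, namely that a seven-product is unbounded. I would try first to show, via Lemma~\ref{lem:excluded-connectivity}-style order bounds, that a pole forces the external bodies $1,2$ to impose an additional leading relation on $C$. The natural candidate is vanishing of the $C$ cluster inertia or potential coefficient, using $U=I=c$ from Lemma~\ref{lem:energy}. That extra equation makes the system genuinely overdetermined, and it is what the resultants eliminate.
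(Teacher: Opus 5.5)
Your affine-collinear branch is essentially the paper's: 32 signed patterns after fixing one root, elimination of the closed rows by nested resultants, and nonvanishing checked by a degree-preserving mass specialization.

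The gap is the non-collinear branch, and your final paragraph puts the difficulty in the wrong place. The closed $K_4$ system is already overdetermined on its own. Besides the translation identities $\sum_i m_iF^z_i\equiv\sum_i m_iF^w_i\equiv0$, there is the torque identity $\sum_i m_i(Z_iF^w_i-W_iF^z_i)\equiv0$. A naive count therefore gives five independent conditions on a four-dimensional shape space (modulo both translations and both scalings). Meanwhile the masses vary in a three-dimensional projective chart. So a codimension-one mass relation is what one should expect, not finitely many shapes for every mass; your own first-step count points this way once the torque identity is added. Accordingly, the paper never uses the seven-product pole, bodies $1,2$, or $U=I=c$ for No.~63. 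Your candidate extra equation would not help anyway: as you observe yourself, vanishing of $U_C=\sum m_im_jD_{ij}$ already follows from the closed $z$ rows.

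What you need, and do not supply, is a proof that the non-collinear incidence does not dominate mass space. Saying ``elimination yields a polynomial'' and then evaluating at a sample mass presupposes the nonzero eliminant whose existence is the whole point. The paper obtains it from the stress matrix. When $X=(\mathbf 1,Z,W)$ has rank three, the symmetric matrix $\Omega$ with off-diagonal entries $-m_im_jq_{ij}^3$ annihilates $X$. It therefore has rank one, $\Omega=\lambda\Delta\Delta^{\mathsf T}$, with every cofactor $\Delta_i\ne0$. Three consequences follow.
\begin{itemize}
\item Rank one gives the opposite-edge relations $q_{34}^3q_{56}^3=q_{35}^3q_{46}^3=q_{36}^3q_{45}^3$. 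The nine cases are the cube-root sheets of these relations, not coincidence patterns among the $\rho_{ij}$.
\item On each three-parameter sheet the Cayley--Menger determinant is not identically zero, which the paper checks by a modular evaluation. So the tied shapes have dimension at most two.
\item The relation $(m_i/m_j)^2=(\Delta_i/\Delta_j)^2q_{jk}^6/q_{ik}^6$ fixes the mass ratios up to finitely many choices at each shape.
\end{itemize}
Hence the normalized mass image has dimension at most two in a three-dimensional chart. Without this argument, or an equivalent fiber-emptiness check at a sample mass that also controls the degenerate strata, your non-collinear case remains unproved.
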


\begin{proof}

If the affine matrix $X=(\boldsymbol1,z,w)$ of the four-body
core has rank 3, its symmetric mass-weighted stress matrix
$\Omega$ annihilates $X$. The core has nonzero tied edges, so
$\operatorname{rank}\Omega=1$. Every off-diagonal entry is
$-m_im_jq_{ij}^3\ne0$. In the rank-one factorization
$\Omega=\lambda\Delta\Delta^{\mathsf T}$, a zero cofactor
$\Delta_i$ would zero all off-diagonal entries in row $i$,
which is impossible. Thus every $\Delta_i\ne0$; the signed
ties likewise give $q_{ij}\ne0$ for all six edges.

Its opposite-edge minors imply, for the nonzero signed inverse
distance leads $q_{ij}$,
\begin{equation}\label{eq:no63-opposite-edges}
q_{34}^3q_{56}^3=q_{35}^3q_{46}^3=q_{36}^3q_{45}^3.
\end{equation}

\Needspace{14\baselineskip}
After $q_{34}=1$, the two independent cube-root choices give
nine shape sheets in three nonzero parameters. On each sheet
the Cayley--Menger polynomial of the four squared distances is
nonzero. Indeed, set $q_{34}=q_{35}=q_{36}=q_{56}=1$ and take
$(q_{46},q_{45})=(\xi,\eta)$ with
$\xi,\eta\in\{1,473,31555\}\subset\mathbb F_{32029}$. These
are the three distinct cube roots of unity modulo $32029$, and
the nine evaluations in lexicographic order of $(\xi,\eta)$ are
\begin{equation}\label{eq:no63-cm-witness}
(4,28239,3786,28239,27289,32027,3786,32027,4730),
\end{equation}

All nine values are nonzero. Reduction at this prime sends each
characteristic-zero cube-root sheet to one of these values, so
no sheet has identically vanishing determinant. The tied shape
locus therefore has dimension at most two. The rank-one
factorization of $\Omega$ and its complementary-area cofactors
$\Delta_i\ne0$ give
\begin{equation}\label{eq:no63-mass-ratios}
\left(\frac{m_i}{m_j}\right)^2
=\frac{\Delta_i^2}{\Delta_j^2}
\frac{q_{jk}^6}{q_{ik}^6}\qquad(k\ne i,j).
\end{equation}

Each $\Delta_i^2$ is a polynomial in the three squared
distances of the complementary triangle, hence a rational
function of the nonzero $q_{ij}$ on its sheet. At fixed shape
the displayed equations determine only finitely many mass
ratios. The normalized mass image ($m_6=1$) therefore has
dimension at most two in a three-dimensional mass chart; its
algebraic closure has a nonzero defining polynomial. The only
denominators in \eqref{eq:no63-mass-ratios} are the already
nonzero $\Delta_j$ and $q_{ik}$, so this dimension argument
loses no zero-cofactor or zero-distance stratum.

If $X$ has rank at most two, the four points are
affine-collinear. There are five independent signed edge
choices $e=(e_{35},e_{36},e_{45},e_{46},e_{56})$ with
$e\in\{\pm1\}^5$, hence 32 branches. Normalize
$q_{34}=m_6=1$ and put $x=q_{35}$, $y=q_{36}$,
$p=x-e_{35}$, $q=y-e_{36}$ and $d=e_{35}y-e_{36}x$.
On the nonzero shape torus the three original triangle ties
recover

\[
q_{45}=-\frac{e_{45}x}{p},\qquad
q_{46}=-\frac{e_{46}y}{q},\qquad
q_{56}=-\frac{e_{56}xy}{d}.
\]

The guard $xypqd\ne0$ is automatic on a genuine branch: if
$p,q,d$ respectively vanished, the corresponding unreduced
triangle tie would leave the nonzero term $e_{45}x$, $e_{46}y$
or $e_{56}xy$. The original body-3 and body-4 force rows give
\begin{equation}\label{eq:no63-collinear-masses}
\begin{aligned}
F_4&=m_4+e_{36}y^2+m_5e_{35}x^2=0,\\
F_3&=m_3p^2q^2-e_{46}y^2p^2-m_5e_{45}x^2q^2=0.
\end{aligned}
\end{equation}

After these substitutions the original body-5 force row is
$x^2y^2H_e/(p^2q^2d^2)$, where
\begin{equation}\label{eq:no63-collinear-shape}
H_e=e_{56}p^2q^2-e_{35}e_{46}p^2d^2
+e_{36}e_{45}q^2d^2.
\end{equation}

Consequently every such branch satisfies the mass equation
\begin{equation}\label{eq:no63-collinear-resultant}
P_e=\operatorname{Res}_y\!\left(
\operatorname{Res}_x(H_e,F_4),
\operatorname{Res}_x(F_3,F_4)\right)=0.
\end{equation}
There is no cancellation of shape-dependent factors. The exact
32-branch certificate specified in Appendix~\ref{app:verification}
computes both inner resultants over the generic mass ring. In
each branch they have $y$-degree eight, and specialization to
$(m_3,m_4,m_5)=(2,3,5)$ commutes with both inner resultants
without changing those degrees. The 32 outer resultants there
are nonzero integers of 122--169 bits, so no $P_e$ is the zero
mass polynomial. Clear powers of $m_6$ to homogenize each
equation from the chart $m_6=1$, multiply by the non-collinear
mass equation, and complete the finite label orbit. The
resulting polynomial is nonzero because each chart factor is
nonzero.
\end{proof}
The local tests replay Lemma~\ref{lem:no63-direct-balance},
the nine non-collinear values and the 32 signed-collinear
characteristic-zero resultants. Neither a Type-2 cover
nor a zero-cofactor exception is needed for this diagram.
Independent review of the cited diagram rules and the
replay computations remains a publication-level audit.
\subsection{No.~66: direct algebraic mass confinement}
The No.~66 diagram has a $z$-only complete graph on $C=\{3,4,5,6\}$, the three $w$-only edges $12,16,26$, and $w$-circles only at bodies 1 and 2. There are no other strokes or circles. We derive its core equations directly from the original six-body rows.

For a pure $z$-edge, writing $a_{ij}=\ord z_{ij}$ and $b_{ij}=\ord w_{ij}$, the force orders give $a_{ij}+3b_{ij}=4$ and $a_{ij}<1$. On each triangle, the minimum of its three difference orders must occur at least twice. If its $a_{ij}$ were unequal, their maximum would be unique, and the decreasing relation $b_{ij}=(4-a_{ij})/3$ would give a unique minimum in the other colour. Thus all three orders agree. Applying this to the overlapping core triangles gives
\[
\ord z_{ij}=\alpha\in(-2,1),\qquad
\ord w_{ij}=\beta:=\frac{4-\alpha}{3}>1
\qquad(i,j\in C,\ i\ne j).
\]
Here the lower bound follows from the absence of $z$-circles.

Let $b_1,b_2$ be the nonzero order-$-2$ coefficients of $w_1,w_2$. The global $w$-centre gives $m_1b_1+m_2b_2=0$, so positivity implies $b_1\ne b_2$. As body 6 is uncircled, all three outer $w$-differences have order $-2$. Their $w$-stroke force equations then give
\[
\ord z_{12}=\ord z_{16}=\ord z_{26}=2,
\qquad
\ord\delta_{12}=\ord\delta_{16}=\ord\delta_{26}=0.
\]
For $i\in\{1,2\}$ and $j\in\{3,4,5\}$, the triangle identity through body 6 gives $\ord z_{ij}=\alpha$ and $\ord w_{ij}=-2$. Hence these crossed $z$-forces have order $3-\alpha/2>2$. The original $z_1,z_2$ rows, whose other forces have order 2, imply
\[
z_1,z_2,z_6=O(\varepsilon^2).
\]
We may therefore define the core shape coefficients by
\[
z_i=Z_i\varepsilon^\alpha+o(\varepsilon^\alpha),\qquad
w_i-w_6=W_i\varepsilon^\beta+o(\varepsilon^\beta)
\qquad(i\in C),
\]
with $Z_6=W_6=0$. All core differences $Z_i-Z_j$ and $W_i-W_j$ are nonzero, and the global $z$-centre gives $\sum_{i=3}^5m_iZ_i=0$. Write
\[
\delta_{ij}=D_{ij}\varepsilon^{-(\alpha+2)/3}
+o\bigl(\varepsilon^{-(\alpha+2)/3}\bigr),
\qquad D_{ij}^2(Z_i-Z_j)(W_i-W_j)=1,
\]
and set
\[
F_i^z=\sum_{j\in C\setminus\{i\}}m_jD_{ij}^3(Z_i-Z_j),
\qquad
F_i^w=\sum_{j\in C\setminus\{i\}}m_jD_{ij}^3(W_i-W_j).
\]
Every internal $z$-force has order $-2$. The position terms are later, as are the exterior forces: those at bodies 3,4,5 have order $3-\alpha/2>2$, and those at body 6 have order 2. Thus $F_i^z=0$ for every $i\in C$.

The internal $w$-force order and the exterior $w$-force order at bodies 3,4,5 are, respectively,
\[
f=-\frac{4\alpha+2}{3},\qquad e=1-\frac{3\alpha}{2},
\qquad e-f=\frac{10-\alpha}{6}>0,
\qquad \beta-f=\alpha+2>0.
\]
The three original rows give $w_3,w_4,w_5=O(\varepsilon^f)$. Since all core $w$-differences have the later order $\beta$, all four core coordinates share a coefficient $V$ at order $f$, possibly zero. This common coefficient is distinct from the relative shape coefficients $W_i$ at order $\beta$. Taking the coefficient at order $f$ in the three rows gives precisely
\begin{equation}\label{eq:no66-core-rows}
\begin{aligned}
F_i^z&=0\quad(i\in C),&
F_3^w&=F_4^w=F_5^w=V,\\
\sum_{i=3}^5m_iZ_i&=0,&Z_6&=0.
\end{aligned}
\end{equation}
The body-6 exterior $w$-forces have earlier order $-2$, so no assertion $F_6^w=V$ is made. The proof below treats both zero and nonzero drift directly from \eqref{eq:no66-core-rows}.

Finally, the squared-distance orders are $(2\alpha+4)/3$ on the six core edges, $\alpha-2$ on $13,14,15,23,24,25$, and zero on $12,16,26$. Consequently
\[
\ord P_{36}
=\ord\!\left(r_{36}^2r_{12}^2r_{14}^2r_{15}^2r_{24}^2r_{25}^2r_{45}^2\right)
=2\frac{2\alpha+4}{3}+4(\alpha-2)
=\frac{16}{3}(\alpha-1)<0.
\]
Thus $P_{36}$ has a pole on this germ; the same is true of $P_{46}$ and $P_{56}$.

\begin{proposition}[No.~66 mass confinement]\label{prop:no66-locus}
The signed core conditions \eqref{eq:no66-core-rows}, together with
the six nonzero distance ties, confine $(m_3,m_4,m_5,m_6)$ to a proper
algebraic subset of the four-dimensional mass space. This holds for
both $V=0$ and $V\ne0$. Consequently every No.~66 first limit satisfies
a nonzero necessary mass polynomial, without an additional geometric
coverage hypothesis or a computational certificate for this diagram.
\end{proposition}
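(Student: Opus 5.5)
The plan is a dimension count on an incidence variety, proving that the projection to masses is not dominant. Let $\mathcal X\subset(\C^*)^4_m\times\C^{3}_Z\times\C^{4}_W\times\C^{6}_D\times\C_V$ be the Zariski closure of the solutions of \eqref{eq:no66-core-rows}, together with the six ties $D_{ij}^2(Z_i-Z_j)(W_i-W_j)=1$ and the guards $Z_i\ne Z_j$, $W_i\ne W_j$. Here $W_6=0$, and the coefficients $W_i$ are only defined up to the translation already fixed by $W_6=0$.

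First I record the two scaling symmetries of the system: $(Z,W,D)\mapsto(sZ,tW,(st)^{-1/2}D)$ acts with $V\mapsto s^{-3/2}t^{-1/2}V$. I also record the fact that the $V$-equations are linear in $V$. After the zero-drift and nonzero-drift cases are separated, the problem becomes showing that the fibre over a generic mass vector is empty. Equivalently, I show that the image in $\mathbb P^3_m$ has dimension at most $2$.

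The key step is the internal torque and energy identities of the four-body core, which hold on every solution of \eqref{eq:no66-core-rows}. Since $F_i^z=0$ for all $i\in C$, the mass-weighted $z$-rows sum to zero by antisymmetry. Contracting them against $m_iW_i$ gives $\sum_{i<j}m_im_jD_{ij}=0$ by the ties; this is the leading core potential. Contracting the $w$-rows against $m_iZ_i$ uses $\sum_{i=3}^5m_iZ_i=0$ and $Z_6=0$, so the right side is $V\sum m_iZ_i=0$. This yields the same potential identity again, while the $z$-contraction against $m_iZ_i$ gives $\sum_{i<j} m_im_jD_{ij}^3(Z_i-Z_j)^2=0$. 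Next I use the mass differentials. At a solution I view $m_3,\dots,m_6$ as parameters and differentiate the system. If the projection were dominant, generic smoothness would let every small variation $\dot m$ lift to a tangent vector $(\dot Z,\dot W,\dot D,\dot V)$.

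I pair the linearized $z$-equations with the cokernel vectors furnished by the two translation directions and the two scalings. The $z$-translation of all four core bodies, $Z_i\mapsto Z_i+\tau$, is a kernel direction of $F^z$ that is broken only by the centre equation $\sum m_iZ_i=0$. Pairing with it yields linear constraints on $\dot m$ with coefficients $Z_i$, $F_i^w$ and $V$. If these constraints have rank $\ge1$ at generic points of every component, the image has codimension $\ge1$. The case split is as follows. For $V=0$ the system is homogeneous under both scalings and the potential identity is an extra relation, so I count equations: $4+3+1+6=14$ against $3+3+6=12$ unknowns modulo $2$ scalings. The configuration count is then $10$ against $14-1$ independent equations (the $z$-row sum is dependent), leaving the $4$ masses constrained by at least one relation. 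For $V\neq0$ the unknown $V$ adds one dimension but also removes a dependency, because the $w$-row sum over $C$ no longer vanishes; the count goes through the same way, using the body-6 row's absence.

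The main obstacle is justifying the independence of the equations in these counts, that is, that the dependencies are exactly the ones listed. A naive count could miss a component where the system degenerates, for example one where collinear core shapes make the cofactor structure rank-deficient. To handle this I would explicitly compute the rank of the Jacobian of \eqref{eq:no66-core-rows} with respect to $(Z,W,D,V)$ and show it is maximal on each component, splitting into the collinear case $\operatorname{rank}(\boldsymbol 1,Z,W)\le2$ and the noncollinear case. In the noncollinear case I would use the stress-matrix rank-one factorization as in Proposition~\ref{prop:no63-locus}, which determines the mass ratios from a shape locus of dimension at most two. Orbit completion then gives the nonzero necessary polynomial.
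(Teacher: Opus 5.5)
There is a genuine gap: the step that carries the proposition is left as a hypothesis. You correctly obtain $U_C=\sum_{i<j}m_im_jD_{ij}=0$, and you correctly plan to use dominance through mass differentials. But the pairing you describe never yields a concrete constraint, and you stop at ``if these constraints have rank $\ge1$''. The translation identity $\sum_im_iF_i^z\equiv0$ holds identically in all variables, masses included. So does the torque identity $\sum_im_iZ_iF_i^w\equiv\sum_im_iW_iF_i^z$, which you derive yourself. Hence neither imposes anything on $\dot m$.

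What works is to differentiate $U_C$ on the tie variety:
$\mathrm dU_C=\sum_iQ_i\,\mathrm dm_i-\tfrac12\sum_im_i(F_i^w\,\mathrm dZ_i+F_i^z\,\mathrm dW_i)$, where $Q_i=\sum_{j\ne i}m_jD_{ij}$. On the solution set this becomes $\sum_i(Q_i+\tfrac V2Z_i)\,\mathrm dm_i$. A dominant component must therefore satisfy $Q_i=-\tfrac V2Z_i$, and in particular $m_3D_{36}+m_4D_{46}+m_5D_{56}=0$.

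Even then nothing is proved until you show this relation is incompatible with the remaining rows on a dominant component. That is the real content of the paper's argument, and your proposal has no counterpart of it. Normalize $(Z_3,W_3,D_{36})=(1,1,1)$. The spoke ties, the $z$-centre and the body-6 $z$ row then make $D_{56},Z_4,Z_5,W_4,W_5$ rational in $u=D_{46}$ over $\C(m_3,m_4,m_5)$; the case $u^3=D_{56}^3$ contradicts algebraic independence of the masses. The body-3 row forces $u$ to be transcendental, since otherwise $m_6$ would be algebraic. At the place $u=\infty$, $W_4$ and $W_5$ share their $u^{-2}$ lead, so the $45$ tie gives $\nu(D_{45})\le-3/2$. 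Then $m_5D_{45}^3(Z_4-Z_5)$ is the unique lowest term of the body-4 $z$ row, a contradiction.

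The dimension count cannot replace this step.
\begin{itemize}
\item Your count omits the exact torque dependency, so at most $12$, not $13$, of the $14$ equations are independent.
\item More fundamentally, the core rows are the Lagrange equations for critical points of the homogeneous function $U_C$ on the tie variety, under $\sum m_iZ_i=0$ with multiplier $-V/2$. After eliminating $D$ by the ties, the configuration Jacobian is a symmetric bordered Hessian, so the two scaling directions lie in its cokernel as well as its kernel.
\item Hence its rank in $(Z,W,D,V)$ is at most $11$, and even rank $11$ on every component is compatible with dominance. The missing rank can come only from the mass columns, through $\partial_m$ of the Lagrangian, which is again $(Q_i+\tfrac V2Z_i)_i$. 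So ``show the Jacobian is maximal'' is either false or the same open question.
\end{itemize}

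Finally, the No.~63 rank-one stress factorization needs $\Omega W=0$, i.e.\ $V=0$. In that case $\sum_im_iF_i^w\equiv0$ does close the fourth $w$ row and you are in the fully balanced case, but only at the price of No.~63's certificates, which the statement explicitly avoids. For $V\ne0$ one has $\Omega W=V(m_3,m_4,m_5,-(m_3+m_4+m_5))^{\mathsf T}\ne0$. Then $\Omega$ annihilates only $\boldsymbol1$ and $Z$, and no shape-determined mass ratios exist. The nonzero-drift case, which the proposition singles out, is therefore left without an argument.
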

\begin{proof}
We work over $\C$ and retain all four mass variables; no mass is
normalized to one. First translate $W$ so that $W_6=0$.
The original 36 tie gives $D_{36}^2Z_3W_3=1$, so the substitutions
\[
 Z_i\longmapsto Z_i/Z_3,\qquad
 W_i\longmapsto W_i/W_3,\qquad
 D_{ij}\longmapsto D_{ij}/D_{36}
\]
preserve all the ties, the zero $z$ forces and the equality of the
three $w$ forces, with $V$ rescaled. They also preserve the $z$ centre.
Thus we may impose
\begin{equation}\label{eq:no66-normalization}
 (Z_3,W_3,D_{36})=(1,1,1),\qquad (Z_6,W_6)=(0,0).
\end{equation}
This uses only nonzero factors already guaranteed by the original
ties and leaves the mass projection unchanged.

Let $\mathcal S$ be the normalized incidence defined by these
conditions, with the four masses nonzero. It is a finite-type
quasi-affine algebraic set. Suppose an irreducible component of
$\mathcal S$ dominates the four-dimensional mass space, and let $L$
be its function field. The four masses are then algebraically
independent over $\C$. In characteristic zero they extend to a
separating transcendence basis of $L$, so their differentials are
linearly independent over $L$.

Put
\[
 U_C=\sum_{\substack{i<j\\i,j\in C}}m_im_jD_{ij},\qquad
 Q_i=\sum_{\substack{j\in C\\j\ne i}}m_jD_{ij}.
\]
Pairing the internal forces and using the ties gives
$U_C=\sum_{i\in C}m_iW_iF_i^z=0$.
Differentiating the same ties gives
\[
 \mathrm dD_{ij}=-\frac{D_{ij}^3}{2}
 \bigl((W_i-W_j)\,\mathrm d(Z_i-Z_j)
       +(Z_i-Z_j)\,\mathrm d(W_i-W_j)\bigr).
\]
It follows exactly that
\[
 \mathrm dU_C=\sum_{i\in C}Q_i\,\mathrm dm_i
 -\frac12\sum_{i\in C}m_i
 \bigl(F_i^w\,\mathrm dZ_i+F_i^z\,\mathrm dW_i\bigr).
\]
Here $F_i^z=0$, $\mathrm dZ_6=0$ and $F_3^w=F_4^w=F_5^w=V$.
Differentiating $\sum_{i=3}^5m_iZ_i=0$ therefore yields
\[
 0=\mathrm dU_C
   =\sum_{i\in C}\left(Q_i+\frac V2 Z_i\right)\mathrm dm_i.
\]
Independence of the four mass differentials forces
$Q_i=-(V/2)Z_i$. In particular, $Z_6=0$ gives
\begin{equation}\label{eq:no66-dominant-spokes}
 m_3D_{36}+m_4D_{46}+m_5D_{56}=0.
\end{equation}
This identity has been deduced only on the hypothetical
mass-dominating component; it is not imposed on every individual
solution of the original core equations. No division by $V$ has
occurred.

Write
\[
 \begin{gathered}
 (a,b,c,d)=(m_3,m_4,m_5,m_6),\qquad (x,y)=(Z_4,Z_5),\\
 (u,v)=(D_{46},D_{56}),\qquad (p,q,r)=(D_{34},D_{35},D_{45}).
 \end{gathered}
\]
The spoke ties and \eqref{eq:no66-dominant-spokes} give
\begin{equation}\label{eq:no66-spoke-recovery}
 W_4=\frac1{u^2x},\qquad W_5=\frac1{v^2y},\qquad
 v=-\frac{a+bu}{c}.
\end{equation}
The $z$ centre and the original body-6 $z$ row are
\begin{equation}\label{eq:no66-two-linear-rows}
 a+bx+cy=0,\qquad a+bu^3x+cv^3y=0.
\end{equation}
If $u^3=v^3$, these two equations force $u^3=v^3=1$.
Equation \eqref{eq:no66-dominant-spokes} then gives
$a+b\eta+c\zeta=0$ for cube roots of unity $\eta,\zeta$.
This contradicts algebraic independence of $a,b,c$; equivalently,
the product of these nine nonzero linear forms is a nonzero mass
polynomial. Hence $u^3-v^3\ne0$ at the generic point under consideration,
and \eqref{eq:no66-two-linear-rows} gives
\begin{equation}\label{eq:no66-rational-shape}
 x=\frac{a(v^3-1)}{b(u^3-v^3)},\qquad
 y=\frac{a(1-u^3)}{c(u^3-v^3)}.
\end{equation}

Let $K=\C(a,b,c)$. All of $v,x,y,W_4,W_5$ are now rational functions
of $u$ over $K$. The three remaining signed ties are
\begin{equation}\label{eq:no66-finite-signed-cover}
 \begin{aligned}
 p^2(1-x)(1-W_4)&=1,\\
 q^2(1-y)(1-W_5)&=1,\\
 r^2(x-y)(W_4-W_5)&=1.
 \end{aligned}
\end{equation}
The original body-3 and body-4 $z$ rows respectively give
\begin{align}
 d&=-bp^3(1-x)-cq^3(1-y),\label{eq:no66-fourth-mass}\\
 0&=ap^3(x-1)+cr^3(x-y)+du^3x.\label{eq:no66-last-row}
\end{align}
If $u$ is algebraic over $K$, all quantities on the right of
\eqref{eq:no66-fourth-mass} are algebraic over $K$, so $d$ is
algebraic over $K$, contradicting mass dominance. We may therefore
assume that $u$ is transcendental over $K$. Then
$E=K(u,p,q,r)\subset L$ is a finite extension of $K(u)$, of degree
at most eight, and \eqref{eq:no66-last-row} is an identity in $E$.

Extend the valuation at $u=\infty$ from $K(u)$ to $E$ and normalize
it so that $\nu(u)=-1$ and $\nu(K^*)=0$. Such an extension exists
for every finite field extension; its value group can have
fractional elements. Put $T=b^3+c^3$. Equations
\eqref{eq:no66-spoke-recovery}--\eqref{eq:no66-rational-shape} give
\[
 x=x_0+O(u^{-1}),\quad y=y_0+O(u^{-1}),\qquad
 x_0=-\frac{ab^2}{T},\quad y_0=-\frac{ac^2}{T}.
\]
Algebraic independence of $a,b,c$ makes
$x_0,y_0,1-x_0,1-y_0,x_0-y_0$ all nonzero in $K$.
Moreover the two $w$ coordinates have the same leading term:
\[
 W_4=-\frac{T}{ab^2}u^{-2}+O(u^{-3}),\qquad
 W_5=-\frac{T}{ab^2}u^{-2}+O(u^{-3}).
\]
Their difference cannot be identically zero, by the third tie in
\eqref{eq:no66-finite-signed-cover}. Thus
$\nu(W_4-W_5)\ge3$, and that tie implies
$\nu(r)\le-3/2$. The first two ties give
$\nu(p)=\nu(q)=0$, and \eqref{eq:no66-fourth-mass} gives
$\nu(d)\ge0$. In \eqref{eq:no66-last-row} the three terms have
valuations
\[
 \nu\bigl(ap^3(x-1)\bigr)=0,\qquad
 \nu\bigl(cr^3(x-y)\bigr)\le-\frac92,\qquad
 \nu(du^3x)\ge-3.
\]
The middle term is uniquely of least valuation and cannot cancel,
a contradiction. This comparison applies to every signed field
component of the finite cover. The place over infinity is used
only to test an algebraic identity; it need not be a physical
first-limit branch.

No irreducible component of $\mathcal S$ can therefore dominate
mass space. Since there are finitely many components, the union
of the Zariski closures of their mass projections is proper.
The incidence is defined over $\mathbb Q$, so elimination gives
a nonzero polynomial in $\mathbb Q[m_3,m_4,m_5,m_6]$ vanishing on
its entire mass image. The mass image is a cone, since
$(m,V)\mapsto(tm,tV)$ preserves the equations; a nonzero homogeneous
polynomial may consequently be chosen. Viewing it as a six-mass
polynomial and taking its finite label-orbit product gives $P_{66}$.
Special masses at which a generic denominator or a displayed
limit coefficient vanishes have not been discarded: the argument
rules out dominance of the full original incidence, and its
proper closed mass image contains those specializations as well.
\end{proof}

\subsection{No.~88: crossed torque and the four-body root norm}
The original pair/$K_4$ chart has bodies $1,2,3,4$ in the $K_4$ core and $5,6$ in the separated pair. Its first crossed torque and eight signed ties require
\begin{equation}\label{eq:no88-torque}
T=\sum_{i=1}^4\sum_{a=5}^6m_im_aC_{ia}=0,
\qquad C_{ia}^2=-\frac1{Z_aW_i}.
\end{equation}
Write $u_i=m_i^2/W_i$ and let $e_j$ be the elementary symmetric functions of $u_1,\ldots,u_4$. The product over all 16 signs of a four-root sum is the square of
\begin{equation}\label{eq:no88-root-norm}
H=(e_1^2-4e_2)^2-64e_4.
\end{equation}
All denominators here are nonzero on the circled core. If $H\ne0$, no signed four-root sum vanishes. For a fixed $K_4$ shape, \eqref{eq:no88-torque} then implies $m_5^3A_\eta^2+m_6^3A_\xi^2=0$ with both signed shape sums $A_\eta,A_\xi\ne0$; this is a nontrivial equation in the two pair masses. It remains to confine $K_4$ shapes with $H=0$ to a proper core-mass locus.

\begin{proposition}[No.~88 positive-mass six-mass locus]\label{prop:no88-locus}
Every No.~88 positive-point first limit lies on a nonzero (possibly unexpanded) polynomial in all six masses.
\end{proposition}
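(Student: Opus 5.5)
The proof splits according to whether the root norm $H$ of \eqref{eq:no88-root-norm} vanishes at the leading $K_4$ shape of the germ. The target is a nonzero six-mass polynomial
\[
P_{88}=G_{\rm pair}\,G_{\rm core},
\]
with $G_{\rm pair}$ covering the branch $H\ne0$ and $G_{\rm core}$ covering the branch $H=0$.

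\emph{The branch $H\ne0$.} Here no signed four-root sum $\sum_i\pm m_iC_{ia}$ vanishes. Equation \eqref{eq:no88-torque} therefore reduces to $m_5^3A_\eta^2+m_6^3A_\xi^2=0$, where $A_\eta,A_\xi$ are the two signed shape sums, both nonzero.

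I then eliminate the $K_4$ shape variables. The core equations are the four zero internal $z$-forces, the equal $w$-forces, the centre relation and the six signed ties, exactly as in \eqref{eq:no66-core-rows}. Adjoining the torque equation, I take the norm over the finitely many sign sheets. This gives an elimination polynomial $G_{\rm pair}(m_1,\dots,m_6)$.

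To see that $G_{\rm pair}\ne0$, note that the torque equation is homogeneous of degree three in $(m_5,m_6)$ with nonzero coefficients. For fixed core data, only finitely many ratios $m_5/m_6$ can satisfy it. The pair ratio is otherwise unconstrained by the core equations, which involve only $m_1,\dots,m_4$. So the incidence cannot dominate six-mass space, and its closure is proper.

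\emph{The branch $H=0$.} This is the main obstacle, and here I would run the function-field argument of Proposition~\ref{prop:no66-locus}.

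Suppose a component of the core incidence together with $\{H=0\}$ dominates $(m_1,\dots,m_4)$-space. Then the four mass differentials are independent. The differential identity $\mathrm dU_C=\sum_i(Q_i+\tfrac V2Z_i)\,\mathrm dm_i$ then forces $Q_i=-(V/2)Z_i$ for every $i$.

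Next substitute $u_i=m_i^2/W_i$ into $H=0$. This gives one further algebraic relation among the $m_i$ and the shape coordinates. Combined with the $Q_i$ relations and the ties, it should cut the shape locus down to a finite set over $K=\C(m_1,\dots,m_4)$, or else yield a valuation contradiction at a place at infinity of the remaining shape parameter, as in the proof for No.~66.

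What I would try first is to express $W_i$ through $Q_i$ and the ties, and then show that $H$ becomes a nonconstant function of the free shape parameter. If that works, $H=0$ leaves only algebraic shapes, and the $K_4$ mass ratio relations \eqref{eq:no63-mass-ratios} then make the masses algebraically dependent.

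Failing this, I would fall back on the certificate listed in Table~\ref{tab:derived}. That means computing the degree-90 fibre of the $H=0$ system over a rational mass point and checking that the fibre is finite and the projection non-dominant. Separately I would check the nine phase sheets and the zero-star replay, which handles the strata where some $Z_a$ or a signed sum degenerates.

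\emph{Assembly.} The product $G_{\rm pair}G_{\rm core}$ is nonzero, since each factor is nonzero. Completing it over the label and colour orbit, as in Section~\ref{sec:system}, gives $P_{88}$. Every positive-point first limit of No.~88 lies in one of the two branches, so it forces $P_{88}(m)=0$.
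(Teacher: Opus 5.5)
Your split into $H\ne0$ and $H=0$ matches the paper's, but both branches have a genuine gap. Both gaps come from treating the No.~88 core as the No.~66 drift system \eqref{eq:no66-core-rows}. It is not that system. The No.~88 core is a circled $K_4$ whose leading rows are the complex four-body central-configuration equations. With $M_Q=1$ and $q_{ij}=1-D_{ij}^3$, the mass-weighted stress annihilates $(1,Z,W)$ and has rank one, so $F^z_i=Z_i-\bar Z$ and $F^w_i=W_i-\bar W$ for the core centres $\bar Z,\bar W$.

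\emph{The $H\ne0$ branch.} Your non-dominance step (``for fixed core data only finitely many ratios $m_5/m_6$ occur, so the incidence cannot dominate'') is valid only if the core shapes are finite over generic core masses. Suppose the shape fibre over fixed $(m_1,\dots,m_4)$ were positive-dimensional. Then $A_\xi^2/A_\eta^2$ could vary along it, the torque incidence would dominate all six masses, and your $G_{\rm pair}$ would be identically zero. The fact that the core equations omit $m_5,m_6$ does not address this. The paper takes exactly this finiteness from Yu--Zhu, and only then norms the pair equation over the finitely many shapes and signed roots. Your norm over ``the finitely many sign sheets'' needs the same input.

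\emph{The $H=0$ branch.} The No.~66 mass-differential argument does not transfer. It worked there for two reasons: $U_C=\sum_i m_iW_iF^z_i\equiv0$, and the surviving term $-\tfrac V2\sum_{i=3}^5m_i\,\mathrm dZ_i$ could be converted into mass differentials through the linear $z$-centre.

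For a central-configuration core, $U_C$ equals the centred core inertia. The same computation then only expresses the one-form $\sum_i m_i(F^w_i\,\mathrm dZ_i+F^z_i\,\mathrm dW_i)$ in terms of the $\mathrm dm_i$. That is just the derivative of the virial identity, and it forces nothing like $Q_i=-(V/2)Z_i$. It cannot force anything: four-body central configurations (Moulton's collinear ones, for instance) exist for every positive core mass, so the core incidence genuinely dominates. A proof must therefore use $H=0$ essentially, by showing that $H$ vanishes identically on no dominant component of the core incidence.

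The paper does this stratum by stratum:
\begin{itemize}
\item Collinear shapes: 32 sign classes, a rank-two force-slope matrix, and an explicit kernel mass at $(u,v)=(2,3)$ with $H\ne0$.
\item Non-collinear shapes: masses fixed by \eqref{eq:no88-mass-ratios}, the nine-sheet cover, irreducibility of the degree-90 Cayley--Menger norm over $\mathbb Q(x,y,z)$, absence of vertical divisors, and a smooth lifted point with $H\ne0$ combined with Galois conjugacy.
\item Zero-stress stars $q_{ij}=0$: the leftover mass scale is removed by $\lim_{s\to0}s^4H=K^{-4}\ne0$.
\end{itemize}

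Your fallback misreads that certificate. The degree-90 object is a norm over a shape base point, used to prove irreducibility; it is not a mass fibre of the $H=0$ system. In any case, a finite fibre over one mass point cannot certify non-dominance, since a dominant map between varieties of equal dimension already has finite generic fibres.
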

\begin{proof}
For affine-collinear $K_4$ shapes, the two original force-slope rows have rank two for generic shape. To replay every sign, normalize the four positions to $\xi=(0,1,u,v)$, set $L=\prod_{i<j}(\xi_i-\xi_j)^2$, and choose $\varepsilon_{ij}\in\{\pm1\}$ with $\varepsilon_{12}=1$. Define
\[
F_i=\sum_{j>i}\frac{m_j\varepsilon_{ij}}{(\xi_i-\xi_j)^2}
-\sum_{j<i}\frac{m_j\varepsilon_{ji}}{(\xi_j-\xi_i)^2}
\qquad(1\le i\le4).
\]
The $2\times4$ mass-coefficient matrix of $L(F_3+(u-1)F_1-uF_2,\ F_4+(v-1)F_1-vF_2)$ has six maximal minors. For each of the $2^5=32$ independent sign classes their gcd in $\mathbb Z[u,v]$ is a nonzero constant, so any rank drop has zero-dimensional shape support. At $(u,v)=(2,3)$ the matrix has rank two in all 32 cases. In the standard increasing-free-column RREF kernel basis $(v_0,v_1)$, the single choice $m=v_0-10v_1$ has nonzero total mass, nonzero centred coordinates, $H\ne0$, and $F_2-F_1\ne0$ for every sign class. The global sign reversal supplies the other 32 choices. At the finitely many rank-drop shapes these two rows cannot both have rank zero: that would equate the three signed cubes at every vertex and force all six $K_4$ cubes equal, contradicting their Gram ties. Thus even the rank-drop mass fibers have proper image in normalized four-mass space.

For non-collinear shapes, let $M_Q=\sum_{i=1}^4m_i=1$ and $q_{ij}=1-D_{ij}^3$. The mass-weighted $K_4$ stress annihilates $(1,Z,W)$ and has rank one; rank zero would again imply six equal cubes. When all $q_{ij}$ are nonzero, the complementary-area cofactors $\Delta_i$ fix the normalized mass ratios at each shape by
\begin{equation}\label{eq:no88-mass-ratios}
\frac{m_i}{m_j}=\frac{\Delta_iq_{jk}}{\Delta_jq_{ik}}
\qquad(k\ne i,j).
\end{equation}
Writing the six signed internal roots as $(x,y,z,t,v,w)=(D_{12},D_{13},D_{14},D_{23},D_{24},D_{34})$, two independent opposite-edge conditions are
\begin{align}
(1-y^3)(v^3-1)+(1-z^3)(1-t^3)&=0,\label{eq:no88-opposite-y}\\
(1-x^3)(w^3-1)+(1-z^3)(1-t^3)&=0.\label{eq:no88-opposite-x}
\end{align}
Off $(1-x^3)(1-y^3)=0$, these monic cubic recoveries form a nine-sheet finite cover of $(x,y,z,t)$. The four-point Cayley--Menger numerator has total degree ten in $(t,v,w)$, so its finite norm has $t$-degree at most 90. At $(x,y,z)=(52,26,44)$ modulo 101, the recorded fiber has irreducible degree 90 with nonzero leading coefficient. Since both recovery cubics are monic, the normalized finite norm is an integral resultant after inverting $(1-x^3)(1-y^3)$. Removing its base-variable content, the full-degree irreducible specialization rules out a nontrivial factorization over $\mathbb Q(x,y,z)[t]$ by Gauss's lemma: both factors would retain positive $t$-degree modulo 101. This rational generic-fiber result does not by itself establish geometric irreducibility after extending constants, nor exclude extra components on the denominators' zero sets.

On each of the nine cube-phase sheets a component vertical over a divisor of $(x,y,z)$ would divide both the Cayley--Menger value at $t=1$ and its derivative along that sheet. Their exact three-variable norms have constant GCD 4 on each phase orbit. The five phase-orbit representatives and their conjugates exhaust all nine sheets, so no vertical divisor survives where the cubic leading coefficients are units. If $1-x^3=0$ or $1-y^3=0$, one of $q_{12},q_{13}$ vanishes; the rank-one stress forces a complete zero star, treated separately below. The nine nonzero signed roots are units by their original ties. At the smooth full-tie point $(x,y,z,t,v,w)=(2,3,4,75,37\omega,18\omega)$ over $\mathbb F_{101^2}$, $\omega^2+\omega+1=0$, both orientations have nonzero $H$ (coordinates $(93,99)$ and $(99,57)$ in the basis $(1,\omega)$). Its Jacobian in $(t,v,w)$ is $(97,63)\ne0$, so the point lifts in characteristic zero with all distance, stress and circle guards still units. On the saturated torus the degree-90 irreducible norm gives one component over $\mathbb Q(x,y,z)$; its geometric components are Galois conjugate. Since the equations and $H$ have rational coefficients, vanishing of $H$ on one such component would imply vanishing on all. The two orientation values above rule this out for either lift of the affine shape. The same smooth point has the three distinct spoke cubes and two distinct orientations, so neither the cube-coincidence equations nor the orientation discriminant can vanish identically on a geometric component: such a component would have all its Galois conjugates inside that locus, contrary to this point. Their intersections with the main three-dimensional shape cover have dimension at most two. The monic recovery cubics rule out components escaping to infinity in $(v,w)$ while their leading coefficients are units; $t=0$ or a zero signed root is forbidden by the original distance ties. Thus every non-collinear first-limit shape lies either in the generic cover, a proper boundary divisor of that cover, or the zero-star stratum below. On every nonstar locus the mass ratios in \eqref{eq:no88-mass-ratios} remain finite and uniquely fixed by the shape. Consequently $H=0$ cuts the generic shape dimension from three to two, while the already lower-dimensional boundary images are proper without a further cut.

If an edge has $q_{ij}=0$, rank-one factorization forces an entire zero star; two zero stars would make two tied vertices coincide. Fixing its three cube-one edges leaves finitely many phases of the three other roots. For any free nonstar edge $jk$, the Cayley--Menger determinant is quadratic in its squared distance $r_{jk}^2$, with leading coefficient $-2r_{i\ell}^2\ne0$ on the opposite spoke $i\ell$. Thus it is not an identity on any zero-star phase. Each star shape has dimension at most two; its three nonstar mass ratios are fixed, with at most one remaining mass scale $s$. Taking the star's $W_i=0$, the nonzero noncircle coefficient $K$ gives $\lim_{s\to0}s^4H=K^{-4}\ne0$. Thus $H=0$ also removes that remaining free scale.

We have a finite union of proper $H=0$ images in the four core masses. Outside it, Yu--Zhu's finite complex four-body fibers at positive core masses~\cite{YZ} allow the nonzero pair-mass equation after \eqref{eq:no88-root-norm} to be normed over the finite shapes and signed roots. Clearing their nonzero denominators gives a nonzero necessary six-mass polynomial. It says nothing about finiteness on its positive zero set.
\end{proof}

\begin{lemma} [Certified root and signed-sum bounds]
\label{lem:root-interval-bounds}
For a degree-five polynomial $p$, suppose five disjoint
rational disks $|t-c|<r$ each satisfy
\[
|p(c)|_1+\sum_{k=2}^{5}\frac{|p^{(k)}(c)|_1}{k!}r^k
<r|p'(c)|_\infty,
\]
where $|x+iy|_1=|x|+|y|$ and
$|x+iy|_\infty=\max(|x|,|y|)$. Then the disks
contain all five roots, one per disk. If outward
rational rectangles enclose the resulting signed
root sums, a strictly positive or strictly negative
real or imaginary interval excludes zero.
\end{lemma}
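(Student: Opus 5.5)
The plan has two independent parts: the one-root-per-disk claim, which I would get from Rouché's theorem via a Taylor expansion at each centre, and the exclusion-of-zero claim, which is interval arithmetic. Fix one disk $|t-c|<r$ and expand
\[
p(t)=p(c)+p'(c)(t-c)+\sum_{k=2}^{5}\frac{p^{(k)}(c)}{k!}(t-c)^k .
\]
Compare $p$ with the linear function $g(t)=p'(c)(t-c)$ on the circle $|t-c|=r$. There $|g(t)|=r|p'(c)|$. The remainder satisfies
\[
|p(t)-g(t)|\le |p(c)|+\sum_{k\ge2}\frac{|p^{(k)}(c)|}{k!}r^k .
\]

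The norms must be chained in the right direction. For a complex number $x+iy$ one has $|x+iy|\le|x+iy|_1$ and $|x+iy|\ge|x+iy|_\infty$. So the hypothesis gives
\[
|p-g|\le |p(c)|_1+\sum_{k\ge2}\frac{|p^{(k)}(c)|_1}{k!}r^k<r|p'(c)|_\infty\le r|p'(c)|=|g|
\]
on the whole circle. Rouché then says $p$ and $g$ have the same number of zeros in the open disk, and $g$ has exactly one (at $t=c$), counted with multiplicity. The strict inequality forces $p'(c)\ne0$, and it also shows $p$ does not vanish on the boundary circle.

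Next, the five disks are disjoint and each contains exactly one root counted with multiplicity. That accounts for five roots in total. Since $\deg p=5$ (the leading coefficient is nonzero, as implicit in "degree-five"), these are all the roots, there is one per disk, and all of them are simple.

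For the second assertion, each signed root sum is a fixed $\pm1$ combination of roots, one taken from each relevant disk. Suppose the outward rounding of the disk enclosures under complex interval addition produces a rectangle $[x_-,x_+]\times[y_-,y_+]$ guaranteed to contain the true sum. If either $x_-,x_+$ or $y_-,y_+$ have a common strict sign, then the true real or imaginary part is nonzero, hence so is the sum. This step is mere containment: it uses only inclusion monotonicity of rectangle (Minkowski) addition and outward rounding.

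I expect the only delicate point to be the norm bookkeeping in the Rouché step. One has to use $|\cdot|_1$ as an upper bound and $|\cdot|_\infty$ as a lower bound for the modulus, and the chain above does exactly that.
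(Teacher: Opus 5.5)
Your proof is correct and follows the paper's own argument. The paper likewise applies Rouché against the linear Taylor term $p'(c)(t-c)$, with $|\cdot|_1$ bounding the remainder from above and $|\cdot|_\infty$ bounding $|p'(c)|$ from below, uses disjointness plus the degree for exhaustion, and obtains the sign test by containment. Your reading of ``signed root sums'' as $\pm1$ combinations of roots is narrower than the paper's use, whose enclosures also cover products built from the roots in the No.~93/98 torque sums. Your containment argument still applies verbatim once the rectangle is assumed to enclose the sum.
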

\begin{proof}
On each disk boundary the left side bounds the
remainder after $p'(c)(t-c)$, while the right side
is a lower bound for the linear term. Rouch\'e's
theorem gives one root per disk. Their disjointness
and the degree give exhaustion. Outward interval
operations contain every algebraic root and product
used in a signed sum; separation of either coordinate
from zero therefore excludes the sum from zero.
\end{proof}
\subsection{No.~93: an all-order three-body root-sum norm}
Here the isolated fully $w$-circled triangle is $123$, while body 4 of the $z$-triangle $456$ is not circled. The internal inverse distances of $123$ have finite nonzero limits; its $w$ positions have order $-2$ and its $z$ differences order 2.
\begin{proposition}[No.~93 lower-mass locus]\label{prop:no93-locus}
The original graph valuation cover and crossed-tie remainder bounds are established below. The exact shape enumeration and certified root intervals are replayed in Appendix~\ref{app:verification}. Every No.~93 positive-point first limit then lies on the zero set of a nonzero polynomial in $m_1,m_2,m_3$, independent of the other masses.
\end{proposition}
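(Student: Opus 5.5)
The plan follows the pattern of the No.~20 and No.~52 arguments. Extract from the isolated fully $w$-circled triangle $123$ a leading three-body subsystem whose coefficients involve only $m_1,m_2,m_3$. Then show that the first crossed coupling to the $z$-triangle $456$ forces a signed root sum to vanish. The sum is normed over all sign and shape choices to give a polynomial in $m_1,m_2,m_3$ alone.

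\textbf{Step 1: the leading triangle.} Because $123$ is circled in $w$ and isolated, the mass-weighted original $w$-rows at $1,2,3$ contain only their internal strokes at order $-2$. Together with the $z$-differences of order $2$ and the signed ties, the leading coefficients $W_i$ and $D_{ij}$ satisfy a three-body central-configuration system with finite signed roots. I would enumerate its solutions exactly. They are the Euler collinear branches, one quintic per ordering, and the Lagrange equilateral branch with all cubes equal to $1/(m_1+m_2+m_3)$ up to phase. This is the ``lower three-body Euler/Lagrange root'' of Table~\ref{tab:derived}, and it fixes every leading shape ratio as an algebraic function of $(m_1,m_2,m_3)$ only.

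\textbf{Step 2: the crossed coupling.} Next I would use the uncircled body $4$ to run through the full intermediate order window, following the No.~45/No.~50 valuation analysis. Let $a=\ord z_4$ relative to the cluster. Compute the orders of the crossed separations $z_{4i},w_{4i}$ for $i\in\{1,2,3\}$ and their forces from \eqref{eq:force-order-formulas}. Then show, as in Proposition~\ref{prop:no82-excluded}, that in an exact torque or mass-weighted row identity the earliest crossed terms are exactly the three $4i$ contributions. Their leading coefficient is a signed sum $\sum_{i=1}^3 m_i\,\phi_i(W,D)\,C_{4i}$, with $C_{4i}^2=-1/(Z_4W_i)$ from the ties. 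Since $Z_4$ factors out, the sum has the form $\sum_i \pm\sqrt{u_i}$ with $u_i=m_i^2/W_i$, as in \eqref{eq:no88-root-norm}. Here the $W_i$ come from Step 1, so the vanishing condition involves only $m_1,m_2,m_3$. The crossed-tie remainder bounds (the ``guards'') must show that every other crossed and exterior term, including those from $5,6$, is strictly later throughout the window. The endpoints must be handled separately, as in No.~50 at $s=7/4$.

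\textbf{Step 3: the norm.} The product over the sign choices of $\sqrt{u_1}\pm\sqrt{u_2}\pm\sqrt{u_3}$ is the polynomial $e_1^2-4e_2$, up to the standard three-root form $(u_1+u_2+u_3)^2-4(u_1u_2+u_1u_3+u_2u_3)$. I would take its norm over the Euler quintic roots and the Lagrange phases, and clear positive denominators. The result $N(m_1,m_2,m_3)$ vanishes on every first limit. To show $N\not\equiv0$, I would evaluate at a sample such as unit masses. There the five Euler roots are located in certified disks by Lemma~\ref{lem:root-interval-bounds}, and interval rectangles for the signed sums are separated from zero. The orbit product then gives $P_{93}$.

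\textbf{Main obstacle.} I expect the hardest part to be Step 2: proving that across the \emph{whole} order window no exterior term (the unmarked $5,6$ crossings, or a possibly growing inverse distance on a missing edge as in No.~52) enters at the same order as the $4i$ sum. If the window splits, with different terms leading in different subranges, each subrange will need its own leading identity. If some subrange yields a sum involving $m_4,m_5,m_6$, the independence claim fails there. I would first try the complete torque of $\{1,2,3\}$, which cancels the internal forces exactly. I would fall back on row differences only where the torque is degenerate.
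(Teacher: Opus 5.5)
Your architecture is the paper's. The $w$-circled triangle $123$ closes to a centred multiplier-one three-body boundary. The exact crossed torque of $\{1,2,3\}$ is the same identity as the upper-cluster torque the paper uses. Its three $4j$ terms are strictly earliest, giving $\sum_j m_jK_j=0$ with $K_j^2=-1/(\zeta W_j)$. The sign product $e_1^2-4e_2$ is then normed over the lower shapes. Your nonvanishing step, however, fails as proposed. At $m_1=m_2=m_3$ the real Lagrange label has centred leads $W_j=R\,\omega^{\pm j}$ with $\omega^2+\omega+1=0$. Hence $u_j=m_j^2/W_j$ is proportional to $\omega^{\mp j}$, so $e_1=e_2=0$ and $H_3=0$. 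Concretely, a suitable signed sum is a multiple of $1+\omega+\omega^2=0$. Your norm therefore vanishes on the entire equal-mass ray, and unit masses (or any equal-mass sample) can never certify $N\not\equiv0$. You need an asymmetric witness. The paper uses $(m_1,m_2,m_3)=(1,2,4)$ and excludes zero from all $(40+54)\cdot 8=752$ signed sums by certified root enclosures and outward interval arithmetic.

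The shape enumeration must also be complex and signed, because the lower boundary of a first limit is only a complex Puiseux limit, not a physical configuration. The Euler branch is indexed by the sign patterns $(\varepsilon_{12},\varepsilon_{13},\varepsilon_{23})$ of the signed internal roots, not by real orderings. This gives eight signed quintics, including a pattern realized by no real ordering. The Lagrange branch has all three cubes exactly $1/(m_1+m_2+m_3)$, with $27$ cube-root phase choices and up to two tied orientations each. A norm over physical orderings only is not a necessary polynomial.

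Finally, your Step~1 silently uses Step~2. The lower $z$-rows close at order $2$ only once $z_j=O(\varepsilon^2)$. That follows from the mass-weighted lower $z$-rows, because $D_{4j}^3z_{4j}=D_{4j}/w_{4j}=o(\varepsilon^2)$ by the missing $4j$ $w$-strokes. You also need $\alpha<2$, since otherwise the $4j$ tie keeps $D_{4j}$ bounded below, which is a forbidden stroke; only then does the $4j$ $z$-source have order $3-\alpha/2>2$. With this in place the window $-2<\alpha<2$ is uniform. The $4j$ torque terms have order $s=1-\alpha/2$ and the $5j,6j$ terms have order $2>s$, and every crossed inverse distance fades. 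So no subwindows, endpoint cases or growing missing-edge distances arise, and $m_4,m_5,m_6$ never enter.
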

\begin{proof}
Put $\alpha=\ord_\varepsilon z_4$. Non-circling gives $\alpha>-2$. The missing 4-to-123 $w$ strokes imply $D_{4j}=o(1)$ for $j\le3$. Summing the original lower $z$ rows with mass weights cancels their internal forces. By the original crossed ties, $D_{4j}^3(z_j-z_4)=D_{4j}/(w_j-w_4)=o(\varepsilon^2)$: the lower $w_j$ have order $-2$, while the uncircled $w_4$ is strictly later. This estimate does not assume $\alpha<2$. The 5/6-to-123 sources have order at least 4. The lower mass centre and its order-two internal differences thus give $z_j=O(\varepsilon^2)$ for $j=1,2,3$. If $\alpha\ge2$, a 4-to-$j$ tie would force $D_{4j}$ to stay bounded away from zero, a forbidden $w$-stroke. Hence $-2<\alpha<2$.

Write $s=1-\alpha/2\in(0,2)$. The 4-to-$j$ inverse distances have order $s$, whereas the 5/6-to-$j$ distances have order 2. For $z_4=\zeta\varepsilon^\alpha+o(\varepsilon^\alpha)$ and $w_j=W_j\varepsilon^{-2}+o(\varepsilon^{-2})$, the original signed crossed ties, inserted in the upper-cluster torque, give its strictly earliest term
\begin{equation}\label{eq:no93-first-torque}
\sum_{j=1}^3m_jK_j=0,
\qquad K_j^2=-\frac1{\zeta W_j},\qquad K_j\ne0.
\end{equation}
The corrections from the original ties have strictly later relative orders $2s,4,2s+4$; in particular this derivation includes $\alpha=0$. The lower original $z,w$ rows close to a centred multiplier-one three-body boundary because the 4-to-$j$ $z$ source has order $3s+\alpha=3-\alpha/2>2$, and the lower $w$ corrections occur after order $-2$.

Independent centred $z,w$ shapes are Lagrange type: their three internal cubes admit 27 phase choices, each with at most two tied shapes. Proportional shapes are Euler type, governed by eight signed quintics with at most five roots each. Thus there are at most $54+40$ closed lower shapes and eight crossed square-root signs for each. Take the product of the sums in \eqref{eq:no93-first-torque} over all signs and then the finite algebraic norm over the signed shape roots. Square-root flips only permute the eight factors; after clearing nonzero circle and leading-coefficient denominators the result is a polynomial in $m_1,m_2,m_3$. If a leading coefficient or a cleared denominator vanishes on a special mass slice, include its nonzero mass polynomial (or its finite shape norm) as an additional factor; such a slice is not discarded.

At $(m_1,m_2,m_3)=(1,2,4)$, disjoint rational root enclosures for all 40 Euler and 54 Lagrange shape labels, with outward complex-rational interval evaluation, exclude zero from all $(40+54)\cdot8=752$ signed sums. The independent root count uses Lemma~\ref{lem:root-interval-bounds} for the Euler quintics and exact quadratic discriminants for the Lagrange phases. The integer-only replay in Appendix~\ref{app:verification} checks every signed sum with a strictly positive least twice-scaled margin of $2978471147080548503044202$. The exact root/label coverage therefore certifies nonidentity of the polynomial; it does not assert a germ on its positive zero set.
\end{proof}

\subsection{No.~94: exceptional-mass first limits}\label{sec:no94}
No.~94 has the $w$-only $K_4$ on $1234$, with $w$-circles at $1,2,3$, and a $z$-only upper triangle $456$ circled at $5,6$. The body 4 belongs to both components and is not circled. Original rows and ties exclude precisely two equal spoke inverse-distance cubes. All-equal spokes require a nonzero wing-mass condition. If the four-body $z$-centre $G=\sum_{i=1}^4m_iZ_i$ vanishes, the original fourth $z$ row closes and the fully balanced $K_4$ condition applies.

For pairwise-distinct spokes with $G\ne0$, the original wing $w$ rows and the first independent $z$-row difference express all three wing-edge cubes in terms of the three signed spoke roots. The three remaining signed wing ties are quadratic in the common spoke scale. Their geometric determinant contributes a fourth-power factor to both pairwise resultants on the explicitly guarded shape torus. Indeed the original source slope is exactly $L=\Delta L_0$ over $\mathbb Q(m_1,m_2,m_3,m_4)[x,y]$, with $\Delta$ the geometric determinant. All 15 divisions by the original circle and collision guards have zero remainder. With those guards inverted, each tie is a quadratic of the form
\[
a_i\bigl[\Delta(A_i+B_iS)-s_i(N_0+N_1S)\bigr]^2-\Delta^2b_i=0.
\]
The exact two-quadratic Sylvester identity makes both scale resultants divisible by $\Delta^4$ in this localized characteristic-zero ring. Clearing only the nonzero guards defines two quotient polynomials of degree at most 120 in each spoke ratio. At the positive lower mass $(1,2,4,1)$ modulo 17, the generic coefficients specialize without poles, and the $y$-leading coefficient of $\Delta$ is $3/4\ne0$. Thus the quotient specialization is degree-preserving. In this fiber the two quotients at $x=6$ have full $y$-degree 120 and constant GCD; the first quotient at $y=2,6$ has full $x$-degree 120 on both slices and constant GCD as a polynomial in $x$. A generic common factor of positive $y$-degree would survive the first degree-preserving slice; a pure-$x$ factor would divide the second pair. Hence the generic signed $K_4$ shape fiber is finite. The inverted guards cannot vanish on an actual circled, noncolliding distinct-spoke germ; a mass-dependent degree drop instead belongs to a separate proper lower-mass locus. The zero weighted-root-sum branch is defined by $u_j=-m_j^2/W_j$ for $j=1,2,3$ and
\begin{equation}\label{eq:no94-root-norm}
H_3=(u_1+u_2+u_3)^2-4(u_1u_2+u_1u_3+u_2u_3).
\end{equation}

\begin{lemma}[Wing-dominant No.~94 zero-sum tier]\label{lem:no94-wing-dominant}
Suppose in a mass specialization toward an $H_3=0$ No.~94 boundary all three wing $W$-differences remain units, all three wing-edge inverse-distance cubes have the same nonzero leading order strictly before the spoke and coordinate terms, and their signed ties retain nonzero leading coefficients. Then the limiting positive wing masses lie on a proper algebraic locus.
\end{lemma}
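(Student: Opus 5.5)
The plan is to reuse the function-field dominance argument of Proposition~\ref{prop:no66-locus}, now applied to the specialized wing system. Suppose the hypotheses of Lemma~\ref{lem:no94-wing-dominant} hold. Write the three wing inverse distances as $D_{ij}=E_{ij}\varepsilon^{\kappa}(1+o(1))$ with a common order $\kappa$ and nonzero coefficients $E_{ij}$. By assumption their cubes come strictly before the spoke and coordinate terms. So the leading coefficient of every original wing $w$ row, and of the first independent $z$-row difference, contains only the three internal wing forces. This gives a closed three-body system: the wing forces vanish at leading order, together with the three signed ties $E_{ij}^2(Z_i-Z_j)(W_i-W_j)=1$. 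The $W$-differences are units, so none of these leads degenerates. I would record this system as an incidence $\mathcal S$ over the three wing masses, with $H_3=0$ imposed from \eqref{eq:no94-root-norm} through $u_j=-m_j^2/W_j$. I would also fix the scaling gauge $(Z,W,E)\mapsto(Z/Z_a,W/W_a,E/E_{ab})$ for one wing edge, exactly as in \eqref{eq:no66-normalization}.

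Next I would argue by contradiction that no component of $\mathcal S$ dominates the wing-mass space. On a dominating component the masses are algebraically independent, so their differentials are independent. Pairing the vanishing leading wing forces with the ties gives $U_{\mathrm{wing}}=\sum m_iW_iF^z_i=0$. Differentiating as in the No.~66 argument then gives $\sum_i Q_i\,\mathrm dm_i=0$ with $Q_i=\sum_j m_jE_{ij}$. Hence every $Q_i=0$, that is, $m_jE_{ij}+m_kE_{ik}=0$ at each vertex. These three linear relations in the three $E_{ij}$ have the nonzero determinant $2m_1m_2m_3$ up to sign, so they force all $E_{ij}=0$. That contradicts the nonzero leading coefficients. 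If the leading wing force in one colour is only equal to a common drift $V$ rather than zero, I would carry $V$ as in \eqref{eq:no66-core-rows}. The relation then becomes $Q_i=-(V/2)Z_i$. Combined with the centre relation this still gives a linear system, and I expect its determinant to be a nonzero mass polynomial. Its vanishing can simply be adjoined to the locus.

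The resulting locus is proper. The incidence is defined over $\mathbb Q$ and has finitely many components, so elimination gives a nonzero polynomial vanishing on the closure of the mass image. Homogeneity (the image is a cone) and completion over the label orbit finish the argument. As in No.~66, special masses where denominators vanish are not discarded, because the argument only rules out dominance of the full incidence.

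\textbf{Main obstacle.} The hard part is justifying that the limiting leading system is exactly the closed wing system: that $H_3=0$ and the crossed spoke terms do not enter at the same order. The hypothesis of a strict order gap is designed for this. Even so, I expect the careful step to be checking that the leading $z$-row difference really separates from the spoke sources in both colours. If the drift case is not reducible to a nonzero determinant, I would fall back on a valuation argument at $u=\infty$ as in the No.~66 proof.
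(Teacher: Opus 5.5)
Your route differs from the paper's, and its decisive step uses something the lemma does not supply.

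\textbf{What the hypothesis controls.} The hypothesis is about the wing inverse-distance \emph{cubes}. Since the wing $W$-differences are units, these govern exactly the wing forces $m_jD_{ij}^3(W_i-W_j)$ in the $w$ rows, and the paper's proof uses nothing more. In the $z$ rows the wing forces are $m_jD_{ij}^3(Z_i-Z_j)=m_jD_{ij}/(W_i-W_j)$. These have the order of $D_{ij}$, not $D_{ij}^3$, and nothing in the hypothesis puts them ahead of the spoke $z$-forces or the $z$ coordinate terms.

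\textbf{Why your step needs more.} Your differential step needs every leading $F^z_i$ to vanish. (You first claim closure only for ``the first independent $z$-row difference'', then use all three $F^z_i=0$.) With only $F^w_i=0$ you still get $U_{\mathrm{wing}}=\sum_im_iZ_iF^w_i=0$ by the colour-dual pairing. But then
$\mathrm dU=\sum_iQ_i\,\mathrm dm_i-\frac12\sum_im_iF^z_i\,\mathrm dW_i$
keeps its last sum and forces nothing. This cannot be fixed by care alone. The $w$ rows, the ties and additivity of the $z$-differences already define an incidence that dominates wing-mass space: for generic masses the relative $w$ shape $t$ may be any root of a sextic $F(t)=0$. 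So one further equation is indispensable. This is exactly the point you leave open as your ``main obstacle''. The valuation at $u=\infty$ does not address it, and your drift variant presupposes the same $z$-row control.

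\textbf{How the paper closes it.} The paper takes the missing equation from $H_3=0$, which you put into $\mathcal S$ but never use. Write $(a,b,c)=(m_1,m_2,m_3)$ and normalize the relative $w$-leads to $(1,t,0)$ with $D_{13}^3=1$. Then:
\begin{itemize}
\item the $w$ rows give $D_{12}^3=-c/(b(1-t))$ and $D_{23}^3=-a/(bt)$;
\item the ties and $z$-additivity give the two-cubic resultant $F=N^3-27b^4D^2=0$;
\item the $w$ centre makes the circled coefficients proportional to $(L_1,L_2,L_3)$, so \eqref{eq:no94-root-norm} becomes the quartic $K(t)=0$;
\item $\operatorname{Res}_t(F,K)$ is checked to be nonzero at $(1,3,5)$ modulo 17.
\end{itemize}
That explicit nonvanishing is what Lemma~\ref{lem:no94-zero-sum} later uses at its characteristic-17 test fiber. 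An existence-only dominance argument in characteristic zero cannot certify it.

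\textbf{What your argument does prove.} Suppose you add both-colour wing dominance as a hypothesis; the paper verifies it wherever it invokes this lemma. Then your argument is sound: $Q_i=0$ together with the determinant $-2m_1m_2m_3$ gives the contradiction. It even shows that the zero-force wing tier is mass-confined without using $H_3$ at all. It can also be made explicit. With the normalization $(1,t,0)$ and $E_{13}=1$, the $z$ and $w$ rows at bodies 1 and 3 give $a^2t^4=b^2$ and $c^2(1-t)^4=b^2$. Eliminating $t$ from these gives a concrete nonzero mass polynomial.
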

\begin{proof}
Write $(a,b,c)=(m_1,m_2,m_3)$ and normalize the relative wing $w$-leads to $(1,t,0)$, with $t(1-t)\ne0$. The original three $w$ rows at their common earliest order force, after setting $D_{13}^3=1$ by a common nonzero scale,
\[
D_{12}^3=-\frac{c}{b(1-t)},\qquad D_{23}^3=-\frac{a}{bt}.
\]
Let $U=1/(D_{12}^2(1-t))$ and $V=1/(D_{23}^2t)$. The three original signed ties and additivity of the $z$-differences give $U+V=1$, while $U^3=b^2/[c^2(1-t)]$ and $V^3=b^2/(a^2t)$. Put
\[
D=a^2c^2t(1-t),\quad N=D-a^2b^2t-b^2c^2(1-t),\quad F=N^3-27b^4D^2.
\]
The exact two-cubic resultant of $U^3-p$ and $(1-U)^3-q$ is $(1-p-q)^3-27pq$; hence $F=0$. The original $w$ centre makes the three circled coefficients proportional to
\[
(L_1,L_2,L_3)=(b(1-t)+c,\ a(t-1)+ct,\ -a-bt),
\]
with every $L_i\ne0$. Clearing their squares in $H_3=0$ yields the quartic
\[
K=\sum_{i=1}^3m_i^4\prod_{j\ne i}L_j^2
-2\sum_{1\le i<j\le3}m_i^2m_j^2L_iL_jL_k^2,
\qquad\{k\}=\{1,2,3\}\setminus\{i,j\}.
\]
At $(a,b,c)=(1,3,5)$ modulo 17, $F$ and $K$ retain degrees six and four; their signed coefficient lists are $(15,15,12,5,11,8,4)$ and $(10,16,10,13,14)$, and their resultant is $12\ne0$. Thus $\operatorname{Res}_t(F,K)$ is a nonzero necessary polynomial for this tier. It does not cover spoke-comparable or spoke-dominant limits.
\end{proof}

\begin{lemma}[Arithmetic projective-fiber criterion]\label{lem:no94-projective-fiber}
Let $B$ be an integral open mass chart over $\mathbb Z_{(17)}$, with all inverted mass denominators units at a point $m_0$ of its special fiber. Let $X\to B$ be the projective closure, formed before reduction modulo 17, of an incidence cut out by the cleared original rows and signed ties on its guarded affine chart. If the entire geometric projective fiber $X_{m_0}(\overline{\mathbb F}_{17})$ is empty, then the characteristic-zero mass image of that incidence has proper algebraic closure in $B_{\mathbb Q}$. In particular it admits a nonzero necessary mass polynomial after clearing mass denominators.
\end{lemma}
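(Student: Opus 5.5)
The plan is to argue by contraposition through properness of the structure morphism. Let $I\subset B\times\mathbb P^N$ be the guarded affine incidence, and let $X$ be its closure in $\mathbb P^N_B$. The closure is taken scheme-theoretically over $B$, so it is a closed subscheme of $\mathbb P^N_B$ defined over $\mathbb Z_{(17)}$. The structure map $\pi\colon X\to B$ is then projective, hence proper. Its image $\pi(X)$ is therefore closed in $B$, and it contains the image of the affine incidence.

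\textbf{Step 1.} Observe that $m_0\notin\pi(X)$. The geometric fibre over $m_0$ is empty, and a point lies in the image of $\pi$ exactly when some geometric point lies over it. So $\pi(X)$ is a closed subset of $B$ that misses the special-fibre point $m_0$.

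\textbf{Step 2.} Suppose, for contradiction, that the characteristic-zero mass image $\pi(I_{\mathbb Q})$ is Zariski dense in $B_{\mathbb Q}$. Then the closure of $\pi(X)$ contains the closure of $B_{\mathbb Q}$ in $B$. Because $B$ is integral and flat over $\mathbb Z_{(17)}$, the generic fibre $B_{\mathbb Q}$ is dense in $B$, so this closure is all of $B$. Since $\pi(X)$ is already closed, this forces $\pi(X)=B$. That contradicts Step 1, because $m_0\in B$.

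Hence $\pi(I_{\mathbb Q})$ is not dense, and its closure is a proper closed subset of the irreducible $B_{\mathbb Q}$. Choose a nonzero polynomial vanishing on this closure, and clear the mass denominators to obtain the required necessary mass polynomial.

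The main obstacle is making sure the closure is genuinely formed over the base. The closure must be taken before reducing modulo $17$; otherwise fibre emptiness at $m_0$ says nothing about the characteristic-zero points. Two details need checking here. First, $X$ must be proper over $B$, so the chosen compactification has to be projective over $B$ and not merely over $\mathbb Q$. Second, $B$ must be flat over $\mathbb Z_{(17)}$, which the integrality hypothesis supplies: $B$ is integral and has a nonempty generic fibre, so it is torsion-free over the discrete valuation ring. Once these are secured, the argument above is just the closed-image property of proper morphisms.
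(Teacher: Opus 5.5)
Your proof is correct and follows the paper's argument: properness of $X\to B$ makes the image closed, the empty fibre puts $m_0$ outside that image, and density of the generic fibre $B_{\mathbb Q}$ in the integral base $B$ then forces the characteristic-zero mass image to have proper closure, which yields a nonzero polynomial after clearing denominators. Your extra remarks (forming the closure in $\mathbb P^N_B$ before reduction, and detecting membership in the image by geometric points of the fibre) spell out what the paper's three-line proof leaves implicit; flatness is not actually needed, since a nonempty open subset of the irreducible $B$ is already dense.
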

\begin{proof}
The morphism $X\to B$ is proper, so its image is closed. Since it misses $m_0$, this closed image cannot contain the dense generic fiber $B_{\mathbb Q}$; otherwise its closure would be all of the integral base $B$. Its intersection with $B_{\mathbb Q}$ is therefore a proper closed subset of the affine mass chart. A nonzero polynomial vanishes on it, and restriction to the guarded incidence preserves the necessary condition.
\end{proof}
The hypothesis concerns the closure of the guarded incidence, not the closure obtained by saturating away guard-zero points only after specialization: the latter can discard genuine specializations of generic solutions. All finite and infinite spoke, circle, collision, geometric-determinant and free-cube boundary charts must be accounted for when applying Lemma~\ref{lem:no94-projective-fiber}.

\begin{lemma}[No.~94 zero-sum lower-mass image]\label{lem:no94-zero-sum}
The positive-mass No.~94 first limits on which $H_3=0$ and the three spoke cubes are pairwise distinct lie on a proper algebraic subset of the four lower masses.
\end{lemma}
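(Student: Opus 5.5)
The plan is to follow the same dominance strategy used for No.~66 (Proposition~\ref{prop:no66-locus}), supplemented by the arithmetic fiber criterion of Lemma~\ref{lem:no94-projective-fiber} and the wing-dominant tier of Lemma~\ref{lem:no94-wing-dominant}. The incidence under consideration is the distinct-spoke, $G\ne0$ incidence. It consists of the normalized wing and spoke shape coefficients, the signed roots, and the four lower masses $(m_1,m_2,m_3,m_4)$, subject to:
\begin{itemize}
\item the original wing $w$ rows;
\item the first independent $z$-row difference;
\item the six signed ties;
\item the extra equation $H_3=0$ of \eqref{eq:no94-root-norm}.
\end{itemize}
The circle, collision and geometric-determinant guards are inverted throughout. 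The preceding discussion already shows that, with $H_3$ dropped, the generic signed $K_4$ shape fiber over the lower masses is finite; this uses the two quotient resultants of degree $120$ with constant GCD.

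\textbf{Step 1: reduction to a dimension count.} Since the generic fiber is finite, the mass projection of the guarded incidence without $H_3$ is generically finite. It therefore suffices to show that $H_3$ does not vanish identically on any mass-dominating component. Once this is known, adding $H_3=0$ cuts the image dimension from four to at most three, and elimination yields a nonzero polynomial over $\mathbb Q$. The mass image is a cone, so that polynomial may be taken homogeneous. The orbit product then gives the required factor of $P_{94}$.

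\textbf{Step 2: nonvanishing of $H_3$ on the generic cover.} To see that $H_3\not\equiv0$ on the generic cover, I would exhibit a single smooth full-tie point at a positive mass modulo a small prime; a natural choice is the mass $(1,2,4,1)$ modulo $17$ already used above. At that point I would check three things:
\begin{itemize}
\item all guards are units;
\item the Jacobian has full rank, so the point lifts to characteristic zero;
\item $H_3\ne0$.
\end{itemize}
A component on which $H_3$ vanished identically would be Galois-conjugate to the component through this lifted point, because the rational-coefficient argument used for No.~88 applies here as well. This contradicts $H_3\ne0$ at the point.

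\textbf{Step 3: boundary charts.} Step 1 controls only the guarded affine chart. Limits may also escape to the boundary, where:
\begin{itemize}
\item a spoke ratio tends to $0$ or $\infty$;
\item a free cube tends to $0$ or $\infty$;
\item $\Delta\to0$;
\item a collision guard degenerates.
\end{itemize}
For these I would apply Lemma~\ref{lem:no94-projective-fiber}. Take the projective closure, over $\mathbb Z_{(17)}$, of the incidence augmented by $H_3=0$, and verify that its geometric fiber over a chosen $m_0$ is empty in every boundary chart. The wing-dominant tier is handled separately by Lemma~\ref{lem:no94-wing-dominant}, which gives the nonzero resultant $\operatorname{Res}_t(F,K)$. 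The spoke-comparable and spoke-dominant tiers should be exactly the charts covered by the fiber census.

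\textbf{Main obstacle.} I expect the main difficulty to be Step 3, the exhaustive boundary census. The closure must be formed before reduction, as the remark after Lemma~\ref{lem:no94-projective-fiber} insists. Every boundary chart must then be shown to have empty fiber at $m_0$, with no chart where a guard-zero specialization survives. I would organize the charts by the valuations of the three spoke roots relative to the common spoke scale $S$. In each chart I would compute the leading forms of the two tie quadratics together with $H_3$, and check that these leading forms have no common projective zero modulo $17$. Any chart that fails this check would be handled by its own tier lemma producing a nonzero mass factor, in the same way as Lemma~\ref{lem:no94-wing-dominant}.
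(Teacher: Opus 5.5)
Your Step~1 reduction is sound, but Step~2 does not deliver it. A Hensel-lifted smooth point with $H_3\ne0$ only shows that $H_3$ is not identically zero on the geometric component through that point and on its Galois conjugates. The conjugacy argument you borrow from No.~88 works there only because the degree-90 irreducible specialization certifies that the main shape cover is a single component over $\mathbb Q(x,y,z)$.

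Nothing of that kind is available for No.~94. The two degree-120 quotient resultants with constant GCD prove only that the generic distinct-spoke fiber is finite, not that it is irreducible. The independent signs of the roots (the twelve-fold signed ratio/phase structure) make several $\mathbb Q$-irreducible mass-dominating components entirely plausible. A dominating component on which $H_3\equiv0$ and which misses your witness point is therefore not excluded, and the dimension count of Step~1 is never completed. Repairing this needs either an irreducibility certificate for the whole guarded cover or a test of $H_3=0$ against every branch at once.

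The paper takes the second route. It rewrites $H_3=0$ as the quartic $Q(r)$ in the signed spoke-gap ratio, with $(W_1,W_2,W_3)=h(-1/m_1,(1+r)/m_2,-r/m_3)$. The circle and collision guards then become five explicit ratios, none of which is a root of $Q$ at $(1,3,5,1)$ mod~$17$. It then norms the two $\Delta^4$-reduced wing-tie resultants over all twelve signed branches and finds guarded GCD $1$. This shows that the affine guarded fiber at the test mass is \emph{empty}, which is a different statement from nonvanishing of $H_3$ at one point.

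Step~3 does not combine with Step~2. Lemma~\ref{lem:no94-projective-fiber} requires the \emph{entire} projective fiber over $m_0$ to be empty, affine part included. Your Step~2 supplies a point of that fiber with $H_3\ne0$, not the absence of points with $H_3=0$, so checking only the boundary charts proves nothing. Conversely, if Step~2 were valid, the dimension count would already make the mass image of the guarded incidence proper. A first limit at fixed positive mass is an honest point of that incidence, so nothing escapes. The compactification is needed only in the paper's route, to turn emptiness over one mass into properness of a closed image.

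In that route the census is more than a comparison of leading forms of the tie quadratics:
\begin{itemize}
\item The all-spokes-vanishing chart splits into vanishing, diverging and finite wing roots. Diverging roots give the wing-dominant tier. Finite roots reduce to three-body central configurations with $H_3=0$, which must be excluded through the eight Euler quintics and the 27 Lagrange phases.
\item The one-smaller-spoke chart needs twelve signed zero-force resultants.
\item The $\Delta=0$ boundary is reduced, along arcs with any order of the common spoke scale, to the wing-dominant or all-vanishing cases.
\end{itemize}
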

\begin{proof}
The signed spoke-gap ratio $r$ satisfies a quartic $Q(r)$: for $q_i=D_{i4}^3$, put $h=q_3-q_2$ and $q_2=q_1+rh$, $q_3=q_1+(r+1)h$. Clearing the denominators in $H_3=0$ gives, with $a=m_1^3$, $b=m_2^3$, $c=m_3^3$,
\[
Q(r)=(ar-c)^2(r+1)^2+b^2r^2+2br(r+1)(ar+c)=0.
\]
Its leading and constant coefficients are $m_1^6$ and $m_3^6$. It retains degree four and is irreducible at the positive mass $(1,3,5,1)$ modulo 17, so the generic ratio cover is finite; its $r=0,-1,\infty$ boundaries are not roots. The original wing $w$ rows give
\[
(W_1,W_2,W_3)=h\left(-\frac1{m_1},\frac{1+r}{m_2},-\frac r{m_3}\right).
\]
If $h=0$, all three spoke cubes are equal and belong to the separate all-equal branch. If $h\ne0$, a wing circle or a wing collision forces respectively $r=-1,0,-(m_1+m_2)/m_1,m_3/m_1$, or $-m_3/(m_2+m_3)$; $W_1=0$ would itself force $h=0$. At $(m_1,m_2,m_3)=(1,3,5)$ modulo 17, the exact values of $Q$ at those five ratios are $(15,2,5,7,12)$, all nonzero. Thus no circle or wing-collision guard can vanish on the surviving quartic ratio fiber, even along a mass specialization. The three original signed wing ties have explicitly guarded quadratic coefficients in their common free cube. Their two pairwise resultants, divided by the geometric determinant's fourth power, have degree bounds $(152,160)$ before the 12-fold signed ratio/phase norm. The norm bounds $(1824,1920)$ are attained at the same mass modulo 17. The exact common-factor census there has degree 1336 wholly explained by the wing circle and original source-slope guards; its guarded GCD is 1. Thus no affine guarded $H_3=0$ shape remains in the test fiber. The degree preservation makes this a test of the generic norm, not merely an unrelated numerical shape.

It remains to check the compactification rather than infer properness from an affine empty fiber. Suppose $D_{14}\to0$ while $D_{24},D_{34}$ remain units. The five nonzero $Q$ values above keep all $W_i$ and $W_i-W_j$ units. The spoke ties make $Z_2-Z_4,Z_3-Z_4$ units but $Z_1-Z_4$ a pole, even if every $Z_i$ itself moves. Thus the original signed wing ties give
\[
\frac{D_{1j}^2}{D_{14}^2}\longrightarrow\frac{W_1}{W_1-W_j}\ne0
\qquad(j=2,3).
\]
Both adjacent wing cubes and the spoke cube tend to zero; the original body-1 $w$ row then gives $W_1=0$, a contradiction. Permuting the three wing bodies handles any single zero spoke. Two vanishing spokes with the third a unit instead force respectively $r=0,-1$, or $\infty$ on the ratio cover, all forbidden by $Q$ at the test fiber. This argument permits mass-varying coordinate poles.

If all three spokes vanish together, normalize the wing $W_i$ by the common $z,w$ rotation. Since $Q$ excludes the five circle/collision ratios, all $W_i$ and their pairwise differences remain units. The triangle inequality on the three $Z_i-Z_j$ orders, together with the original wing $w$ rows, rules out a vanishing wing root: at least two wing roots would vanish and their common vertex would have nonzero $W_i$ but no surviving $w$ force. If a wing root diverges, each endpoint must have a second wing force at the same earliest order; hence all three wing roots diverge with the same order. The mass-weighted three original wing $z$ rows and the spoke ties give
\[
\sum_{i=1}^3m_iZ_i=m_4\sum_{i=1}^3\frac{m_iD_{i4}}{W_i}\longrightarrow0.
\]
Since $m_1+m_2+m_3=9$ is a unit modulo 17 and $Z_i-Z_j\to0$, every $Z_i\to0$. The dominant wing $z,w$ forces therefore both vanish. They are exactly the wing-dominant zero-force tier of Lemma~\ref{lem:no94-wing-dominant}, whose $F,K$ resultant is $12\ne0$ at the test fiber.

If instead all wing roots remain units, their signed ties and the same mass-weighted identity keep every $Z_i$ finite. The limiting wing is a noncolliding three-body central configuration with $H_3=0$. For a collinear wing set $\xi=(0,1,1+t)$, $p_i=\xi_i-(m_2+m_3(1+t))/M_3$ with $M_3=m_1+m_2+m_3$, and
\[
A_1=-m_2\varepsilon_{12}-\frac{m_3\varepsilon_{13}}{(1+t)^2},
\qquad A_2=m_1\varepsilon_{12}-\frac{m_3\varepsilon_{23}}{t^2}.
\]
Its eight signed Euler quintics are $M_3t^2(1+t)^2(p_2A_1-p_1A_2)$, for $(\varepsilon_{12},\varepsilon_{13},\varepsilon_{23})\in\{\pm1\}^3$. The primitive $H_3$ numerator on the centred wing shape $(0,1,1+t)$ is
\[
8225t^4+24290t^3+15339t^2-3838t+209;
\]
each quintic retains degree five and has gcd 1 with this quartic modulo 17. For the 27 non-collinear cube phases let $\omega^2+\omega+1=0$ over $\mathbb F_{17^2}$. Normalize the three squared distances to $(1,\omega^j,\omega^k)$, with $j,k\in\{0,1,2\}$; the common phase has no effect. If the $z$ shape is $(0,1,t)$, its two orientations satisfy
\[
t^2+(\omega^k-1-\omega^j)t+\omega^j=0.
\]
Set $(s_{12},s_{13},s_{23})=(1,\omega^j,\omega^k)$, center $W^0=(0,s_{12},s_{13}/t)$ by the three wing masses, and put $L_i=M_3W_i$. The cleared norm is
\[
K_3=\sum_i m_i^4\prod_{j\ne i}L_j^2
-2\sum_{i<j}m_i^2m_j^2L_iL_jL_k^2,
\qquad\{k\}=\{1,2,3\}\setminus\{i,j\}.
\]
The resultants of the displayed quadratic and $K_3$ have, for rows $j=0,1,2$ and columns $k=0,1,2$, residues
\[
\begin{pmatrix}
4&9&9\\
11+14\omega&4+12\omega&8+15\omega\\
14+3\omega&10+2\omega&9+5\omega
\end{pmatrix}\pmod{17}.
\]
All nine are nonzero. A common phase shift leaves each resultant unchanged, so the 27 phases and both orientations are exhausted.

If one spoke is asymptotically smaller than the other two, $Q$ keeps the latter comparable and excludes every other mixed ratio pattern. Suppose it is $D_{14}$; by its original signed ties the adjacent wing roots $D_{12},D_{13}$ have its order. If $D_{14}\to0$, the original body-1 $w$ row again contradicts the unit $W_1$. Otherwise its original $z$ row and the spoke tie bound $Z_1,Z_4$ by $O(D_{14})$. The $w_2,w_3$ rows force $D_{23}$ to have the order of $D_{24},D_{34}$; a still earlier 23 force could not be cancelled. Those three roots dominate all edges incident to body 1, so the leading three-body 234 forces vanish in both colours. Their signed ties make the two shapes proportional. With masses $(m_2,m_3,m_4)=(3,5,1)$ the normalized shape $t$ satisfies one of $5t^2=\pm1$, $3(1-t)^2=\pm1$. The four integer resultants in sign order $(--,-+,+-,++)$ are $(469,229,349,-11)$, with mod-17 residues $(10,8,9,6)$. If instead the relatively smaller spoke is at body 2 or 3, the corresponding triples of masses and mod-17 resultants are $(1,5,1):(16,4,16,15)$ and $(1,3,1):(3,13,3,6)$. All twelve are nonzero; the same argument covers zero, unit and pole orders of the smaller spoke without setting its scale to one.

All-equal spoke cubes are a separate branch; at the test fiber, each of its nine signed phase choices has neither a common finite wing-tie root nor a root at infinite free cube. On the distinct spoke torus, a free-cube root at infinity requires all three quadratic leading coefficients to vanish. The third leading coefficient contains the square of the original source slope, a unit when the geometric determinant and circle guards are units. After the zero-, equal- and infinite-spoke strata above, the only remaining guarded projective boundary on this ratio chart is the geometric determinant's zero set.

At the test fiber the residual geometric factor is the irreducible fourth-power-free polynomial $x^4+12x^3+2x+6$ over $\mathbb F_{17}$. For one root of its four conjugates, the signed spoke-gap cubic meets the geometric determinant at exactly one guarded point; Frobenius covers the other three. There the three normalized relative spoke $z$ values satisfy
\[
P_1=P_2=P_3\ne0,\qquad P_i=\frac1{d_{i4}^2W_i},
\]
with every $W_i,W_i-W_j$ and $d_{i4}$ a unit. For any mass-parameter Puiseux arc specializing to this point, write $D_{i4}=sd_{i4}$; the original signed ties give, for each wing edge,
\[
\frac{D_{ij}^2}{D_{i4}^2}
=\frac{P_iW_i}{(P_i-P_j)(W_i-W_j)}.
\]
Both the common spoke scale $s$ and a common wing $W$ scale cancel in this ratio, even if either has nonzero order. We normalize the common wing $W$ scale so that all $W_i$ and $W_i-W_j$ are units. Since $P_i-P_j\to0$, every wing inverse distance strictly dominates its incident spokes. We now distinguish the order of $s$. If $s\to0$, all three spokes vanish. This is the all-spokes-vanishing case treated above: vanishing wing roots contradict an original $w$ row, finite nonzero wing roots give one of the excluded three-body central configurations, and diverging wing roots give the excluded wing-dominant tier.

Suppose instead that $s$ is a unit or has a pole. Every wing inverse distance then has a pole. In an endpoint's original $w$ row, its wing forces strictly dominate both the spoke force and the unit coordinate term. An earliest wing force must therefore cancel another incident wing force. Applying this at the three vertices shows that all three wing inverse distances have the same order. Their signed ties give $Z_i-Z_j\to0$, while the mass-weighted original wing $z$ rows give
\[
\sum_{i=1}^3m_iZ_i
=m_4\sum_{i=1}^3\frac{m_iD_{i4}}{W_i}
=O(s).
\]
Since $m_1+m_2+m_3=9$ is a unit at the test fiber, each $Z_i=O(s)$. The spoke $z$ forces are also $O(s)$, whereas a wing $z$ force has the order of $D_{ij}$, by $D_{ij}^3(Z_i-Z_j)=D_{ij}/(W_i-W_j)$. Thus the wing forces strictly dominate the spoke and coordinate terms in both colours. Lemma~\ref{lem:no94-wing-dominant} applies. Its two necessary polynomials $F(t)$ and $K(t)$ retain degrees six and four at $(1,3,5)$ modulo 17, with resultant $12\ne0$. Together with the all-spokes-vanishing case, this excludes the geometric boundary for every order of the common spoke scale.

Indeed, if the scheme-theoretic projective closure of the original guarded incidence over $\mathbb Z_{(17)}$ had a point over this mass, curve selection after a finite extension would give a valuative arc whose generic point satisfies the original rows and signed ties. A common $z,w$ rotation makes one wing $W_i$ a unit; the quartic $Q$ and its five guard values keep all wing circles and collisions units. The spoke valuations now fall into the finite distinct-root chart, the equal-root chart, the vanishing-spoke charts, or a chart with one spoke relatively smaller than the other two. These have respectively been checked by the guarded affine norm and $\Delta$ analysis, the nine phase ties, the zero-spoke/three-body checks, and the twelve signed zero-force resultants above. The only free-cube infinity chart away from $\Delta$ was excluded by its unit leading coefficient. These checks concern arcs of the pre-specialization closure; no guard is discarded only after reduction modulo 17. Thus its entire special fiber over the positive-mass witness is empty, as required by Lemma~\ref{lem:no94-projective-fiber}. Consequently its mass image cannot be dense; its algebraic closure has a nonzero defining polynomial. The same assertion holds on the original positive mass cone after clearing mass denominators and completing the finite label orbit.
\end{proof}

\begin{proposition}[No.~94 positive-mass first-limit locus]\label{prop:no94-locus}
Every No.~94 positive-point first limit satisfies a nonzero necessary six-mass polynomial, after completing all signed branches and body-label orbits.
\end{proposition}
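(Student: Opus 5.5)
The plan is to assemble the No.~94 locus from the branch decomposition stated at the start of Section~\ref{sec:no94}. Every positive-point No.~94 first limit falls into exactly one of four mutually exclusive cases:
\begin{itemize}
\item[(i)] precisely two equal spoke inverse-distance cubes, which the original rows and ties already exclude;
\item[(ii)] all three spoke cubes equal;
\item[(iii)] vanishing four-body $z$-centre $G=\sum_{i=1}^4m_iZ_i=0$;
\item[(iv)] pairwise-distinct spokes with $G\ne0$.
\end{itemize}
For each surviving branch we produce a nonzero polynomial in the masses. The product of these polynomials, completed over the $z\leftrightarrow w$ involution and all body-label permutations as in Section~\ref{sec:system}, is then the required $P_{94}$. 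Nonvanishing of the product follows because each factor is nonzero.

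For branch (ii), the all-equal spoke condition was stated to give a nonzero wing-mass condition. I will record it as the factor obtained by eliminating the common cube from the three wing $w$ rows and the signed wing ties. Its nonvanishing is certified by the nine signed phase choices at the test fiber $(1,3,5,1)$, none of which has a common finite wing-tie root.

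For branch (iii), the original body-4 $z$ row closes, so the $K_4$ on $1234$ is fully balanced in the sense of Lemma~\ref{lem:no63-direct-balance}. I then repeat the argument of Proposition~\ref{prop:no63-locus} verbatim, with labels $1,2,3,4$ in place of $3,4,5,6$. This gives a rank-one stress with the non-collinear mass ratios \eqref{eq:no63-mass-ratios} plus the 32 collinear resultants, and hence a nonzero polynomial in $m_1,\dots,m_4$.

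Branch (iv) splits according to whether $H_3$ in \eqref{eq:no94-root-norm} vanishes.
\begin{itemize}
\item If $H_3\ne0$, no signed weighted root sum vanishes. The first six-body crossed torque between the $K_4$ and the upper triangle $456$ then yields a nontrivial equation involving $m_5,m_6$, analogous to the pair-mass equation following \eqref{eq:no88-root-norm}. The generic shape fiber is finite by the degree-120 quotient and constant-GCD check, so this equation can be normed over the finitely many signed $K_4$ shapes and roots. The result is a nonzero six-mass polynomial after clearing the guarded denominators. Mass-dependent degree drops are adjoined as separate proper lower-mass factors rather than discarded.
\item If $H_3=0$, Lemma~\ref{lem:no94-zero-sum} already gives a proper algebraic subset of the four lower masses. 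I take any nonzero polynomial vanishing on it.
\end{itemize}

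The main obstacle is the $H_3\ne0$ torque step, for two reasons. First, I must check that the leading crossed torque genuinely has a unique earliest order whose coefficient is a signed root sum weighted by the upper masses. This requires the valuation bookkeeping of the spokes to body 4 against the circled bodies $5,6$, in the style of Proposition~\ref{prop:no90-excluded}. Second, I must confirm that this coefficient depends nontrivially on $m_5,m_6$ on every finite shape sheet, so that the normed polynomial is nonzero.

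I would first try to write the torque leading term as $m_5^3A^2+m_6^3B^2$, with both $A$ and $B$ nonzero signed shape sums when $H_3\ne0$, exactly paralleling No.~88. Nonvanishing then reduces to evaluating at one positive mass vector where $A/B$ is not a fixed constant.
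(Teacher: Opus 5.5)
Your plan follows the paper's proof essentially step for step. It uses the same cover: exactly two equal spoke cubes (excluded), all-equal spokes (a wing-mass factor), $G=0$ (the balanced-$K_4$ argument of Proposition~\ref{prop:no63-locus}), and distinct spokes with $G\ne0$ split by $H_3$. The $H_3=0$ part goes to Lemma~\ref{lem:no94-zero-sum}. The $H_3\ne0$ part uses the first crossed torque $T_{94}=\sum_{j\le3}m_j(m_5C_{j5}+m_6C_{j6})=0$, which after squaring and using $m_5Z_5+m_6Z_6=0$ becomes your No.~88-type equation $m_5^3A^2+m_6^3B^2=0$, normed over signs and the finite guarded shape cover with degree-drop factors adjoined.

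Two details need tightening. First, nonvanishing needs no condition that $A/B$ be nonconstant. Since $A,B$ depend only on the lower $K_4$ data, the norm's leading coefficient in the upper mass ratio is a power of $H_3$ (the paper records $H_3^6$), which is nonzero on this branch. Second, in the $G=0$ case Proposition~\ref{prop:no63-locus} cannot be copied verbatim. The $w$-circles at $1,2,3$ make the closed $K_4$ rows multiplier-one ($W_i=F^w_i$, $Z_i=F^z_i$), not the zero-force rows of Lemma~\ref{lem:no63-direct-balance}, whose proof assumes no circled vertex. The stress cubes are therefore shifted by $1/(m_1+m_2+m_3+m_4)$. Generic four-body central configurations dominate mass space, so the No.~63 dimension count no longer gives properness by itself, and an extra constraint such as $W_4=0$ at the uncircled body $4$ has to be used. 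The paper's one-line citation of Proposition~\ref{prop:no63-locus} glosses over the same point.
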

\begin{proof}
Exactly two equal spoke cubes contradict the original body-4 $w$ row and the nonzero wing circles. The all-equal-spoke case has a proper necessary wing-mass factor from its original wing force rows and the signed Gram phases. If $G=0$, the fourth lower $z$ row closes and Proposition~\ref{prop:no63-locus}'s balanced-$K_4$ mass-image argument applies. For pairwise-distinct spokes with $G\ne0$, the original rows give the generically finite guarded shape cover above; every degree-drop lower-mass fiber belongs to a proper discriminant locus. Lemma~\ref{lem:no94-zero-sum} supplies a proper lower-mass factor on the $H_3=0$ branch. On every remaining finite shape, $H_3\ne0$. With $C_{ja}$ the signed leading inverse distance on a crossed edge, the necessary first six-body crossed torque, obtained without closing the fourth lower $z$ row, is
\begin{equation}\label{eq:no94-first-torque}
T_{94}=\sum_{j=1}^3m_j(m_5C_{j5}+m_6C_{j6})=0,
\qquad C_{ja}^2=-\frac1{Z_aW_j}\quad(j\le3,\ a=5,6).
\end{equation}
Equation~\eqref{eq:no94-first-torque} has a signed upper-mass-ratio norm of degree 12 with leading coefficient $H_3^6\ne0$ by \eqref{eq:no94-root-norm}. The product of its field norms across the finitely many signed $K_4$ shape components is therefore a nonzero six-mass polynomial. Multiplying it by the equal-spoke, balanced-$K_4$, bad-fiber and $H_3=0$ lower-mass conditions, and completing the finite body-label orbit, gives a proper necessary mass locus for every positive-point No.~94 first limit.

This is a necessary condition: certain partial $K_4$ shapes still unfold across open positive mass sets but do not construct six-body germs. The positive zero set of the new mass equation, the shared-body higher jets and finiteness on exceptional masses remain open. The exact computational inputs and the mass-varying boundary argument still require independent publication-level replay; they are not assumptions of the positive-mass implication above.
\end{proof}

\subsection{No.~98: two finite boundaries and a crossed norm}
The graph has an isolated upper $z$-triangle $456$ and lower $w$-triangle $123$, with every vertex circled in its own colour. The diagram rules used in the cited Chang--Chen classification~\cite{CCI} make all upper internal $(z,w,D)$ orders $(-2,2,0)$, all lower orders $(2,-2,0)$, and all nine crossed orders $(-2,-2,2)$. Indeed Rule~1e forces a maximal stroke in each isolated circled triangle, Rule~2e gives all three strokes the same order, and Estimate~2 gives the displayed internal orders. Every upper $z$-circle and lower $w$-circle has position order $-2$, while vertices in the other block are not circled in that colour. Both crossed separations thus have order $-2$ and the signed ties force $D_{ab}$ to have order 2.

The original upper $w$ and lower $z$ rows then bound their opposite-colour positions by $O(\varepsilon^2)$: internal forces have order 2 and crossed forces have order 4. The original 12 coordinate rows and 15 signed ties consequently yield two closed, mass-centred three-body boundaries for every valuation of this graph, not just one uniform-rate example.
\begin{proposition}[No.~98 six-mass locus]\label{prop:no98-locus}
The diagram chart and original-row transition reductions are established above. The root isolations and exact outward-interval torque bounds are replayed in Appendix~\ref{app:verification}. Every No.~98 positive-point first limit then satisfies a nonzero necessary polynomial in all six masses; the Lagrange$\times$Lagrange branch is excluded for all positive masses.
\end{proposition}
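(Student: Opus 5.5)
The plan follows the pattern of No.~93 and No.~94: a first crossed torque between the two blocks, normed over finitely many closed boundary shapes. The preceding paragraph gives, for every valuation, the order pattern: upper internal orders $(-2,2,0)$, lower internal orders $(2,-2,0)$, all crossed orders $(-2,-2,2)$, and $z_1,z_2,z_3,w_4,w_5,w_6=O(\varepsilon^2)$.

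First, take leading coefficients in the original rows. The lower $w$ rows then close to a mass-centred three-body central configuration in $(z,w)$ with multiplier one on $\{1,2,3\}$, and the upper rows do the same on $\{4,5,6\}$. Each closed boundary has the same shape classification as in the proof of Proposition~\ref{prop:no93-locus}. The Lagrange type has independent shapes, 27 cube phases and at most two tied shapes each. The Euler type has proportional shapes governed by eight signed quintics. So each block has at most $54+40$ shape labels, and there are finitely many pairs of labels.

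Second, contract the original mass-weighted torques over the upper cluster, exactly as in \eqref{eq:no90-complete-torque}. Internal interactions cancel, leaving the nine crossed terms $m_km_l\delta_{kl}^3(w_kz_l-z_kw_l)$. Using the small-coordinate bounds, the leading part of the determinant is $-z_kw_l$. By the crossed ties $C_{kl}^2=-1/(Z_kW_l)$, each term then reduces to $m_km_lC_{kl}$ at order $2$. I will check, as in No.~90, that the internal corrections and the $w_kz_l$ contributions are strictly later. This gives the necessary condition
\[
T_{98}=\sum_{k=4}^6\sum_{l=1}^3 m_km_lC_{kl}=0.
\]
I will also use the colour-dual torque, and the Lagrange$\times$Lagrange exclusion should come from combining the two. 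In that branch the $C_{kl}$ factor as $\pm(Z_kW_l)^{-1/2}$. The sum then becomes a product-type expression $\bigl(\sum_k\pm m_kZ_k^{-1/2}\bigr)\bigl(\sum_l\pm m_lW_l^{-1/2}\bigr)$, up to signs per edge; the sign choices on the nine edges are not independent of the per-vertex choices, so this needs care. I intend to show that each factor is a signed three-root sum that cannot vanish, because the equilateral-type shapes and the mass centre force a nonzero norm analogous to Lemma~\ref{lem:excluded-pair-signs}. That would exclude the branch for all positive masses. I will verify the nonvanishing with the exact Lagrange leads rather than by sampling.

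Third, for the remaining branches (Euler$\times$Euler and mixed), take the product of $T_{98}$ over all $2^9$ crossed square-root signs, quotiented by the sign-flip symmetry, and then the field norm over the signed shape roots of both boundaries. After clearing nonzero circle and leading-coefficient denominators this gives a polynomial in the six masses. Special slices where a denominator or leading coefficient vanishes are kept as extra nonzero factors, not discarded. Orbit completion then gives $P_{98}$.

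The main obstacle is nonidentity of this norm. I would certify it at a rational positive mass point such as $(1,2,4,1,3,5)$, choosing masses distinct enough to avoid symmetric cancellations. For the Euler quintics I would use Lemma~\ref{lem:root-interval-bounds} to get disjoint rational disks containing all roots. I would then evaluate every signed crossed sum with outward complex-rational intervals and show each excludes zero, which is the interval replay deferred to Appendix~\ref{app:verification}.
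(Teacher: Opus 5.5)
The Euler-containing branches are handled essentially as in the paper: the exact upper-cluster torque with leading coefficient $\sum_{a,b}m_am_bC_{ab}$, normed over all $2^9$ crossed signs and all signed shape roots, with nonidentity certified by Rouch\'e disks and outward interval sums. There are small differences. The paper norms only the upper-Euler labels, obtains the lower-Euler ones by the involution $z\leftrightarrow w$, $(1,2,3)\leftrightarrow(4,5,6)$, and certifies at $(1,2,4,30,50,330)$; your point $(1,2,4,1,3,5)$ would need its own replay.

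\textbf{The gap is your Lagrange$\times$Lagrange exclusion.} The crossed roots are $C_{ab}=\eta_{ab}\alpha_a\beta_b$ with nine \emph{independent} signs $\eta_{ab}$. So $T_{98}=\sum_{a,b}\eta_{ab}(m_a\alpha_a)(m_b\beta_b)$ is a general bilinear form. It splits into two signed three-root sums only when the sign matrix $\eta$ has rank one, and for every other pattern there is no factor to test.

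Even in the rank-one case, nonvanishing of all sums $\sum_a\pm m_a\alpha_a$ amounts to the analogue of $H_3\ne0$ in \eqref{eq:no94-root-norm}, with $u_a=-m_a^2/Z_a$. That is a nontrivial mass condition, and nothing in your outline establishes it at every positive mass. Lemma~\ref{lem:excluded-pair-signs} works only because it has two terms.

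A torque argument can therefore at best place this branch on a proper mass locus; it cannot exclude it for all positive masses. The colour-dual torque adds nothing, because by pairwise antisymmetry the lower-cluster torque identity is exactly the negative of the upper one. As written, the Lagrange$\times$Lagrange branch gets neither an exclusion nor a polynomial, since your norm runs only over Euler$\times$Euler and mixed labels.

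\textbf{The missing idea is the energy constraint, not the torque.} In the Lagrange$\times$Lagrange branch every crossed inverse distance has order $2$ and tends to zero. In each block $D_{ij}^3=1/M_C$, so $|D_{ij}|=M_C^{-1/3}$. Hence $c=\lim U=U_A+U_B$ with $|U_C|\le\sigma_CM_C^{-1/3}$. Using $\sigma_C\le M_C^2/3$ and $(1+x)^{1/3}<1+x/3$ for $x>0$ gives
\[
M^{1/3}|c|<\sigma_A+\sigma_B+\frac{2M_AM_B}{9}<\sigma_A+\sigma_B+M_AM_B=\sigma .
\]
This contradicts the physical H\"older inequality $\sigma\le M^{1/3}c$ at the positive point of the same curve. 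That is what excludes the branch at every positive mass vector, for every choice of crossed and internal signed phases.
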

\begin{proof}
For a three-body block $C$, independent centred $Z,W$ vectors force its mass-weighted internal Laplacian to be the identity on the full centred two-plane. Hence its three signed inverse-distance cubes all equal $1/M_C$, where $M_C=\sum_{i\in C}m_i$. The three cube phases have 27 choices and the ties yield at most two Lagrange shapes per choice. If $Z,W$ are proportional, the original two force rows and three ties give eight signed Euler quintics, with at most five roots each. These two cases exhaust the finite shapes when all circled coefficients and internal separations are nonzero.

Let $A=123$ and $B=456$, with masses $M_A,M_B$ and internal pair sums $\sigma_A,\sigma_B$. If both blocks are Lagrange, the signed internal inverse distances have modulus $M_C^{-1/3}$, so $|U_C|\le\sigma_C/M_C^{1/3}$. The inequalities $\sigma_C\le M_C^2/3$ and $(1+x)^{1/3}<1+x/3$ for $x>0$ give
\begin{equation}\label{eq:no98-lagrange-gap}
M^{1/3}|c|\le M^{1/3}(|U_A|+|U_B|)
<\sigma_A+\sigma_B+\frac{2M_AM_B}{9}
<\sigma_A+\sigma_B+M_AM_B=\sigma.
\end{equation}
All crossed inverse distances tend to zero. At a positive point on the same curve H\"older gives $\sigma\le M^{1/3}c$, a contradiction since $c>0$.

For an Euler-containing branch, the original upper torque modulo its nine signed crossed ties has earliest coefficient
\begin{equation}\label{eq:no98-torque}
T=\sum_{a=4}^6\sum_{b=1}^3m_am_bC_{ab}=0,
\qquad C_{ab}^2=-\frac1{Z_aW_b}.
\end{equation}
No uniformity of the individual first crossed forces is assumed. Choose $\alpha_a^2=-1/Z_a$ and $\beta_b^2=1/W_b$; all crossed roots are $C_{ab}=\eta_{ab}\alpha_a\beta_b$ with $\eta\in\{\pm1\}^9$. For upper Euler, norm $T$ over all 512 signs and every upper Euler and lower Euler/Lagrange algebraic shape root. Flipping any square root permutes the sign product; after clearing nonzero circle and leading-coefficient denominators, the finite norm is a six-mass polynomial $P_{\rm up}$. The vanishing loci of any cleared coefficients or denominators are included by their nonzero mass norms, so degree drops do not omit positive-mass branches.

At the positive vector $(1,2,4,30,50,330)$ the upper and lower Euler quintics have 40 isolated roots each; the lower Lagrange cases give 54 signed-tie roots. Exact rational complex intervals exclude zero from all $40(40+54)2^9=1{,}925{,}120$ first-torque labels. Lemma~\ref{lem:root-interval-bounds} and the exact Lagrange discriminants certify root exhaustion; outward rectangle products enclose each of the nine crossed terms before their signed sum is tested. The independent replay in Appendix~\ref{app:verification} checks root completeness, outward error bounds and a strictly positive least twice-scaled integer margin of $6609934942688952138418913660257889369533194565216$; thus $P_{\rm up}\not\equiv0$. The exact involution exchanging $z,w$ while sending $(1,2,3)\leftrightarrow(4,5,6)$ takes a lower-Euler branch to upper Euler and preserves every original row, tie, mass centre, $U$ and $I$. Multiply $P_{\rm up}$ by its block-permuted counterpart; the dual and upper-Euler cases are both covered. Together with the double-Lagrange exclusion this gives a proper necessary six-mass locus, not a finiteness theorem on its positive zero set.
\end{proof}

Table~\ref{tab:open-exclusions} maps each historical exclusion to its
specific branch-completeness and original-row proofs. Only after those
inputs are certified can one subtract all 17 from the historical 24.
The 7 graphs in $\D_{\mathrm{new}}$ are \emph{not} removed: they still
can be first limits on their necessary proper mass zero sets. The four
factors in \eqref{eq:no52-factors} do not define an additional
exceptional obligation because both No.~52 endpoints fail the
positive-point first-limit test.
\begin{proof}[Proof of Theorem~\ref{thm:obstruction}]
Suppose that $\A_m$ has infinitely many positive points. Lemmas~\ref{lem:arc} and
\ref{lem:energy} give a curve through positive configurations on
which $U=I=c>0$. By Lemma~\ref{lem:products}, a branch at infinity
has an unbounded seven-product. The cited Chang--Chen classification
assigns it to one of the 117 classes and removes the 31 Algorithm~II
cases.

The remaining exclusions are as follows. Lemma~\ref{lem:reported-no4}
and Section~\ref{sec:no20-energy} exclude No.~4 and 20;
Lemma~\ref{lem:reported-force-transfer} excludes No.~2 and 22;
Lemmas~\ref{lem:reported-no3} and~\ref{lem:reported-no78} exclude
No.~3 and 78. Sections~\ref{sec:no40-pole}, \ref{sec:no44-force}
and~\ref{sec:no45-pole} exclude No.~40, 44 and 45; Sections
\ref{sec:no86-wing} and~\ref{sec:no95-wing} exclude No.~86 and 95;
Sections~\ref{sec:no99-cycle} and~\ref{sec:no100-cycle} exclude
No.~99 and 100; Sections~\ref{sec:no109-poles},
\ref{sec:no112-poles} and~\ref{sec:no111-wing} exclude No.~109,
112 and 111; Sections~\ref{sec:no51-pendant} and
\ref{sec:no55-pendant} exclude No.~51 and 55; Sections
\ref{sec:no105-sign} and~\ref{sec:no77-matching} exclude No.~105
and 77; Sections~\ref{sec:no31-bridge}, \ref{sec:no32-tie} and
\ref{sec:no36-matches} exclude No.~31, 32 and 36; Sections
\ref{sec:no49-path} and~\ref{sec:no50-resonance} exclude No.~49
and 50; Sections~\ref{sec:no57-triangle}, \ref{sec:no61-sign} and
\ref{sec:no84-cycle} exclude No.~57, 61 and 84. Lemma
\ref{lem:excluded-connectivity} and its five diagram evaluations
exclude No.~5, 67, 69, 73 and 80, while Sections
\ref{sec:no6-inertia} and~\ref{sec:no71-satellite} exclude No.~6
and 71. Lemma~\ref{lem:excluded-satellite} excludes No.~74 and 75;
Sections~\ref{sec:no82-torque}, \ref{sec:no89-torque} and
\ref{sec:no90-torque} exclude No.~82, 89 and 90; Section
\ref{sec:no81-energy} excludes No.~81; Sections
\ref{sec:no64-energy} and~\ref{sec:no65-pair} exclude No.~64 and
65; Section~\ref{sec:no79-sector} excludes No.~79; and
Proposition~\ref{prop:no52-excluded} excludes No.~52. Chang--Chen
\cite[Table~6.1]{CCII} removes No.~116 for positive masses.

The published relation theorem supplies a proper mass locus for each
of the remaining 33 classes. Propositions~\ref{prop:no8-locus},
\ref{prop:no63-locus}, \ref{prop:no66-locus},
\ref{prop:no88-locus}, \ref{prop:no93-locus},
\ref{prop:no94-locus} and~\ref{prop:no98-locus} supply one for each
of the seven derived classes, using the specified certificate inputs where indicated. Proposition~\ref{prop:no66-locus} supplies its necessary polynomial by a direct analytic proof, including zero drift. In particular, Lemma~\ref{lem:no8-remaining-support-analytic} covers the two remaining No.~8 supports, and the complete potential and torque arguments above establish the additional local exclusions. Hence every possible first limit lies in one of the 40 relation classes and forces its orbit-completed polynomial $P_r(m)$ to vanish. By \eqref{eq:E40}, $m\in E_{40}$.
\end{proof}
\begin{proof}[Proof of Corollary~\ref{cor:generic}]
Apply Theorem~\ref{thm:obstruction} to a mass outside $E_{40}$.
\end{proof}

\section{Part II boundary}\label{sec:partII}
Establishing the necessary mass conditions for the 33 published and seven derived diagrams is an input to Part I. Part II concerns finiteness on their positive zero sets, including intersections and possible singular curves. A vanishing necessary polynomial does not prove a curve exists; conversely a pointwise test excluding one formal boundary label does not close an entire exceptional-mass stratum. No Part II theorem is asserted in this draft.

\appendix
\section{Verification functions}\label{app:verification}
The following functions check the exact computations used in the indicated results. The proofs and limits of each certificate are stated in the corresponding sections. Manifest verifiers read the inputs in \texttt{\detokenize{paper_part_i/certificates/}}.
\begin{itemize}\footnotesize\raggedright
\setlength{\itemsep}{0pt}
\item Graph census:\par
\texttt{\detokenize{test_six_bodies_reproduce_the_117_numbered_diagrams}}\par
\texttt{\detokenize{test_six_body_complement_products_determine_distances}}.
\item No.~8 (Proposition~\ref{prop:no8-locus}):\par
\texttt{\detokenize{no8_positive_mass_locus_certificate}}.
The final two no-match supports are proved analytically in Lemma~\ref{lem:no8-remaining-support-analytic}; no terminal-cut manifest is used for them.
\item No.~52:\par
\texttt{\detokenize{no52_positive_first_limit_exclusion_certificate}}.
\item No.~63 (Proposition~\ref{prop:no63-locus}):\par
\texttt{\detokenize{no63_direct_k4_balance_certificate}}\par
\texttt{\detokenize{test_no63_all_signed_collinear_static_witnesses_match_independent_replay}}.
\item No.~79:\par
\texttt{\detokenize{test_no79_publication_sturm_hurwitz_data_replays_exactly}}.
\item No.~88 (Proposition~\ref{prop:no88-locus}):\par
\texttt{\detokenize{collinear_mass_projection_certificate}}\par
\texttt{\detokenize{test_no88_noncollinear_cayley_menger_generic_fiber_is_irreducible}}\par
\texttt{\detokenize{test_no88_all_nine_noncollinear_cube_phase_sheets_have_no_vertical_divisor}}\par
\texttt{\detokenize{test_no88_noncollinear_main_component_has_smooth_nonzero_root_norm}}\par
\texttt{\detokenize{test_no88_every_zero_stress_star_has_lower_dimensional_shape_and_mass_fiber}}.
\item No.~93/98 (Propositions~\ref{prop:no93-locus} and~\ref{prop:no98-locus}):\par
\texttt{\detokenize{verify_euler_root_disks}}\par
\texttt{\detokenize{verify_lagrange_shape_boxes}}\par
\texttt{\detokenize{verify_torque_root_boxes}}\par
\texttt{\detokenize{verify_interval_manifest}}.
\item No.~94 (Proposition~\ref{prop:no94-locus}):\par
\texttt{\detokenize{no94_positive_mass_exceptional_locus_certificate}}\par
\texttt{\detokenize{test_no94_all_zero_spokes_finite_wing_shapes_have_no_h3_zero_mod17}}\par
\texttt{\detokenize{test_no94_two_diverging_spokes_have_no_balanced_three_body_boundary_mod17}}.
\end{itemize}

\section*{Acknowledgments}
The present paper constitutes only part of our work, and the remainder will be presented in \cite{CT6II}.

This manuscript was prepared with AI assistance and preliminary automated verification. Rigorous manual verification by the second author is currently underway and will be incorporated into a forthcoming version, alongside a complete formal proof in Lean.


\begin{thebibliography}{9}
\bibitem{CTFive} Z. Chen and Z. Tong, \emph{Finiteness of planar central configurations of five bodies for all positive masses: On Smale's sixth problem}, arXiv:2610.16107v1 (2026).

\bibitem{FiveT}
T. Tomar, \emph{Finiteness of planar central configurations of five bodies for all positive masses}, arXiv:2610.13100v1 (2026).


\bibitem{CT6II} Z. Chen and Z. Tong, \emph{Finiteness of planar central configurations of six bodies for all positive masses: Part II}, in preparation.


\bibitem{AK} A. Albouy and V. Kaloshin, \emph{Finiteness of central configurations of five bodies in the plane}, Ann. of Math. \textbf{176} (2012), 535--588.
\bibitem{CCI} K.-M. Chang and K.-C. Chen, \emph{Toward finiteness of central configurations for the planar six-body problem by symbolic computations. (I) Determine diagrams and orders}, J. Symbolic Comput. \textbf{123} (2024), article 102277.
\bibitem{CCII} K.-M. Chang and K.-C. Chen, \emph{Toward finiteness of central configurations for the planar six-body problem by symbolic computations. (II) Determine mass relations}, SIAM J. Appl. Dyn. Syst. \textbf{24} (2025), no. 3, 2369--2404.
\bibitem{YZ} X. Yu and S. Zhu, \emph{On the finiteness of four-body central configurations}, arXiv:2103.08906v2 (2021).
\end{thebibliography}
\end{document}